\theoremstyle{plain}
\newtheorem{theorem}{Theorem}[section]
\newtheorem{lemma}[theorem]{Lemma}
\newtheorem{corollary}[theorem]{Corollary}
\newtheorem{proposition}[theorem]{Proposition}
\newtheorem{definition}[theorem]{Definition}
\newtheorem{example}[theorem]{Example}
\newtheorem{remark}[theorem]{Remark}
\theoremstyle{remark}
\newcommand{\C}{C^{\infty}}
\newcommand{\R}{\mathbb{R}}
\newcommand{\Z}{\mathbb{Z}}
\newcommand{\sh}{\mathcal{O}}
\newcommand{\ev}{_{\overline{0}}}
\newcommand{\od}{_{\overline{1}}}
\begin{document}
\title{$\C$-SuperRings and $\C$-SuperSchemes}

\author{Cristian Danilo Olarte}

\address{Instituto de Matemáticas, FCEyN, Universidad de Antioquia, 50010  Medellín, Colombia}

\email{cristian.olarte@udea.edu.co}

\author{Pedro Rizzo} 

\email{pedro.hernandez@udea.edu.co}

\author{Alexander Torres-Gomez}
 
\email{galexander.torres@udea.edu.co}


\subjclass[2020]{Primary: 13Nxx, 58A50, 17A70, 14A22; Secondary: 14M30}

\begin{abstract}
This paper develops a theory of $\C$-superrings and their associated $\C$-superschemes. We prove a key equivalence between the category of fair affine $\C$-superschemes and the category of fair $\C$-superrings. We place special emphasis on split $\C$-superrings, which generalize the function algebras of supermanifolds and serve as building blocks for more complex, non-split structures. 
\\

\noindent \textsc{Key words:} $\C$-Rings, Superrings, $\C$-Schemes, Supermanifolds, Superschemes. 
\end{abstract}

\maketitle

\noindent

\tableofcontents

\section{Introduction}

Superalgebraic and supergeometric structures generalize classical non-graded structures to $ \mathbb{Z}_{2}$-graded ones. This extension introduces an interplay between commuting and anticommuting elements, enriching tools of algebra, differential geometry, and algebraic geometry. Although its origins lie in theoretical physics, the theory has evolved into an independent field of study. This development led to the creation of supergeometry, which provides a geometric framework for understanding objects like supermanifolds and superschemes \cite{Manin, Westra, CCF, BRUZZO, Noja}.\\


In parallel, synthetic geometry emerged as an approach that studies geometric objects through axioms and logical reasoning, without reliance on numerical or algebraic representations \cite{Kock}. This approach gained significant attention in the
works of E. Dubuc, I. Moerdijk and G. E. Reyes \cite{D1, D2, MRI, MRII, MR}, among others. However, interest in synthetic geometry waned until the recent revival of its central concept, $\C$-rings, within Dominic Joyce’s program
on derived differential geometry \cite{DJ}. Joyce's work has sparked a new wave of research by proposing a version of algebraic geometry based on $\C$-rings instead of traditional commutative rings \cite{J, Olarte, Ler1, Yama}.\\

A natural question arises from the intersection of these two fields: Is it possible to develop a theory of $\C$-algebraic supergeometry that generalizes both classical supergeometry and $\C$-algebraic geometry? Previous attempts by H. Nishimura and D. N. Yetter \cite{NISHIMURA, YETTER} explored this question using Weil superalgebras, which are a specific type of $\C$-superring, but did not adopt a general $\C$-superring perspective. More recently, C.H. Liu and S.T. Yau used a notion of $\C$-superrings in their work on supersymmetric theories for fermionic D-branes \cite{Yau 1,Yau 2}. However, their focus on applications in theoretical physics meant they did not rigorously explore the mathematical properties of these structures.\\

Separately, D. Carchedi and D. Roytenberg approached the topic from a categorical perspective, defining $\C$-superalgebras as an example of a broader construction known as ``super Fermat theories" \cite{Carchedi}. While this approach provides a powerful and general framework with interesting applications in derived differential supergeometry \cite{Carchedi2}, its reliance on intricate category theory can be challenging for a geometric-inclined researcher.\\

This paper establishes a rigorous algebro-geometric framework for the theory of $\C$-superrings and their associated $\C$-superschemes. The foundational theory is built upon the concept of locally $\C$-ringed superspaces.\\

The primary objective is to develop a comprehensive mathematical foundation for these structures and to demonstrate a functorial correspondence between every $\C$-superring and a geometric object. A central result of this work is the proof of an equivalence of categories between the category of fair affine $\C$-superschemes and the category of fair $\C$-superrings. This generalizes a known equivalence for supermanifolds \cite[Proposition 3.4.5]{CCF}.\\

Inspired by the work of Berni and Mariano on ``smooth algebras" \cite{CM1}, we extend their key ideas and results to ``smooth superalgebras". This extension introduces novel tools and constructions that lack counterparts in either classical superring theory or the theory of $\C$-rings.\\

A particular emphasis is placed on split $\C$-superrings, which, while relatively simple, are essential for understanding more complex structures. These split rings are generalizations of the algebras of global sections on supermanifolds. However, we demonstrate that more intricate, non-split $\C$-superrings can be constructed as quotients of these simpler ones, and we analyze the resulting induced $\C$-ringed superspaces.\\

In a forthcoming paper \cite{ORTG}, we extend Batchelor's theorem \cite{Batchelor}, which states that every smooth supermanifold is globally split, to the setting of $\C$-superrings. This extension highlights the fundamental importance of split $\C$-superrings within the field of supergeometry.\\

The paper is organized as follows: 
\begin{itemize}
\item Section \ref{prelim} provides a concise review of $\C$-rings and superrings.\\

\item Section \ref{cinftysuper} introduces the definition of $\C$-superrings and their morphisms, presenting illustrative examples and a method to construct split $\C$-superrings. After establishing that the category of $\C$-superrings is both complete and finitely cocomplete, we systematically investigate the process of localization. This includes extending several foundational tools to the super-framework, which are necessary for developing a geometric theory based on $\C$-superrings. Special attention is given to the $\C$-radical of a superideal. This notion is significant because  localization at a prime superideal may not yield a local $\C$-superring unless the ideal is $\C$-radical.\\

\item   Section \ref{superspec} is dedicated to constructing a spectrum functor for defining affine $\C$-superschemes. We present two distinct approaches: one based on prime superideals and another on the space of $\R$-points. We then establish the fundamental adjunction between the spectrum functor and the global sections functor. This adjuntion is crucial for proving the equivalence of categories between fair affine $\C$-superschemes and fair $\C$-superrings.

\end{itemize}
\section{Preliminaries}\label{prelim}
\subsection{$\C$-rings and $\C$-algebraic geometry: A short background}
$\C$-rings are $\R$-algebras equipped with a family of \emph{$n$-ary operations} $\phi_f$, indexed by smooth functions $f\in \C(\R^n)$ for all $n\in \mathbb{N}$. This structure facilitates an algebraic geometry approach to smooth algebras, extending techniques used in commutative algebra.
This subsection provides a review of key definitions and results essential for understanding the corresponding concepts in the context of $\C$-superrings, which will be introduced later. The content of this subsection is primarily based on \cite{J} and \cite{Olarte}.

\begin{definition}\label{ring}
A \emph{$\C$-ring} is a pair $\left(\mathfrak{C}, \{\phi_f\}_{f\in\C(\R^n)}\right)$, where $\mathfrak{C}$ is a set and $\{\phi_f: \mathfrak{C}^n \rightarrow \mathfrak{C}|\,f\in \C(\R^n),\,n\in \mathbb{N}\}$ is a collection of maps, satisfying the following:
    \begin{enumerate}
    \item [(i)] \textbf{Projections are Preserved}: If $p_i: \mathbb{R}^n \rightarrow \mathbb{R}$ is the projection onto the 
		$i^{th}-$coordinate, then $\phi_{p_i}(c_1, \ldots, c_n)=c_i$, for all 
		$(c_1, \ldots, c_n)\in \mathfrak{C}^n$.

    \item [(ii)] \textbf{Compositions are Preserved}: If $h_1,\ldots,h_n\in \C(\mathbb{R}^m)$, $g \in \C(\mathbb{R}^n)$ and $g(h_1,\ldots,h_n)$ denotes its composition, then
\begin{equation*}
\phi_{g(h_1,\ldots,h_n)}(c_1, \ldots, c_m)=\phi_{g}(\phi_{h_1}
		(c_1, \ldots, c_m),\ldots, \phi_{h_n}(c_1, \ldots, c_m))
\end{equation*}
for all $(c_1, \ldots, c_m)\in \mathfrak{C}^m$.
    \end{enumerate} 
\end{definition}

\begin{definition}\label{Morphisms}
Let $\left(\mathfrak{C}, \{\phi_f\}_{f\in\C(\R^n)}\right)$ and
$\left(\mathfrak{D},\{\psi_f\}_{f\in\C(\R^n)}\right)$ be $\C$-rings. A \emph{morphism 
of $\C$-rings} is a map $\varphi :\mathfrak{C}\rightarrow  \mathfrak{D}$ such that the following diagram commutes for all $n\in\mathbb{N}$ and $f\in \C(\R^n)$:
\[ 
\begin{tikzcd}
\mathfrak{C}^n \arrow{r}{\varphi^n} \arrow[swap]{d}{\phi_{f}} & \mathfrak{D}^n\arrow{d}{\psi_f} \\
\mathfrak{C} \arrow{r}{\varphi}& \mathfrak{D}
\end{tikzcd}
\]
where $\varphi^n: \mathfrak{C}^n \rightarrow \mathfrak{D}^n$ is given by $\varphi^n(c_1, \cdots, c_n)=(\varphi(c_1), \cdots, \varphi(c_n))$. The set of morphisms of $\C$-rings from $\mathfrak{C}$ to $\mathfrak{D}$ is denoted by $Hom_{\C-{\bf Rings}}(\mathfrak{C}, \mathfrak{D})$.
\end{definition}

The collection of $\C$-rings and their associated morphisms constitutes a category, which we denote by {\bf C$^\infty$Rings}. Objects within this category are represented as $\left(\mathfrak{C}, \{\phi_f\}_{f\in\C(\R^n)}\right)$, or simply $\mathfrak{C}$ when no ambiguity arises.

\begin{remark}\label{rem:limits}
The category {\bf C$^\infty$Rings} possesses the following important properties:
\begin{itemize}
\item {\bf C$^\infty$Rings} is equivalent to the category of functors from Euclidean spaces to sets that preserve finite products. This characterization is crucial for establishing various properties of {\bf C$^\infty$Rings}.

\item All categorical limits and filtered colimits exist in {\bf C$^\infty$Rings} \cite[Prop. 2.5]{J}.

\item {\bf C$^\infty$Rings} is an abelian category, with $\R$ as its initial object. Furthermore, every object in {\bf C$^\infty$Rings} admits a natural $\R$-algebra structure \cite[Lemmas 3 and 4]{Olarte}.

\item The $\R-$algebra quotient $\mathfrak{C}/I$ can be endowed with the structure of a $\C$-ring, which follows from Hadamard’s Lemma \cite[Definition 2.7, p.7]{J}.

\item We identify two important full subcategories: finitely generated, denoted {\bf C$^\infty$Rings}$^{fg}$, and finitely presented, denoted {\bf C$^\infty$Rings}$^{fp}$. The objects of {\bf C$^\infty$Rings}$^{fg}$ are $\C$-rings of the form $\C(\mathbb{R}^n)/I$, where $n\geq 0$ and $I$ is an ideal. The objects of {\bf C$^\infty$Rings}$^{fp}$ are finitely generated $\C$rings where the ideal $I$ is also finitely generated.
\end{itemize}
\end{remark}

\begin{definition}\label{localdef}
A $\C$-ring $\mathfrak{C}$ is called \emph{local} if it possesses a unique maximal ideal $\mathfrak{m}_{\mathfrak{C}}$. Furthermore, it is assumed that $\mathfrak{C}/\mathfrak{m}_{\mathfrak{C}}\cong \R$. A morphism $\varphi:\mathfrak{C}\rightarrow\mathfrak{D}$ between local $\C-$rings is a morphism of $\C-$rings satisfying $\varphi^{-1}(\mathfrak{m}_{\mathfrak{D}}) = \mathfrak{m}_{\mathfrak{C}}$.
\end{definition}

\begin{example}\label{ex:smooth}\rm
The set $\C(M)$ of smooth real-valued functions defined on a smooth manifold $M$ serves as the archetypal example of a $\C$-ring. The $n$-ary operations $\phi_{f}$ are defined by: $\phi_{f}(c_1, \ldots, c_n)(x)= f(c_1(x), \ldots, c_n(x))$, for every $x\in M$, 
$n\in\mathbb{N}$, $f\in \C(\R^{n})$ and $(c_1, \ldots, c_n)\in  \C(M)^n$. Note the following:
\begin{itemize}
\item $\C(\mathbb{R}^m)$ is the archetypal finitely generated $\C$-ring, with generators given by the projection functions  $p_1,\ldots, p_m$. This follows from the fact that $f=\phi_{f}(p_1,\ldots, p_m)$, for all $f\in \C(\mathbb{R}^m)$.

\item The set of germs of functions at a point $p\in M$, denoted $\C_p(M)$, is a typical example of a local $\C$-ring. The ideal $\mathfrak{m}_p=\{[(f,U)]\mid p \in U \; \text{and}\;  f(p)=0 \}$ (with $U \subset M$ open)  is the unique maximal ideal of $\C_p(M)$. The evaluation map at $p$, $\C_p(M) \rightarrow \R$, given by $[(f,U)]\mapsto f(p)$, is a surjective morphism of $\C$-rings with kernel $\mathfrak{m}_p$. Consequently, by \cite[Lemma 4]{Olarte}, $\C_p(X)/\mathfrak{m}_p\cong \R$.
\end{itemize}
\end{example}

In a manner analogous to the spectrum construction in classical algebraic geometry, which associates commutative rings to locally ringed spaces, we can define a \emph{Spectrum functor} that maps $\C$-rings to locally $\C$-ringed spaces. The subsequent definitions and results are designed to develop the theoretical framework necessary to achieve this objective.

\begin{definition}\label{LCRS}
A \textbf{$\C$-ringed space} is a pair $(X,\mathcal{O}_X)$, where $X$ is a topological space and $\mathcal{O}_X$ is a sheaf of $\C$-rings on $X$. The stalk of $X$ at $p \in X$ is defined as the $\C$-ring 
$$
\mathcal{O}_{X,p}:=\varinjlim_{p\in U}\mathcal{O}_X(U)
$$ 
where the direct limit is taken over all open sets $U\subseteq X$ containing $p$. 
\end{definition}

Explicitly, the elements of $\mathcal{O}_{X,p}$ are equivalence classes $[(f,U)]$ of pairs $(f,U)$, with $f\in\mathcal{O}_X(U)$. Two pairs $(f,U)$ and $(g,V)$ are equivalent if there exists an open set $ W\subseteq U\cap V$ such that $p\in W$ and $f|_W\equiv g|_W$. For any open neighborhood $U$ of $p$, we have the canonical morphism $\sigma_p(U):\mathcal{O}_X(U)\rightarrow \mathcal{O}_{X,p}$ given by $\sigma_p(s)=[(s,U)]$ for $s\in \mathcal{O}_X(U)$.\\

A $\C$-ringed space $(X,\mathcal{O}_X)$ is a \textbf{locally $\C$-ringed space} if all of its stalks $\mathcal{O}_{X,p}$ are local $\C$-rings for every $p\in X$.

\begin{example}\label{ex:rest}\rm
Let $(X,\mathcal{O}_{X})$ be a locally $\C$-ringed space and $U\subseteq X$ an open subset. The restriction of $\mathcal{O}_{X}$ to $U$, denoted $\mathcal{O}_{X}|_{U}$, is the sheaf defined on $U$ by $\mathcal{O}_{X}|_{U}(V)=\mathcal{O}_{X}(V)$ for any open subset $V$ of $U$. For any $p\in U$, the stalks $\mathcal{O}_{X,p}$ and $\mathcal{O}_{X}|_{U,p}$ are identical. Consequently, $(U,\mathcal{O}_X|_U)$ is also a locally $\C$-ringed space.
\end{example}

\begin{definition}
A morphism of $\C$-ringed spaces from $(X,\mathcal{O}_X)$ to $(Y,\mathcal{O}_Y)$ is a pair $(f,f^{\#})$, where: $f:X\rightarrow Y$ is a continuous map and $f^{\#}:\mathcal{O}_Y\rightarrow f_{\ast}\mathcal{O}_X$ is a morphism of sheaves of $\C$-rings on $Y$.
\end{definition}

\begin{remark}
In the case of locally $\C$-ringed spaces, the morphism $f^{\#}$ induces a morphism of local $\C$-rings on stalks $f^{\#}_{p}:\mathcal{O}_{Y,f(p)}\rightarrow \mathcal{O}_{X,p}$ for every point $p\in X$.
\end{remark}

We denote the category of $\C$-ringed spaces by $\bf{\C RS}$ and the category of locally $\C$-ringed spaces by $\bf{L\C RS}$. When the context is clear, we may abbreviate $(X,\mathcal{O}_X)$ to $X$ for a $\C$-ringed space and $(f,f^{\#})$ to $f$ for a morphism of $\C$-ringed spaces.

\begin{example}\rm
Let $M$ be a smooth manifold and $\mathcal{O}_{M}$ its sheaf of smooth functions, that is, $\mathcal{O}_M(U)=\C(U)$ for any open subset of $U\subseteq M$, with the usual restriction maps.  Then, as discussed in Example \ref{ex:smooth}, $\mathcal{O}_M(U)$ is a $\C$-ring, and the pair $(M,\mathcal{O}_{M})$ forms a locally $\C$-ringed space.  Furthermore, the stalk of $\mathcal{O}_M$ at any point $p\in M$ is the local $\C$-ring $\C_p(M)$ of germs of smooth functions at $p$ (see Example \ref{ex:smooth}).\\

Therefore, the category ${\bf Man}_{\mathbb{R}}$ of smooth manifolds can be viewed as a subcategory of $\bf{L\C RS}$.
\end{example}

\begin{definition}\label{def:Rpoints}
Let $\mathfrak{C}$ be a $\C-$ring. An $\R$-point of $\mathfrak{C}$ is a $\C$-ring morphism $x:\mathfrak{C}\rightarrow \R$. The set of all $\R$-points of $\mathfrak{C}$ is denoted by $X_{\mathfrak{C}}$.
\end{definition}
We equip $X_{\mathfrak{C}}$ with the topology generated by the basis of open sets of the form $U_{c}=\{x\in X_{\mathfrak{C}}\mid x(c)\neq 0 \}$ for all $c\in \mathfrak{C}$. $X_{\mathfrak{C}}$ is a regular space \cite[Lemma 4.15]{J} and hence a Hausdorff space.\\

Following \cite[Definition 2.13]{J}, let $\mathfrak{C}_x$ denote the \emph{localization} of $\mathfrak{C}$ at $x$, which is a local $\C$-ring constructed by localizing $\mathfrak{C}$ at the multiplicative set $S=\{c\in \mathfrak{C}:x(c)\neq 0\}$, for any $x\in X_\mathfrak{C}$. The {\it localization morphism} $\pi_x:\mathfrak{C}\rightarrow\mathfrak{C}_x$ is surjective \cite[Proposition 2.14]{J}. Moreover, any morphism of $\C$-rings $\varphi:\mathfrak{C}\rightarrow\mathfrak{D}$ induces a unique morphism of local $\C$-rings $\varphi_y:\mathfrak{C}_x \rightarrow\mathfrak{D}_y$, where $x=y\circ\varphi$, making the following diagram commute
\begin{equation*}
\begin{tikzcd}
\mathfrak{C} \arrow{r}{\varphi} \arrow[swap]{d}{\pi_x} 
& \mathfrak{D}\arrow{d}{\pi_y} \\
\mathfrak{C}_x\arrow{r}{\varphi_y}& \mathfrak{D}_y,
\end{tikzcd}
\end{equation*}

\begin{remark}\label{rem:localCinfty}
It is important to note that the localization procedure for $\R$-algebras differs significantly from that for $\C$-rings \cite[Section 3]{CM1}. For an $\R$-algebra $R$, the localization of $R$ at $t \in R$ is given by $R_t=\{\frac{r}{t^n}: r\in R,\, n\in \mathbb{N}\}$. However, in the category of $\C$-rings, the localization of a $\C$-ring $\mathfrak{C}$ at $f \in \mathfrak{C}$ is given by $\mathfrak{C}\{f^{-1}\}=\mathfrak{C}\{s\}/(fs-1)$, where $\mathfrak{C}\{s\} =\mathfrak{C} \otimes_\infty C^\infty (\R)$. This $\C$-ring localization can be considerably larger than its $\R$-algebra counterpart. For instance, consider the $\C$-ring $\C(\R^n)$ and a smooth function $f\in \C(\R^n)$. The localization $\C(\R^n)\{f^{-1}\}$ is isomorphic to $\C(U_f)$, where $U_f = \{ x  \in \R^n \mid f(x) \neq 0 \}$ is the principal open set defined by $f$. It is well known that not every smooth function on $U_f$ can be expressed in the form $h/f^n$ \cite[page 327]{MRI}.
\end{remark}

\begin{definition}\label{Fair}
A finitely generated $\C$-ring $\mathfrak{C}$ is called \emph{fair} if it satisfies the following condition: for any element $c\in\mathfrak{C}$, if the image of $c$ under the localization morphism $\pi_x: \mathfrak C \rightarrow \mathfrak C_x$ is zero ($\pi_x (c)=0$) for every $\R$-point $x$ of $\mathfrak C$, then $c$ must be zero.
\end{definition}

\begin{proposition}\label{prop:structural}
 Let $\mathfrak{C}$ be a $\C$-ring and $\mathfrak{C}_x$ its localization at $x \in X_\mathfrak{C}$. Then the assignment
\begin{equation*}
\mathcal{O}_{X_\mathfrak{C}}(U):=\left\{s:U\rightarrow\bigsqcup_{x\in U} 
\mathfrak{C}_x|\,\forall\, x\in U,\exists \; \text{an open}\; V_x\subseteq U\, \text{containing}\; x \, \text{and}  \, c\in 
\mathfrak{C}\,\text{such that}\; s(y)=\pi_y(c) \,\forall\,y\in V_x\right\}
\end{equation*}
for each open set $U\subseteq X_\mathfrak{C}$ defines a sheaf of $\C$-rings on $X_\mathfrak{C}$, where $\pi_y:\mathfrak{C}\rightarrow\mathfrak{C}_y$ is the localization morphism at $y$.
\end{proposition}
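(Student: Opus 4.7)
The plan is to carry out the standard spectrum-sheaf construction, following the classical scheme-theoretic template but with the extra care needed to produce $\C$-ring structure (not merely $\R$-algebra structure) on sections. I would organize the argument into four steps: (1) define a $\C$-ring structure on $\mathcal{O}_{X_\mathfrak{C}}(U)$ pointwise via the stalk structures; (2) verify that the resulting operations preserve the ``locally representable by an element of $\mathfrak{C}$'' condition; (3) define restriction maps and check the presheaf axioms; (4) verify the separation and gluing axioms.

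First, for $f\in\C(\R^n)$ and $s_1,\dots,s_n\in\mathcal{O}_{X_\mathfrak{C}}(U)$, I would set $\phi_f(s_1,\dots,s_n)(x):=\phi_f^{\mathfrak{C}_x}(s_1(x),\dots,s_n(x))$, using the $\C$-ring operations on the stalk $\mathfrak{C}_x$. The axioms of Definition \ref{ring} (preservation of projections and compositions) then hold pointwise on $U$ because they hold in each $\mathfrak{C}_x$. Since the zero and one sections (coming from $0,1\in\mathfrak{C}$) are clearly representable, this equips $\mathcal{O}_{X_\mathfrak{C}}(U)$ with the candidate $\C$-ring structure.

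Second, I must check that $\phi_f(s_1,\dots,s_n)$ still satisfies the local-representability condition. Fix $x\in U$. By hypothesis, for each $i$ there is an open $V_i\subseteq U$ containing $x$ and an element $c_i\in\mathfrak{C}$ with $s_i(y)=\pi_y(c_i)$ for all $y\in V_i$. Set $V:=\bigcap_{i=1}^n V_i$, which is open and contains $x$. Because each $\pi_y:\mathfrak{C}\to\mathfrak{C}_y$ is a morphism of $\C$-rings, for every $y\in V$ we have
\begin{equation*}
\phi_f(s_1,\dots,s_n)(y)=\phi_f^{\mathfrak{C}_y}(\pi_y(c_1),\dots,\pi_y(c_n))=\pi_y\bigl(\phi_f^{\mathfrak{C}}(c_1,\dots,c_n)\bigr),
\end{equation*}
so the section is represented on $V$ by the single element $\phi_f^{\mathfrak{C}}(c_1,\dots,c_n)\in\mathfrak{C}$. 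This is the only nontrivial algebraic step, and it is also where the $\C$-ring structure (rather than merely the $\R$-algebra structure) of the localizations is genuinely used.

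Third, for $V\subseteq U$ open, define the restriction $\rho^U_V(s):=s|_V$ as a set-theoretic restriction; it remains locally representable (the same $V_x$ and $c$ work when $V_x\cap V$ is taken), and is visibly a morphism of $\C$-rings. The presheaf identities $\rho^U_U=\mathrm{id}$ and $\rho^V_W\circ\rho^U_V=\rho^U_W$ are immediate. Finally, for the sheaf axioms on an open cover $\{U_i\}_{i\in I}$ of an open set $U$: separation follows because sections are set-theoretic functions on $U$, so if two sections agree on every $U_i$ they agree pointwise on $U$. For gluing, given compatible $s_i\in\mathcal{O}_{X_\mathfrak{C}}(U_i)$, define $s:U\to\bigsqcup_{x\in U}\mathfrak{C}_x$ by $s(x):=s_i(x)$ whenever $x\in U_i$; this is well defined by the compatibility condition, and the local-representability condition at any $x\in U$ is inherited from whichever $s_i$ has $x\in U_i$. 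Uniqueness of the glued section follows from separation. This completes the proof that $\mathcal{O}_{X_\mathfrak{C}}$ is a sheaf of $\C$-rings on $X_\mathfrak{C}$; the only real content is the compatibility of pointwise $\C$-operations with local representability in Step 2.
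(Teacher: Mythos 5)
Your proposal is correct and is the standard spectrum-sheaf argument: define the $\C$-operations pointwise on stalks, observe that local representability is preserved because each localization map $\pi_y$ is a morphism of $\C$-rings, and then verify the sheaf axioms at the level of set-theoretic sections. The paper itself does not include a proof — it cites [Olarte, Proposition~16] — but the construction carried out there is precisely this one, and your Step~2 identifies the only point where the $\C$-ring structure (as opposed to just the underlying $\R$-algebra structure) is genuinely needed, namely that $\pi_y$ intertwines $\phi_f^{\mathfrak{C}}$ with $\phi_f^{\mathfrak{C}_y}$.
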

\begin{proof}
See \cite[Proposition 16]{Olarte}
\end{proof}

The pair $(X_\mathfrak{C}, \mathcal{O}_{X_\mathfrak{C}})$ thus defines a $\C$-ringed space. We refer to $\mathcal{O}_{X_\mathfrak{C}}$ as the structure sheaf of $X_\mathfrak{C}$.

\begin{proposition}\label{fnumeral}
Any morphism of $\C$-rings $\varphi:\mathfrak{C}\rightarrow\mathfrak{D}$ induces a morphism of locally $\C$-ringed spaces $(f_\varphi,f^{\#}_{\varphi} ):(X_\mathfrak{D},\mathcal{O}_{X_\mathfrak{D}})
\rightarrow(X_\mathfrak{C},\mathcal{O}_{X_\mathfrak{C}})$
\end{proposition}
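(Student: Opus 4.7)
The plan is to build $(f_\varphi, f^{\#}_\varphi)$ in three stages: first the underlying continuous map, then the sheaf morphism built from the induced maps on localizations, and finally verify the locality condition on stalks. The only nontrivial input we will need is the commutativity of the square relating $\varphi$ with the localization morphisms $\pi_x, \pi_y$, recalled immediately before Remark \ref{rem:localCinfty}.

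For the map on underlying spaces, define $f_\varphi: X_\mathfrak{D} \to X_\mathfrak{C}$ by $f_\varphi(y) := y \circ \varphi$. Since the composition of two $\C$-ring morphisms is again a $\C$-ring morphism, $f_\varphi(y)$ is an $\R$-point of $\mathfrak{C}$. Continuity is checked on the basis: for any $c \in \mathfrak{C}$ one computes directly
$$
f_\varphi^{-1}(U_c) = \{\, y \in X_\mathfrak{D} \mid y(\varphi(c)) \neq 0 \,\} = U_{\varphi(c)},
$$
which is open. For the sheaf morphism $f^{\#}_\varphi: \mathcal{O}_{X_\mathfrak{C}} \to (f_\varphi)_* \mathcal{O}_{X_\mathfrak{D}}$, given an open $U \subseteq X_\mathfrak{C}$ and a section $s \in \mathcal{O}_{X_\mathfrak{C}}(U)$, I would set
$$
\bigl(f^{\#}_\varphi(U)(s)\bigr)(y) := \varphi_y\bigl(s(f_\varphi(y))\bigr) \in \mathfrak{D}_y, \qquad y \in f_\varphi^{-1}(U),
$$
where $\varphi_y : \mathfrak{C}_{f_\varphi(y)} \to \mathfrak{D}_y$ is the canonical morphism on localizations. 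To confirm this lands in $\mathcal{O}_{X_\mathfrak{D}}(f_\varphi^{-1}(U))$, assume that near $x_0 = f_\varphi(y_0) \in U$ the section is represented by some $c \in \mathfrak{C}$, i.e.\ $s(x) = \pi_x(c)$ on an open $V_{x_0} \subseteq U$; then the commutativity $\varphi_y \circ \pi_{f_\varphi(y)} = \pi_y \circ \varphi$ yields
$$
\bigl(f^{\#}_\varphi(U)(s)\bigr)(y) = \varphi_y(\pi_{f_\varphi(y)}(c)) = \pi_y(\varphi(c))
$$
for all $y$ in the open set $f_\varphi^{-1}(V_{x_0})$, which is exactly the local condition of Proposition \ref{prop:structural} witnessed by $\varphi(c) \in \mathfrak{D}$. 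That each $f^{\#}_\varphi(U)$ is a morphism of $\C$-rings is then pointwise inherited from $\varphi_y$, and compatibility with restrictions is immediate from the formula.

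Finally, under the canonical identification of the stalk $\mathcal{O}_{X_\mathfrak{C},x}$ with $\mathfrak{C}_x$ (via $[(s,U)] \mapsto s(x)$), the induced morphism on stalks at $y$ is precisely $\varphi_y : \mathfrak{C}_{f_\varphi(y)} \to \mathfrak{D}_y$, which is a morphism of local $\C$-rings by construction. Hence $(f_\varphi, f^{\#}_\varphi)$ is a morphism of locally $\C$-ringed spaces. The main obstacle in the whole argument is the verification in Step 2 that the pointwise prescription actually produces a section of the structure sheaf; once the commutativity of the localization square is invoked, the local description in terms of $\varphi(c)$ falls out, and the remaining verifications are formal.
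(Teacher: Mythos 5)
Your construction is correct and is essentially the standard argument: the paper itself only cites \cite[Proposition 17]{Olarte} for this statement, and the proof given there proceeds exactly as you do, via $f_\varphi(y)=y\circ\varphi$, the identity $f_\varphi^{-1}(U_c)=U_{\varphi(c)}$, and the commutativity $\varphi_y\circ\pi_{f_\varphi(y)}=\pi_y\circ\varphi$ to verify that $f^{\#}_\varphi(U)(s)$ is locally represented by $\varphi(c)$. Your identification of the stalk maps with the local morphisms $\varphi_y$ (via Lemma \ref{stalks}) correctly supplies the locality condition, so no gaps.
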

\begin{proof}
See \cite[Proposition 17]{Olarte}
\end{proof}

\begin{lemma}\label{stalks}
For every $x  \in X_{\mathfrak{C}}$, the stalk $\mathcal{O}_{X_{\mathfrak{C}}, x}$ of the structure sheaf at $x$ is isomorphic to the localization $\mathfrak{C}_x$ of $\mathfrak{C}$ at $x$.
\end{lemma}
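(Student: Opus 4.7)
My plan is to construct an explicit isomorphism by evaluation at $x$, mirroring the template familiar from the structure sheaf of an affine scheme. First, I will define
\[
\Phi : \mathcal{O}_{X_\mathfrak{C}, x} \longrightarrow \mathfrak{C}_x, \qquad [(s, U)] \longmapsto s(x).
\]
Well-definedness is immediate: equivalent representatives of a germ agree on a common open refinement containing $x$, hence agree at $x$ itself. Because $\mathcal{O}_{X_\mathfrak{C}}$ inherits its $n$-ary operations pointwise from the $\C$-ring structures on each $\mathfrak{C}_y$, the map $\Phi$ is automatically a morphism of $\C$-rings.

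Surjectivity should be painless. By the result of Joyce recalled above, the localization morphism $\pi_x : \mathfrak{C} \to \mathfrak{C}_x$ is surjective, so any target element $t \in \mathfrak{C}_x$ is of the form $\pi_x(c)$ for some $c \in \mathfrak{C}$. The global assignment $y \mapsto \pi_y(c)$ is then a section of $\mathcal{O}_{X_\mathfrak{C}}$ over the whole of $X_\mathfrak{C}$ (take $V_x = X_\mathfrak{C}$ in the definition of Proposition~\ref{prop:structural}), and its germ at $x$ is mapped by $\Phi$ to $\pi_x(c) = t$.

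Injectivity is where I expect the real work and the main obstacle: one must promote a pointwise vanishing at $x$ into vanishing on an entire open neighborhood. Suppose $\Phi([(s, U)]) = 0$. After shrinking $U$, I may assume there exists $c \in \mathfrak{C}$ with $s(y) = \pi_y(c)$ for all $y \in U$, so in particular $\pi_x(c) = 0$. The key input I plan to invoke is the standard presentation of $\mathfrak{C}_x$ as the filtered colimit of the $\C$-ring localizations $\mathfrak{C}\{f^{-1}\}$ over those $f \in \mathfrak{C}$ with $x(f) \neq 0$ (compare Remark~\ref{rem:localCinfty}). This colimit presentation forces $\pi_x(c) = 0$ to come from the image of $c$ already vanishing in some particular $\mathfrak{C}\{f^{-1}\}$ with $x(f) \neq 0$. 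For any $y \in U_f$ the element $f$ is a unit in $\mathfrak{C}_y$, so $\pi_y$ factors through $\mathfrak{C}\{f^{-1}\}$, and therefore $\pi_y(c) = 0$. Consequently $s$ vanishes on the open basic neighborhood $U \cap U_f$ of $x$, and $[(s, U)] = 0$ in the stalk, finishing the argument.

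The only nontrivial ingredient is the reduction from $\pi_x(c) = 0$ to vanishing in some $\mathfrak{C}\{f^{-1}\}$; once this standard fact about filtered colimits of $\C$-rings is in hand, the rest of the proof is essentially bookkeeping.
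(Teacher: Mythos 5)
Your argument is correct, and its skeleton (evaluation map $[(s,U)]\mapsto s(x)$, surjectivity from the surjectivity of $\pi_x$ applied to constant sections, injectivity by spreading pointwise vanishing to a basic open neighborhood) is the same as in the proof the paper cites. The one place where you diverge is the justification of the injectivity step. The cited toolkit gives a more direct route: by \cite[Proposition 2.14]{J}, the kernel of $\pi_x$ consists exactly of those $c$ for which there exists $d\in\mathfrak{C}$ with $x(d)\neq 0$ and $cd=0$; then for every $y\in U_d$ the element $\pi_y(d)$ is a unit in $\mathfrak{C}_y$ and $\pi_y(c)\pi_y(d)=0$, so $\pi_y(c)=0$ on the basic open set $U\cap U_d$. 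You instead pass through the presentation $\mathfrak{C}_x\cong\varinjlim_{x(f)\neq 0}\mathfrak{C}\{f^{-1}\}$ and the fact that an element of a filtered colimit vanishes iff it vanishes at some stage. Both of these facts are true (the colimit presentation follows from the universal property of localization, and filtered colimits in ${\bf \C Rings}$ are computed on underlying sets), so your proof goes through; but neither is set up in the paper, whereas the kernel description is already quoted, so if you write this up you should either cite the colimit presentation explicitly or switch to the kernel argument, which makes the "only nontrivial ingredient" you flag essentially free.
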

\begin{proof}
See \cite[Lemma 18]{Olarte}
\end{proof}

\begin{remark}
Lemma \ref{stalks} and Proposition \ref{fnumeral} together imply that for any morphism of $\C$-rings $\varphi:\mathfrak{C}\rightarrow\mathfrak{D}$, the induced morphism on stalks $\varphi_y:\mathfrak{C}_x\rightarrow\mathfrak{D}_y$, where $x=y\circ\varphi$, is precisely the morphism of local $\C$-rings $f^{\#}_{\varphi, x} :  \mathcal{O}_{X_{\mathfrak{D}},f_\varphi (x)} \rightarrow  \mathcal{O}_{X_{\mathfrak{C}},x}$.
\end{remark}

\begin{definition}\label{def:twofunctors}
We define the following two functors between the opposite category of {\bf C$^\infty$Rings} and $\bf{L\C RS}$:
\begin{itemize}
\item {\bf The Spectrum Functor:}  $Spec:  {\bf \C Rings}^{op} \rightarrow \bf{L\C RS}$ is defined on objects by $Spec(\mathfrak{C})=(X_\mathfrak{C},\mathcal{O}_{X_\mathfrak{C}})$, for any $\C$-ring $\mathfrak C$, and on morphisms by $Spec(\varphi)=(f_{\varphi},f^{\#}_{\varphi})$, for any $\C$-ring morphism $\varphi: \mathfrak C \rightarrow \mathfrak D$. 

\item {\bf The Global Sections Functor:} $\Gamma:\bf{L\C RS}\rightarrow {\bf \C Rings}^{op}$ is defined on objects by $\Gamma(X,\mathcal{O}_X)=\mathcal{O}_X(X)$, for any locally $\C$-ringed space $(X, \mathcal O_X)$, and on morphisms by $\Gamma(f)=f^{\#}(X):\mathcal{O}_Y(Y)\rightarrow\mathcal{O}_X(X)$, for any morphism of locally $\C$-ringed spaces $(f, f^{\#}) : (X, \mathcal O_X) \rightarrow (Y, \mathcal O_Y)$.
\end{itemize}
\end{definition}

Let $\mathfrak{C}$ be a $\C$-ring. $c\in\mathfrak{C}$ induces a global section $\Psi_{\mathfrak{C}}(c) \in \mathcal O_{X_{\mathfrak C}}(X_{\mathfrak C})$ on $X_\mathfrak{C}$, defined by:
\begin{align*}
\Psi_{\mathfrak{C}}(c)\colon   X_\mathfrak{C}  &\longrightarrow \bigsqcup_{x\in X_\mathfrak{C}}\mathfrak{C}_x\\
x &\longmapsto  \pi_x(c),
\end{align*} 
This defines a morphism of $\C$-rings $\Psi_{\mathfrak{C}}:\mathfrak{C}\rightarrow \Gamma(X_{\mathfrak{C}})$. Furthermore, for finitely generated $\C$-rings, we have the following:

\begin{proposition}
If $\mathfrak{C}$ is a finitely generated $\C$-ring, then the morphism of $\C$-rings $\Psi_\mathfrak{C}:\mathfrak{C}\rightarrow \Gamma\circ Spec(\mathfrak{C})$ is surjective.
\end{proposition}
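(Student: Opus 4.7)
The plan is to use a partition of unity on $\R^n$ to glue the local representatives of a global section into a single element of $\mathfrak{C}$. Since $\mathfrak{C}$ is finitely generated, I would first write $\mathfrak{C}\cong\C(\R^n)/I$ for some ideal $I\subset\C(\R^n)$. Under this presentation, an $\R$-point of $\mathfrak{C}$ is a $\C$-ring morphism $\C(\R^n)\to\R$ vanishing on $I$; by the standard fact that every $\C$-ring morphism $\C(\R^n)\to\R$ is evaluation at a point, $X_\mathfrak{C}$ is identified with the closed subset $Z(I)\subset\R^n$, carrying the subspace topology.

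Given $s\in\Gamma\circ Spec(\mathfrak{C})=\mathcal{O}_{X_\mathfrak{C}}(X_\mathfrak{C})$, Proposition \ref{prop:structural} supplies an open cover $\{V_\alpha\}$ of $X_\mathfrak{C}$, with $V_\alpha=U_\alpha\cap X_\mathfrak{C}$ for open sets $U_\alpha\subset\R^n$, together with elements $c_\alpha\in\mathfrak{C}$ such that $s(x)=\pi_x(c_\alpha)$ for all $x\in V_\alpha$. Fix lifts $\tilde c_\alpha\in\C(\R^n)$. Adjoining $U_0:=\R^n\setminus X_\mathfrak{C}$ produces an open cover of $\R^n$, and paracompactness of $\R^n$ yields a smooth partition of unity $\{\eta_0\}\cup\{\eta_\alpha\}$ subordinate to $\{U_0\}\cup\{U_\alpha\}$. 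I would then form the locally finite sum
\[
\tilde c \;:=\; \sum_\alpha \eta_\alpha\,\tilde c_\alpha \;\in\; \C(\R^n),
\]
which is smooth, and set $c:=[\tilde c]\in\mathfrak{C}$.

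To check $\Psi_\mathfrak{C}(c)=s$, fix $x\in X_\mathfrak{C}$. Since $\mathrm{supp}(\eta_0)\subset U_0$ is disjoint from $X_\mathfrak{C}$, we have $\sum_\alpha\eta_\alpha(x)=1$. By local finiteness, only finitely many $\eta_\alpha$ are nonzero on some neighborhood $W\ni x$, and each such index satisfies $x\in U_\alpha$, hence $x\in V_\alpha$ and $\pi_x(c_\alpha)=s(x)$. Thus near $x$ the function $\tilde c$ is a finite combination of $\tilde c_\alpha$'s whose classes all coincide in $\mathfrak{C}_x$, and the identity $\pi_x(c)=s(x)$ follows.

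The main obstacle is justifying the last sentence rigorously: one must verify that the $\C$-ring localization $\pi_x$ is compatible with the locally finite smooth combination, namely $\pi_x\!\bigl([\sum_\alpha\eta_\alpha\tilde c_\alpha]\bigr)=\sum_\alpha\eta_\alpha(x)\,\pi_x(c_\alpha)$. The most efficient route is to combine Lemma \ref{stalks} with the identification of $\mathfrak{C}_x$ as $\C_x(\R^n)/I\!\cdot\!\C_x(\R^n)$, reducing the claim to a statement about germs of smooth functions at $x\in\R^n$; then each term $\eta_\alpha(\tilde c_\alpha-\tilde c_{\alpha_0})$ (for a chosen $\alpha_0$ with $\eta_{\alpha_0}(x)\neq 0$) has germ at $x$ lying in $I\!\cdot\!\C_x(\R^n)$ by the compatibility of the $c_\alpha$, so $\tilde c$ and $\tilde c_{\alpha_0}$ have the same image in $\mathfrak{C}_x$, which is $s(x)$ by construction.
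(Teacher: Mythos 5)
Your proof is correct and follows the same partition-of-unity strategy used in the cited source (\cite[Proposition~21]{Olarte}, which in turn follows Joyce). The one small imprecision is the claim that every $\alpha$ with $\eta_\alpha\not\equiv 0$ near $x$ satisfies $x\in U_\alpha$ — this holds only for those $\alpha$ with $x\in\mathrm{supp}(\eta_\alpha)$, while for the others $\eta_\alpha$ has zero germ at $x$ and contributes nothing — but your final germ-level argument implicitly uses exactly this refined dichotomy, so the proof goes through.
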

\begin{proof}
See \cite[Proposition 21]{Olarte}.
\end{proof}

The morphism $\Psi_\mathfrak{C}$ represents the natural transformation arising from the right adjointness of the functors $Id_{\C\bf{-Rings}}$ and $\Gamma\circ Spec$. While the existence of a right adjoint to $\Gamma$ was proven in \cite{MR} and \cite{D1} without explicit construction,  \cite{J} provides a novel, explicit construction and explores its consequences. Due to its significance in $\C$-ring theory, we present this result without proof.  Further details can be found in \cite[Theorem 4.20]{J}.

\begin{theorem}\label{adjoint}
The functors $Spec$ and $\Gamma$  form an adjoint pair, with $Spec$ being the right adjoint and $\Gamma$ the left adjoint.  More precisely, for every $\C$-ring $\mathfrak C$ and every locally $\C$-ringed space $X$, there exist natural bijections $Hom_{\bf{\C Rings}}(\mathfrak{C},\Gamma(X))\xrightleftharpoons[R_{{\mathfrak{C}, X}}]{\,L_{{\mathfrak{C}, X}}\,} Hom_{\bf{L\C RS}}(X,Spec(\mathfrak{C}))$ 
\end{theorem}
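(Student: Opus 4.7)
The plan is to construct the two bijection maps $L_{\mathfrak{C},X}$ and $R_{\mathfrak{C},X}$ explicitly, verify they are mutually inverse, and check naturality in both arguments.

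First I would define $L_{\mathfrak{C},X}$. Given $\varphi\colon \mathfrak{C}\to \mathcal{O}_X(X)$, the underlying continuous map $f_\varphi\colon X\to X_\mathfrak{C}$ sends $p\in X$ to the $\R$-point
\[
f_\varphi(p)\colon \mathfrak{C}\xrightarrow{\varphi}\mathcal{O}_X(X)\xrightarrow{\sigma_p}\mathcal{O}_{X,p}\longrightarrow \mathcal{O}_{X,p}/\mathfrak{m}_p\cong \R.
\]
Continuity follows because the preimage of a principal open $U_c\subseteq X_\mathfrak{C}$ is the locus on which the germ of $\varphi(c)$ is a unit in the stalk, which is open in any locally $\C$-ringed space. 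For the sheaf morphism, the key observation is that every element of $\mathfrak{C}$ outside the kernel of $f_\varphi(p)$ is sent by $\sigma_p\circ \varphi$ to a unit of the local $\C$-ring $\mathcal{O}_{X,p}$, so the universal property of the $\C$-ring localization $\pi_{f_\varphi(p)}$ yields a unique morphism of local $\C$-rings
\[
\tilde{\varphi}_p\colon \mathfrak{C}_{f_\varphi(p)}\longrightarrow \mathcal{O}_{X,p}
\]
factoring $\sigma_p\circ \varphi$. Using Lemma \ref{stalks}, this is exactly the desired stalk morphism. Then $f^{\#}_\varphi(U)$ is built by transporting a section $s\in \mathcal{O}_{X_\mathfrak{C}}(U)$ that locally equals $\pi_{(-)}(c)$ on $V\subseteq U$ (Proposition \ref{prop:structural}) to the section $\varphi(c)|_{f_\varphi^{-1}(V)}$ of $\mathcal{O}_X$, and gluing these local pieces via the sheaf axiom.

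In the reverse direction, given a morphism $(f,f^{\#})\colon X\to Spec(\mathfrak{C})$ of locally $\C$-ringed spaces, I would set
\[
R_{\mathfrak{C},X}(f,f^{\#}) := f^{\#}(X_\mathfrak{C})\circ \Psi_\mathfrak{C}\colon \mathfrak{C}\longrightarrow \mathcal{O}_X(X).
\]
For $R\circ L = \mathrm{id}$, evaluating $f^{\#}_\varphi(X_\mathfrak{C})$ at the global section $\Psi_\mathfrak{C}(c)\colon x\mapsto \pi_x(c)$ returns $\varphi(c)$ by the very construction of $f^{\#}_\varphi$. For $L\circ R = \mathrm{id}$, one recovers the underlying map from the locality of each stalk morphism $f^{\#}_p$: the maximal ideal of $\mathcal{O}_{X_\mathfrak{C},f(p)}$ must pull back to $\mathfrak{m}_p$, which pins down the $\R$-point $f(p)$; the stalk maps then coincide with the $\tilde{\varphi}_p$ by the uniqueness in the universal property, and hence so do the sheaf maps (they are determined by their stalk data and by values on the base of principal opens, on which $\Psi_\mathfrak{C}$ is surjective onto the relevant germs). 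Naturality in $\mathfrak{C}$ and $X$ reduces to a diagram chase using the functoriality of the stalk construction and of $\Psi$.

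The principal obstacle is the construction and well-definedness of the sheaf morphism $f^{\#}_\varphi$. Because $\C$-ring localization differs markedly from $\R$-algebra localization (Remark \ref{rem:localCinfty}), one cannot simply mimic the classical algebraic-geometry proof. The verification leans crucially on the surjectivity of $\pi_x$, which guarantees that local representatives of sections always come from elements of $\mathfrak{C}$ itself, together with the universal factorization $\tilde{\varphi}_p$ to handle compatibility across different choices of local representative and to ensure that the glued section actually lies in $\mathcal{O}_X(f_\varphi^{-1}(U))$. Once this sheaf morphism is in place and shown to be local on stalks, the adjunction bijection and its naturality follow cleanly from the two explicit formulas.
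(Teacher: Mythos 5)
Your proposal is correct, but note that the paper itself gives no proof of Theorem \ref{adjoint}: it explicitly defers to \cite[Theorem 4.20]{J}. Your construction — defining $f_\varphi(p)$ as the composite through the residue field of the stalk, obtaining the stalk maps $\tilde\varphi_p$ from the universal property of $\C$-localization, using the surjectivity of $\pi_x$ to pin down uniqueness, and taking $R(f,f^\#)=f^\#(X_\mathfrak{C})\circ\Psi_\mathfrak{C}$ — is essentially the standard argument of Joyce's cited theorem, so it fills the omitted proof in the same way the reference does.
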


Having established the necessary framework, we can now define \emph{affine $\C$-schemes} and \emph{$\C$-schemes}.

\begin{definition} A locally $\C$-ringed space $(X, \mathcal O_X)$ is called an affine $\C$-scheme if it is isomorphic to the spectrum of some $\C$-ring $\mathfrak C$. If $\mathfrak C$ is finitely generated, then $(X, \mathcal O_X)$ is called an affine finitely generated $\C$-scheme. A locally $\C$-ringed space $(X, \mathcal O_X)$ is called a $\C$-scheme if it is locally affine. That is, there exists an open cover $\{U_i\}$ of $X$ such that for each $i$, $(U_i,\mathcal{O}_X|_{U_i})$ is an affine $\C$-scheme.\\
\end{definition}
A morphism of affine $\C$-schemes or $\C$-schemes is simply a morphism of locally $\C$-ringed spaces.\\

We denote the categories of affine $\C$-schemes, affine finitely generated $\C$-schemes, and $\C$-schemes by $\bf{A\C Sch}$, $\bf{A\C Sch^{fg}}$ and $\bf{\C Sch}$, respectively.

\begin{example}\label{ff-functor}\rm
Since the $\C$-ring $\C(X)$ is finitely generated for any smooth manifold $X$, there exists a fully faithful functor $F:\bf{Man}_{\mathbb R} \rightarrow \bf{A\C Sch^{fg}}$.  $F$ acts on objects by $X\rightarrow \text{Spec}(\C(X))$ and on morphisms by $(f:X\rightarrow Y)\Longrightarrow (\text{Spec}(f^{*}))$, where $f^{*}$ is the pullback morphism induced by $f$.
\end{example}

A consequence of the adjunction between $Spec$ and $\Gamma$ (Theorem \ref{adjoint}) is that the functor $Spec$ transforms categorical limits in $\C$-Rings to colimits in $\bf{A\C Sch}$.  A direct calculation using universal properties shows that pushouts in $\C$-Rings correspond to fiber products in $\bf{A\C Sch}$. In particular, we have the following result:

\begin{corollary}\label{consequences of adjoint}
In the category of finitely generated $\C$-rings, the pushout  $\mathfrak{C}=\mathfrak{D}\bigsqcup_{\mathfrak{F}}\mathfrak{E}$ of morphisms $\phi:\mathfrak{F}\rightarrow \mathfrak{D}$ and $\psi:\mathfrak{F}\rightarrow \mathfrak{E}$ corresponds to the fiber product $\text{Spec}(\mathfrak{D})\times_{\text{Spec}(\mathfrak{F})}\text{Spec}(\mathfrak{E})$ in $\bf{A\C Sch}$.
\end{corollary}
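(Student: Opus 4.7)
The plan is to deduce the statement abstractly from the adjunction of Theorem \ref{adjoint}, together with the general categorical fact that right adjoints preserve limits. Since $Spec:{\bf \C Rings}^{op}\rightarrow {\bf L\C RS}$ is a right adjoint, it carries limits in ${\bf \C Rings}^{op}$ to limits in ${\bf L\C RS}$; equivalently, it carries colimits in ${\bf \C Rings}$ to limits in ${\bf L\C RS}$.

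First, I would reinterpret the pushout $\mathfrak{C}=\mathfrak{D}\bigsqcup_{\mathfrak{F}}\mathfrak{E}$ of the span $\mathfrak{D}\xleftarrow{\phi}\mathfrak{F}\xrightarrow{\psi}\mathfrak{E}$ in ${\bf \C Rings}$ as a pullback (fiber product) cone in ${\bf \C Rings}^{op}$ with vertex $\mathfrak{C}$ over the cospan $\mathfrak{D}\rightarrow\mathfrak{F}\leftarrow\mathfrak{E}$. Next, I would apply the functor $Spec$ to this limit diagram. Because $Spec$ preserves limits, the resulting cone in ${\bf L\C RS}$, with vertex $Spec(\mathfrak{C})$ and legs $Spec(\phi):Spec(\mathfrak{D})\rightarrow Spec(\mathfrak{F})$ pulled along $Spec(\psi)$, is itself a limit cone, i.e. exhibits $Spec(\mathfrak{C})$ as the fiber product $Spec(\mathfrak{D})\times_{Spec(\mathfrak{F})}Spec(\mathfrak{E})$ in ${\bf L\C RS}$.

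For the reader who prefers an explicit verification, I would alternatively unpack the two natural bijections $L_{-,X}$ and $R_{-,X}$ of Theorem \ref{adjoint}: given any locally $\C$-ringed space $X$ with morphisms to $Spec(\mathfrak{D})$ and $Spec(\mathfrak{E})$ agreeing on $Spec(\mathfrak{F})$, transport these through $L_{-,X}$ to a pair of morphisms of $\C$-rings $\mathfrak{D}\rightarrow \Gamma(X)$ and $\mathfrak{E}\rightarrow \Gamma(X)$ coequalizing $\phi,\psi$ on $\mathfrak{F}$, invoke the universal property of the pushout to obtain a unique morphism $\mathfrak{C}\rightarrow\Gamma(X)$, and transport back via $R_{-,X}$ to obtain the unique morphism $X\rightarrow Spec(\mathfrak{C})$ factoring the data. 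The naturality of $L_{-,X}$ and $R_{-,X}$ in $X$ ensures the required compatibility with the projection morphisms.

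Finally, since $Spec(\mathfrak{C})$ is by construction an affine $\C$-scheme, the limit computed in ${\bf L\C RS}$ already lies in the full subcategory ${\bf A\C Sch}$, so it is automatically the fiber product in ${\bf A\C Sch}$ as well. I do not anticipate a genuine obstacle here: the only point requiring care is correctly dualizing between ${\bf \C Rings}$ and ${\bf \C Rings}^{op}$ so that ``pushout of $\phi,\psi$'' on the algebraic side is matched with the correct ``fiber product of $Spec(\phi),Spec(\psi)$'' on the geometric side; the finitely generated hypothesis plays no role beyond ensuring that the pushout stays in ${\bf \C Rings}^{fg}$, which is needed only for the statement to speak of $\bf{A\C Sch^{fg}}$ in applications.
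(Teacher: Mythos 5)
Your argument is correct and is essentially the one the paper intends: the paper offers no written proof beyond the remark preceding the corollary that, by the adjunction of Theorem \ref{adjoint}, $Spec$ (being a right adjoint) carries colimits of $\C$-rings to limits of locally $\C$-ringed spaces, which is exactly your first paragraph. Your explicit unwinding of the bijections $L_{\mathfrak C,X}$, $R_{\mathfrak C,X}$ and your observation that the limit already lands in the full subcategory $\bf{A\C Sch}$ are correct elaborations of the same route, not a different one.
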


Finally, we introduce the crucial concept of modules over a $\C$-superring and their corresponding geometrical objects over an affine $\C$-scheme. Our notation throughout this part follows \cite{J}.

\begin{definition}
A \emph{module $M$ over a} $\C$-ring  $\mathfrak{C}$ is simply a $\mathfrak{C}$-module when $\mathfrak{C}$ is regarded as a $\R$-algebra. Thus, for a $\C$-ringed space $(X,\mathcal{O}_X)$ a \emph{sheaf of modules} is simply an $\mathcal{O}_X$-module.
\end{definition}

\begin{definition}\label{Mspec}
Let $(X,\mathcal{O}_X)=\emph{Spec}(\mathfrak{C})$ be an affine $\C$-scheme, and $M$ be a module over $\mathfrak{C}$. We define the \emph{$\mathcal{O}_X$-module induced by} $M$, denoted by $\mathcal E = \emph{MSpec}(M)$, as follows \cite[Definition 5.16]{J}: For any open set $U\subset X$, we take $\mathcal E(U)$ to be the real vector space of all sections

$$
e:U\rightarrow \bigsqcup_{x\in U}M\otimes_{\mathfrak C} \mathfrak{C}_x 
$$ 
such that for any $x\in U$ there exists an open subset $W$ for which $e|_{W}(x)=m\otimes 1\in M\otimes \mathfrak{C}_x$, for some fixed $m\in M$. For open sets $V\subseteq U\subseteq X$ the restriction of $e$ from $U$ to $V$ is given by $e\rightarrow e|_{V}$. 

Now for a $\mathfrak{C}$-module morphism $\alpha:M\rightarrow N$, we write $\mathcal{E}=\emph{Mspec}(M)$ and $\mathcal{F}=\emph{Mspec}(N)$, therefore for any open set $U\subseteq X$ define $\emph{Mspec}(\alpha) (U)$ by $e\rightarrow \emph{Mspec}(\alpha)(U)(e):x\rightarrow (\alpha\otimes Id)(e(x))$, for all $x\in U$
\end{definition}

\begin{definition}\label{modglobalsections}
Let $(X,\sh_X)$ be a $\C$-ringed space. We have a global sections functor

$$
\Gamma: \sh_X-Mod\rightarrow \sh_X(X)-Mod 
$$ 
defined for any $\sh_X$-module $\xi$ as $\Gamma(\xi)=\xi(X)$. Additionally, for any morphism $\alpha:\xi\rightarrow \zeta$ of $\sh_X$-modules, we have $\Gamma(\alpha):\Gamma(\xi)\rightarrow\Gamma(\zeta)$ defined as $\alpha(X):\xi(X)\rightarrow\zeta(X)$ a morphism of $\sh_X(X)$-modules.
\end{definition}

\subsection{Superrings: a quick overview}
This subsection introduces the basic framework and examples of superrings, providing the necessary background for understanding $\C$-superrings in the subsequent section. For a more detailed account of superrings see \cite{Westra, Joel}.

\begin{definition}
Let $\mathfrak{R}$ be a ring and let $\Z_2= \{ \overline 0, \overline 1 \}$ be the group of integers modulo 2. We say that $\mathfrak R$ is $\Z_2$-graded if its underlying additive group is a direct sum of abelian groups $\mathfrak R\ev$ and $\mathfrak R\od$,  $\mathfrak{R}=\mathfrak{R}\ev \oplus \mathfrak{R}\od$, such that $\mathfrak{R}_i \mathfrak{R}_j \subseteq \mathfrak{R}_{i+j}$ (mod 2) for $i, j \in \Z_2$.
\end{definition}

Let $\mathfrak{R}=\mathfrak{R}\ev \oplus \mathfrak{R}\od$ be a $\Z_2$-graded ring. The set $h(\mathfrak{R}) := \mathfrak{R}\ev \cup \mathfrak{R}\od$ is called the set of homogeneous elements of $\mathfrak{R}$. A nonzero homogeneous element $a$ is called even if $a \in \mathfrak{R}\ev$ and odd if $a \in \mathfrak{R}\od$. The parity $|a|$ of a homogeneous element $a$ is defined as: $|a|=0$ if $a \in \mathfrak{R}\ev$ or $|a|=1$ if $a \in \mathfrak{R}\od$. \\

A $\Z_2$-graded ring $\mathfrak{R}=\mathfrak{R}\ev \oplus \mathfrak{R}\od$ is called supercommutative if for any two homogeneous elements $x, y \in h(R)$, we have:
$$xy= (-1)^{|x||y|} yx $$

\begin{definition}
A superring is a $\Z_2$-graded unitary supercommutative ring.
\end{definition}

\begin{remark}
The $\Z_2$-grading of the superring $\mathfrak{R}=\mathfrak{R}\ev \oplus \mathfrak{R}\od$ implies that the even part $\mathfrak{R}\ev$ is a commutative ring and the odd part $\mathfrak{R}\od$ is an $\mathfrak{R}\ev$-module.
\end{remark}

\begin{definition}
Let $\mathfrak{R}=\mathfrak{R}\ev \oplus \mathfrak{R}\od$ and $\mathfrak S=\mathfrak S\ev \oplus \mathfrak S\od$ be superrings. A morphism of superrings $\varphi: \mathfrak{R}\rightarrow \mathfrak S$ is a ring homomorphism that preserves the $\Z_2$-grading, that is, $\varphi(\mathfrak{R}_i)\subseteq \mathfrak S_i$ for all $i \in \Z_2$.
\end{definition}

A morphism $\varphi: \mathfrak{R}\rightarrow \mathfrak{S}$ is an isomorphism if it is bijective. In this case, we say that $\mathfrak{R}$ and $\mathfrak S$ are isomorphic superrings, denoted by $\mathfrak{R}\cong \mathfrak S$.

 \begin{definition}
Let $\mathfrak{R}=\mathfrak{R}\ev \oplus \mathfrak{R}\od$ be a superring. A superideal of $\mathfrak{R}$ is a $\Z_2$-graded ideal $I$ of $\mathfrak{R}$, that is, an ideal that admits a decomposition $I=I\ev\oplus I\od$, where $I\ev=I\cap\mathfrak{R}\ev$ and $I\od=I\cap \mathfrak{R}\od$.
\end{definition}

Given a superring $\mathfrak{R}=\mathfrak{R}\ev \oplus \mathfrak{R}\od$ and a superideal $I=I\ev\oplus I\od$, the quotient ring $\mathfrak{R}/I$ inherits a natural $\Z_2$-grading $\mathfrak{R}/I=(\mathfrak{R}_0/I_0)\oplus (\mathfrak{R}_1/I_1)$. This makes $\mathfrak{R}/I$ a superring.

\begin{definition}
Let $\mathfrak{R}=\mathfrak{R}\ev\oplus \mathfrak{R}\od$ be a superring.  The ideal $\mathfrak{J}_{\mathfrak{R}}$ generated by the odd part $\mathfrak{R}\od$, that is, $\mathfrak{J}_{\mathfrak{R}}=\mathfrak{R} \mathfrak{R}\od=\mathfrak{R}\od^2\oplus \mathfrak{R}\od$, is called the canonical superideal of $\mathfrak R$. The quotient ring $\overline{\mathfrak{R}}=\mathfrak{R}/\mathfrak{J}_\mathfrak{R}$ is called the superreduced ring of $\mathfrak R$.
\end{definition}

The superreduced ring $\overline{\mathfrak{R}}=\mathfrak{R}/\mathfrak{J}_\mathfrak{R}$ is a commutative ring isomorphic to $\mathfrak{R}\ev/\mathfrak{R}\od^2$. Furthermore, if $\mathfrak{J}_\mathfrak{R}$ is finitely generated, then $\mathfrak{J}_\mathfrak{R}/\mathfrak{J}_\mathfrak{R}^2$ is a finitely generated $\overline {\mathfrak R}$-module. The elements of $\overline {\mathfrak R}$ are called purely even.

\begin{example}\label{ex:supergrass}
Let $\mathbb{K}$ be a field. The polynomial superring over $\mathbb{K}$ with even indeterminates $X^1, \ldots, X^s$ and odd indeterminates $\theta^1, \ldots, \theta^d$ is:
\begin{equation}\label{eq:superpoly}
\mathfrak{R}=\mathbb{K}[X^1, \ldots, X^s \mid  \theta^1, \ldots, \theta^d]^{\pm}:=\mathbb{K}\langle Y^1, \ldots, Y^s, Z^1, \ldots, Z^d\rangle/( Y^i Y^j-Y^j Y^i,Z^i Z^j+Z^j Z^i , Y^i Z^j-Z^j Y^i)
\end{equation}
where $X^i$ corresponds to the image of $Y^i$ and $\theta^j$ corresponds to the image of $Z^j$ in the quotient, for $i=1, \ldots, s$ and $j=1, \ldots, d$.

The superreduced ring $\overline{\mathfrak{R}}$ is isomorphic to the polynomial ring $\mathbb{K}[X^1, \ldots, X^s]$. Every element of  $\mathfrak{R}$ can be uniquely expressed as:
\begin{equation}\label{eq:superpoly2}
   f=f_{i_0}(X^1, \ldots, X^s)+\sum_{J\,:\,\text{even}}f_{i_1\cdots i_J}(X^1, \ldots, X^s)\; \theta^{i_1}\cdots\theta^{i_J}+\sum_{J\,:\,\text{odd}}f_{i_1\cdots i_J}(X^1, \ldots, X^s)\; \theta^{i_1}\cdots\theta^{i_J},
 \end{equation}
where $f_{i_0}, f_{i_1\cdots i_J}\in\mathbb{K}[X^1, \ldots, X^s]$ for all $J$.
\end{example}

\begin{example}\label{ex:supergrass2}
Let $M$ be a module over a commutative ring $R$. The \emph{exterior algebra} of $M$ over $R$, denoted $\bigwedge_R(M)$, is defined as the quotient of the tensor algebra $T_R(M)$ by the two-sided ideal $N$ generated by all elements of the form $m \otimes m$, where $m \in M$.  Thus, the exterior algebra is the direct sum:
\[
 \bigwedge_R(M):=R \oplus M \oplus \bigwedge^2 M \oplus \bigwedge^3 M \oplus \cdots
\]
where, for $p \geq 2$, $ \bigwedge^p M=(M^{\otimes p})/N^p$ and $N^p= N \cap M^{\otimes p}$.\\

Let $M$ be a finitely generated $R$-module with a basis $\{ \theta^1,\cdots, \theta^d \}$. We define the superring:
\[
R[\theta^1,\cdots, \theta^d]^{\pm} := \bigwedge_R (M)
\]
which we call the polynomial superring in the odd variables $\theta^1,\cdots, \theta^d$ over $R$. Since $\bigwedge_R (M)$
 is also an $R$-module, we may refer to $R[\theta^1,\cdots, \theta^d]^{\pm}$ as an $R$-superalgebra.
\end{example}

Prime ideals play a crucial role in the development of algebraic geometry techniques, and this remains true in the context of superrings.

 \begin{definition}
A superideal $\mathfrak{p}$ of a superring $\mathfrak{R}$ is called prime if $\mathfrak{p}$ is a proper ideal of $\mathfrak{R}$, and for any two elements $p, q \in \mathfrak R$,  $p q \in \mathfrak{p}$  implies that $p \in \mathfrak{p}$ or $q \in \mathfrak{p}$.
\end{definition}

\begin{remark}
The definition of a prime ideal in a superring coincides with the usual definition for commutative rings. This is because every prime ideal in a superring is completely prime.
\end{remark}

\begin{proposition}\label{primeideal}
Let $\mathfrak{p}$ be a prime superideal of a superring $\mathfrak{R}=\mathfrak{R}\ev \oplus \mathfrak{R}\od$. Then $\mathfrak{p}=\mathfrak p\ev \oplus \mathfrak{R}\od$, where $\mathfrak p\ev$ is a prime ideal of the commutative ring $\mathfrak R\ev$.
\end{proposition}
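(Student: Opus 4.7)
The plan is to establish the containment $\mathfrak{R}\od \subseteq \mathfrak{p}$ first, after which the decomposition $\mathfrak{p} = \mathfrak{p}\ev \oplus \mathfrak{R}\od$ drops out of the graded splitting built into the definition of superideal, and the primality of $\mathfrak{p}\ev$ in $\mathfrak{R}\ev$ becomes a direct consequence of the complete primeness of $\mathfrak{p}$ recorded in the remark preceding the statement.

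First I would show that every homogeneous odd element lies in $\mathfrak{p}$. For any $\theta \in \mathfrak{R}\od$, supercommutativity gives $\theta \cdot \theta = (-1)^{|\theta||\theta|}\theta \cdot \theta = -\theta^2$, so $2\theta^2 = 0$. Assuming $2$ is not a zero divisor in $\mathfrak{R}$ (which is the case in the $\R$-linear setting that governs the rest of the paper), this forces $\theta^2 = 0 \in \mathfrak{p}$. Complete primeness of $\mathfrak{p}$ then yields $\theta \in \mathfrak{p}$, whence $\mathfrak{R}\od \subseteq \mathfrak{p}$. Combining this with the tautological identity $\mathfrak{p} = (\mathfrak{p} \cap \mathfrak{R}\ev) \oplus (\mathfrak{p} \cap \mathfrak{R}\od)$ supplied by the superideal hypothesis gives $\mathfrak{p} = \mathfrak{p}\ev \oplus \mathfrak{R}\od$, where $\mathfrak{p}\ev := \mathfrak{p} \cap \mathfrak{R}\ev$.

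It then remains to verify that $\mathfrak{p}\ev$ is a prime ideal of the commutative ring $\mathfrak{R}\ev$. Closure under addition and under multiplication by $\mathfrak{R}\ev$ is immediate from $\mathfrak{R}\ev \cdot \mathfrak{R}\ev \subseteq \mathfrak{R}\ev$ combined with $\mathfrak{p}$ being an ideal, and properness follows from $1 \in \mathfrak{R}\ev \setminus \mathfrak{p}$. For primeness, any $a,b \in \mathfrak{R}\ev$ with $ab \in \mathfrak{p}\ev \subseteq \mathfrak{p}$ must satisfy $a \in \mathfrak{p}$ or $b \in \mathfrak{p}$ by complete primeness, and a parity check puts the selected element back into $\mathfrak{p}\ev$.

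The main obstacle I anticipate is the passage from $2\theta^2 = 0$ to $\theta^2 = 0$: the preliminaries define supercommutativity purely through the Koszul sign rule and do not explicitly forbid $2$-torsion in $\mathfrak{R}\od$, so a fully general proof would need either the standing convention that odd squares vanish (customary in the supergeometry literature) or the restriction to $\R$-linear superrings that the subsequent sections impose. Once this single point is settled, the remainder of the argument is essentially bookkeeping on the graded decomposition of $\mathfrak{p}$.
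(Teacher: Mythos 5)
Your argument is correct and is the standard route (the paper itself does not reproduce a proof; it defers to \cite[Lemma 4.1.9]{Westra}, whose argument is exactly of this shape: show $\mathfrak{R}\od$ lies in every prime superideal, then read off the graded decomposition). The bookkeeping on $\mathfrak{p}\ev$ --- closure, properness, primeness via complete primeness and a parity check --- is all fine.

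On the one step you flag: you can dispense with the global hypothesis that $2$ is a non-zero-divisor in $\mathfrak{R}$ and argue directly inside the quotient. From $2\theta^2 = 0 \in \mathfrak{p}$, the images in the integral domain $\mathfrak{R}/\mathfrak{p}$ satisfy $\bar{2}\,\bar{\theta}^2 = 0$, so either $\bar{\theta}=0$ (i.e.\ $\theta \in \mathfrak{p}$) or $\bar{2}=0$ (i.e.\ $2 \in \mathfrak{p}$). The second alternative is what would genuinely obstruct the statement for a general $\mathbb{Z}$-graded supercommutative ring, but in this paper every superring under consideration is an $\mathbb{R}$-algebra, so $2$ is a unit and never lies in a proper ideal. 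So your caveat is well-placed, and the cleanest phrasing is ``$2 \notin \mathfrak{p}$'' (automatic here) rather than ``$2$ is not a zero-divisor in $\mathfrak{R}$,'' which is a slightly stronger hypothesis than you actually need.
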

\begin{proof}
See \cite[Lemma 4.1.9] {Westra}
\end{proof}

\begin{remark}
Proposition \ref{primeideal} demonstrates that the prime superideals of a superring $\mathfrak{R}=\mathfrak{R}\ev \oplus \mathfrak{R}\od$ are completely determined by the prime ideals of its even part $\mathfrak{R}\ev$. 
\end{remark}

\section{$\C$-Superrings}\label{cinftysuper}
This section defines and studies the main properties of $\C$-superrings. In essence, we combine the algebraic structures of superrings and $\C$-rings, described in the previous section, into a single unified structure and explore its properties.

We establish a category of $\C$-superrings that encompasses both $\C$-rings and a broader class of superrings. We also delve into specific examples, particularly focusing on split $\C$-superrings, and investigate their categorical properties, including the existence of limits and coproducts. 

\subsection{Definition and examples}
Our definition of $\C$-superrings follows that proposed by Liu and Yau in \cite[Definition 1.4.1]{Yau 1}. 

\begin{definition}
A $\C$-superring is a superring  $\mathfrak{R}=\mathfrak{R}\ev\oplus \mathfrak{R}\od$ such that the even part $\mathfrak{R}\ev$ is endowed with the structure of a $\C$-ring.
\end{definition}

Since the odd part $\mathfrak{R}\od$ of a superring $\mathfrak{R}=\mathfrak{R}\ev \oplus \mathfrak{R}\od$ is an $\mathfrak{R}\ev$-module, the odd part $\mathfrak{R}\od$ of a $\C$-superring is naturally a module over the $\C$-ring $\mathfrak{R}\ev$.

\begin{definition}\label{def:smorph}
Let $\mathfrak{R}=\mathfrak{R}\ev\oplus \mathfrak{R}\od$ and $\mathfrak S=\mathfrak S\ev\oplus \mathfrak S\od$ be $\C$-superrings. A morphism of $\C$-superrings $\varphi:\mathfrak{R}\rightarrow \mathfrak S$ is a morphism of superrings that is, a grade-preserving map, such that its restriction to the even part, $\varphi|_{\mathfrak{R}\ev}:\mathfrak{R}\ev\rightarrow \mathfrak S\ev$, is a morphism of $\C$-rings.
\end{definition}
\begin{remark}
Let $\varphi:\mathfrak{R}\longrightarrow \mathfrak{S}$ be a morphism of $\C$-superrings. According to Definition \ref{def:smorph} the restriction of $\varphi$ to the even part of $\mathfrak{R}$ is a morphism in the category of $\C$-rings. Nevertheless, the restriction to the odd part, $\varphi\od:\mathfrak{R}\od\longrightarrow \mathfrak{S}\od$, is, in principle, merely a map between a $\mathfrak{R}\ev$-module and a $\mathfrak{S}\ev$-module. This could be problematic, as we require a morphism between $\mathfrak{R}\od$ and $\mathfrak{S}\od$ that preserves a specific structure. We resolve this inconvenience by applying \emph{extension of scalars} to the morphism, as shown the lemma below.
\end{remark}

\begin{lemma}\label{escalarext}
  Let $\varphi:\mathfrak{R}\rightarrow \mathfrak{S}$ be a (grade preserving) morphism of $\C$-superrings. Then $\mathfrak{S}\od$ is endowed with an $\mathfrak{R}\ev$-module structure, and consequently, the restriction of $\varphi$ to $\mathfrak{R}\od$ is indeed an $\mathfrak{R}\ev$-modules morphism.
\end{lemma}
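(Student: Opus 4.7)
The plan is to define the desired $\mathfrak{R}\ev$-module structure on $\mathfrak{S}\od$ by pulling back along the underlying $\C$-ring morphism $\varphi\ev\colon \mathfrak{R}\ev\to \mathfrak{S}\ev$, and then to check that $\varphi\od$ respects this structure. More concretely, $\mathfrak{S}\od$ is already an $\mathfrak{S}\ev$-module (since $\mathfrak{S}$ is a superring, and the even part acts on the odd part), and any ring homomorphism $A\to B$ turns a $B$-module into an $A$-module by the usual restriction of scalars. I would therefore set
\begin{equation*}
r\cdot s := \varphi\ev(r)\,s \qquad \text{for all } r\in \mathfrak{R}\ev,\; s\in \mathfrak{S}\od,
\end{equation*}
where the product on the right-hand side is taken in the ambient superring $\mathfrak{S}$. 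A routine verification—exploiting that $\varphi\ev$ is a ring homomorphism sending $1$ to $1$—shows that this definition satisfies the four $\mathfrak{R}\ev$-module axioms on $\mathfrak{S}\od$.

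With this structure in place, the second assertion reduces to showing that $\varphi\od\colon \mathfrak{R}\od\to \mathfrak{S}\od$ is additive and $\mathfrak{R}\ev$-linear. Additivity is immediate because $\varphi$ is a morphism of abelian groups. For linearity, given $r\in \mathfrak{R}\ev$ and $x\in \mathfrak{R}\od$, one has $rx\in \mathfrak{R}\od$, so applying the ring homomorphism $\varphi$ and using that it preserves the $\Z_2$-grading yields
\begin{equation*}
\varphi\od(r x) \;=\; \varphi(rx) \;=\; \varphi(r)\,\varphi(x) \;=\; \varphi\ev(r)\,\varphi\od(x) \;=\; r\cdot \varphi\od(x),
\end{equation*}
where the last equality is exactly the definition of the pulled-back action.

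I do not foresee a genuine obstacle here: the content of the lemma is that the two compatibility data (the ring structure of $\mathfrak{S}$ and the grading preservation of $\varphi$) already encode the module morphism property, so the proof is a bookkeeping argument. The only point deserving explicit comment is terminological—despite the remark preceding the lemma calling the construction ``extension of scalars'', what is actually used is \emph{restriction} of scalars along $\varphi\ev$; I would briefly flag this to avoid any confusion, but it has no bearing on the validity of the argument.
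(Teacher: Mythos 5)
Your proposal is correct and follows exactly the paper's argument: the action $r\cdot s:=\varphi\ev(r)s$ is precisely the one defined in the paper's proof, and your explicit check of $\mathfrak{R}\ev$-linearity fills in what the paper dismisses as ``straightforward.'' Your terminological observation is also apt — the construction is restriction of scalars along $\varphi\ev$, not extension of scalars as the preceding remark labels it.
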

\begin{proof}
First, observe that $\mathfrak{S}\od$ can be naturally endowed with a $\mathfrak{R}\ev$-module structure. For any $r\ev\in\mathfrak{R}\ev $ and $t\od\in\mathfrak{S}\od $, we define the action as 
$$
r\ev\cdot t\od=\varphi\ev(r\ev)t\od, 
$$ 
where the right side uses the action of $\mathfrak{S}\ev$ on $\mathfrak{S}\od$. Consequently, it is straightforward to show that under this action, the restriction  $\varphi\od:\mathfrak{R}\od\longrightarrow \mathfrak{S}\od$ becomes a $\mathfrak{R}\ev$-module morphism.
\end{proof}
\newpage
$\C$-superrings, together with their morphisms, form a category denoted $\C$-Superrings. This category contains $\C$-Rings as a subcategory. The situation is analogous to the relationship between the categories of Rings and Superrings. Furthermore, just as in the category of $\C$-rings, where modules are defined over the underlying ring structure, modules over $\C$-superrings are defined with respect to the underlying superring structure. It is easy to verify that the category of modules over $\C$-Superrings is an abelian category. Therefore, we may employ the standard machinery of modules over superrings in this context.

\begin{example}\label{ex:trivial} 
Any $\C$-ring $\mathfrak{C}$ can be naturally viewed as a $\C$-superring by setting its odd part to zero.  This defines a functor $S: {\bf \C Rings}  \rightarrow  {\bf \C Superrings}$, which acts on objects by $S(\mathfrak C) = \mathfrak C \oplus 0$ and on morphisms in the obvious way. This functor allows us to identify  {\bf C$^\infty$Rings} as a subcategory of  {\bf C$^\infty$Superrings}.
\end{example}

Inspired by the polynomial superring over a commutative ring  with odd indeterminates (Example \ref{ex:supergrass2}), we present the following example of a $\C$-superring \cite[Example 1.1.2]{Yau 2}.

\begin{example}\label{R12}
Let $\C({\R})[\theta^1, \theta^2]^{\pm}$  be the superring defined by:
$$
\C({\R})[\theta^1, \theta^2]^{\pm}\;
	:=\;   \frac{C^{\infty}({\R})\langle Z^1,\,Z^2 \rangle}
	                   {( f Z^{i}-Z^{i}f\,,\;
			                    Z^{i} Z^{j}+Z^{j} Z^{i}\,
				                |\; f \in\C({\R})\,;\;    i, j =1,\, 2)},
$$
where $\theta^i$ corresponds to the image of $Z^i$ in the quotient, for $i,j=1, 2$. The elements of $\C({\R})[\theta^1, \theta^2]^{\pm}$ can be written in the form:
$$
f_0+f_{12}\theta^1\theta^2 +f_1\theta^1+f_2\theta^2, \,\text{where}\, f_0, f_1, f_2, f_{12} \in\C({\R}).
$$
The even part of $\C({\R})[\theta^1, \theta^2]^{\pm}\ev$ is:
$$
\C({\R})[\theta^1, \theta^2]^{\pm}\ev \;
	         =\; \{f+g\theta^{12}\mid\, f, g \in\C({\R})\}\,,
$$
where $\theta^{12}:=\theta^1\theta^2$. The $\C$-ring structure on $\C({\R})[\theta^1, \theta^2]^{\pm}\ev$ is defined as follows: for any smooth function $h:{\R}^k\rightarrow {\R}$ and $k\in {\Bbb Z}_{\ge 1}$, the $k$-ary operation is given by:

$$
\Phi_h(f_1+g_1\theta^{12},\ldots,f_k+g_k\theta^{12})
	  := h(f_1,\ldots, f_k)\,
	         +\,  \sum_{i=1}^k   (\partial_i h)(f_1, \,\cdots\,,\, f_k)\;(g_i\theta^{12})\,,			 
$$
where $f_i,g_i\in \C(\R)$ for $i\in\{1,\ldots,k\}$.  Here, $\partial_i h$ denotes the partial derivative of h with respect to its $i$-th argument. It is straightforward to verify that $\phi_h$ satisfies the conditions of Definition \ref{ring}.

\end{example}

\begin{example}\label{Rpq} 
The following superring generalizes Example \ref{R12}:
$$
\C({\R}^p)[\theta^1,\ldots, \theta^q]^{\pm} \;
	 :=\;   \frac{C^{\infty}({\R}^p)\langle Z^1,\ldots, Z^q \rangle}{\left( fZ^{i}-Z^{i}f\,,\;Z^{i}Z^{j}+Z^{j}Z^{i}
\mid f \in\C({\R}^p);\, i, j \in\{1,\ldots, q\}\right)}.
   $$
Here, $\theta^i$ represents the equivalence class of the indeterminate $Z^i$ in the quotient. The elements of this quotient can be expressed in the form:
$$
F=f+\sum_{|I|=2n}f^I\theta^I+\sum_{|I|=2n+1}g^I\theta^I
$$
where $I\subseteq \{1,2,\ldots,q\}$ is a multi-index, $n\in \Z$, $f,f^I,g^I\in \C(\R^p)$ and $\theta^I=\theta^{i_1}\theta^{i_2}\ldots\theta^{i_{|I|}}$. The even part $\C({\R}^p)[\theta^1,\ldots, \theta^q]^{\pm}\ev$ consists of elements $F$ with $g^I = 0$ for all odd $|I|$.  The $\C$-ring structure on $\C({\R}^p)[\theta^1,\ldots, \theta^q]^{\pm}\ev$ is defined as follows: for any smooth function $h:{\R}^k\rightarrow {\R}$ and $k\in {\Bbb Z}_{\ge 1}$, the $k$-ary operation $\Phi_h: \left(  \C({\R}^p)[\theta^1,\ldots, \theta^q]^{\pm}\ev  \right)^k \rightarrow \C({\R}^p)[\theta^1,\ldots, \theta^q]^{\pm}\ev$ is given by:

 $$
\Phi_h(F_1,\ldots,F_k)= h (f_1, \ldots, f_k)\,
	         +\,  \sum_{j=1}^k   (\partial_j h) (f_1, \ldots, f_k)\sum_{|
I_j|=2n}f^{I_j}\theta^{I_j},		 
   $$
where  $j\in \{1,\ldots,k\}$ and $F_j=f_j+\displaystyle{\sum_{|I_j|=2n}f^{I_j}\theta^{I_j}}$. This structure makes $\C({\R}^p)[\theta^1,\ldots, \theta^q]^{\pm}$ a $\C$-superring.
\end{example}

The $\C$-superring of Example \ref{Rpq} plays a fundamental role in the theory of real supermanifolds, where it serves as the structure sheaf (or coordinate superring) of the Euclidean superspace $\R^{p|q}$. Therefore, from now on, we will denote the coordinate superring $\C(\R^p)[\theta^1,\ldots, \theta^q]^{\pm}$ by $\C(\R^{p|q})$.

\begin{remark}
The $\C$-superring of Example \ref{Rpq}  is a special case of what we will later call a \emph{split $\C$-superring}. Although we omitted the proof that the $n$-ary operations defined in Example \ref{Rpq} endow the even part with a $\C$-ring structure, we will provide a proof for the general case of a split $\C$-superring in Proposition \ref{split}.
\end{remark}

\begin{example}\label{quotsr} 
Let $\mathfrak R=\mathfrak R\ev \oplus \mathfrak R\od$ be a $\C$-superring, and let $I = I\ev \oplus I\od$ be a superideal of $\mathfrak R$. Then the quotient $\mathfrak R/I$ inherits a $\C$-superring structure from $\mathfrak R$. Since $\mathfrak R/I = (\mathfrak R\ev/I\ev) \oplus (\mathfrak R\od/I\od)$ is a superring, we only need to show that the even part $\mathfrak R\ev/I\ev$ is a $\C$-ring. This follows from the fact that $I\ev$ is an ideal of the $\C$-ring $\mathfrak R\ev$, and the quotient of a $\C$-ring by an ideal is again a $\C$-ring (see Remark \ref{rem:limits}, item 4).
\end{example}

\begin{example}
Following Example \ref{quotsr}, the quotient of a $\C$-superring $\mathfrak R=\mathfrak R\ev \oplus \mathfrak R\od$ by its canonical superideal $\mathfrak{J}_{\mathfrak{R}}=\mathfrak{R} \mathfrak{R}\od=\mathfrak{R}\od^2\oplus \mathfrak{R}\od$ has the structure of a $\C$-superring. This quotient is the superreduced ring of $\mathfrak R$, given by $\overline{\mathfrak R}=\mathfrak{R}\ev/\mathfrak{R}\od^2 \oplus 0$, which can be identified with the $\C$-ring $\mathfrak{R}\ev/\mathfrak{R}\od^2$.
\end{example}

\begin{remark}
A key distinction of $\C$-superrings is that the even component $\mathfrak{R}\ev$, while itself a $\C$-ring, may contain nilpotent elements arising from the odd part of the superring due to the relation $\mathfrak{R}\ev\cap \mathfrak{J}_{\mathfrak{R}}= \mathfrak{R}\od^2$. This contrasts with the typical focus in the theory of $\C$-rings, where such nilpotent elements are not considered.  While $\C$-rings can possess nilpotent elements (e.g., Weil algebras \cite[Proposition 1.5]{D1} or \cite[Example 2.9]{J}), the nilpotent elements of primary interest in superring theory are those originating from the odd part.
\end{remark}

\begin{example}\label{reduced-quotient}
Consider the $\C$-superring $\mathfrak{R}=\C(\R^{p|q})$ from Example \ref{Rpq}. Each element in its canonical superideal $\mathfrak{J}_\mathfrak{R}=[\C(\R^{p|q})\od]^2\oplus \C(\R^{p|q})\od$ can be expressed in the form:
$$
F=\sum_{|I|=2n}f^I\theta^I+\sum_{|I|=2n+1}g^I\theta^I.
$$
This leads to the following decomposition of the superring:
$$
\C(\R^{p|q})=\C(\R^p)\oplus [\C(\R^{p|q})\od]^2\oplus \C(\R^{p|q})\od.
$$
\end{example}
Therefore, the superreduced ring is given by $\overline{\C(\R^{p|q})}=\C(\R^p)$.\\

More generally, if $\mathfrak{R}=\C(\R^{p|q})/I$, then $\mathfrak{J}_\mathfrak{R}=[\C(\R^{p|q})\od/I\od]^2\oplus [\C(\R^{p|q})\od/I\od] $ and its elements can be written in the form 
$$
\overline{F}=\sum_{|\alpha|=2n}\overline{f}^\alpha\theta^\alpha+\sum_{|\alpha|=2n+1}\overline{g}^\alpha\theta^\alpha.
$$
The morphism $\varphi:\C(\R^{p|q})/I\longrightarrow  \C(\R^{p})/[I\cap \C(\R^{p})]$ given by 
$$
\overline{f}+\sum_{|\alpha|=2n}\overline{f}^\alpha\theta^\alpha+\sum_{|\alpha|=2n+1}\overline{g}^\alpha\theta^\alpha\longmapsto \overline{f}
$$ 
is well defined, surjective and has kernel $\mathfrak{J}_\mathfrak{R}$. In conclusion, the reduced part is equal to
$$
\overline{\C(\R^{p|q})/I}=\C(\R^{p})/[I\cap \C(\R^{p})]
$$

\subsection{Split $\C$-superrings} 
A key property of real supermanifolds is that their structure sheaves are globally isomorphic to the exterior power of a locally free sheaf \cite{Batchelor}. Supermanifolds possessing this property are called split supermanifolds. This subsection introduces the algebraic framework necessary to formalize the concept of split $\C$-superrings.\\

In a forthcoming work \cite{ORTG}, we will present a detailed study of an extension of Batchelor's structure theorem \cite{Batchelor} to a class of superspaces that includes the category of supermanifolds.\\

Given a $\C$-ring $\mathfrak{C}$ and a finitely generated $\mathfrak C$-module $\xi$, with basis  $\{\theta^1,\ldots, \theta^q\}$, we can construct the polynomial superring $\mathfrak{C}[\theta^1, \ldots, \theta^q]^{\pm}:=\bigwedge_{\mathfrak C} \xi$  in the odd variables $\theta^1,\ldots, \theta^q$ over $\mathfrak C$, following Example \ref{ex:supergrass2}. The following proposition establishes that  $\mathfrak{C}[\theta^1, \ldots, \theta^q]^{\pm}$ admits the structure of a $\C$-superring.

\begin{proposition}\label{split}
The superring $\mathfrak{C}[\theta^1, \ldots, \theta^q]^{\pm}=\bigwedge_{\mathfrak C} \xi$ can be equipped with the structure of a $\C$-superring.
\end{proposition}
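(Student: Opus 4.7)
The plan is to extend Example \ref{Rpq} from $\mathfrak{C}=\C(\R^p)$ to an arbitrary $\C$-ring $\mathfrak{C}$ via a formal Taylor expansion that terminates because of nilpotency. Since $\mathfrak{R}\od$ is automatically a module over $\mathfrak{R}\ev$ by the ambient superring multiplication, the entire task is to equip
$$
\mathfrak{R}\ev \;=\; \mathfrak{C}\oplus\bigoplus_{k\geq 1}{\textstyle\bigwedge}^{2k}\xi
$$
with a $\C$-ring structure compatible with its underlying commutative ring structure. I would first decompose each even element uniquely as $F=f+n$, with $f\in\mathfrak{C}$ and $n\in\bigoplus_{k\geq 1}\bigwedge^{2k}\xi$; since $\bigwedge^{m}\xi=0$ for $m>q$, the nilpotent part satisfies $n^{N}=0$ whenever $2N>q$.

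For each $h\in\C(\R^k)$ and $F_j=f_j+n_j$, $j=1,\ldots,k$, I would define
$$
\Phi_h(F_1,\ldots,F_k)\;:=\;\sum_{\alpha\in\mathbb{N}^k}\frac{1}{\alpha!}\,\phi_{\partial^\alpha h}(f_1,\ldots,f_k)\,n_1^{\alpha_1}\cdots n_k^{\alpha_k},
$$
where $\phi$ denotes the $\C$-ring operations already present on $\mathfrak{C}$ and $\partial^\alpha h\in\C(\R^k)$ is a mixed partial derivative. Nilpotency of the $n_j$ ensures the sum collapses to finitely many nonzero terms, so $\Phi_h(F_1,\ldots,F_k)\in\mathfrak{R}\ev$. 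Axiom (i) of Definition \ref{ring} is immediate, since for $h=p_i$ only the multi-indices $\alpha=0$ and $\alpha=\mathbf{e}_i$ contribute, yielding $f_i+n_i=F_i$. The main obstacle is axiom (ii), preservation of composition: to verify that $\Phi_{g(h_1,\ldots,h_n)}(F_1,\ldots,F_m)$ agrees with $\Phi_g(\Phi_{h_1}(F_1,\ldots,F_m),\ldots,\Phi_{h_n}(F_1,\ldots,F_m))$, I would extract the coefficient of each monomial $n_1^{\alpha_1}\cdots n_m^{\alpha_m}$ and reduce the resulting identity to a Fa\`a di Bruno-type relation among iterated partial derivatives of smooth functions, which in turn lifts from $\C(\R^m)$ to $\mathfrak{C}$ thanks to the composition axiom already satisfied by the $\C$-ring $\mathfrak{C}$.

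A conceptually cleaner alternative that bypasses the combinatorial bookkeeping is to recognize $\R[\theta^1,\ldots,\theta^q]^{\pm}\ev$ as a finite-dimensional Weil $\C$-ring and identify $\mathfrak{R}\ev$ with the coproduct $\mathfrak{C}\otimes_\infty \R[\theta^1,\ldots,\theta^q]^{\pm}\ev$ in $\mathbf{C^\infty Rings}$, whose existence is guaranteed by Remark \ref{rem:limits}. This endows $\mathfrak{R}\ev$ with a $\C$-ring structure by transport, and a comparison on the generators recovers the explicit Taylor formula above. The odd part $\bigoplus_{k\geq 0}\bigwedge^{2k+1}\xi$ is then an $\mathfrak{R}\ev$-module automatically via the superring multiplication, completing the $\C$-superring structure.
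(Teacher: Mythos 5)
Your proposal is correct and tracks the paper's strategy (construct an explicit $k$-ary operation $\Phi_h$ on the even part and verify the two axioms), but with one substantive difference that is worth emphasizing: you use the full Taylor expansion
\[
\Phi_h(F_1,\ldots,F_k)=\sum_{\alpha\in\mathbb{N}^k}\frac{1}{\alpha!}\,\phi_{\partial^\alpha h}(f_1,\ldots,f_k)\,n_1^{\alpha_1}\cdots n_k^{\alpha_k},
\]
whereas the paper keeps only the first-order terms,
\[
\Phi_h(C_1,\ldots,C_k)= \phi_h(a_1,\ldots,a_k)+\sum_{|I_l|=2n}\sum_{j=1}^k\phi_{\partial_j h}(a_1,\ldots,a_k)\,b^j_{I_l}\theta^{I_l}.
\]
These two formulas agree precisely when the even nilpotent ideal squares to zero, i.e.\ when $q\leq 3$. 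For $q\geq 4$ the element $n_j\in\bigoplus_{k\geq 1}\bigwedge^{2k}\xi$ can have $n_1n_2\neq 0$, and then the paper's formula gives $\Phi_{xy}(F_1,F_2)=f_1f_2+f_2n_1+f_1n_2$, which drops the term $n_1n_2$ and therefore does not reproduce the exterior-algebra product on $\mathfrak R\ev$. Your full expansion does recover it (only $\alpha\in\{(0,0),(1,0),(0,1),(1,1)\}$ contribute for $h=xy$, giving $f_1f_2+f_2n_1+f_1n_2+n_1n_2$), and this is the correct choice if one wants the $\C$-ring structure on $\mathfrak R\ev$ to be compatible with the ambient superring multiplication — a compatibility that the paper's definition assumes implicitly (e.g.\ whenever it treats $\mathfrak R\od$ as a module over the $\C$-ring $\mathfrak R\ev$ via the superring product). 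So you have not merely reproduced the paper's proof: you have supplied the version that actually works for general $q$.

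The only place your argument is thin is the verification of axiom (ii), which you defer to ``a Fa\`a di Bruno-type relation.'' The claim is standard and correct, but for a complete proof you would still need to record the identity explicitly and note that every coefficient appearing in it lies in the image of $\phi$ applied to a fixed smooth function, so that the relation valid in $\C(\R^m)$ transports to $\mathfrak C$ by the composition axiom. Your alternative route via $\mathfrak R\ev\cong\mathfrak C\otimes_\infty\R[\theta^1,\ldots,\theta^q]^\pm\ev$ is cleaner: $\R[\theta^1,\ldots,\theta^q]^\pm\ev$ is a Weil algebra, the $\C$-coproduct with a Weil algebra coincides with the ordinary $\R$-tensor product (Dubuc \cite{D1}), and $\mathfrak C\otimes_\R\R[\theta^1,\ldots,\theta^q]^\pm\ev\cong\mathfrak R\ev$ as $\R$-algebras. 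This bypasses the combinatorics entirely and automatically yields the compatible $\C$-ring structure, at the cost of invoking a nontrivial external result. Either route is acceptable, and both are preferable for $q\geq 4$ to the first-order formula in the paper.
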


\begin{proof}
 Let $\mathfrak C$ be a $\C$-ring with $k$-ary operations $\phi_h: \mathfrak C^k \rightarrow \mathfrak C$ for any $h \in \C(\mathbb R^k)$. Every element $C$ in $\mathfrak{C}[\theta^1, \ldots, \theta^q]^{\pm}=\bigwedge\limits_{\mathfrak{C}} \xi$ can be written as:
$$
C=c_0+\sum_{|I|=2n}c_I \, \theta^I+\sum_{|I|=2n+1}d_I\, \theta^I \, ,
$$
where $c_0,c_I,d_I\in \mathfrak{C}$. Here, $I$ represents a multi-index of the set $\{1,2,3,\ldots, q\}$ and $\theta^I=\theta^{i_1}\theta^{i_2}\ldots\theta^{i_{|I|}}$.

Define the $k$-ary operation $\Phi_h: \left( \mathfrak{C}[\theta^1, \ldots, \theta^q]^{\pm}\ev \right)^k \rightarrow \mathfrak{C}[\theta^1, \ldots, \theta^q]^{\pm}\ev$ on the even part of the superring $\mathfrak{C}[\theta^1, \ldots, \theta^q]^{\pm}$ by:
 $$
\Phi_h(C_1,\ldots,C_k)\;
	  =\;  \phi_h(a_1,\ldots, a_k)\,
	         +\, \sum_{|
I_l|=2n} \sum_{j=1}^k   (\phi_{\partial_j h})(a_1, \,\cdots , a_k)\, b_{I_l}^j\theta^{I_l} \, ,
$$
where $C_j=a_j+\displaystyle{\sum_{|I_l|=2n}b_{I_l}^j \, \theta^{I_l}}$ for each $j\in \{1,\ldots,k\}$.

First, we prove that $\Phi_h$ preserves projections. For any collection $C_1,\ldots, C_k\in \mathfrak{C}[\theta^1, \ldots, \theta^q]^{\pm}\ev$ and projection $p_i: \mathbb R^k \rightarrow \mathbb R$ onto the $i$-th coordinate, we have:
$$
\Phi_{p_i}(C_1,\ldots, C_k)= \phi_{p_i}(a_1,\ldots, a_k)+\sum_{j=1}^k   (\phi_{\partial_j p_i})(a_1,\ldots, a_k)\sum_{|
I_l|=2n}b_{I_l}^j \, \theta^{I_l}= a_i+\sum_{|
I_l|=2n}b_{I_l}^i \, \theta^{I_l}=C_i
$$
since $(a_1,\ldots, a_k)\in \mathfrak{C}^k$, $\partial_j\rho_i=\delta_{ij}$, and $\Phi_1$ and $\Phi_0$ are the additive and multiplicative identities, respectively, of the $\R$-algebra structure underlying $\mathfrak C$.

Second, we prove that $\Phi_h$ preserves compositions. If $h\in\C(\R^m)$ $g_1, \cdots, g_m \in \C(\R^k)$, then:
\begin{eqnarray*}
\begin{split}
&
\Phi_{h}\left(\Phi_{g_1}(C_1,\ldots,C_k),\ldots, \Phi_{g_m}(C_1,\ldots,C_k)\right)\\
&= \Phi_h\left(\phi_{g_1}(a_1,\ldots, a_k)+\displaystyle{\sum_{|I_l|=2n}\Bigg[ \sum_{j=1}^k\phi_{\partial_j {g_1}}(a_1,\ldots, a_k)b_{I_l}^j \Bigg]\theta^{I_l}} ,\ldots,\phi_{g_m}(a_1,\ldots, a_k)+\displaystyle{\sum_{|I_l|=2n}\Bigg[ \sum_{j=1}^k\phi_{\partial_j {g_m}}(a_1,\ldots, a_k)b_{I_l}^j \Bigg]\theta^{I_l}}\right)\\
&= \phi_{h(g_1,\ldots,g_m)}(a_1,\ldots, a_k)+  
     \displaystyle{\sum_{|I_l|=2n}\Bigg[ \sum_{i=1}^m\sum_{j=1}^k\phi_{\partial_i h}(\phi_{g_1}(a_1,\ldots, a_k),\ldots,\phi_{g_m}(a_1,\ldots, a_k)) \; \phi_{\partial_j {g_i}}(a_1,\ldots,a_k)b_{I_l}^j \Bigg]\theta^{I_l}}\\
     &= \phi_{h(g_1,\ldots,g_m)}(a_1,\ldots,a_k)+ \displaystyle{\sum_{|I_l|=2n}\Bigg[ \sum_{j=1}^k\phi_{\partial_j h(g_1,\ldots,g_m)}(a_1,\ldots,a_k)b_{I_l}^j \Bigg]\theta^{I_l}}\\
     &=\Phi_{h(g_1,\ldots,g_m)}(C_1,\ldots, C_k)
     \end{split}
\end{eqnarray*} 
where the application of the chain rule $\partial_j \left( h(g_1,\ldots,g_m) \right)=\sum_{i=1}^m \left( (\partial_i h) (g_1, \cdots, g_m)  \right) \partial_j g_i$ in conjunction with the underlying $\R$-algebra operations defined in terms of the $k$-ary operations $\phi_h$, yields the sequence of equalities observed in the second to last line.
\end{proof}

\begin{definition}
Let $\mathfrak C$ be a $\C$-ring and $\xi$ a finitely generated $\mathfrak C$-module with basis $\{ \theta^1, \cdots, \theta^q \}$. A $\C$-superring $\mathfrak R$ is said to be \emph{split} if it is isomorphic to a $\C$superring of the form $\mathfrak{C}[\theta^1, \ldots, \theta^q]^{\pm}=\bigwedge_{\mathfrak C} \xi$ for some $q\geq 0 \in \Z$.
\end{definition}

\begin{remark}
Let $\mathfrak R= \mathfrak R\ev \oplus \mathfrak R\od$ be a superring such that $\mathfrak R\od$ is a finitely generated $\mathfrak R\ev$-module. The superring $\mathfrak R$ admits a $\mathfrak J_{\mathfrak R}$-adic filtration of length $q$:
\[
\mathfrak R=:\mathfrak J_{\mathfrak R}^0 \supset \mathfrak J_{\mathfrak R} \supset \mathfrak J_{\mathfrak R}^2 \supset \cdots \supset \mathfrak J_{\mathfrak R}^q \supset \mathfrak J_{\mathfrak R}^{q+1}=0.
\]
Define the $\mathbb Z$-graded superring $Gr\; \mathfrak R$ associated to $\mathfrak R$ by:
\[
Gr\; \mathfrak R := \frac{\mathfrak R}{\mathfrak J_{\mathfrak R}} \oplus \frac{\mathfrak J_{\mathfrak R}}{\mathfrak J_{\mathfrak R}^2 }\oplus \cdots \oplus \frac{\mathfrak J_{\mathfrak R}^{q-1}}{\mathfrak J_{\mathfrak R}^q} \oplus \mathfrak J_{\mathfrak R}^q.
\]777

Denote the commutative ring $\mathfrak R/\mathfrak J_{\mathfrak R}$  by $R$ and the $(\mathfrak R/\mathfrak J_{\mathfrak R})$-module $\mathfrak J_{\mathfrak R}/\mathfrak J_{\mathfrak R}^2$ by $\xi$. We have that $\xi$ is a free $R$-module with finite rank and $Gr\; \mathfrak R$  is isomorphic to $\bigwedge_R \xi$. Therefore, $Gr\; \mathfrak R$ is split.\\

Note that $\mathfrak J_{\mathfrak R}^n/\mathfrak J_{\mathfrak R}^{n+1} \cong \mathfrak R\od^{n}/\mathfrak R\od^{n+2} $ because $\mathfrak J_{\mathfrak R}^n=\begin{cases}
			\mathfrak R\od^{n+1} \oplus \mathfrak R\od^{n},  & \text{if $n$ odd}\\
            \mathfrak R\od^{n} \oplus \mathfrak R\od^{n+1}, & \text{if $n$ even}
		 \end{cases}$. \\
Then, $Gr\;  \mathfrak R$ can be rewritten to explicitly show the $\Z_2$-grading. For $q$ odd:
\[
Gr \; \mathfrak R \cong \left( \frac{\mathfrak R\ev} {\mathfrak R\od^2}  \oplus \frac{\mathfrak R\od^2} {\mathfrak R\od^4} \cdots \oplus \frac{\mathfrak R\od^{q-1}}{\mathfrak R\od^{q+1}} \oplus \mathfrak R\od^{q+1} \right) 
\oplus \left(  \frac{\mathfrak R\od^1}{\mathfrak R\od^3}  \oplus \frac{\mathfrak R\od^3}{\mathfrak R\od^5} \cdots \oplus \frac{\mathfrak R\od^{q-2}}{\mathfrak R\od^{q}} \oplus \mathfrak R\od^{q} \right) \, ,
\]
For $q$ even:
\[
Gr \; \mathfrak R \cong \left( \frac{\mathfrak R\ev} {\mathfrak R\od^2}  \oplus \frac{\mathfrak R\od^2} {\mathfrak R\od^4} \cdots \oplus \frac{\mathfrak R\od^{q-2}}{\mathfrak R\od^{q}} \oplus \mathfrak R\od^{q} \right) 
\oplus \left(  \frac{\mathfrak R\od^1}{\mathfrak R\od^3}  \oplus \frac{\mathfrak R\od^3}{\mathfrak R\ev^5} \cdots \oplus \frac{\mathfrak R\od^{q-1}}{\mathfrak R\od^{q+1}} \oplus \mathfrak R\od^{q+1} \right) \, .\\
\]
\end{remark}

\begin{remark}
The superreduced ring of a split $\C$-superring $\mathfrak{R} \cong \mathfrak{C}[\theta^1, \ldots, \theta^n]^{\pm}$ is isomorphic to the $\C$-ring $\mathfrak C$.
\end{remark}

The split $\C$-superrings form a full subcategory of the category of $\C$-superrings, which we denote by {\bf SplC$^\infty$Superrings}.

\begin{definition}\label{def:superfunct}
For any fixed $q\in \mathbb{Z}^+$, we define a functor $T:{\bf \C Rings}  \rightarrow  {\bf \C Superrings}$ as follows:
\begin{itemize}
\item {\bf On objects:} For a $\C$-ring $\mathfrak{C}$, $T(\mathfrak C)=\mathfrak{C}[\theta^1,\ldots,\theta^q]^{\pm}$.

\item {\bf On morphisms:} For a morphism of $\C$-rings $\varphi:\mathfrak{C}\rightarrow\mathfrak{D}$, we define the morphism of $\C$-superrings $T(\varphi):\mathfrak{C}[\theta^1,\ldots,\theta^q]^{\pm}\longrightarrow \mathfrak{D}[\eta^1,\ldots,\eta^q]^{\pm}$ by:
$$
T(\varphi)\left( c_j+\displaystyle{\sum_{|I_j|=2n}c_{I_j}\theta^{I_j}}+\displaystyle{\sum_{|I_j|=2n+1}d_{I_j}\theta^{I_j}} \right)= \varphi(c_j)+\displaystyle{\sum_{|I_j|=2n}\varphi(c_{I_j}) \, \eta^{I_j}}+\displaystyle{\sum_{|I_j|=2n+1}\varphi(d_{I_j}) \, \eta^{I_j}}
$$
\end{itemize}
\end{definition} 

\begin{remark}
Definition \ref{def:superfunct} illustrates a specific instance of a more general construction known as the superization functor, introduced by Carchedi in \cite{Carchedi} for Fermat Theories. This powerful tool provides a systematic method for extending algebraic theories to their superalgebraic counterparts, incorporating both commutative and anticommutative elements.
\end{remark}

\begin{remark}\label{rem:split}  
Given a superring $\mathfrak{R}=\mathfrak{R}\ev\oplus\mathfrak{R}\od$, the following \emph{structural} short exact sequence of $\mathfrak{R}$-modules:
\begin{equation}\label{eq:split}  
0\longrightarrow \mathfrak{J}_{\mathfrak{R}} \stackrel{\iota} \longrightarrow \mathfrak{R}\stackrel{\pi}{\longrightarrow} \overline{\mathfrak{R}}\longrightarrow 0
\end{equation}
satisfies the splitting lemma, meaning the following three statements are equivalent:
\begin{itemize}
\item $\mathfrak{R}\cong \overline{\mathfrak{R}}\oplus \mathfrak{J}_{\mathfrak{R}}= (\overline{\mathfrak{R}} \oplus \mathfrak{R}\od^2)\oplus \mathfrak{R}\od$, where $\iota$ is the inclusion of $\mathfrak{J}_{\mathfrak{R}}$ into the second summand, and $\pi$ is the projection onto the first.

\item $\pi$ has a right inverse, i.e., there exists a homomorphism $\sigma: \overline{\mathfrak{R}} \rightarrow \mathfrak{R}$ such that $\pi\circ\sigma=Id_{\overline{\mathfrak{R}}}$.

\item $\iota$ has a left inverse, i.e., there exists a homomorphism $\kappa : \mathfrak R \rightarrow \mathfrak{J}_{\mathfrak{R}}$ such that $\kappa \circ \iota=Id_{\mathfrak J_{\mathfrak R}}$
\end{itemize}

If any of these statements hold, the sequence is called a \emph{split exact sequence}, or simply said to \emph{split}.
\end{remark}   

The following proposition justifies the use of the term ``split" for both a superring and its structural exact sequence.

\begin{lemma} A $\C$-superring $\mathfrak{R}$ with finitely generated odd part is split if and only if the canonical short exact sequence (\ref{eq:split}) is split.
\end{lemma}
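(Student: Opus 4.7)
The biconditional splits into a short construction and a more substantial converse. For the forward direction, assume $\mathfrak R \cong \mathfrak C[\theta^1,\ldots,\theta^q]^\pm = \bigwedge_{\mathfrak C}\xi$. Then $\overline{\mathfrak R}$ identifies canonically with $\mathfrak C$, and the inclusion of $\mathfrak C$ into the exterior algebra as its scalar summand is a $\C$-superring morphism providing a section of $\pi \colon \mathfrak R \to \overline{\mathfrak R}$; by Remark \ref{rem:split} the structural sequence splits.

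Conversely, fix a section $\sigma \colon \overline{\mathfrak R} \to \mathfrak R$ of $\pi$, interpreted as a $\C$-superring morphism. Via $\sigma$, the odd part $\mathfrak R\od$ inherits an $\overline{\mathfrak R}$-module structure. Choose $\mathfrak R\ev$-generators $\eta^1,\ldots,\eta^q$ of the finitely generated module $\mathfrak R\od$. Supercommutativity in characteristic zero forces $(\eta^i)^2 = 0$, hence $\mathfrak R\od^{q+1}=0$ and $\mathfrak J_{\mathfrak R}$ is nilpotent. Let $\xi$ be the free $\overline{\mathfrak R}$-module on symbols $\theta^1,\ldots,\theta^q$. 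The universal property of the exterior algebra produces a $\C$-superring morphism
\[
\Phi \colon \bigwedge_{\overline{\mathfrak R}}\xi \longrightarrow \mathfrak R, \qquad c \mapsto \sigma(c),\quad \theta^i \mapsto \eta^i,
\]
and surjectivity follows by writing any $r \in \mathfrak R$ as $\sigma(\pi(r)) + (r - \sigma(\pi(r)))$ and iteratively expanding the remainder in $\mathfrak J_{\mathfrak R}$ as an $\overline{\mathfrak R}$-combination of monomials in the $\eta^i$, the iteration terminating by nilpotency.

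The main obstacle is injectivity of $\Phi$. My plan is to filter $\mathfrak R$ by the $\mathfrak J_{\mathfrak R}$-adic filtration and $\bigwedge_{\overline{\mathfrak R}}\xi$ by exterior degree; the section $\sigma$ makes $\Phi$ filtration-preserving. The remark preceding this lemma identifies $Gr\,\mathfrak R$ with $\bigwedge_{\overline{\mathfrak R}}(\mathfrak J_{\mathfrak R}/\mathfrak J_{\mathfrak R}^2)$, and I would check that the induced map on associated graded objects is the obvious isomorphism. A standard induction on filtration degree, terminating by nilpotency of $\mathfrak J_{\mathfrak R}$, then upgrades this graded isomorphism to an isomorphism $\Phi$ of $\C$-superrings, so $\mathfrak R \cong \bigwedge_{\overline{\mathfrak R}}\xi$ is split.
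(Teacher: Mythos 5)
Your proposal is correct in outline and follows essentially the same route as the paper: the forward direction exhibits the scalar inclusion $\mathfrak C \hookrightarrow \bigwedge_{\mathfrak C}\xi$ as a section of $\pi$, and the converse identifies $\mathfrak R$ with its associated graded superring $Gr\,\mathfrak R \cong \bigwedge_{\overline{\mathfrak R}}\bigl(\mathfrak J_{\mathfrak R}/\mathfrak J_{\mathfrak R}^2\bigr)$ via the $\mathfrak J_{\mathfrak R}$-adic filtration, exactly as in the remark preceding the lemma. The only point worth flagging is that your degree-one graded comparison requires the chosen generators $\eta^1,\ldots,\eta^q$ to induce a \emph{basis} of $\mathfrak J_{\mathfrak R}/\mathfrak J_{\mathfrak R}^2$ over $\overline{\mathfrak R}$ — the same freeness assertion the paper's remark makes without proof — so you are no worse off than the paper, but that step deserves an explicit justification in both arguments.
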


\begin{proof}
Suppose that $\mathfrak{R}$ is a $\C$-superring split, that is, $\mathfrak{R}\cong\mathfrak{C}[\theta^1, \ldots, \theta^n]^{\pm}$, for some $\C$-ring $\mathfrak{C}$, then, as we pointed in Proposition \ref{split}, the elements of $\mathfrak{R}$ are of the form 
$$
C=c_0+\sum_{|I|=2n}c_I \, \theta^I+\sum_{|K|=2n+1}d_K\, \theta^K \, ,
$$
Where $c_0,c_I,d_K$ are elements of $\mathfrak{C}$ and $I,K$ are both multi-indexes of even and odd size respectively. 

Thus, with this description of the elements it is easy to see that $\overline{\mathfrak{R}}$ can be identified with $\mathfrak{C}$, the elements of $\mathfrak{R}\ev$ coming from the odd variables are of the form $\sum_{|I|=2n}c_I\, \theta^I$ and the elements of $\mathfrak{R}\od$ are of the form $\sum_{|K|=2n+1}d_K\, \theta^K \,$

This allows us to conclude that $\mathfrak{R}\cong (\overline{\mathfrak{R}} \oplus \mathfrak{R}\od^2)\oplus \mathfrak{R}\od$, which is equivalent to saying that the canonical exact sequence splits.

Conversely, suppose that $\mathfrak{R}\od$ is generated by the odd variables $\xi^1\ldots\xi^q$ and the sequence (\ref{eq:split}) is split, that is $\mathfrak{R}$ splits into a sum $\overline{\mathfrak{R}}\oplus\mathfrak{J}_\mathfrak{R}$. We denote by $\theta^i$ the image of $\xi^i$ under the quotient projection $\mathfrak{R}\rightarrow \overline{\mathfrak{R}}$. Therefore, for any multi-index $(i_1\cdots i_k)$ with $1\leq k\leq q$ the product $\xi^{i_1}\ldots\xi^{i_k}$ can be identified with the product $\theta^{i_1}\ldots\theta^{i_k}$ 
 in $\frac{\mathfrak J_{\mathfrak R}}{\mathfrak J_{\mathfrak R}^2 }\oplus \cdots \oplus \frac{\mathfrak J_{\mathfrak R}^{q-1}}{\mathfrak J_{\mathfrak R}^q} \oplus \mathfrak J_{\mathfrak R}^q$ which induces a $\overline{\mathfrak{R}}$-module isomorphism between ${\mathfrak J_{\mathfrak R}} $ and $\frac{\mathfrak J_{\mathfrak R}}{\mathfrak J_{\mathfrak R}^2 }\oplus \cdots \oplus \frac{\mathfrak J_{\mathfrak R}^{q-1}}{\mathfrak J_{\mathfrak R}^q} \oplus \mathfrak J_{\mathfrak R}^q$. Therefore $\mathfrak R$ is isomorphic to its associated graded superring which is split. 
\end{proof}

\begin{lemma}
 Let $\mathfrak{R}$ be a split superring and $\mathfrak{I}$ a superideal of $\mathfrak{R}$ such that $\mathfrak I \subseteq \mathfrak{J}_{\mathfrak{R}}$. Then the quotient superring $\mathfrak{R}/\mathfrak{I}$ is also split.
\end{lemma}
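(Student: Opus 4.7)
The plan is to reduce to the characterization just established: a $\C$-superring with finitely generated odd part is split if and only if its canonical structural sequence splits. Since $\mathfrak{R}$ is split, its odd part is finitely generated; the same is therefore true of $(\mathfrak{R}/\mathfrak{I})\od = \mathfrak{R}\od/\mathfrak{I}\od$, because the images of a finite generating set still generate. It is thus enough to exhibit a splitting of the structural short exact sequence associated to $\mathfrak{R}/\mathfrak{I}$.

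The key preliminary step is to identify the canonical superideal of the quotient. The hypothesis $\mathfrak{I} \subseteq \mathfrak{J}_{\mathfrak{R}}$ means $\mathfrak{I}\ev \subseteq \mathfrak{R}\od^2$ and $\mathfrak{I}\od \subseteq \mathfrak{R}\od$. From this one computes $((\mathfrak{R}/\mathfrak{I})\od)^2 = \mathfrak{R}\od^2/\mathfrak{I}\ev$, and consequently $\mathfrak{J}_{\mathfrak{R}/\mathfrak{I}} = \mathfrak{J}_{\mathfrak{R}}/\mathfrak{I}$. The third isomorphism theorem then yields $\overline{\mathfrak{R}/\mathfrak{I}} \cong \mathfrak{R}/\mathfrak{J}_{\mathfrak{R}} = \overline{\mathfrak{R}}$, so the superreduced ring of the quotient is canonically identified with that of $\mathfrak{R}$.

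Now, using that $\mathfrak{R}$ is split, pick a grade-preserving section $\sigma : \overline{\mathfrak{R}} \to \mathfrak{R}$ of the canonical projection $\pi : \mathfrak{R} \to \overline{\mathfrak{R}}$, and compose with the quotient map $q : \mathfrak{R} \to \mathfrak{R}/\mathfrak{I}$ to obtain $\sigma' := q \circ \sigma : \overline{\mathfrak{R}} \to \mathfrak{R}/\mathfrak{I}$. Under the identification $\overline{\mathfrak{R}/\mathfrak{I}} \cong \overline{\mathfrak{R}}$, the canonical projection $\pi' : \mathfrak{R}/\mathfrak{I} \to \overline{\mathfrak{R}/\mathfrak{I}}$ sends $r + \mathfrak{I}$ to $r + \mathfrak{J}_{\mathfrak{R}}$, whence $\pi' \circ \sigma' = \pi \circ \sigma = \mathrm{Id}_{\overline{\mathfrak{R}}}$. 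Therefore the structural sequence of $\mathfrak{R}/\mathfrak{I}$ splits, and the preceding lemma lets us conclude that $\mathfrak{R}/\mathfrak{I}$ is split. The only nontrivial verification, and the main (though mild) obstacle, is the identification $\mathfrak{J}_{\mathfrak{R}/\mathfrak{I}} = \mathfrak{J}_{\mathfrak{R}}/\mathfrak{I}$, which relies essentially on the hypothesis $\mathfrak{I} \subseteq \mathfrak{J}_{\mathfrak{R}}$ to control the new even-part relations produced by quotienting.
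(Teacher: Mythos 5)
Your proof is correct and proceeds along essentially the same lines as the paper's: both hinge on the identifications $\mathfrak{J}_{\mathfrak{R}/\mathfrak{I}} = \mathfrak{J}_{\mathfrak{R}}/\mathfrak{I}$ and $\overline{\mathfrak{R}/\mathfrak{I}} \cong \overline{\mathfrak{R}}$ (which both use $\mathfrak{I} \subseteq \mathfrak{J}_{\mathfrak{R}}$), and both then invoke the preceding lemma/remark relating ``split'' to the splitting of the structural sequence. The only difference is cosmetic: you exhibit a right inverse $\sigma' = q \circ \sigma$ of the projection, while the paper writes out the corresponding internal direct-sum decomposition $\mathfrak{R}/\mathfrak{I} \cong \overline{\mathfrak{R}/\mathfrak{I}} \oplus \mathfrak{J}_{\mathfrak{R}/\mathfrak{I}}$ explicitly -- two equivalent formulations per the paper's own Remark \ref{rem:split}.
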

\begin{proof}
Since  $\mathfrak{J}_{\mathfrak{R}}=\mathfrak{R}\od^2 \oplus \mathfrak{R}\od$ and $\mathfrak{I} \subseteq \mathfrak{J}_{\mathfrak{R}}$, it follows that $\mathfrak{I}\ev\subseteq \mathfrak{R}\od^2$ and $\mathfrak{I}\od\subseteq \mathfrak{R}\od$. Therefore, the superreduced ring of $\mathfrak{R}$ is isomorphic to:
\begin{equation}\label{eq:quotreduc}
\overline{\mathfrak{R}}
\cong \mathfrak{R}\ev/\mathfrak{R}\od^2\cong\frac{\mathfrak{R}\ev/\mathfrak{I}\ev}{\mathfrak{R}\od^2/\mathfrak{I}\ev}
\cong\frac{\mathfrak{R}\ev/\mathfrak{I}\ev\oplus \mathfrak{R}\od/\mathfrak{I}\od}{\mathfrak{R}\od^2/\mathfrak{I}\ev\oplus \mathfrak{R}\od/\mathfrak{I}\od}
\cong\frac{\mathfrak{R}/\mathfrak{I}}{\mathfrak{J}_{\mathfrak{R}}/\mathfrak{I}}   
\end{equation}
Additionally, the ideal $\mathfrak{J}_{\mathfrak{R}}/\mathfrak{I}$ in the quotient ring $\mathfrak R/ \mathfrak I$ is equal to the canonical superideal $\mathfrak{J}_{\mathfrak{R}/\mathfrak{I}}$ of $\mathfrak R/ \mathfrak I$. Thus,
$$
\overline{\mathfrak{R}}\cong\frac{\mathfrak{R}/\mathfrak{I}}{\mathfrak{J}_{\mathfrak{R}}/\mathfrak{I}}=\overline{\mathfrak{R}/\mathfrak{I}}.
$$
Finally, since $\mathfrak R$ is split, we have $\mathfrak R\ev=\overline{\mathfrak R} \oplus \mathfrak R\od^2$. As $\mathfrak I\ev \subseteq \mathfrak R\od^2$, it follows that:
$$
\mathfrak{R}/\mathfrak{I}=\mathfrak{R}\ev/\mathfrak{I}\ev\oplus\mathfrak{R}\od/\mathfrak{I}\od\cong\overline{\mathfrak{R}}\oplus\mathfrak{R}\od^2/\mathfrak{I}\ev\oplus\mathfrak{R}\od/\mathfrak{I}\od\cong\overline{\mathfrak{R}/\mathfrak{I}}\oplus\mathfrak{R}\od^2/\mathfrak{I}\ev\oplus\mathfrak{R}\od/\mathfrak{I}\od.
$$
We have $\mathfrak{R}/\mathfrak{I}\cong\overline{\mathfrak{R}/\mathfrak{I}}\oplus\mathfrak{J}_{\mathfrak{R}/\mathfrak{I}}$. Therefore, $\mathfrak R/ \mathfrak I$ is a split superring.
\end{proof}

\begin{lemma}
Let $\mathfrak{R}$ be a split superring with splitting morphism $\sigma: \overline{\mathfrak{R}}\rightarrow \mathfrak{R}$. Let $\mathfrak{R}\rightarrow\mathfrak{T}$ be a surjective morphism of superrings with kernel $\mathfrak{I}$, and denote $\overline{\mathfrak{I}}$ the image of $\mathfrak{I}$ in $\overline{\mathfrak{R}}$ . If $\sigma(\overline{\mathfrak{I}}) \subseteq \mathfrak{I}$, then $\mathfrak{T}$ is a split superring
\end{lemma}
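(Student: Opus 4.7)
The plan is to construct an explicit splitting section $\tilde{\sigma}:\overline{\mathfrak{T}}\to\mathfrak{T}$ by descending $\sigma$ through the quotient by $\mathfrak{I}$. Since the definition of splitness (via Remark \ref{rem:split}) is equivalent to the existence of a right inverse of the canonical projection $\pi_{\mathfrak{T}}:\mathfrak{T}\to\overline{\mathfrak{T}}$, producing such a map will suffice.

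First, I would pin down what $\overline{\mathfrak{T}}$ looks like. Writing $q:\mathfrak{R}\to\mathfrak{T}$ for the given surjection (so $\mathfrak{T}\cong\mathfrak{R}/\mathfrak{I}$), the odd part of $\mathfrak{T}$ is $\mathfrak{R}\od/\mathfrak{I}\od$, and therefore its canonical superideal is $\mathfrak{J}_{\mathfrak{T}}=(\mathfrak{J}_{\mathfrak{R}}+\mathfrak{I})/\mathfrak{I}$. Two applications of the third isomorphism theorem then give
\[
\overline{\mathfrak{T}}\;=\;\mathfrak{T}/\mathfrak{J}_{\mathfrak{T}}\;\cong\;\mathfrak{R}/(\mathfrak{J}_{\mathfrak{R}}+\mathfrak{I})\;\cong\;\overline{\mathfrak{R}}/\overline{\mathfrak{I}},
\]
where $\overline{\mathfrak{I}}=\pi_{\mathfrak{R}}(\mathfrak{I})=(\mathfrak{J}_{\mathfrak{R}}+\mathfrak{I})/\mathfrak{J}_{\mathfrak{R}}$. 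This identification is the workhorse of the argument.

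Next, I would define $\tilde{\sigma}:\overline{\mathfrak{T}}\to\mathfrak{T}$ by
\[
\tilde{\sigma}(\overline{r}+\overline{\mathfrak{I}})\;:=\;\sigma(\overline{r})+\mathfrak{I},
\]
using the identification $\overline{\mathfrak{T}}\cong\overline{\mathfrak{R}}/\overline{\mathfrak{I}}$. The hypothesis $\sigma(\overline{\mathfrak{I}})\subseteq\mathfrak{I}$ is precisely the condition that this map is well defined: if $\overline{r}\in\overline{\mathfrak{I}}$, then $\sigma(\overline{r})\in\mathfrak{I}$, so the class $\sigma(\overline{r})+\mathfrak{I}$ vanishes. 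The fact that $\tilde{\sigma}$ is a superring morphism is inherited from $\sigma$ and the quotient map, since both preserve the ring structure and the $\Z_2$-grading.

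Finally, I would verify the splitting identity $\pi_{\mathfrak{T}}\circ\tilde{\sigma}=\mathrm{id}_{\overline{\mathfrak{T}}}$: tracing through the identifications, the composition sends $\overline{r}+\overline{\mathfrak{I}}$ to $\pi_{\mathfrak{R}}(\sigma(\overline{r}))+\overline{\mathfrak{I}}=\overline{r}+\overline{\mathfrak{I}}$, using $\pi_{\mathfrak{R}}\circ\sigma=\mathrm{id}_{\overline{\mathfrak{R}}}$. By Remark \ref{rem:split}, this shows that the structural short exact sequence for $\mathfrak{T}$ splits, so $\mathfrak{T}$ is a split superring. The only delicate point in the whole argument is the bookkeeping around the identification $\overline{\mathfrak{T}}\cong\overline{\mathfrak{R}}/\overline{\mathfrak{I}}$; once that is in hand, the construction of $\tilde{\sigma}$ is essentially forced and the hypothesis $\sigma(\overline{\mathfrak{I}})\subseteq\mathfrak{I}$ is used exactly once, at the well-definedness step.
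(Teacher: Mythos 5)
The paper itself does not prove this lemma but only cites \cite[Proposition 3.5.4]{Westra}, so there is no in-paper argument to compare against. Your construction of an explicit descending section $\tilde{\sigma}$ is the natural argument and, so far as I can tell, is precisely the one carried out by Westra. The key steps — identifying $\mathfrak{J}_{\mathfrak{T}}=(\mathfrak{J}_{\mathfrak{R}}+\mathfrak{I})/\mathfrak{I}$, using the third isomorphism theorem to get $\overline{\mathfrak{T}}\cong\overline{\mathfrak{R}}/\overline{\mathfrak{I}}$, and then observing that $\sigma(\overline{\mathfrak{I}})\subseteq\mathfrak{I}$ is exactly what makes $\tilde{\sigma}(\overline{r}+\overline{\mathfrak{I}}):=\sigma(\overline{r})+\mathfrak{I}$ well defined — are all correct, and the verification that $\pi_{\mathfrak{T}}\circ\tilde{\sigma}=\mathrm{id}$ is immediate from $\pi_{\mathfrak{R}}\circ\sigma=\mathrm{id}$.

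One small nitpick on citation: the final step appeals to Remark \ref{rem:split}, which only states the equivalence of the three splitting-lemma conditions; passing from "the structural sequence of $\mathfrak{T}$ splits" to "$\mathfrak{T}$ is a split superring" requires the lemma \emph{following} Remark \ref{rem:split} (the equivalence between splitness as a superring and splitness of the structural sequence, which in turn needs $\mathfrak{T}\od$ to be finitely generated — which it is, being a quotient of the finitely generated $\mathfrak{R}\od$). This is a bookkeeping point only; the argument is sound.
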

\begin{proof}
See \cite[Proposition 3.5.4]{Westra}
\end{proof}

\begin{remark}
The property of being split is not necessarily preserved under quotients of $\C$-superrings, as demonstrated by Example \ref{nonsplit} below. This observation highlights the complexities inherent in constructing quotients of $\C$-superrings and motivates a more thorough investigation into the conditions under which the quotient remains split. Characterizing these conditions represents a promising avenue for future research, as a comprehensive understanding of the behavior of quotients could lead to a more refined classification of superspace structures.
\end{remark}

The following example generalizes one found in \cite[page 22]{Westra}, illustrating a case where the quotient of a split superring is not split.

\begin{example}\label{nonsplit}
Let $\mathfrak{I}$ be the ideal of $\C(\R^{1|2})$ generated by $x^2+ \theta^1\theta^2$. Consider the superring $\mathfrak{R}/\mathfrak{I}$. We will show that $\mathfrak{R}$ is not split.\\

Recall that elements of $\mathfrak{R}=\C(\R^{1|2})$ are of the form 
$f_0(x)+f_{12}(x)\; \theta^1\theta^2 +f_1(x)\;\theta^1+f_2(x)\;\theta^2$, where $f_0, f_1, f_2, f_{12} \in\C({\R})$. Note that the odd parts $(\C(\R^{1|2}) )\od$ and $\mathfrak{R}\od$ can be identified because in the quotient $\theta^1 \theta^2$ is equivalent to $-x^2$. With this identification, $x^2$ becomes nilpotent in the quotient $\mathfrak{R}$.  Specifically, $\mathfrak{R}\ev$ is isomorphic to $\C(\R)/(x^4)$.\\
Observe that the superreduced ring  $\overline{\mathfrak R}$ is isomorphic to $\C(\R)/(x^4)$.  Consequently, we have a surjective morphism:
$$\C(\R)/(x^4)\rightarrow\frac{\C(\R)/(x^4)}{(x^2)/(x^4)})=\frac{\C(\R)}{(x^2)} \, .$$
This morphism corresponds to the canonical projection $\mathfrak{R}\ev\rightarrow \overline{\mathfrak{R}}$. However, there is no injective morphism:
$$\C(\R)/(x^2)\rightarrow \C(\R)/(x^4) \, .$$
Thus, a right inverse to the canonical projection $\mathfrak R \rightarrow \overline{\mathfrak R}$ cannot be defined. Consequently, the canonical short exact sequence of $\mathfrak R$ does not split in this case, meaning $\mathfrak R$ is not a split superring
\end{example}

\subsection{Categorical properties}
There is a natural functor, defined in \cite[page 14]{Westra} for general superrings, that can be extended to a functor $\mathcal{F}:{\bf \C Superrings} \rightarrow {\bf \C Rings}$. On objects, this functor is defined by $\mathfrak{R}\longmapsto\overline{\mathfrak{R}}$. On morphisms $\varphi:\mathfrak{R}\rightarrow\mathfrak{S}$, the functor assigns the morphism $\overline{\varphi}:\overline{\mathfrak{R}}\rightarrow \overline{\mathfrak{S}}$ given by $\overline{\varphi}(\overline{r})=\overline{\varphi(r)}$, which is well-defined because $\varphi(\mathfrak{J}_{\mathfrak{R}})\subseteq \mathfrak{J}_{\mathfrak{S}}$. This induces the following commutative diagram:
\[
\begin{tikzcd}
\mathfrak R\arrow{r}{q_{\mathfrak R}} \arrow[swap]{d}{\varphi} & {\overline{\mathfrak R}=\mathcal F(\mathfrak R)} \arrow{d}{\overline{\varphi} } \\
 \mathfrak S \arrow[swap]{r}{ q_{\mathfrak S}}  & {\overline{\mathfrak S}=\mathcal F(\mathfrak S)}
\end{tikzcd}
\]

Note that the morphism $\overline \varphi$ is indeed a morphism of $\C$-rings. For $\overline{c_1},\ldots, \overline{c_n} \in \overline{\mathfrak{R}}\cong\mathfrak{R}\ev/\mathfrak{R}\od^2$, $h\in \C(\R^n)$, $\phi_h: \mathfrak R\ev^n \rightarrow \mathfrak R\ev$, and $\psi_h: \mathfrak S\ev^n \rightarrow \mathfrak S\ev$, we have
$$
\overline{\varphi}(\phi_h(\overline{c_1},\ldots, \overline{c_n}))=\overline{\varphi}(\overline{\phi_h(c_1\ldots, c_n)})=\overline{\varphi(\phi_h(c_1\ldots, c_n))}=\overline{\psi_h(\varphi(c_0),\ldots, \varphi(c_n))}=\psi_h(\overline{\varphi}(\overline c_0),\ldots \overline{\varphi}(\overline c_0)).
$$
Thus, we obtain the commutative diagram
\[
\begin{tikzcd}
\mathfrak R\ev^n\arrow{r}{\phi_h} \arrow[swap]{d}{\varphi^n}  &\mathfrak R\ev\arrow{r}{q_{\mathfrak R}} \arrow[swap]{d}{\varphi} & \overline{\mathfrak R} \arrow{d}{\overline{\varphi} } & \overline{\mathfrak R}^n\arrow[swap]{l}{\phi_h}  \arrow{d}{\overline \varphi^n}\\
 \mathfrak S\ev^n \arrow[swap]{r}{ \psi_h}  & \mathfrak S\ev \arrow[swap]{r}{ q_{\mathfrak S}}  & \overline{\mathfrak S} & \overline{\mathfrak S}^n\arrow{l}{\phi_h}
\end{tikzcd}
\]

In contrast, we have a functor $\mathcal{G}:{\bf \C Rings}  \rightarrow  {\bf \C Superrings}$ defined by adding a zero odd part to both objects and morphisms. The following proposition is related to \cite[Proposition 3.1.10]{Westra}.

\begin{proposition}
The functors $\mathcal{F}$ and $\mathcal{G}$ form an adjoint pair $(\mathcal{F},\mathcal{G})$.
\end{proposition}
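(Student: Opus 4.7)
The plan is to exhibit, for every $\C$-superring $\mathfrak{R}$ and every $\C$-ring $\mathfrak{C}$, a natural bijection
\[
\Phi_{\mathfrak{R},\mathfrak{C}}:\operatorname{Hom}_{{\bf \C Rings}}(\mathcal{F}(\mathfrak{R}),\mathfrak{C})\longrightarrow \operatorname{Hom}_{{\bf \C Superrings}}(\mathfrak{R},\mathcal{G}(\mathfrak{C})),
\]
thereby showing that $\mathcal{F}$ is left adjoint to $\mathcal{G}$. The key geometric observation driving the construction is that the underlying superring of $\mathcal{G}(\mathfrak{C})=\mathfrak{C}\oplus 0$ has trivial odd part, so grade preservation automatically kills odd elements.

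First I would define the forward map. Given a $\C$-ring morphism $\psi:\overline{\mathfrak{R}}\rightarrow \mathfrak{C}$, set $\Phi_{\mathfrak{R},\mathfrak{C}}(\psi):=\psi\circ q_{\mathfrak{R}}:\mathfrak{R}\rightarrow \mathfrak{C}\oplus 0=\mathcal{G}(\mathfrak{C})$, where $q_{\mathfrak{R}}:\mathfrak{R}\rightarrow \overline{\mathfrak{R}}$ is the canonical projection. Since $q_{\mathfrak{R}}$ is grade preserving (sending $\mathfrak{R}\od$ to $0$) and its restriction to the even part is a morphism of $\C$-rings, and since $\psi$ is a morphism of $\C$-rings, the composite is a morphism of $\C$-superrings in the sense of Definition \ref{def:smorph}.

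Next I would construct the inverse. Given $\varphi:\mathfrak{R}\rightarrow \mathcal{G}(\mathfrak{C})$, grade preservation forces $\varphi(\mathfrak{R}\od)\subseteq (\mathcal{G}(\mathfrak{C}))\od=0$, so $\mathfrak{R}\od\subseteq \ker\varphi$; consequently $\mathfrak{R}\od^{2}\subseteq \ker\varphi$ as well, and therefore $\mathfrak{J}_{\mathfrak{R}}=\mathfrak{R}\od^{2}\oplus \mathfrak{R}\od\subseteq \ker\varphi$. By the universal property of the quotient in ${\bf \C Rings}$ (using Remark \ref{rem:limits} item 4, i.e., Hadamard's lemma to guarantee that the restriction $\varphi|_{\mathfrak{R}\ev}$ descends as a $\C$-ring morphism), there is a unique $\C$-ring morphism $\overline{\varphi}:\overline{\mathfrak{R}}\rightarrow \mathfrak{C}$ with $\overline{\varphi}\circ q_{\mathfrak{R}}=\varphi$. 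Setting $\Psi_{\mathfrak{R},\mathfrak{C}}(\varphi):=\overline{\varphi}$ gives the candidate inverse.

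Finally I would check the two equalities $\Psi\circ\Phi=\operatorname{id}$ and $\Phi\circ\Psi=\operatorname{id}$, which follow from the uniqueness part of the quotient's universal property together with the fact that $q_{\mathfrak{R}}$ is surjective. Naturality in $\mathfrak{R}$ and $\mathfrak{C}$ reduces to the commutative square displayed just before the proposition: for any morphism of $\C$-superrings $f:\mathfrak{R}'\rightarrow \mathfrak{R}$ and any morphism of $\C$-rings $g:\mathfrak{C}\rightarrow \mathfrak{C}'$, precomposing with $f$ corresponds under $\Phi$ to precomposing with $\overline{f}=\mathcal{F}(f)$ because $q_{\mathfrak{R}}\circ f=\overline{f}\circ q_{\mathfrak{R}'}$, while postcomposing with $g$ is trivially compatible since $\mathcal{G}(g)=g\oplus 0$. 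The main (mild) obstacle is simply the bookkeeping that the induced map $\overline{\varphi}$ on the quotient is a $\C$-ring morphism rather than only an $\R$-algebra morphism, which is precisely the content of the commutative diagram preceding the proposition and is guaranteed by the fact that the quotient of a $\C$-ring by an ideal carries a canonical $\C$-ring structure.
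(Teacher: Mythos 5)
Your proposal is correct and follows essentially the same route as the paper's proof: both use the observation that grade preservation forces any morphism into $\mathcal{G}(\mathfrak{C})=\mathfrak{C}\oplus 0$ to kill $\mathfrak{J}_{\mathfrak{R}}$, so it factors uniquely through $q_{\mathfrak{R}}$, yielding the bijection $\varphi\mapsto\overline{\varphi}$ with inverse $\upsilon\mapsto\upsilon\circ q_{\mathfrak{R}}$, followed by the same naturality check.
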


\begin{proof}
 Let $\mathfrak{R}$ be a $\C$-superring and $\mathfrak{C}$ a $\C$-ring. Observe that for any $\varphi \in \text{Hom}_{\bf \C Superrings}(\mathfrak{R}, \mathcal{G}(\mathfrak{C}))$, we have $\varphi (\mathfrak{J}_{\mathfrak{R}})=0$, since $(\mathcal{G}(\mathfrak{C}))\od=0$. Therefore, $\overline{\varphi}$ is the unique morphism that makes the following diagram commute:
\[
\begin{tikzcd}
\mathfrak R\arrow{r}{q_{\mathfrak R}} \arrow[swap]{d}{\varphi} & {\overline{\mathfrak R}=\mathcal F(\mathfrak R)} \arrow{d}{\overline{\varphi} } \\
 \mathcal G(\mathfrak{C})= \mathfrak{C} \oplus 0  \arrow[swap]{r}{\cong}  & \mathfrak C
\end{tikzcd}
\]
Hence, we define a bijection $\mu: \text{Hom}_{\bf \C Superrings}( \mathfrak{R},\mathcal G (\mathfrak{C})) \rightarrow \text{Hom}_{\bf \C Rings}(\mathcal F(\mathfrak{R}), \mathfrak{C})$ as $\varphi \mapsto \overline{\varphi}$. The inverse of $\mu$ is given by the composition $\upsilon \circ q_{\mathfrak R}$
  for $\upsilon \in \text{Hom}_{\bf \C Rings}(\mathcal{F}(\mathfrak{R}),\mathfrak{C})$.\\

Now, let $\chi:\mathfrak{S}\rightarrow \mathfrak{R}$ be a morphism of $\C$-superrings and $\psi:\mathfrak{C}\rightarrow \mathfrak{D} $ a morphism of $\C$-rings. The following diagram commutes for both directions of the horizontal arrows, establishing the naturality of the adjunction:

\begin{center}
\begin{tikzcd}[row sep=3cm,column sep=2.5cm]
   \text{Hom}_{\bf \C Superrings}( \mathfrak{R},\mathcal G (\mathfrak{C}))
     \arrow[transform canvas={yshift=.8ex}]{r}{\varphi_{\mathfrak R} \mapsto \overline{\varphi_{\mathfrak R}}}
     \arrow[transform canvas={yshift=-.8ex},leftarrow]{r}[swap]{\upsilon_{\overline{\mathfrak R}} \circ q_{\mathfrak R} \leftmapsto \upsilon_{\overline{\mathfrak R}} }
     \arrow{d}[swap]{\varphi_{\mathfrak R} \mapsto \varphi_{\mathfrak S}}
 &\text{Hom}_{\bf \C Rings}( \mathcal F(\mathfrak{R}), \mathfrak{C})
     \arrow{d}{\upsilon_{\overline{\mathfrak R}} \mapsto \upsilon_{\overline{\mathfrak S}}}
      \\
     \text{Hom}_{\bf \C Superrings}( \mathfrak{S},\mathcal G (\mathfrak{D}))
     \arrow[transform canvas={yshift=.8ex}]{r}{\varphi_{\mathfrak S} \mapsto \overline{\varphi_{\mathfrak S}}}
     \arrow[transform canvas={yshift=-.8ex},leftarrow]{r}[swap]{\upsilon_{\overline{\mathfrak R}} \circ q_{\mathfrak S} \leftmapsto \upsilon_{\overline{\mathfrak R}} }
 &  \text{Hom}_{\bf \C Rings}( \mathcal F(\mathfrak{S}), \mathfrak{D})
\end{tikzcd}
\end{center}

where the vertical arrows are given by pre-composition with $\chi$ and post-composition with $\psi$, respectively, as shown in the following commutative diagrams:

\begin{figure}[htb]
\centering
\begin{minipage}{0.48 \textwidth}
\centering
\begin{tikzcd}
\mathfrak{R} \arrow{r}{\varphi_\mathfrak{R}}  & {\mathcal G (\mathfrak{C})=\mathfrak C \oplus 0} \arrow{d}{\psi} \\
\mathfrak{S} \arrow{u}{\chi} \arrow{r}{\varphi_{\mathfrak S}}& {\mathcal G (\mathfrak{D})=\mathfrak D \oplus 0}
\end{tikzcd}
\end{minipage}
\centering  
\begin{minipage}{0.48 \textwidth}
\centering 
\begin{tikzcd}
{\overline{\mathfrak{R}}=\mathcal F(\mathfrak R)} \arrow{r}{\upsilon_{\overline{\mathfrak R}}}  & \mathfrak{C}\arrow{d}{\psi} \\
{\overline{\mathfrak{S}}=\mathcal F(\mathfrak S)} \arrow{u}{\overline{\chi}} \arrow{r}{\upsilon_{\overline{\mathfrak S}}}& \mathfrak{D}
\end{tikzcd}
\end{minipage}
\end{figure}
\end{proof}

\begin{lemma}\label{limit}
The category ${\bf \C Superrings}$ is complete and finitely co-complete. 
\end{lemma}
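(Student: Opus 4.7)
My plan is to verify completeness and finite co-completeness by explicit construction, reducing the $\C$-superring structure to the already-developed theory of $\C$-rings.

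For \textbf{completeness}, it suffices to construct all small products and all equalizers. Given a family $\{\mathfrak{R}_i\}_{i\in I}$ of $\C$-superrings, I would form the componentwise product $\prod_i \mathfrak{R}_i$ with the obvious $\Z_2$-grading and componentwise multiplication; its even part $\prod_i (\mathfrak{R}_i)\ev$ is a small product in ${\bf \C Rings}$ and hence inherits a $\C$-ring structure by Remark \ref{rem:limits}. For parallel morphisms $\varphi, \psi: \mathfrak{R} \to \mathfrak{S}$, the equalizer is the $\Z_2$-graded subring $E = \{r \in \mathfrak{R} : \varphi(r) = \psi(r)\}$, whose even part $E\ev$ is the equalizer of $\varphi|_{\mathfrak{R}\ev}$ and $\psi|_{\mathfrak{R}\ev}$ in ${\bf \C Rings}$. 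The universal properties then follow by combining the known universal properties from superrings and from ${\bf \C Rings}$.

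For \textbf{finite co-completeness}, I would construct an initial object, binary coproducts, and coequalizers. The $\C$-superring $\R$ (with zero odd part) is initial, since $\R$ is the initial object in ${\bf \C Rings}$ by Remark \ref{rem:limits} and the unique $\C$-ring morphism $\R \to \mathfrak{R}\ev$ assembles into a unique morphism of $\C$-superrings $\R \to \mathfrak{R}$ that vanishes on the odd part. For coequalizers of $\varphi, \psi: \mathfrak{R} \to \mathfrak{S}$, take the quotient $\mathfrak{S}/I$ where $I$ is the superideal generated by $\{\varphi(r) - \psi(r) : r \in \mathfrak{R}\}$; this is again a $\C$-superring by Example \ref{quotsr}, and a direct verification yields the desired universal property.

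Binary coproducts are the most delicate step and constitute the main obstacle. I would construct $\mathfrak{R} \sqcup \mathfrak{S}$ as a \emph{super smooth tensor product} $\mathfrak{R} \otimes_\infty^s \mathfrak{S}$ built in two stages. First, form the $\C$-ring coproduct $\mathfrak{C} = \mathfrak{R}\ev \otimes_\infty \mathfrak{S}\ev$ in ${\bf \C Rings}$ (which exists by Remark \ref{rem:limits}), with canonical maps $\iota_\mathfrak{R}: \mathfrak{R}\ev \to \mathfrak{C}$ and $\iota_\mathfrak{S}: \mathfrak{S}\ev \to \mathfrak{C}$. Second, view $\mathfrak{R}\od$ and $\mathfrak{S}\od$ as $\mathfrak{C}$-modules via $\iota_\mathfrak{R}$ and $\iota_\mathfrak{S}$, and define the coproduct as the supercommutative $\mathfrak{C}$-superalgebra generated by these two odd $\mathfrak{C}$-modules, modulo the multiplicative relations coming from the products $\mathfrak{R}\od \cdot \mathfrak{R}\od \subseteq \mathfrak{R}\ev$ and $\mathfrak{S}\od \cdot \mathfrak{S}\od \subseteq \mathfrak{S}\ev$, together with the Koszul anticommutation relations between odd elements coming from different factors. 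The hard part will be verifying the universal property: given morphisms $\alpha:\mathfrak{R} \to \mathfrak{T}$ and $\beta:\mathfrak{S} \to \mathfrak{T}$, I would show that the pair extends uniquely to a morphism out of the proposed coproduct. On even parts this reduces precisely to the universal property of $\otimes_\infty$ in ${\bf \C Rings}$, applied to $\alpha|_{\mathfrak{R}\ev}$ and $\beta|_{\mathfrak{S}\ev}$; the extension to the odd parts follows since $\alpha\od$ and $\beta\od$ are morphisms of modules over $\mathfrak{R}\ev$ and $\mathfrak{S}\ev$ respectively (by Lemma \ref{escalarext}), hence descend to well-defined $\mathfrak{C}$-linear maps. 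Once binary coproducts and coequalizers are established, all finite colimits exist by the standard construction as coequalizers of pairs of maps out of a finite coproduct.
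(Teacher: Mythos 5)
Your strategy — products plus equalizers for completeness, and initial object plus binary coproducts plus coequalizers for cocompleteness — is the standard route and differs from the paper, which instead constructs the limit of an inverse system directly (componentwise: first the inverse limit of the even parts as $\C$-rings, then the inverse limit of the odd parts as modules over that) and then merely asserts that finite colimits ``parallel'' the limit construction. Your product, equalizer, initial object, and coequalizer constructions are all correct and filled in with more care than the paper itself provides.

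However, there is a genuine gap in your binary coproduct. The even part of the super-coproduct is \emph{not} $\mathfrak{C} = \mathfrak{R}\ev \otimes_\infty \mathfrak{S}\ev$: it also contains cross-products of odd elements such as $\iota_\mathfrak{R}(r\od)\,\iota_\mathfrak{S}(s\od)$ for $r\od \in \mathfrak{R}\od$ and $s\od \in \mathfrak{S}\od$, which are even, nilpotent, and not in the image of $\mathfrak{C}$. Two consequences. First, you never define the $\C$-ring structure on this larger even part; one must extend the $n$-ary operations $\phi_f$ from $\mathfrak{C}$ to $\mathfrak{C}$ plus this nilpotent ideal (via a finite Taylor/Hadamard expansion, as in Proposition~\ref{split}), and without this the proposed coproduct is not even an object of ${\bf \C Superrings}$. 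Second, your claim that ``on even parts this reduces precisely to the universal property of $\otimes_\infty$'' is false as stated: the universal property of $\otimes_\infty$ only determines the candidate map on the subring $\mathfrak{C}$, and one still has to extend it to the cross-nilpotent terms and verify that the resulting map is a morphism of $\C$-rings (not merely $\R$-algebras) on the full even part. Note that the paper avoids this difficulty by establishing the coproduct formula only in the split case (Proposition~\ref{super-coproduct}), where the split structure supplies exactly the needed $\C$-ring structure on the nilpotent extension, and the paper's own proof of Lemma~\ref{limit} does not close this gap either.
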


\begin{proof} 

We first prove completeness. Consider $\{\mathfrak{R}_i,f_{ij} \}_{i,j\in I}$ an inverse system of $\C$-superrings over the partially 
ordered set $I$. Each $\mathfrak{R}_i$ has the form $\mathfrak{R}_{i \bar 0}\oplus \mathfrak{R}_{i \bar 1}$, where the even part $\mathfrak{R}_{i \bar 0}$ is a $\C$-ring, the odd part $\mathfrak{R}_{i \bar 1}$ is a $\mathfrak{R}_{i \bar 0}$-module, and both parts are $\R$-modules. Since all of the $\mathfrak{R}_{i \bar 0}$ are $\C$-rings, their inverse limit exists (see \cite[Proposition 2.5]{J}) and we denote it as $\mathfrak{R}{\ev}$. The inverse limit $\mathfrak{R}\ev$ is equipped with natural projections $\pi_{i \bar 0}: \mathfrak{R}\ev\rightarrow\mathfrak{R}_{i \bar 0}$.\\

Note that the $\mathfrak{R}_{i \bar 1}$'s are $\mathfrak R_{\bar 0}$-modules, with an action defined as $a{\ev} a_{i\bar 0}:=\left( \pi_{i \bar 0}(a{\ev})  \right) a_{i\bar 1}$, for all $i\in I$, $a{\ev}\in\mathfrak{R}{\ev}$, and $a_{i\bar 1}\in \mathfrak{R}_{i \bar 1}$ . Consequently, we have an inverse system of $\mathfrak{R}{\ev}$-modules $\{\mathfrak{R}_{i \bar 1}\}$, whose inverse limit in the category of $\mathfrak{R}\ev$-modules exists, and we denote it by $\mathfrak{R}{\od}$.\\

Therefore, we have the superring $\mathfrak{R}=\mathfrak{R}\ev\oplus \mathfrak{R}\od$, where the even part is a $\C$-ring, that is, a $\C$-superring. Furthermore, we obtain the following commutative diagram

\[
\begin{tikzcd}
& \mathfrak{R}\ev\oplus \mathfrak{R}\od \arrow[swap]{dl}{\pi_{j \bar 0} \oplus \pi_{j \bar 1}} \arrow{dr}{\pi_{i \bar 0} \oplus \pi_{i \bar 1}} \\
\mathfrak R_{j \bar 0} \oplus \mathfrak R_{j \bar 1}  \arrow[swap]{rr}{f_{ij \bar 0} \oplus f_{ij \bar 1}} & & \mathfrak R_{i \bar 0} \oplus \mathfrak R_{i \bar 1}
\end{tikzcd}
\]

where the pair $(\mathfrak R, \pi_i)$ is universal. Thus, $\mathfrak{R}$ is the limit of  $\{\mathfrak{R}_i,f_{ij} \}$ in the category of ${\bf \C Superrings}$.\\

In the case of finite colimits, the category of $\C$-rings is known to be finitely co-complete. Consequently, the proof for the existence of finite colimits in the category of $\C$-superrings closely parallels the approach used for proving the existence of limits, allowing us to apply similar reasoning.
\end{proof}

\begin{proposition}\label{super-coproduct}
Let $\mathfrak{C}$ and $\mathfrak{D}$ be $\C$-rings. The coproduct of the split $\C$-superrings $\mathfrak{R}=\mathfrak{C}[\theta^1,\ldots, \theta^q]^{\pm}$ and $\mathfrak{S}=\mathfrak{D}[\xi^1,\ldots,\xi^n]^{\pm}$ (or pushout $\mathfrak R \sqcup_{\mathbb R} \mathfrak S$) in the category ${\bf \C Superrings}$ is given by  
$$
\mathfrak{R}\otimes_{\infty s} \mathfrak{S}:=(\mathfrak{C}\otimes_\infty\mathfrak{D})[\theta^1,\ldots, \theta^q,\xi^1,\ldots,\xi^n]^{\pm},
$$
where $\mathfrak{C}\otimes_\infty\mathfrak{D}$ denotes the coproduct in the category ${\bf \C Rings}$.
\end{proposition}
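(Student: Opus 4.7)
The plan is to verify the universal property of the coproduct directly. Writing $\mathfrak{P} := (\mathfrak{C}\otimes_\infty\mathfrak{D})[\theta^1,\ldots, \theta^q,\xi^1,\ldots,\xi^n]^{\pm}$ for the candidate, I would first define canonical morphisms $\iota_{\mathfrak R}: \mathfrak R \rightarrow \mathfrak P$ and $\iota_{\mathfrak S}: \mathfrak S \rightarrow \mathfrak P$. These are assembled from the canonical $\C$-ring morphisms $j_{\mathfrak C}: \mathfrak C \rightarrow \mathfrak C \otimes_\infty \mathfrak D$ and $j_{\mathfrak D}: \mathfrak D \rightarrow \mathfrak C \otimes_\infty \mathfrak D$ coming from the coproduct in ${\bf \C Rings}$, extended to send each odd indeterminate $\theta^i$ of $\mathfrak R$ (respectively $\xi^j$ of $\mathfrak S$) to the homonymous odd indeterminate of $\mathfrak P$. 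That these are $\C$-superring morphisms is immediate from the explicit formula for $\Phi_h$ given in Proposition \ref{split} together with the functoriality recorded in Definition \ref{def:superfunct}.

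Next, given any $\C$-superring $\mathfrak T$ with morphisms $\alpha: \mathfrak R \rightarrow \mathfrak T$ and $\beta: \mathfrak S \rightarrow \mathfrak T$, I would construct the mediating morphism $\gamma: \mathfrak P \rightarrow \mathfrak T$ as follows. Restricting $\alpha$ and $\beta$ to the canonical $\C$-subrings $\mathfrak C \hookrightarrow \mathfrak R\ev$ and $\mathfrak D \hookrightarrow \mathfrak S\ev$ yields two $\C$-ring morphisms into $\mathfrak T\ev$; by the universal property of the coproduct in ${\bf \C Rings}$, these assemble into a unique $\C$-ring morphism $\gamma\ev: \mathfrak C \otimes_\infty \mathfrak D \rightarrow \mathfrak T\ev$. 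Using the normal form for elements of $\mathfrak P$, I would then define $\gamma$ by
\begin{equation*}
\gamma\!\left(\sum_{I,J} a_{IJ}\,\theta^I \xi^J\right) := \sum_{I,J} \gamma\ev(a_{IJ})\,\alpha(\theta^{i_1})\cdots\alpha(\theta^{i_{|I|}})\,\beta(\xi^{j_1})\cdots\beta(\xi^{j_{|J|}}),
\end{equation*}
with $\alpha(\theta^i),\beta(\xi^j)\in\mathfrak T\od$ and products interpreted in the supercommutative $\mathfrak T$.

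The main verification is that $\gamma$ is a well-defined morphism of $\C$-superrings. Well-definedness with respect to the exterior-algebra relations reduces to supercommutativity of $\mathfrak T$, and grade preservation is immediate. The non-trivial step is checking that the even part of $\gamma$ respects the $\C$-ring operations. Elements of $\mathfrak P\ev$ have the form $a + \sum_{|K|\text{ even},\ K\neq\emptyset} b_K\, \eta^K$, where each monomial $\eta^K$ in $\{\theta^i,\xi^j\}$ is nilpotent. Applying the formula of Proposition \ref{split} to both sides of $\gamma\ev(\Phi_h(P_1,\ldots,P_k)) = \Phi_h(\gamma\ev(P_1),\ldots,\gamma\ev(P_k))$ reduces the identity to compatibility of $\gamma\ev$ with $\phi_h$ on $\mathfrak C \otimes_\infty \mathfrak D$ together with the analogous compatibilities for $\alpha\ev$ and $\beta\ev$ on their own even parts; nilpotency of the even odd-monomials truncates all higher-order Taylor terms.

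Uniqueness of $\gamma$ follows from the fact that $\gamma\ev$ is forced by the universal property of $\mathfrak C \otimes_\infty \mathfrak D$ and that the values of $\gamma$ on the odd generators are forced by $\gamma \circ \iota_{\mathfrak R} = \alpha$ and $\gamma \circ \iota_{\mathfrak S} = \beta$, and these data determine $\gamma$ on all of $\mathfrak P$. The main obstacle I anticipate is this compatibility with $\Phi_h$, which interlaces the partial derivatives $\partial_j h$ with the even odd-monomials and so requires careful Taylor-truncation bookkeeping of the same flavor used in the proof of Proposition \ref{split}; once the normal form is fixed the computation is direct but notationally heavy.
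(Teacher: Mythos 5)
Your construction of the two canonical inclusions is exactly the one the paper uses. Where you differ is in completeness: the paper stops there and simply asserts that the universal property of the superring coproduct ``follows'' from that of $\mathfrak C\otimes_\infty\mathfrak D$ in ${\bf \C Rings}$, without constructing the mediating morphism or verifying that it is a $\C$-superring morphism. You actually build $\gamma$ (via $\gamma\ev$ on the reduced even parts plus the forced values on the odd generators) and outline the verification, which is a genuine improvement over the printed argument.

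One step deserves more care than you give it, and it is not merely ``notationally heavy.'' On the source side, $\Phi_h$ on $\mathfrak P\ev$ is governed by the formula of Proposition \ref{split}. On the target side, $\mathfrak T$ is an \emph{arbitrary} $\C$-superring, so $\Phi_h^{\mathfrak T}$ carries no split-form formula; what you do have is Hadamard's lemma, which, applied to $\Phi_h^{\mathfrak T}\bigl(\gamma\ev(a_1)+n_1,\ldots,\gamma\ev(a_k)+n_k\bigr)$ with $n_j\in\mathfrak T\od^2$ nilpotent, produces the \emph{full} Taylor series in the $n_j$'s, truncated to a finite sum by nilpotency. For your identity $\gamma\bigl(\Phi_h(P_1,\ldots,P_k)\bigr)=\Phi_h^{\mathfrak T}\bigl(\gamma(P_1),\ldots,\gamma(P_k)\bigr)$ to close, the split-side operation must reproduce the same truncated series. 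The formula as literally written in Proposition \ref{split} contains only the first-order Taylor term; the quadratic and higher cross-terms in the even odd-monomials do not vanish once $q+n\ge 4$ (for instance $\Phi_\times$ as written fails to reproduce the exterior-algebra product of $1+\theta^1\theta^2$ and $1+\theta^3\theta^4$), so matching fails there. Your argument implicitly uses the full Taylor expansion on the split side — which is the version consistent with the underlying $\R$-algebra structure and hence the intended one — and with that reading the verification does go through cleanly: the finitely many odd generators $\alpha(\theta^i),\beta(\xi^j)$ force all the $n_j$'s into a nilpotent ideal of bounded order, so both sides collapse to the same finite Hadamard sum. You should state explicitly that you are invoking the truncated-but-full Taylor form, not merely the first-order term, and that the truncation order is controlled by the total number $q+n$ of odd generators.
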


\begin{proof}
Consider the commutative diagram of coproducts of $\C$-rings:
\begin{equation}\label{eq:localmor}
\begin{tikzcd}
\R \arrow[r,hook] \arrow[d,hook]
& \mathfrak{D}\arrow{d}{\gamma} \\
\mathfrak{C}\arrow[swap]{r}{\delta}& \mathfrak{C}\otimes_\infty\mathfrak{D}
\end{tikzcd}
\end{equation}

We define the canonical inclusion morphisms into $\mathfrak{R}\otimes_{\infty s} \mathfrak{S}:=(\mathfrak{C}\otimes_\infty\mathfrak{D})[\theta^1,\ldots, \theta^q,\xi^1,\ldots,\xi^n]^{\pm}
$ as follows: 
\begin{equation*}
  \alpha:  \mathfrak{R} \longrightarrow  \mathfrak{R}\otimes_\infty \mathfrak{S} \, , \;\;\;   \left(f+\displaystyle{\sum_{I}f_I\theta^I} \right) \longmapsto  \delta(f) +\displaystyle{\sum_{I}\delta(f_I)\theta^I} \, ;
\end{equation*}
\begin{equation*}
\beta:   \mathfrak{S} \longrightarrow  \mathfrak{R}\otimes_\infty \mathfrak{S}  \, , \;\;\;   \left( f+\displaystyle{\sum_{I}f_I\xi^I} \right) \longmapsto  \gamma(f) +\displaystyle{\sum_{I}\gamma(f_I)\xi^I} \, ;
\end{equation*}

where $\delta$ and $\gamma$ are defined in (\ref{eq:localmor}). Consequently, the following diagram commutes 
\begin{equation*}
\begin{tikzcd}
\R \arrow[r,hook] \arrow[d,hook] 
& \mathfrak{R}\arrow{d}{\alpha} \\
 \mathfrak{S}\arrow{r}{\beta}& (\mathfrak{C}\otimes_\infty\mathfrak{D})[\theta^1,\ldots, \theta^q,\xi^1,\ldots,\xi^n]^{\pm}
\end{tikzcd}
\end{equation*}
 
 Since $\mathfrak{C}\otimes_{\infty s}\mathfrak{D}$ is a coproduct, we conclude that $(\mathfrak{C}\otimes_\infty\mathfrak{D})[\theta^1,\ldots, \theta^q,\xi^1,\ldots,\xi^n]^{\pm}$ satisfies the universal property of coproducts.\\
\end{proof}
\begin{example}
Consider the task of adding a new odd variable to a given split $\C$-superring $\mathfrak{R}=\mathfrak{C}[\theta^1,\ldots, \theta^q]^{\pm}$. Let $\mathfrak{S}=\R[\theta]^{\pm}$, where $\R$ is regarded as a $\C$-ring. The coproduct $\mathfrak{R}\otimes_\infty \mathfrak{S}$  is then given by:
$$
\mathfrak{R}\otimes_{\infty s} \mathfrak{S}=\mathfrak{C}[\theta^1,\ldots, \theta^q]^{\pm}\otimes_{\infty s}\R[\theta]^\pm=(\mathfrak{C} \otimes_\infty \R)[\theta^1,\ldots, \theta^q,\theta]^\pm \cong \mathfrak{C}[\theta^1,\ldots, \theta^q,\theta]^\pm \, .
$$

This shows that adding a new odd variable to $\mathfrak{C}[\theta^1,\ldots, \theta^q]^{\pm}$ corresponds to taking the coproduct with $\R[\theta]^{\pm}$.\\
\end{example}

Finitely generated $\C$-rings, which are isomorphic to quotients of the form $\C(\R^n)/I$ (where $I$ is an ideal), are thus characterized in terms of the free $\C$-ring on $n$ generators, $\C(\R^n)$. We aim to establish an analogous concept for the super setting, generalizing the notion of finite generation from the non-graded context. To this end, we introduce the following definition.

\begin{definition}
A $\C$-superring $\mathfrak{R}$ is said to be finitely generated if there exists a surjective morphism $\C(\R^{m|n}):=\C(\R^m)[\theta^1,\ldots, \theta^n]^{\pm}\longrightarrow \mathfrak{R}$ for some non-negative integers $m$ and $n$.    
\end{definition}
Thus, any finitely generated $\C$-superring $\mathfrak{R}$ is isomorphic to a quotient $\C(\R^{m|n})/I$, where $I$ is a $\Z_2$-graded ideal of $\C(\R^{m|n})$. The $\Z_2$-graded structure of this quotient is given by: 
$$\frac{\C(\R^{m|n})}{I}=\frac{\C(\R^{m|n})\ev}{I\ev}\oplus \frac{\C(\R^{m|n})\od}{I\od} \, .$$

\begin{remark}
While this work primarily focuses on finitely generated $\C$-superrings, it is important to emphasize that they form a proper full subcategory of the category ${\bf \C Superrings}$. It is expected that non-finitely generated $\C$-superrings could describe the structure sheaves of infinite-dimensional supermanifolds.
\end{remark}

\begin{example}
A finitely generated $\C$-superring $\mathfrak R$ necessarily possesses a finitely generated superreduced ring  $\overline{\mathfrak R}$. Furthermore, it is plausible that for any non-finitely generated $\C$-rings $\mathfrak{C}$ and any natural number $n$, its \emph{split extension} $\mathfrak{C}[\theta^1,\ldots, \theta^n]$ remains non-finitely generated as a $\C$-superring.    
\end{example}

The following example generalizes the concept of free $\C$-rings determined by a set (see \cite[Proposition 3.3]{CM3}).

\begin{example}
Consider arbitrary sets $U$ and $V$. These sets generate a free $\C$-superring, denoted by $\C \left(\R^{U|V} \right)$. It is defined as the colimit:
$$
\C \left( \R^{U|V} \right) = \varinjlim_{\substack{U_n \subseteq U \text{ finite} \\ V_m \subseteq V \text{ finite}}} \C \left( \R^{U_n|V_m} \right) \, ,
$$

where the colimit is taken over finite subsets $U_n\subseteq U$ and $V_m\subseteq V$. The superreduced ring of $\C(\R^{U|V})$ is $\C(\R^U)$.\\
\end{example}

\subsection{Local $\C$-superrings}

In the existing literature on $\C$-rings, two distinct but closely related definitions of a local $\C$-ring are found. The first states that a $\C$-ring $\mathfrak{C}$ is local if it possesses a unique maximal ideal $\mathfrak{m}$. This notion appears in the works of Moerdijk and Reyes \cite{MR} and, more recently, in those of Berni and Mariano \cite{CM1}, which primarily focus on algebraic and model-theoretic aspects of $\C$-rings.

On the other hand, in more geometrically oriented works, such as those by Dubuc \cite{D2} and Joyce \cite{J}, the definition is refined. They additionally require that the residue field, $\mathfrak C/ \mathfrak m$, be isomorphic to $\R$ as a $\C$-ring. Before Joyce’s work, $\C$-rings satisfying this additional condition were referred to as Archimedean $\C$-rings (see \cite{D1}), but he simply called them local $\C$-rings. See \cite[Example 2.12]{J} for an example of a $\C$-ring that has only one maximal ideal, but whose residue field is not isomorphic to $\R$.

Although this work is primarily concerned with the algebraic aspects of $\C$-superrings, our main goal is to establish the foundations of algebraic supergeometry over $\C$-superrings. This theory is intended to extend the existing notion of smooth supermanifolds and allow for the ``superization'' of other classes of differentiable spaces already present in the literature.
Consequently, we adopt Joyce’s terminology and refer to local $\C$-superrings, even though, strictly speaking, they could be called Archimedean $\C$-superrings. Nonetheless, in the next section, we will analyze the differences and similarities that arise when defining a spectrum functor based on these two notions of local ring.

\begin{definition}\label{def:localSring}
A $\C$-superring $\mathfrak R$ is called local if its superreduced ring $\overline{\mathfrak R}$ is a local $\C$-ring in the sense of Definition \ref{localdef}. That is, $\overline{\mathfrak R}$ must possess a unique maximal ideal $\overline{\mathfrak m}$, and its residue field, $\overline{\mathfrak R}/\overline{\mathfrak m}$, must be isomorphic to $\R$.
\end{definition}

\begin{remark}
While in the theory of superrings, a superring is defined as local if it possesses a unique maximal ideal \cite[Definition 4.1.21]{Westra}, it can be proven that a superring $\mathfrak R$ is local with maximal ideal $\mathfrak m$ if and only if its superreduced ring $\overline{\mathfrak R}$ is a local ring with maximal ideal $\overline{\mathfrak m}$ \cite[Proposition 4.1.23]{Westra}.
\end{remark}

Weil superalgebras provide a powerful algebraic framework for studying infinitesimal neighborhoods within supermanifolds \cite{Alldridge}. They capture infinitesimal  deformations in both even and odd directions, effectively generalizing concepts like tangent spaces and jet spaces to the superspace setting. By incorporating both commuting and anticommuting elements, Weil superalgebras are thus fundamental to the study of supergeometry.

A \emph{Weil $\R$-superalgebra} \cite[Definition 3.26]{Alldridge} is a finite-dimensional $\R$-superalgebra $\mathfrak{R}$ possessing a nilpotent superideal $\mathfrak{m}$ such that $\mathfrak{R} \cong \R\oplus\mathfrak{m}$. Every Weil $\R$-superalgebra is a local superring with $\mathfrak{m}$ as its unique maximal superideal.  Furthermore, $\mathfrak{m}$ consists precisely of all nilpotent elements of $\mathfrak{R}$. Since the unique maximal superideal $\mathfrak m$ is of the form $\mathfrak{m}\ev\oplus \mathfrak{R}\od$, with $\mathfrak{m}\ev$ maximal in $\mathfrak{R}\ev$, we have the decomposition $\mathfrak{R}\cong \R\oplus\mathfrak{m}\ev\oplus\mathfrak{R}\od$. Consequently, the even part $\mathfrak{R}\ev \cong \R\oplus\mathfrak{m}\ev$ itself constitute a Weil algebra (in the non-graded sense).\\


A Weil algebra can be endowed with the structure of a $\C$-ring \cite[Definition 1.4]{D1}. Given the Weil algebra $\mathfrak{R}\ev \cong \R\oplus\mathfrak{m}\ev$, its $n$-ary operations are defined as follows: for $h \in\C(\R^n)$ and elements $(a_1+m_1,\ldots,a_n+m_n)\in \mathfrak{R}\ev^n$, we have
$$
\Phi_h(a_1+m_1,\ldots,a_n+m_n):=h(a_1,\ldots,a_n)+\displaystyle{\sum_{i=1}^{n} \partial_i h(a_1,\ldots,a_n)\; m_i} \, .
$$

Since any Weil algebra is a local $\C$-ring, and the superreduced ring of a Weil $\R$-superalgebra is itself a Weil algebra, it follows that any Weil $\R$-superalgebra is a local $\C$-superring.

\begin{example}
Consider the $\C$-superring $\mathfrak R=\mathbb R[\theta^1, \theta^2, \theta^3]^\pm$, and let $\mathfrak J_{\mathfrak R}$ be its canonical superideal. Since $(\mathfrak R/\mathfrak J^2_{\mathfrak R})/(\mathfrak J_{\mathfrak R}/\mathfrak J^2_{\mathfrak R})\cong \mathfrak R/\mathfrak J_{\mathfrak R}  \cong \mathbb R$, the ideal $\mathfrak m=\mathfrak J_{\mathfrak R}/\mathfrak J^2_{\mathfrak R}$ of $\mathfrak R/\mathfrak J^2_{\mathfrak R}$ is maximal.

Thus, the quotient ring $\mathfrak R/\mathfrak J^2_{\mathfrak R}$  is a Weil $\R$-superalgebra with maximal ideal $\mathfrak m=\mathfrak J_{\mathfrak R}/\mathfrak J^2_{\mathfrak R}$. Both its even part and its superreduced ring are isomorphic to $\R$. More generally, for any $\mathfrak S=\mathbb R[\theta^1, \cdots, \theta^n]^\pm$, the $\C$-superring of the form $\mathfrak S/\mathfrak J^2_{\mathfrak S}$ is a Weil $\R$-superalgebra.
\end{example}


\begin{example}
Let $\mathfrak{C}$ be a Weil algebra and consider $\mathfrak{R}=\mathfrak{C}[\theta^ 1,\ldots,\theta^n]^\pm$ a split $\C$-superring. Since $\mathfrak{C}=\R\oplus\mathfrak{m}$ with $\mathfrak{m}$ a maximal nilpotent ideal of $\mathfrak{C}$, we have that $I\ev=\mathfrak{m}\oplus \mathfrak{R}\od^2$ is a maximal nilpotent ideal of $\mathfrak{R}\ev$ because $\frac{\mathfrak{R}\ev}{\mathfrak{m}\ev}\cong \frac{\overline{\mathfrak{R}}\oplus \mathfrak{R}\od^2}{\mathfrak{m}\oplus \mathfrak{R}\od^2}\cong \frac{\mathfrak{C}\oplus\mathfrak{R}\od^2}{m\oplus \mathfrak{R}\od^2}\cong \frac{\mathfrak{C}}{\mathfrak{m}}\cong \R$ which is a field. Therefore, $\mathfrak{m}_{\mathfrak{R}}=\mathfrak{m}\ev\oplus \mathfrak{R}\od $ is a maximal graded  ideal of $\mathfrak{R}$ . Thus, since $\mathfrak{R}=\R\oplus \mathfrak{m}\oplus \mathfrak{R}\od^2\oplus \mathfrak{R}\od=\R\oplus \mathfrak{m}_{\mathfrak{R}}$, we have that  $\mathfrak{R}$ is a Weil $\R$-superalgebra.    
\end{example}

\begin{lemma}
Consider $\mathfrak{W}$ a Weil $\R$-superalgebra and $\mathfrak{R}$ a $\C$-superring. If $\varphi:\mathfrak{W}\rightarrow \mathfrak{R}$ is a superalgebra morphism, then $\varphi$ is a morphism of $\C$-superrings. 
\end{lemma}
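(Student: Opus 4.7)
By Definition \ref{def:smorph}, the morphism $\varphi$ is grade-preserving by hypothesis (it is a morphism of superalgebras), so the task reduces to showing that its restriction $\varphi\ev:\mathfrak{W}\ev\rightarrow\mathfrak{R}\ev$ commutes with the $n$-ary $\C$-operations for every $h\in\C(\R^n)$. The plan is to exploit the decomposition $\mathfrak{W}\ev\cong \R\oplus \mathfrak{m}\ev$ together with the nilpotency of $\mathfrak{m}\ev$ and Hadamard's lemma, which together force each $\C$-operation on $\mathfrak{W}\ev$ to reduce to a finite polynomial (Taylor) expression in the ordinary algebra structure.

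Concretely, fix $h\in\C(\R^n)$ and $w_i=a_i+m_i$ with $a_i\in\R$ and $m_i\in\mathfrak{m}\ev$. Choose $k\geq 1$ with $\mathfrak{m}\ev^{k}=0$. Iterating Hadamard's lemma $k$ times, one obtains smooth functions $g_{\alpha}$ (indexed by multi-indices $|\alpha|<k$) such that, in any $\C$-ring $\mathfrak{C}$ containing a nilpotent tuple $(n_1,\dots,n_n)$ of nilpotency order at most $k$ and real numbers $a_i$,
\begin{equation*}
\phi_h^{\mathfrak{C}}(a_1+n_1,\dots,a_n+n_n)=\sum_{|\alpha|<k}\tfrac{1}{\alpha!}\,\partial^{\alpha}h(a_1,\dots,a_n)\, n^{\alpha},
\end{equation*}
where the sum is a finite expression in the underlying $\R$-algebra structure. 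The remainder vanishes because it is a sum of terms containing a product of $k$ of the $n_i$'s and therefore lies in the ideal generated by $\mathfrak{m}\ev^{k}$. Applying this identity to $\mathfrak{C}=\mathfrak{W}\ev$ with $n_i=m_i$ gives an expression for $\phi_h^{\mathfrak{W}}(w_1,\dots,w_n)$ as a polynomial in the $m_i$'s with real coefficients.

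The second step is to apply the same identity to $\mathfrak{C}=\mathfrak{R}\ev$, taking $n_i=\varphi(m_i)$. Because $\varphi$ is an $\R$-superalgebra morphism, each $\varphi(m_i)$ satisfies $\varphi(m_i)^{k}=\varphi(m_i^{k})=0$, so the tuple $(\varphi(m_1),\dots,\varphi(m_n))$ is also nilpotent of order at most $k$, and the same Hadamard iteration yields
\begin{equation*}
\phi_h^{\mathfrak{R}}(a_1+\varphi(m_1),\dots,a_n+\varphi(m_n))=\sum_{|\alpha|<k}\tfrac{1}{\alpha!}\,\partial^{\alpha}h(a_1,\dots,a_n)\,\varphi(m)^{\alpha}.
\end{equation*}
Since $\varphi\ev$ is an $\R$-algebra morphism, it sends $a_i\mapsto a_i$ and commutes with sums, scalar multiples and products of the $m_i$'s, so applying $\varphi$ to the Taylor expression for $\phi_h^{\mathfrak{W}}(w)$ produces term-by-term exactly the Taylor expression for $\phi_h^{\mathfrak{R}}(\varphi(w))$. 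This proves $\varphi\ev(\phi_h^{\mathfrak{W}}(w))=\phi_h^{\mathfrak{R}}(\varphi\ev(w))$, as required.

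The main technical point—and the place where care is needed—is the iterated use of Hadamard's lemma to justify the finite Taylor expansion for a general $h\in\C(\R^n)$ in an arbitrary $\C$-ring: one must verify that after $k$ applications, every surviving remainder term has been absorbed into the ideal $(\mathfrak{m}\ev)^{k}=0$, and that the same smooth functions $g_{\alpha}$ can be used in both $\mathfrak{W}\ev$ and $\mathfrak{R}\ev$. Once this bookkeeping is carried out, the statement follows immediately, and it is exactly the observation that extends the classical fact (for $\C$-rings, e.g.\ \cite[Proposition 1.5]{D1}) that any $\R$-algebra map out of a Weil algebra is automatically a $\C$-ring morphism.
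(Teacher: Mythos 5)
Your proof is correct, and the reduction of the super case to the even part is the same as in the paper: by Definition \ref{def:smorph} one only needs $\varphi\ev:\mathfrak{W}\ev\rightarrow\mathfrak{R}\ev$ to be a $\C$-ring morphism, and $\mathfrak{W}\ev\cong\R\oplus\mathfrak{m}\ev$ is itself a Weil algebra. Where you diverge is in how the ungraded fact is handled: the paper simply cites Corollary 3.9 of \cite{MR} (any $\R$-algebra map from a Weil algebra to a $\C$-ring is automatically a $\C$-ring morphism) and stops there, whereas you reprove that result from scratch by iterating Hadamard's lemma to obtain the finite Taylor identity
$\phi_h(a_1+n_1,\dots,a_n+n_n)=\sum_{|\alpha|<k}\tfrac{1}{\alpha!}\partial^{\alpha}h(a)\,n^{\alpha}$
valid in any $\C$-ring once the remainder terms are killed by nilpotency, and then observe that this is a polynomial expression with real coefficients, hence preserved by any unital $\R$-algebra map. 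Your route is longer but self-contained and makes transparent \emph{why} the $\C$-structure on a Weil algebra is rigid (it also implicitly corrects the first-order-only formula for $\Phi_h$ quoted earlier in the section, which is literally valid only when $\mathfrak{m}\ev^2=0$); the paper's route is shorter at the cost of an external reference. Two small points of care in your write-up: nilpotency of order $k$ should be stated as the vanishing of all mixed products $\varphi(m_{i_1})\cdots\varphi(m_{i_k})=\varphi(m_{i_1}\cdots m_{i_k})=0$, not just of individual $k$-th powers; and the identity $\phi_h(a_1,\dots,a_n)=h(a_1,\dots,a_n)\cdot 1$ for real constants, which your expansion uses, should be flagged as the statement that $\R\rightarrow\mathfrak{C}$ is the (unique) $\C$-ring morphism from the initial object. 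Neither affects correctness.
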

\begin{proof}

According to corollary 3.9 of \cite{MR} this result holds for the ungraded case, that is, if $\alpha: \mathfrak{C}\rightarrow W$. is an $\R$-algebra morphism between a Weil algebra and a $\C$-ring, then $\alpha$ is a morphism of $\C$-rings. For the graded case we have that $\varphi:\mathfrak{W}\rightarrow \mathfrak{R}$ is a morphism of $\R$ superalgebras whose restriction to the event part $\varphi\ev:\mathfrak{W}\ev\rightarrow \mathfrak{R}\ev$ is a morphism of $\R$-algebras. Therefore, $\varphi$ is a morphism of $\C$-superrings since it satisfies all the conditions of Definition \ref{def:smorph} 
\end{proof}

\subsection{Localization} 

To construct geometric objects from $\C$-superrings, localization serves as a fundamental tool, thus requiring the introduction of local $\C$-superrings. Similar to $\C$-rings (and unlike commutative rings), there exist prime superideals at which the localization of a $\C$-superring is not necessarily local (see Example \ref{ej-loc-nonlocal}). This fact renders the traditional prime spectrum unsuitable for geometric applications.\\

This section begins with a review of basic properties of prime ideals, followed by a discussion on localization. We will demonstrate the existence of localization for $\C$-superrings and explore its connection to the localization of general superrings. Moreover, we will extend to the $\C$-superring context an important concept from the theory of $\C$-rings widely studied in  \cite{CM1} , the notion of the $\C$-radical of a superideal. This extension is pivotal. It introduces radical prime superideals, which provide the appropriate notion of prime for geometric work within $\C$-superrings (just as in the $\C$-ring case). This is because the localization of a $\C$-superring at a $\C$-radical prime superideal is always a local $\C$-superring.

\begin{definition}\label{def:suplocal}
 Let $\mathfrak{R}$ be a $\C$-superring and $S$ a multiplicative closed set in $\mathfrak {R}\ev$. The localization of $\mathfrak{R}$ at $S$ is a $\C$-superring, denoted $\mathfrak{R}\{S^{-1}\}$, together with a $\C$-superring morphism $\mathcal{L}:\mathfrak{R}\rightarrow \mathfrak{R}\{S^{-1}\}$, such that the following two conditions are satisfied:
\begin{itemize}
\item Every element in $\mathcal{L}(S)$ is invertible in $\mathfrak{R}\{S^{-1}\}$.

\item Universal Property: For any $\C$-superring $\mathfrak S$ and any $\C$-superring morphism $\varphi:\mathfrak{R}\rightarrow \mathfrak{S}$ such that every element in $\varphi(S)$ is invertible in $\mathfrak S$, there exists a unique $\C$-superring morphism $\psi: \mathfrak{R}\{S^{-1}\}\rightarrow \mathfrak{S}$ that makes the following diagram commute:
\end{itemize}

\begin{center}
\begin{tikzcd}
\mathfrak R \arrow{d}[swap]{\varphi} \arrow{r}{\mathcal L} & \mathfrak R \{ S^{-1} \} \arrow{dl}{\psi} \\
 \mathfrak S
\end{tikzcd}
\end{center}

\end{definition}

\begin{proposition}
Let $\mathfrak{R}$ be a $\C$-superring and $S$ a multiplicative closed set in $\mathfrak{R}\ev$. The localization of $\mathfrak{R}$ at $S$ is given by the $\C$-superring $\mathfrak{R}\{S^{-1}\} := \mathfrak{R}\ev\{S^{-1}\} \oplus \mathfrak{R}\od \{S^{-1}\}$, where $\mathfrak{R}\ev\{S^{-1}\}$ is the localization of the $\C$-ring $\mathfrak{R}\ev$  and $\mathfrak{R}\od \{S^{-1}\}$ is the localization of the $\mathfrak{R}\ev$-module $\mathfrak{R}\od $, both with respect to $S$.    
\end{proposition}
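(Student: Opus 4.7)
The plan is to verify that the object $\mathfrak{R}\{S^{-1}\}:=\mathfrak{R}\ev\{S^{-1}\}\oplus \mathfrak{R}\od\{S^{-1}\}$ carries a natural $\C$-superring structure together with a canonical morphism $\mathcal{L}:\mathfrak{R}\to \mathfrak{R}\{S^{-1}\}$ fulfilling both conditions of Definition~\ref{def:suplocal}.

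First I would spell out the $\C$-superring structure. The $\C$-ring $\mathfrak{R}\ev\{S^{-1}\}$ already exists by the $\C$-ring localization theory recalled in Remark~\ref{rem:localCinfty}, and $\mathfrak{R}\od\{S^{-1}\}$ acquires an $\mathfrak{R}\ev\{S^{-1}\}$-module structure because every element of $S$ acts invertibly on it, so the original action of $\mathfrak{R}\ev$ extends along the localization morphism $\mathcal{L}\ev:\mathfrak{R}\ev\to \mathfrak{R}\ev\{S^{-1}\}$. The even$\times$even product is that of $\mathfrak{R}\ev\{S^{-1}\}$; the two mixed products use the $\mathfrak{R}\ev\{S^{-1}\}$-action on $\mathfrak{R}\od\{S^{-1}\}$; and the odd$\times$odd product is induced from the multiplication $\mathfrak{R}\od\otimes_{\mathfrak{R}\ev}\mathfrak{R}\od\to \mathfrak{R}\ev$ in $\mathfrak{R}$ by extension of scalars along $\mathcal{L}\ev$. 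Associativity and supercommutativity descend from $\mathfrak{R}$, so $\mathfrak{R}\{S^{-1}\}$ is a $\C$-superring, and $\mathcal{L}$, whose even component is the $\C$-ring localization morphism, is a grade-preserving morphism of $\C$-superrings inverting $S$.

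Next I would establish the universal property. Given a $\C$-superring morphism $\varphi:\mathfrak{R}\to \mathfrak{S}$ with $\varphi(S)\subseteq \mathfrak{S}\ev^{\times}$, the even restriction $\varphi\ev$ inverts $S$, so the universal property of $\C$-ring localization supplies a unique $\C$-ring morphism $\psi\ev:\mathfrak{R}\ev\{S^{-1}\}\to \mathfrak{S}\ev$ with $\psi\ev\circ \mathcal{L}\ev=\varphi\ev$. Composing with $\psi\ev$, the $\mathfrak{S}\ev$-module $\mathfrak{S}\od$ becomes an $\mathfrak{R}\ev\{S^{-1}\}$-module; the universal property of module localization then yields a unique $\mathfrak{R}\ev\{S^{-1}\}$-linear lift $\psi\od:\mathfrak{R}\od\{S^{-1}\}\to \mathfrak{S}\od$ of $\varphi\od$. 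Setting $\psi:=\psi\ev\oplus \psi\od$, I would check multiplicativity case by case: even$\times$even is the multiplicativity of $\psi\ev$; the two mixed cases reduce to the $\mathfrak{R}\ev\{S^{-1}\}$-linearity of $\psi\od$; and odd$\times$odd reduces, via the extension-of-scalars description of that product, to the identity $\varphi(ab)=\varphi(a)\varphi(b)$ for $a,b\in \mathfrak{R}\od$. Uniqueness of $\psi$ follows on each graded component from the uniqueness clauses of the two auxiliary universal properties.

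The step I expect to be the main obstacle is the handling of the odd$\times$odd product. Because the $\C$-ring localization of $\mathfrak{R}\ev$ cannot be described by simple fractions (see Remark~\ref{rem:localCinfty}), the naive formula $(a/s)(b/t)=ab/(st)$ is not literally available as a definition; the product must instead be defined through extension of scalars along $\mathcal{L}\ev$, and the corresponding verifications (supercommutativity and multiplicativity of $\psi$ on this piece) must be carried out via universal properties rather than by manipulating fractions. Once this point is clean, the remaining verifications are routine.
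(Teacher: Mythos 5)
Your proposal takes the same route as the paper: decompose into even and odd parts, localize the $\C$-ring $\mathfrak{R}\ev$ as a $\C$-ring and the module $\mathfrak{R}\od$ as a module, and derive the universal property from the two ungraded universal properties. The paper's proof is terser than yours: it asserts the superring structure (including $\mathfrak{R}\od\{S^{-1}\}\cdot\mathfrak{R}\od\{S^{-1}\}\subseteq\mathfrak{R}\ev\{S^{-1}\}$) and then says the universal property "follows from the universality of localization for the ungraded case," without explaining how the odd$\times$odd product is actually defined or how the even and odd lifts $\psi\ev,\psi\od$ are stitched into a single multiplicative map. You correctly flag the odd$\times$odd product as the nontrivial point --- since $\C$-ring localization is not given by fractions, that product must be defined by extension of scalars along $\mathcal{L}\ev$ rather than by the formula $(a/s)(b/t)=ab/(st)$ --- and you carry out the case-by-case multiplicativity check for $\psi$, which the paper omits. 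So your proposal is correct, takes the same approach, and in fact fills a genuine gap in the paper's exposition at the spot you anticipated.
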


\begin{proof}
 We first show that $ \mathfrak{R}\ev\{S^{-1}\} \oplus \mathfrak{R}\od \{S^{-1}\}$ has the structure of a $\C$-superring. By the properties of $\C$-rings, $\mathfrak{R}\ev\{S^{-1}\}$ is a $\C$-ring and $\mathfrak{R}\od \{S^{-1}\}$ 
is a $\mathfrak{R}\ev\{S^{-1}\}$-module. Furthermore, $\mathfrak{R}\ev \{S^{-1}\} \mathfrak{R}\ev \{S^{-1}\} \subseteq \mathfrak{R}\ev \{S^{-1}\}$, $\mathfrak{R}\ev \{S^{-1}\} \mathfrak{R}\od \{S^{-1}\} \subseteq \mathfrak{R}\od \{S^{-1}\}$, and $\mathfrak{R}\od \{S^{-1}\} \mathfrak{R}\od \{S^{-1}\} \subseteq \mathfrak{R}\ev \{S^{-1}\}$, which completes the superring structure.\\

Now, we define the localization morphism $\mathcal{L}: \mathfrak{R} \rightarrow \mathfrak{R}\{S^{-1}\}$. This is a $\C$-superrings morphism whose even and odd components are  the localization morphisms   $\mathcal{L}\ev: \mathfrak{R}\ev \rightarrow \mathfrak{R}\ev\{S^{-1}\}$ and $\mathcal{L}\od:\mathfrak{R}\od  \rightarrow \mathfrak{R}\od \{S^{-1}\}$ respectively. Moreover, it satisfies the universal property of localization since it follows from the universality of localization for the ungraded case.
\end{proof}

\begin{remark}\label{rmk:localnil}
 It is not hard to verify from Definition \ref{def:suplocal} that if $S$ contains nilpotent elements then $\mathfrak{R}\{S^{-1}\}$ is the zero superring.   
\end{remark}

\begin{example}
Let $\mathfrak{R}$ be a $\C$-superring and $f \in \overline{\mathfrak R}$. Consider the multiplicative closed set $S=\{f^k;\; k \in \mathbb N \cup 0 \}$. The localization of $\mathfrak{R}$ at $S$, denoted $\mathfrak{R}\{f^{-1}\}$, is generally larger than the corresponding localization of $\mathfrak{R}$ when $\mathfrak{R}$ is considered solely as an $\mathbb R$-superalgebra.

This difference arises because the $\C$-superring localization $\mathfrak{R}\{f^{-1}\}$ is defined as $\mathfrak{R}\{f^{-1}\}:=(\mathfrak{R}\ev)\{f^{-1}\} \oplus (\mathfrak{R}\od)\{f^{-1}\}$. Here, $(\mathfrak{R}\ev)\{f^{-1}\}$ is the localization of the $\C$-ring $\mathfrak{R}\ev $ (as detailed in Remark \ref{rem:localCinfty}), and $(\mathfrak{R}\ev)\{f^{-1}\}$ is the localization of the $\mathfrak{R}\ev$-module $\mathfrak{R}\od$ (with $\mathfrak{R}\ev$ acting as an $\mathbb R$-algebra). The specific nature of $\C$-localization, particularly for $\mathfrak{R}\ev$, can lead to a larger structure compared to purely algebraic localization.
\end{example}

\begin{lemma}
Let $\mathfrak{R}$ and $\mathfrak{S}$ be $\C$-superrings. For any homogeneous $f \in \mathfrak R\ev$, let $S=\{f^k;\; k \in \mathbb N \cup 0 \}$ be the multiplicative set generated by $f$, and denote $\mathfrak R\{f^{-1}\}:=\mathfrak R \{ S^{-1} \}$ as the localization of $\mathfrak R$ at $S$. Given a $\C$-superring morphism $\varphi:\mathfrak{R}\rightarrow \mathfrak{S}$, there exists a unique $\C$-superring morphism $\varphi_f: \mathfrak{R}\{f^{-1}\}\rightarrow \mathfrak{S} \{ \varphi(f)^{-1}\}$ such that the following diagram commutes:
       \[ 
\begin{tikzcd}
\mathfrak{R} \arrow{r}{\mathcal L_{\mathfrak R, f}} \arrow[swap]{d}{\varphi} & \mathfrak{R}\{f^{-1}\}\arrow{d}{\varphi_f} \\
\mathfrak{S} \arrow{r}{\mathcal L_{\mathfrak S, \varphi(f)} }& \mathfrak{S} \{\varphi(f)^{-1} \}
\end{tikzcd}
\]
\end{lemma}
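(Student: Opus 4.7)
The plan is to invoke the universal property of localization stated in Definition \ref{def:suplocal}, applied to a suitably chosen auxiliary morphism. This is a standard diagram-chasing argument; the whole content of the lemma is verifying the hypothesis of the universal property.

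First I would form the composite $\C$-superring morphism $\psi := \mathcal{L}_{\mathfrak{S},\varphi(f)} \circ \varphi : \mathfrak{R} \longrightarrow \mathfrak{S}\{\varphi(f)^{-1}\}$. The key observation is that $\psi(f) = \mathcal{L}_{\mathfrak{S},\varphi(f)}(\varphi(f))$, which is invertible in $\mathfrak{S}\{\varphi(f)^{-1}\}$ by the first defining property of localization. Consequently, for every $f^k \in S = \{f^k : k \in \mathbb{N} \cup \{0\}\}$, the image $\psi(f^k) = \psi(f)^k$ is also invertible.

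Next I would invoke the universal property in Definition \ref{def:suplocal}: since $\psi : \mathfrak{R} \rightarrow \mathfrak{S}\{\varphi(f)^{-1}\}$ is a $\C$-superring morphism that sends every element of $S$ to an invertible element, there exists a unique $\C$-superring morphism $\varphi_f : \mathfrak{R}\{f^{-1}\} \longrightarrow \mathfrak{S}\{\varphi(f)^{-1}\}$ such that $\varphi_f \circ \mathcal{L}_{\mathfrak{R},f} = \psi = \mathcal{L}_{\mathfrak{S},\varphi(f)} \circ \varphi$. This is precisely the commutativity of the required diagram, and uniqueness comes for free from the universal property.

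There is essentially no obstacle here: the only thing to check is invertibility of $\psi(f)$, which is immediate. If one wanted to be more explicit, one could also describe $\varphi_f$ on generators by $\varphi_f(r/f^k) = \varphi(r)/\varphi(f)^k$ on the even part and similarly on the odd part (using the $\mathfrak{R}\ev$-module localization), and verify grade-preservation and compatibility with the $\C$-ring operations on the even part; but appealing to the universal property makes all of these checks automatic.
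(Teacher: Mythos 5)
Your proof is correct and takes essentially the same approach as the paper, namely invoking the universal property of localization (Definition~\ref{def:suplocal}) for the composite $\mathcal L_{\mathfrak S,\varphi(f)}\circ\varphi$. The only difference is cosmetic: the paper separates the cases $\varphi(f)$ nilpotent and $\varphi(f)$ not nilpotent, whereas you observe correctly that the universal property applies uniformly (in the nilpotent case $\mathfrak S\{\varphi(f)^{-1}\}$ is the zero superring, where every element is invertible, so the argument goes through unchanged), which slightly streamlines the paper's argument.
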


\begin{proof}
We consider two cases.
\begin{itemize}
\item Case 1: $\varphi(f)$ is nilpotent. In this case, by Remark \ref{rmk:localnil}, the localization $\mathfrak S \{\varphi(f) ^{-1}\}$ is the zero superring. Consequently, the unique morphism $\varphi_f$ must be the zero morphism.

\item Case 2: $\varphi(f)$ is not nilpotent. Then the canonical localization map $\mathcal{L}_{\mathfrak S, \varphi(f)}$ ensures that $\mathcal{L}_{\mathfrak S, \varphi(f)}(\varphi(f))$ is an invertible element on $\mathfrak{S} \{ \varphi(f)^{-1} \}$. By the universal property of localization for $\mathfrak R$, applied to the morphism $\mathfrak R \rightarrow \mathfrak S \{\varphi(f)^{-1} \}$, there exists a unique morphism $\varphi_f: \mathfrak{R} \{f^{-1}\} \rightarrow \mathfrak{S} \{\varphi(f)^{-1} \}$ that makes the diagram commute.
\end{itemize}
\end{proof}

\begin{remark}
Since the canonical ideal $\mathfrak{J}_{\mathfrak{R}}$ is generated by the odd part of a $\C$-superring $\mathfrak{R}$, and such elements are nilpotent, it follows that if a multiplicative set $S$ contains elements from $\mathfrak{J}_{\mathfrak{R}}$, the localization $\mathfrak R \{ S^{-1}\}$  must be the zero superring (see Remark \ref{rmk:localnil}).
\end{remark}

\begin{remark}
Similar to the situation in commutative algebra, it is not true in general that the localization of a $\C$-ring with respect to an arbitrary multiplicatively closed set is a local $\C$-ring. This point is clearly illustrated in \cite[Remark 3.31]{CM1}, where the authors construct a continuum of maximal ideals for the $\C$-ring $\mathfrak{C}=\C(\R)\{f^{-1}\}$, where $f$ is a function that is not identically zero. Thus, by constructing a split $\C$-superring whose superreduced part is $\mathfrak C$, we obtain an example of a non-local $\C$-superring.
\end{remark}

\begin{lemma}
Let $\mathfrak R$ be a $\C$-superring and $S \subseteq \mathfrak R\ev$ be a multiplicative closed set. Let $\overline S$ be the direct image of $S$ under the canonical projection $q_{\mathfrak R} :\mathfrak R \rightarrow \overline{\mathfrak R}$. Then the following isomorphism holds: $\overline{\mathfrak{R}\{S^{-1}\}}\cong \overline{\mathfrak{R}}\{\overline{S} ^{-1}\}$
\end{lemma}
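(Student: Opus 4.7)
The plan is to prove that $\overline{\mathfrak{R}\{S^{-1}\}}$ satisfies the universal property characterizing $\overline{\mathfrak{R}}\{\overline{S}^{-1}\}$ in the category of $\C$-rings, by leveraging the adjunction $(\mathcal F,\mathcal G)$ established earlier in the paper together with the universal property of localization for $\C$-superrings.

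First, I would construct the candidate natural map. Applying the functor $\mathcal F$ to the localization morphism $\mathcal L:\mathfrak R \to \mathfrak R\{S^{-1}\}$ yields a $\C$-ring morphism $\overline{\mathcal L}:\overline{\mathfrak R}\to \overline{\mathfrak R\{S^{-1}\}}$ with $\overline{\mathcal L}\circ q_{\mathfrak R}=q_{\mathfrak R\{S^{-1}\}}\circ \mathcal L$. Since $\mathcal L(s)$ is invertible in $\mathfrak R\{S^{-1}\}$ for every $s\in S$, its image under the canonical quotient $q_{\mathfrak R\{S^{-1}\}}$ is invertible, and this image equals $\overline{\mathcal L}(\overline s)$ by the commutative square. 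Hence $\overline{\mathcal L}$ sends $\overline S$ into the units of $\overline{\mathfrak R\{S^{-1}\}}$.

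Next, I would verify the universal property. Let $\mathfrak C$ be any $\C$-ring and $\phi:\overline{\mathfrak R}\to \mathfrak C$ a morphism with $\phi(\overline S)$ invertible. The adjunction $(\mathcal F,\mathcal G)$ gives a corresponding $\C$-superring morphism $\tilde\phi:=\mathcal G(\phi)\circ q_{\mathfrak R}:\mathfrak R\to \mathcal G(\mathfrak C)$. For every $s\in S$, $\tilde\phi(s)=(\phi(\overline s),0)\in \mathfrak C\oplus 0=\mathcal G(\mathfrak C)$ is invertible, because its inverse $(\phi(\overline s)^{-1},0)$ exists in $\mathcal G(\mathfrak C)$. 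By the universal property of $\C$-superring localization (Definition \ref{def:suplocal}), $\tilde\phi$ factors uniquely as $\tilde\phi=\tilde\psi\circ\mathcal L$ for some $\tilde\psi:\mathfrak R\{S^{-1}\}\to \mathcal G(\mathfrak C)$. Applying $\mathcal F$ and using $\mathcal F\mathcal G\cong \mathrm{id}_{\bf \C Rings}$ yields a $\C$-ring morphism $\psi:\overline{\mathfrak R\{S^{-1}\}}\to \mathfrak C$ satisfying $\psi\circ\overline{\mathcal L}=\phi$. Uniqueness of $\psi$ follows by reversing the adjunction: any $\psi'$ with $\psi'\circ\overline{\mathcal L}=\phi$ induces a $\C$-superring morphism $\mathcal G(\psi'):\mathfrak R\{S^{-1}\}\to \mathcal G(\mathfrak C)$ that, when precomposed with $\mathcal L$, recovers $\tilde\phi$, and so must coincide with $\tilde\psi$ by uniqueness in the super setting; then applying $\mathcal F$ forces $\psi'=\psi$.

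The main obstacle will be the careful bookkeeping of the adjunction, specifically ensuring that the passage between $\tilde\psi$ and $\psi$ is well-defined and unique at each stage, because the $\C$-ring localization $\mathfrak R\ev\{S^{-1}\}$ is strictly larger than its purely algebraic analogue (Remark \ref{rem:localCinfty}), so one cannot argue element-by-element with fractions. The virtue of the universal-property approach is that it avoids this entirely: it only relies on the fact that $\mathcal F$ is a left adjoint (hence preserves the colimit-like construction of localization) together with invertibility being preserved by the canonical projection $q_{\mathfrak R\{S^{-1}\}}$. The isomorphism $\overline{\mathfrak R\{S^{-1}\}}\cong \overline{\mathfrak R}\{\overline S^{-1}\}$ then follows from uniqueness of objects representing the same universal property.
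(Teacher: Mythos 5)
Your proof is correct and takes essentially the same approach as the paper: both verify that $\overline{\mathfrak R\{S^{-1}\}}$ satisfies the universal property of $\C$-ring localization at $\overline S$ by lifting a test morphism $\phi:\overline{\mathfrak R}\to\mathfrak C$ to a $\C$-superring morphism $\mathfrak R\to\mathcal G(\mathfrak C)$ (what the paper phrases informally as "adding zero odd part to $\mathfrak C$") and invoking the universal property of super-localization, then descending back via $\mathcal F$. Your version is a slightly more systematic packaging through the $(\mathcal F,\mathcal G)$ adjunction, which also lets you avoid the paper's preliminary case-split on whether $S$ contains nilpotents.
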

\begin{proof} If $S$ contains nilpotent elements then both $\overline{\mathfrak{R}\{S^{-1}\}}$ and $\overline{\mathfrak{R}}\{\overline S^{-1}\}$ are zero. Now, if $S$ contains no nilpotent elements, it is enough to show that $\overline{\mathfrak{R}\{S^{-1}\}}$ satisfies the universal property of localization in the category of $\C$-rings. Let $\mathcal{L}: \mathfrak{R} \rightarrow \mathfrak{R}\{S^{-1} \} $ be the localization morphism. If $\overline{a}\in \overline{S}$, then $a\in S$ and $\mathcal{L}(a)$ is invertible in $\mathfrak{R}\{ S^{-1} \}$. Therefore it is invertible in $\overline{\mathfrak{R}\{S^{-1} \}}$. Thus, if $\varphi:\overline{\mathfrak{R}}\rightarrow \mathfrak{C}$ is a $\C$-ring morphism such that any element of $\overline{S}$ is invertible in $\mathfrak{C}$, then by adding zero odd part to $\mathfrak{C}$, we have a morphism of $\C$-superrings from $\mathfrak{R}$ to $\mathfrak{C}$ such that any element of $S$ is invertible in $\mathfrak{C}$. In consequence, by the universal property of localization applied to $\mathfrak{R}$ and $S$, we have a morphism $\psi:\overline{\mathfrak{R}\{S^{-1}\}}\rightarrow \mathfrak{C}$ making the diagram commutative.

\begin{center}
\begin{tikzcd}
\mathfrak R \arrow{d}[swap]{\varphi} \arrow{r}{\mathcal L} & \mathfrak R \{ S^{-1} \} \arrow{dl}{\psi} \\
 \mathfrak C
\end{tikzcd}
\end{center}

Thus, taking reduced part to all elements of the diagram led us to the commutativity of the following diagram

\begin{center}
\begin{tikzcd}
\overline{\mathfrak R} \arrow{d}[swap]{\varphi} \arrow{r}{\overline{\mathcal L}} & \overline{\mathfrak R \{ S^{-1} \}} \arrow{dl}{\psi} \\
 \mathfrak C
\end{tikzcd}
\end{center}

In conclusion, we have $\overline{\mathfrak{R}\{\mathcal{D}\}^{-1}}\simeq\overline{\mathfrak{R}}\{\overline{\mathcal{D}}\}^{-1}$ by the uniqueness of the universal property of localization.
\end{proof}

The concept of $\C$-radical of a superideal extends the definition provided for $\C$-rings \cite[Definition 3.47]{CM1}, see also \cite{MRI, MRII}.

\begin{definition}\label{defrad} Let $\mathfrak{R}=\mathfrak R\ev \oplus \mathfrak R\od$ be a $\C$-superring, and let $\mathfrak{I}=\mathfrak I\ev \oplus \mathfrak I\od$ be a superideal of $\mathfrak R$. The $\C$-radical of its $\mathfrak I$, denoted as $\sqrt[\infty]{\mathfrak{I}}$, is the superideal defined as:

$$\sqrt[\infty]{\mathfrak{I}}=\sqrt[\infty]{\mathfrak{I}\ev}\oplus\mathfrak{R}\od \, .$$

Here, $\sqrt[\infty]{\mathfrak{I}\ev}$ represents the $\C$-radical of the even part of $\mathfrak I$, and is given by:
$$
\sqrt[\infty]{\mathfrak{I}\ev}:= \left\{ a \in \mathfrak{R}\ev \mid \left(\dfrac{\mathfrak{R}\ev}{\mathfrak{I}\ev}\right)\{(a+\mathfrak{I}\ev)^{-1}\} \cong 0 \right\}.
$$
\end{definition}

\begin{remark}
\label{cor:samerad}
In commutative algebra, the radical of an ideal $I$ of a commutative ring $R$, denoted $\sqrt I$, is equivalently defined as the set $
 \left \{ r \in R \mid \left( R/I\right) [ (r+I)^{-1} ] \cong 0 \right \}$. Here, the square brackets denote algebraic localization (see, e.g., \cite[page 49]{CM1}). This definition, which employs localization, is better understood by recalling that $\sqrt I$ is the preimage of the ideal of nilpotent elements of the quotient ring $R/I$ via the canonical projection map $\pi: R \to R/I$, that is, $\sqrt I=\pi^{-1}\left(\sqrt{(0)} \right)$. Drawing inspiration from this definition, the $\C$-radical of an ideal $I$ of a $\C$-ring $\mathfrak C$ is defined analogously, but employing $\C$-localization instead of algebraic localization. Given that the algebraic localization $\mathfrak C[S^{-1}]$ of a $\C$-ring $\mathfrak C$ is generally a subring of its $\C$-localization $\mathfrak C\{S^{-1}\}$, it has been shown that $\sqrt I \subseteq \sqrt[\infty]{I}$ \cite[Proposition 1]{BK}. The notion of $\C$-radical for $\C$-rings, as well as other types of radicals of an ideal that are not generally equivalent, are subjects of ongoing study within the field \cite{BK} (see also \cite{CM1}).
\end{remark}

\begin{definition}
A superideal $\mathfrak{I}$ of a $\C$-superring is called \emph{$C^\infty$-radical} if $\sqrt[\infty]{\mathfrak{I}}=\mathfrak{I}$. 
\end{definition}

\begin{remark}
Note that $\mathfrak{I}=\sqrt[\infty]{\mathfrak{I}}$ if and only if  $\sqrt[\infty]{\mathfrak{I}\ev}=\mathfrak{I}\ev$ and $\mathfrak{I}\od=\mathfrak{R}\od$
\end{remark}

\begin{remark}\label{rem:radical-prime}
Recall that a prime superideal $\mathfrak{p}$ of a superring $\mathfrak R$ has the form $\mathfrak{p}=\mathfrak{p}\ev\oplus\mathfrak{R}\od$. Therefore, a prime superideal  $\mathfrak{p}$ of a $\C$-superring $\mathfrak R$  is $\C$-radical if and only if its even part $\mathfrak{p}\ev$ is a $\C$-radical ideal of $\mathfrak{R}\ev$. Following the notation of \cite{CM1}, we write $Spec^{\infty}(\mathfrak{C})$ for the set of all $\C$-radical prime ideals of a $\C$-ring  $\mathfrak{C}$. This set is called the $\C$-spectrum of the $\C$-ring and plays an important role in defining locally $\C$-ringed spaces. In the next section, we will generalize the notion of $\C$-spectrum to the case of $\C$-superrings.
\end{remark}

From \cite[Lemma 2.3]{MRI} we know that the $\C$-radical of an ideal is itself an ideal. In particular, maximal ideals of a $\C$-ring are $\C$-radical. Therefore, a direct consequence of Remark \ref{cor:samerad} is that the $\C$ radical of a graded ideal on a $\C$-superring is a graded ideal and, in particular, maximal graded ideals of a $\C$-superring are $\C$-radical.\\

As in the case of $\C$-rings, we observe that $\sqrt{\mathfrak{I}}\subseteq\sqrt[\infty]{\mathfrak{I}}$, where $\sqrt{\mathfrak{I}}$ denotes the usual radical  of the superideal $\mathfrak{I}$. However, this inclusion can be a strict one. The following example is a simple extension of an example presented in \cite[Section 1]{BK}

\begin{example}
 Consider  $\C(\R)$ as a $\C$-superring with a zero odd part, and let $\mathfrak{I}$ be the ideal generated by $\text{exp}({-1/x^2})$. We claim that $x\in \sqrt[\infty]{\mathfrak{I}}$ but $x\notin \sqrt{\mathfrak{I}}$. it is clear that $x^n\notin \mathfrak{I}$ for all $n \in \mathbb N$. Let $b=\text{exp}(-1/x^2)\in \mathfrak{I}$. With the localization morphism $\mathcal{L}_x:\C(\R) \to \C(\R)\{x^{-1}\}$,  $\mathcal L_x(b)$ is invertible in $\C(\R)\{x^-1\} \cong \C(\R \setminus {0})$. Furthermore, its inverse is given by $\mathcal L_x(\text{exp}(1/x^2))$. This is equivalent to saying that $x$ is an element of $\sqrt[\infty]{\mathfrak{I}}$ (see \cite[Proposition 3.48]{CM1}).
\end{example}

At this point, one might question the necessity of introducing the $\C$-radical of a superideal. The answer lies in our primary intended use of $\C$-superrings: the study of locally ringed superspaces. This application requires prime ideals to exhibit desirable behavior under localization. Specifically, we expect that the localization at a prime ideal results in a local $\C$-superring. However, this property may not be guaranteed for general prime superideals, as the following examples show.

\begin{example}\label{ej-loc-nonlocal}
    Let $\mathfrak{m}_0 \subset \C(\R)$ be the ideal of flat functions  at $0$. According to example 1.2 in \cite{MRII}, $\mathfrak{m}_0$ is a non-$\C$-radical ideal, and the localization of $\C(\R)$ at $\mathfrak{m}_0$ is not a local ring (meaning it has more than one maximal ideal). Therefore, it is not hard to show that for the $\C$-superring $\mathfrak{R}=\C(\R)[\theta^1,\theta^2]^{\pm}$, the superideal $\mathfrak{p}=\mathfrak{m}_0\oplus\mathfrak{R}\od$ is non-$\C$-radical, and the localization of $\mathfrak{R}$ at $S=\mathfrak R\ev-\mathfrak m_0$ cannot be a local ring. 
    
\end{example}

Let $\mathfrak{R}$ be a $\C$-superring. We denote the localization of $\mathfrak{R}$ at the multiplicatively closed  set  $S= \mathfrak R\ev-\mathfrak p\ev$ as $\mathfrak{R}_{\mathfrak p\ev}$.

\begin{theorem}\label{rad-local}
Let $\mathfrak{p}=\mathfrak{p}\ev\oplus\mathfrak{R}\od$ be a $\C$-radical prime superideal of a $\C$-superring $\mathfrak{R}=\mathfrak{R}\ev\oplus\mathfrak{R}\od$. Then the $\C$-superring $\mathfrak{R}_\mathfrak{p\ev}$ is local.

\end{theorem}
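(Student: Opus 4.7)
The plan is to reduce the claim to the known corresponding fact for $\C$-rings, namely that localization of a $\C$-ring at a $\C$-radical prime ideal produces a local $\C$-ring in Joyce's sense (a result implicit in \cite{J} and explicit in \cite{CM1}). By Definition \ref{def:localSring}, $\mathfrak{R}_{\mathfrak{p}\ev}$ is local as a $\C$-superring precisely when its superreduced ring is a local $\C$-ring with residue field $\R$. Thus the entire question collapses onto the even/reduced part, where the established $\C$-ring theory applies.

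First, I would identify the superreduced ring of the localization. Applying the earlier lemma that $\overline{\mathfrak{R}\{S^{-1}\}}\cong \overline{\mathfrak{R}}\{\overline{S}^{-1}\}$ to $S=\mathfrak{R}\ev-\mathfrak{p}\ev$, and using that $\mathfrak{R}\od\subseteq \mathfrak{p}$ forces $\mathfrak{R}\od^2\subseteq \mathfrak{p}\ev$ (so that the canonical projection $q_{\mathfrak{R}}$ descends $\mathfrak{p}\ev$ to a well-defined ideal $\overline{\mathfrak{p}\ev}=\mathfrak{p}\ev/\mathfrak{R}\od^2$ of $\overline{\mathfrak{R}}=\mathfrak{R}\ev/\mathfrak{R}\od^2$), we get
\[
\overline{\mathfrak{R}_{\mathfrak{p}\ev}} \;\cong\; \overline{\mathfrak{R}}\{(\overline{\mathfrak{R}}-\overline{\mathfrak{p}\ev})^{-1}\}.
\]
Moreover, the correspondence between prime ideals of $\mathfrak{R}\ev$ containing $\mathfrak{R}\od^2$ and prime ideals of $\overline{\mathfrak{R}}$ immediately shows that $\overline{\mathfrak{p}\ev}$ is prime.

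Second, I would verify that $\overline{\mathfrak{p}\ev}$ is $\C$-radical in $\overline{\mathfrak{R}}$. By Remark \ref{rem:radical-prime}, the hypothesis that $\mathfrak{p}$ is $\C$-radical means precisely that $\mathfrak{p}\ev$ is $\C$-radical in $\mathfrak{R}\ev$. Since $\mathfrak{R}\od^2\subseteq \mathfrak{p}\ev$, the canonical isomorphism $\mathfrak{R}\ev/\mathfrak{p}\ev\cong \overline{\mathfrak{R}}/\overline{\mathfrak{p}\ev}$ holds, and for any $a\in\mathfrak{R}\ev$ with image $\bar{a}\in\overline{\mathfrak{R}}$ we have
\[
(\mathfrak{R}\ev/\mathfrak{p}\ev)\{(a+\mathfrak{p}\ev)^{-1}\}\;\cong\;(\overline{\mathfrak{R}}/\overline{\mathfrak{p}\ev})\{(\bar{a}+\overline{\mathfrak{p}\ev})^{-1}\}.
\]
Hence Definition \ref{defrad} transfers directly, and $\overline{\mathfrak{p}\ev}$ is a $\C$-radical prime of $\overline{\mathfrak{R}}$.

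Finally, I would invoke the $\C$-ring version of the theorem applied to the $\C$-ring $\overline{\mathfrak{R}}$ and the $\C$-radical prime $\overline{\mathfrak{p}\ev}$: the localization $\overline{\mathfrak{R}}\{(\overline{\mathfrak{R}}-\overline{\mathfrak{p}\ev})^{-1}\}$ is a local $\C$-ring with residue field $\R$. Combined with the identification in the first step, this gives that $\overline{\mathfrak{R}_{\mathfrak{p}\ev}}$ is a local $\C$-ring, which by Definition \ref{def:localSring} is exactly what it means for $\mathfrak{R}_{\mathfrak{p}\ev}$ to be a local $\C$-superring. The main subtle step is the descent of $\C$-radicality through the superreduction quotient; I expect this to be genuinely cheap because $\mathfrak{R}\od^2$ lies inside $\mathfrak{p}\ev$ and thus does not interfere with the quotient of $\mathfrak{R}\ev$ by $\mathfrak{p}\ev$, but if this direct route failed one could alternatively argue through the lemma comparing the smooth saturation of $S\subseteq \mathfrak{R}\ev$ with that of $\overline{S}\subseteq \overline{\mathfrak{R}}$ to transfer the $\C$-radical property.
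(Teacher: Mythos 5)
Your argument is correct, and it reaches the same conclusion as the paper through a slightly different reduction. The paper's own proof stays one level up: it observes that $\mathfrak{p}\ev$ is a $\C$-radical prime ideal of the $\C$-ring $\mathfrak{R}\ev$, invokes the ungraded result (\cite[Lemma 1.1]{MRII}) to conclude that $(\mathfrak{R}\ev)_{\mathfrak{p}\ev}$ is a local $\C$-ring with maximal ideal $\mathfrak{m}\ev$, and then exhibits $\mathfrak{m}\ev\oplus(\mathfrak{R}\od)_{\mathfrak{p}\ev}$ as the unique maximal graded ideal of $\mathfrak{R}_{\mathfrak{p}\ev}$. You instead descend all the way to the superreduced ring: you use the lemma $\overline{\mathfrak{R}\{S^{-1}\}}\cong\overline{\mathfrak{R}}\{\overline{S}^{-1}\}$, transfer $\C$-radicality along $\mathfrak{R}\ev\rightarrow\overline{\mathfrak{R}}=\mathfrak{R}\ev/\mathfrak{R}\od^2$ (which is legitimate precisely because $\mathfrak{R}\od^2\subseteq\mathfrak{p}\ev$, so $\mathfrak{R}\ev/\mathfrak{p}\ev\cong\overline{\mathfrak{R}}/\overline{\mathfrak{p}\ev}$ and Definition \ref{defrad} carries over verbatim), and then apply the same ungraded fact to $\overline{\mathfrak{R}}$ at $\overline{\mathfrak{p}\ev}$. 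Both proofs hinge on exactly the same external input. What your route buys is a literal verification of Definition \ref{def:localSring}, which is stated in terms of the superreduced ring; the paper's version leaves implicit the short passage from ``$\mathfrak{R}_{\mathfrak{p}\ev}$ has a unique maximal graded ideal with residue field $\R$'' to ``its superreduced ring is local,'' via the remark following that definition. What the paper's version buys is brevity and an explicit description of the maximal ideal of the localized superring, which is the form of the statement actually used later.
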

\begin{proof}
Since $\mathfrak{p}$ is $\C$-radical, its even part $\mathfrak{p}\ev$ is a $\C$-radical prime ideal of the $\C$-ring $\mathfrak{R}\ev$, Therefore, according to \cite[Lemma 1.1]{MRII}, the localization $(\mathfrak R_{\mathfrak p\ev})\ev=(\mathfrak{R}\ev)_{\mathfrak p\ev}$ is a local $\C$-ring, with a unique maximal ideal which we denote by $\mathfrak{m}\ev$. Consequently, the ideal $\mathfrak{m}=\mathfrak{m}\ev\oplus(\mathfrak{R}\od)_{\mathfrak p\ev}$ is the unique maximal ideal of $\mathfrak{R}_{\mathfrak p\ev}$.  
\end{proof}

\begin{example}
Let $M$ be a smooth manifold. As established in  \cite[Theorem 1.3]{MRII}, any countably generated prime ideal of $\C(M)$ is $\C$-radical. This result directly generalizes to superideals of $\C$-superrings. In particular, any countably generated prime superideal of the structure sheaf of a smooth supermanifold is $\C$-radical. 
\end{example}

The following result is an adaptation to the $\C$-superring case of the result mentioned in page 84 of \cite{CM1}. It is a consequence of Theorem \ref{rad-local}. We present it for completeness.

\begin{lemma}
Every $\C$-radical prime $\mathfrak{p}$ ideal of a $\C$-superring is the kernel of a morphism of $\C$-superrings. 
\end{lemma}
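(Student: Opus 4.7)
The plan is to construct the desired morphism as the composition of the localization at $\mathfrak{p}\ev$ with the projection to the residue field of the resulting local $\C$-superring. By Proposition \ref{primeideal}, I write $\mathfrak{p}=\mathfrak{p}\ev\oplus\mathfrak{R}\od$, and by Theorem \ref{rad-local}, the localization $\mathfrak{R}_{\mathfrak{p}\ev}$ is a local $\C$-superring whose unique maximal superideal has the form $\mathfrak{m}=\mathfrak{m}\ev\oplus(\mathfrak{R}\od)_{\mathfrak{p}\ev}$, where the entire localized odd part is forced to lie in $\mathfrak{m}$ because its elements are nilpotent. By Definition \ref{def:localSring}, $\overline{\mathfrak{R}_{\mathfrak{p}\ev}}/\overline{\mathfrak{m}}\cong\R$, and since $\mathfrak{m}$ already absorbs the full odd part of $\mathfrak{R}_{\mathfrak{p}\ev}$, the quotient $\mathfrak{R}_{\mathfrak{p}\ev}/\mathfrak{m}$ is itself isomorphic to $\R$ as a $\C$-superring with trivial odd part.

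Next, I form the composition
\[
\varphi\colon\mathfrak{R}\xrightarrow{\mathcal{L}}\mathfrak{R}_{\mathfrak{p}\ev}\xrightarrow{q}\mathfrak{R}_{\mathfrak{p}\ev}/\mathfrak{m}\cong\R,
\]
which is a morphism of $\C$-superrings because $\mathcal{L}$ is (Definition \ref{def:suplocal}) and so is the canonical quotient by a superideal (Example \ref{quotsr}). I then compute $\ker\varphi$ componentwise. The odd part is immediate: the target has trivial odd part, so $\ker\varphi\od=\mathfrak{R}\od=\mathfrak{p}\od$. For the even part, $\varphi\ev\colon\mathfrak{R}\ev\to\R$ is the ungraded composition through $(\mathfrak{R}\ev)_{\mathfrak{p}\ev}$ modulo its unique maximal ideal $\mathfrak{m}\ev$, so $\ker\varphi\ev=(\mathcal{L}\ev)^{-1}(\mathfrak{m}\ev)$.

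The crux reduces to establishing the equality $(\mathcal{L}\ev)^{-1}(\mathfrak{m}\ev)=\mathfrak{p}\ev$, which is precisely the purely $\C$-ring analog of the lemma being adapted from \cite[p.~84]{CM1}. The $\C$-radical hypothesis on $\mathfrak{p}\ev$ is exactly what guarantees this equality; without it, localization may introduce elements outside $\mathfrak{p}\ev$ that still map into $\mathfrak{m}\ev$, the pathological behavior illustrated by Example \ref{ej-loc-nonlocal}. Combining the two component kernels yields $\ker\varphi=\mathfrak{p}\ev\oplus\mathfrak{R}\od=\mathfrak{p}$. I expect no serious obstacle in the super-theoretic part: since prime superideals are forced to contain the entire odd component, the super case reduces cleanly to the ungraded $\C$-ring result, which has already been invoked implicitly through Theorem \ref{rad-local}.
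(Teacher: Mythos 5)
Your proof is correct and follows essentially the same route as the paper's: localize at the $\C$-radical prime, pass to the quotient by the unique maximal ideal of the resulting local $\C$-superring, and reduce the kernel computation to the even part since the odd part is automatically annihilated. The only difference is that the paper spells out the two-line argument for $\mathcal{L}^{-1}(\mathfrak{m})=\mathfrak{p}$ in the ungraded case (an element lies in $\mathfrak{p}$ iff its image is a non-unit iff its image lies in $\mathfrak{m}$), whereas you defer this step to the cited ungraded result in \cite{CM1}, which is acceptable since the paper itself presents the lemma as an adaptation of that result.
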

\begin{proof}
Let $\mathfrak{p}$ be a $\C$-radical prime ideal, of a $\C$-ring $\mathfrak{C}$, the localization $\mathfrak{C}_{\mathfrak{p}}$ has only one maximal ideal $\mathfrak{m}$ in $\mathfrak{C}_{\mathfrak{p}}$. let $\mathcal{L}$ and $q$ denote the localization morphism and the quotient morphism onto $\mathfrak{C}_{\mathfrak{p}}/\mathfrak{m}$ respectively. We claim that $\mathfrak{p}$ is the kernel of the composition $\varphi=q\circ\mathcal{L}$. Indeed, if $a \in \mathfrak{p}$ then $\mathcal{L}(a)$ is not invertible in $\mathfrak{C}_{\mathfrak{p}}$. Hence, it cannot be outside $\mathfrak{m}$, that is, $\varphi(a)=0$. By definition this means that $a\in ker\varphi$. On the other hand, if $\varphi(a)=0$ in the fraction field $\mathfrak{C}_{\mathfrak{p}}/\mathfrak{m}$, then $\mathcal{L}(a)\in \mathfrak{m}$ which means that this is not an invertible element in the localization hence $a\in \mathfrak{p}$.

Now for the graded case, let $\mathfrak{R}=\mathfrak{R}\ev\oplus\mathfrak{R}\od$ be a $\C$-superring and denote by $\mathfrak{m}$ the only maximal ideal of the local $\C$-ring $(\mathfrak{R}_{\mathfrak{p}}){\ev}$. Then, $\mathfrak{p}\ev$ is the kernel of a morphism $\varphi\ev:\mathfrak{R}\ev\rightarrow (\mathfrak{R}_{\mathfrak{p}}){\ev} $
 Therefore, 
$\mathfrak{p}\ev\oplus\mathfrak{R}\od$ is a $\C$-radical prime ideal and is the kernel of the morphism $\varphi$  that results when we add zero odd part to  $\varphi\ev$. That is, we extend $\varphi\ev$ to $\mathfrak{R}$ by taking $\varphi(r\od)=0$ for any $r\od\in \mathfrak{R}\od$, then $\mathfrak{p}$ is clearly the kernel of $\varphi:\mathfrak{R}\rightarrow (\mathfrak{R}_{\mathfrak{p}}){\ev}  $
\end{proof}

The $\C$-radical of an ideal is a useful notion in smooth commutative algebra. It allows us to define the notion of $\C$-nilpotent elements and characterize a $\C$-ring as $\C$-reduced (see, for example, \cite{CM1, BK}).\\
We now explore some properties of $\C$-radicals of superideals in $\C$-superrings, extending some of the results established of $\C$-rings. 

\begin{proposition}\label{prop:sradical}
Let $\mathfrak{I}, \mathfrak{H}$ be superideals of a $\C$-superring $\mathfrak{R}=\mathfrak R\ev \oplus \mathfrak R\od$. The following properties hold: 
    \begin{itemize}
\item [a)] If $\mathfrak{I}\subseteq \mathfrak{H}$ then $\sqrt[\infty]{\mathfrak{I}}\subseteq \sqrt[\infty]{\mathfrak{H}} $.
\item [b)]$\sqrt[\infty]{\mathfrak{I}}$ is the intersection of all $\C$-prime radical superideals that contain $\mathfrak{I}$
\item[c)] $\sqrt[\infty]{\mathfrak{I}\mathfrak{H}}=\sqrt[\infty]{\mathfrak{I}\cap \mathfrak{H}}$
\end{itemize}
\end{proposition}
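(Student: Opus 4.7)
The plan is to reduce each of (a), (b), (c) to the analogous statement for $\C$-radicals of ideals in the $\C$-ring $\mathfrak{R}\ev$, exploiting the defining identity $\sqrt[\infty]{\mathfrak{I}} = \sqrt[\infty]{\mathfrak{I}\ev} \oplus \mathfrak{R}\od$ from Definition \ref{defrad}. Since the odd coordinate of the $\C$-radical is always the whole of $\mathfrak{R}\od$, every assertion collapses to an equality or inclusion among the even parts, where the corresponding facts for $\C$-rings (see \cite{CM1, MRI, BK}) can be invoked.

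For (a), the hypothesis $\mathfrak{I} \subseteq \mathfrak{H}$ gives $\mathfrak{I}\ev \subseteq \mathfrak{H}\ev$, and I would argue directly from Definition \ref{defrad}: if $a \in \mathfrak{R}\ev$ makes $(\mathfrak{R}\ev/\mathfrak{I}\ev)\{(a+\mathfrak{I}\ev)^{-1}\}$ trivial, then composing with the canonical $\C$-ring surjection $\mathfrak{R}\ev/\mathfrak{I}\ev \rightarrow \mathfrak{R}\ev/\mathfrak{H}\ev$ (a $\C$-ring morphism by Remark \ref{rem:limits}) and applying the universal property of $\C$-localization shows the same for $(\mathfrak{R}\ev/\mathfrak{H}\ev)\{(a+\mathfrak{H}\ev)^{-1}\}$. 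For (b), I would combine Proposition \ref{primeideal} and Remark \ref{rem:radical-prime} to observe that every $\C$-radical prime superideal has the form $\mathfrak{p} = \mathfrak{p}\ev \oplus \mathfrak{R}\od$ with $\mathfrak{p}\ev$ a $\C$-radical prime of $\mathfrak{R}\ev$, and that containment $\mathfrak{I} \subseteq \mathfrak{p}$ reduces to $\mathfrak{I}\ev \subseteq \mathfrak{p}\ev$ (the odd condition is automatic). Then
\[
\bigcap_{\mathfrak{p} \supseteq \mathfrak{I}} \mathfrak{p} \;=\; \Bigl(\bigcap_{\mathfrak{p}\ev \supseteq \mathfrak{I}\ev} \mathfrak{p}\ev\Bigr) \oplus \mathfrak{R}\od \;=\; \sqrt[\infty]{\mathfrak{I}\ev} \oplus \mathfrak{R}\od \;=\; \sqrt[\infty]{\mathfrak{I}},
\]
where the middle equality is the Nullstellensatz-type characterization of the $\C$-radical for $\C$-rings available in \cite{CM1}.

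For (c), both sides again have $\mathfrak{R}\od$ as odd part, so it suffices to show $\sqrt[\infty]{(\mathfrak{I}\mathfrak{H})\ev} = \sqrt[\infty]{(\mathfrak{I}\cap\mathfrak{H})\ev}$ in $\mathfrak{R}\ev$. The inclusion $\mathfrak{I}\mathfrak{H} \subseteq \mathfrak{I}\cap\mathfrak{H}$ combined with (a) yields one direction immediately. For the reverse, I would use $(\mathfrak{I}\cap\mathfrak{H})\ev = \mathfrak{I}\ev \cap \mathfrak{H}\ev$ and the evident inclusion $\mathfrak{I}\ev \mathfrak{H}\ev \subseteq (\mathfrak{I}\mathfrak{H})\ev$, and invoke the $\C$-ring identity $\sqrt[\infty]{\mathfrak{I}\ev \mathfrak{H}\ev} = \sqrt[\infty]{\mathfrak{I}\ev \cap \mathfrak{H}\ev}$ (a direct consequence of (b) applied in the category of $\C$-rings, since both sides describe the intersection of the same family of $\C$-radical prime ideals), together with monotonicity.

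The main potential obstacle is ensuring that the $\C$-ring analog of (b)---namely, the description of $\sqrt[\infty]{I}$ as the intersection of the $\C$-radical primes containing $I$---is available in the precise form needed. If it is not cited verbatim, I would prove it from scratch by showing that $a \notin \sqrt[\infty]{\mathfrak{I}\ev}$ yields a nonzero $\C$-localization $(\mathfrak{R}\ev/\mathfrak{I}\ev)\{(a+\mathfrak{I}\ev)^{-1}\}$, extracting a maximal (hence $\C$-radical) ideal of this localization via Zorn's lemma, and pulling it back to a $\C$-radical prime of $\mathfrak{R}\ev$ containing $\mathfrak{I}\ev$ but not $a$. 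Everything else in the argument is routine bookkeeping with the $\Z_2$-graded decomposition.
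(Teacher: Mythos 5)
Your plan matches the paper's: each part reduces to the even-part statement via $\sqrt[\infty]{\mathfrak{I}} = \sqrt[\infty]{\mathfrak{I}\ev} \oplus \mathfrak{R}\od$, and the corresponding $\C$-ring facts do the rest. The differences lie in how the even-part facts are established. For (a), the paper simply cites \cite[Theorem 4.29]{CM1}, whereas you supply a self-contained argument via the universal property of $\C$-localization; this is correct and somewhat more transparent. For (b), both you and the paper invoke \cite[Theorem 4.42 b)]{CM1} for the Nullstellensatz-type description of $\sqrt[\infty]{\mathfrak{I}\ev}$; your fallback sketch (extract a maximal ideal of the nonzero localization $(\mathfrak{R}\ev/\mathfrak{I}\ev)\{(a+\mathfrak{I}\ev)^{-1}\}$ and pull it back) is the standard idea behind that citation, but you would need to justify that the preimage under the non-injective composite $\mathfrak{R}\ev \to \mathfrak{R}\ev/\mathfrak{I}\ev \to (\mathfrak{R}\ev/\mathfrak{I}\ev)\{(a+\mathfrak{I}\ev)^{-1}\}$ is itself $\C$-radical --- the paper's own preimage lemma is stated only for injective morphisms, so this step deserves a line of care if you want the fallback to be fully self-contained. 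For (c), the paper argues directly at the level of super-primes: any prime containing $\mathfrak{I}\mathfrak{H}$ also contains $\mathfrak{I}\cap\mathfrak{H}$ (if $x\in\mathfrak{I}\cap\mathfrak{H}$ then $x^{2}\in\mathfrak{I}\mathfrak{H}\subseteq\mathfrak{p}$ forces $x\in\mathfrak{p}$), and then applies (b). You instead sandwich via $\mathfrak{I}\ev\mathfrak{H}\ev\subseteq(\mathfrak{I}\mathfrak{H})\ev$ and use the $\C$-ring identity $\sqrt[\infty]{\mathfrak{I}\ev\mathfrak{H}\ev}=\sqrt[\infty]{\mathfrak{I}\ev\cap\mathfrak{H}\ev}$ together with monotonicity; this is also correct --- it hides the same prime argument inside the $\C$-ring identity --- and has the minor advantage of never needing to describe $(\mathfrak{I}\mathfrak{H})\ev$ explicitly, at the cost of introducing a third ideal.
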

\begin{proof}
\item [a)] We have that $\mathfrak{I}\ev$ and $\mathfrak{H}\ev$ are ideals of the $\C$-ring $\mathfrak R\ev$. Since 
$\mathfrak{I}\ev\subseteq \mathfrak{H}\ev$, it follows $\sqrt[\infty]{\mathfrak{I}\ev}\subseteq \sqrt[\infty]{\mathfrak{H}\ev}$ by \cite[Theorem 4.29]{CM1}. Therefore, $\sqrt[\infty]{\mathfrak{I}\ev}\oplus \mathfrak{R}\od \subseteq \sqrt[\infty]{\mathfrak{H}\ev}\oplus \mathfrak{R}\od $, which means $\sqrt[\infty]{\mathfrak{I}}\subseteq \sqrt[\infty]{\mathfrak{H}}$.

\item[b)] Let $\mathfrak p$ denote a $\C$-radical prime superideal. Then we have
$$
\bigcap\limits_{\mathfrak{p}\supset\mathfrak{I}} \mathfrak{p}=\bigcap\limits_{\mathfrak{p}\ev\oplus \mathfrak{R}\od\supseteq \mathfrak{I}} \left( \mathfrak{p}\ev\oplus \mathfrak{R}\od  \right)= \left(\bigcap\limits_{\mathfrak{p}\ev\supseteq \mathfrak{I}\ev}\mathfrak{p}\ev \right) \oplus\mathfrak{R}\od=\sqrt[\infty]{\mathfrak{I}\ev}\oplus \mathfrak{R}\od=\sqrt[\infty]{\mathfrak{I}}\, .
$$
Here, we have used the property $\sqrt[\infty]{\mathfrak{I}\ev}=\bigcap\limits_{\mathfrak{p}\ev\supseteq \mathfrak{I}\ev}\mathfrak{p}\ev$ (from \cite[Theorem 4.42 b)]{CM1}), where  $\mathfrak p\ev$ represents a $\C$-radical prime ideal of the $\C$-ring $\mathfrak R\ev$.

\item [c)] Since $\mathfrak{I}\mathfrak{H}\subseteq \mathfrak{I}\cap\mathfrak{H}$,any prime ideal containing $\mathfrak{I}\cap\mathfrak{H}$ also contains $\mathfrak{I}\mathfrak{H}$. Now, we suppose that there exists some prime $\mathfrak{p}$ such that $\mathfrak{p}\supseteq \mathfrak{I}\mathfrak{H} $ but there exists $x\in \mathfrak{I}\cap\mathfrak{H} $ such that $x\notin \mathfrak{p}$. However, $x^2\in \mathfrak{I}\mathfrak{H}\subseteq \mathfrak{p}$ and since $\mathfrak{p}$ is prime, we have $x\in\mathfrak{p}$ which is a contradiction.
Thus, the intersection of all even primes that contain $\mathfrak{I}\mathfrak{H}$ is the same as the intersection of all even primes that contain $\mathfrak{I}\cap\mathfrak{H}$. Thus, the conclusion is a consequence of the item $b)$ above.
\end{proof}

\begin{lemma}\label{lem:inftyrad} Let $\mathfrak{R}=\mathfrak R\ev \oplus \mathfrak R\od$ be a $\C$-superring, and let $\mathfrak{J}_\mathfrak{R}$ be the canonical ideal generated by the odd part $\mathfrak R\od$. Then, the following equalities hold:
\begin{itemize}
\item [a)]  $\sqrt[\infty]{\mathfrak{J}_{\mathfrak{R}}}=\sqrt[\infty]{(0)_{\overline{\mathfrak{R}}}}\oplus \mathfrak{J}_{\mathfrak{R}}$.
\item [b)]  $\sqrt[\infty]{(0)_{\mathfrak{R}\ev}}=\sqrt[\infty]{(0)_{\overline{\mathfrak{R}}}}\oplus \mathfrak{R}\od^2$.
\item [c)] $\sqrt[\infty]{(0)_{\mathfrak{R}}}=\sqrt[\infty]{(0)_{\mathfrak{R}\ev}}\oplus \mathfrak{R}\od=\sqrt[\infty]{(0)_{\overline{\mathfrak{R}}}}\oplus \mathfrak{J}_{\mathfrak{R}}$.
\end{itemize}

To simplify the notation, $\sqrt[\infty]{0_{\overline{\mathfrak{R}}}}$ is used to represent both the $\C$-radical of the zero ideal in $\overline{\mathfrak{R}}$ and its inverse image under the quotient projection $\mathfrak{R}\longrightarrow \overline{\mathfrak{R}}$.
\begin{proof}
\begin{itemize}
\item [a)]
Let $a\in \sqrt[\infty]{\mathfrak{J}_{\mathfrak{R}}}$ by the definition of $\C$-radical we see that the localization $\overline{\mathfrak{R}}\{\overline{a}^{-1}\}$ is the zero $\C$-ring. Therefore, $\overline{a}$ is $\C$-nilpotent in $\overline{\mathfrak{R}}$. Here we have two cases: 
First $a\in \mathfrak{J}_{\mathfrak{R}}$, therefore $\overline{a}=0_{\overline{\mathfrak{R}}}\in \overline{\mathfrak{R}}$, hence $\overline{a}\in \sqrt[\infty]{0_{\overline{\mathfrak{R}}}} $ and clearly $a\in \sqrt[\infty]{0_{\overline{\mathfrak{R}}}}\oplus \mathfrak{J}_{\mathfrak{R}}$. On the other hand, if $a \notin \mathfrak{J}_{\mathfrak{R}}$ we have that $\overline{a}$ is a nonzero $\C$-nilpotent element of $\overline{\mathfrak{R}}$, therefore $\overline{a}\in \sqrt[\infty]{0_{\overline{\mathfrak{R}}}} $ and consequently $a\in \sqrt[\infty]{0_{\overline{\mathfrak{R}}}}\oplus \mathfrak{J}_{\mathfrak{R}}$. Thus, we have $\sqrt[\infty]{\mathfrak{J}_{\mathfrak{R}}}\subseteq \sqrt[\infty]{0_{\overline{\mathfrak{R}}}}\oplus \mathfrak{J}_{\mathfrak{R}}$ The other inclusion is obvious since both $\sqrt[\infty]{0_{\overline{\mathfrak{R}}}}$ and $\mathfrak{J}_{\mathfrak{R}}$ are contained in $\sqrt[\infty]{\mathfrak{J}_{\mathfrak{R}}}$.\\ 

\item [b)] In this case, we are working with the $\C$-radical of an ideal in a $\C$-ring. By definition, $\sqrt[\infty]{(0)_{\mathfrak{R}\ev}}$ is the set of all $\C$-nilpotent elements of $\mathfrak{R}\ev$. It is not hard to verify that all nilpotents of $\mathfrak{R}\ev$ are of two types: those that are even elements originating from  $\mathfrak{R}\od^2$, or those whose class in $\overline{\mathfrak{R}}$ is  $\C$-nilpotent (i.e, elements of $\sqrt[\infty]{(0)_{{\overline{\mathfrak{R}}}}}$). Thus, we have $\sqrt[\infty]{(0)_{\mathfrak{R}\ev}}\subseteq\sqrt[\infty]{(0)_{\overline{\mathfrak{R}}}}\oplus \mathfrak{R}\od^2$. The reverse inclusion is clear.

\item[c)] The first equality follows directly from the definition of $\C$-radical applied to superideal $(0)_\mathfrak{R}$. The second is a direct application of item $b)$.
\end{itemize} 
\end{proof}
\end{lemma}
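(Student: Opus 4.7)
The plan is to reduce each of the three identities to a statement about the even part $\mathfrak{R}\ev$ via Definition~\ref{defrad}, and then to exploit two ingredients: the identification $\overline{\mathfrak{R}}\cong \mathfrak{R}\ev/\mathfrak{R}\od^2$, and the fact that every element of the ideal $\mathfrak{R}\od^2$ is nilpotent in the commutative $\C$-ring $\mathfrak{R}\ev$. For the second ingredient, observe that for any $a,b\in\mathfrak{R}\od$ supercommutativity forces $a^2=0$ and hence $(ab)^2=-a^2b^2=0$; since the nilradical is stable under finite sums and under multiplication by ring elements, every element of $\mathfrak{R}\od^2$ is nilpotent. Recall also that $\C$-localization commutes with quotients: if $q:\mathfrak{R}\ev\to\overline{\mathfrak{R}}$ is the canonical projection, then a direct verification of universal properties yields $\overline{\mathfrak{R}}\{q(a)^{-1}\}\cong \mathfrak{R}\ev\{a^{-1}\}/\mathfrak{R}\od^2\cdot\mathfrak{R}\ev\{a^{-1}\}$.

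For part (a), Definition~\ref{defrad} gives $\sqrt[\infty]{\mathfrak{J}_{\mathfrak{R}}}=\sqrt[\infty]{\mathfrak{R}\od^2}\oplus\mathfrak{R}\od$, with the first factor an ideal of $\mathfrak{R}\ev$. For any $a\in\mathfrak{R}\ev$,
\begin{equation*}
a\in\sqrt[\infty]{\mathfrak{R}\od^2}\ \iff\ (\mathfrak{R}\ev/\mathfrak{R}\od^2)\{(a+\mathfrak{R}\od^2)^{-1}\}=0\ \iff\ q(a)\in\sqrt[\infty]{(0)_{\overline{\mathfrak{R}}}},
\end{equation*}
so $\sqrt[\infty]{\mathfrak{R}\od^2}=q^{-1}(\sqrt[\infty]{(0)_{\overline{\mathfrak{R}}}})$, which under the stated notation convention is exactly $\sqrt[\infty]{(0)_{\overline{\mathfrak{R}}}}$; adjoining the odd component $\mathfrak{R}\od$ of $\mathfrak{J}_{\mathfrak{R}}$ then produces (a).

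For (b), the inclusion $\sqrt[\infty]{(0)_{\mathfrak{R}\ev}}\subseteq\sqrt[\infty]{\mathfrak{R}\od^2}$ follows from Proposition~\ref{prop:sradical}(a) since $(0)\subseteq\mathfrak{R}\od^2$. Conversely, if $a\in\sqrt[\infty]{\mathfrak{R}\od^2}$ then the computation above yields $1\in\mathfrak{R}\od^2\cdot\mathfrak{R}\ev\{a^{-1}\}$; but every element of this extended ideal is a finite sum $\sum x_ir_i$ with $x_i\in\mathfrak{R}\od^2$ nilpotent, hence is itself nilpotent in the commutative ring $\mathfrak{R}\ev\{a^{-1}\}$. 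So $1$ is nilpotent, forcing $\mathfrak{R}\ev\{a^{-1}\}=0$ and therefore $a\in\sqrt[\infty]{(0)_{\mathfrak{R}\ev}}$; combining with (a) gives (b). Item (c) is then formal: the first equality is Definition~\ref{defrad} applied to the zero superideal, and the second follows from (b) on the even side (the preimage $\sqrt[\infty]{(0)_{\overline{\mathfrak{R}}}}$ already contains $\mathfrak{R}\od^2$) together with the triviality $\mathfrak{R}\od=\mathfrak{J}_{\mathfrak{R}}\cap\mathfrak{R}\od$ on the odd side.

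The only delicate point is the reverse inclusion in (b): although each element of $\mathfrak{R}\od^2$ is individually nilpotent, the ideal $\mathfrak{R}\od^2$ need not be nilpotent as a whole when $\mathfrak{R}\od$ is not finitely generated, so one cannot argue by uniform nilpotency of the kernel of $q$. What rescues the argument is that any specific witness of $1\in\mathfrak{R}\od^2\cdot\mathfrak{R}\ev\{a^{-1}\}$ involves only finitely many nilpotent generators, and closure of the nilradical under finite sums is enough to conclude.
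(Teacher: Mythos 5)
Your argument is correct and proceeds along the same conceptual lines as the paper's, but it is considerably more explicit, and it supplies exactly the content that the paper waves away with ``it is not hard to verify.'' In particular, for part (b) the paper asserts that every $\C$-nilpotent of $\mathfrak{R}\ev$ either lies in $\mathfrak{R}\od^2$ or maps to a $\C$-nilpotent of $\overline{\mathfrak{R}}$, with no justification given for the hard direction. Your reduction pins this down: you identify $(\mathfrak{R}\ev/\mathfrak{R}\od^2)\{(a+\mathfrak{R}\od^2)^{-1}\}$ with $\mathfrak{R}\ev\{a^{-1}\}/\mathfrak{R}\od^2\cdot\mathfrak{R}\ev\{a^{-1}\}$ (a universal-property check), so that vanishing of the quotient forces $1$ to be a \emph{finite} sum of the form $\sum x_i r_i$ with each $x_i$ nilpotent, hence $1$ is itself nilpotent in the commutative ring $\mathfrak{R}\ev\{a^{-1}\}$, and the localization collapses. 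Your cautionary remark about $\mathfrak{R}\od^2$ failing to be nilpotent as an ideal in the non-finitely-generated case, and why the finite-sum argument rescues the claim, is a genuine improvement over the paper's exposition: it makes clear that the argument needs no finiteness hypothesis on $\mathfrak{R}\od$, which the paper never addresses. For (a) you avoid the paper's case split on whether $a\in\mathfrak{J}_{\mathfrak{R}}$ (which in the paper ends up being vacuous, since both branches reach the same conclusion) and instead read the equality straight off Definition~\ref{defrad} together with $\overline{\mathfrak{R}}\cong\mathfrak{R}\ev/\mathfrak{R}\od^2$. Part (c) is handled identically in both proofs. In short: same strategy, but your version closes the gaps.
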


\begin{corollary}\label{prime_intersection}
For any $\C$-superring $\mathfrak{R}$, the ideal $\sqrt[\infty]{(0)_\mathfrak{R}}$ is equal to the intersection of all $\C$-radical prime superideals of $\mathfrak{R}$.
\end{corollary}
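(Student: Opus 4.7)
The plan is to apply Proposition \ref{prop:sradical}(b) directly to the zero superideal $\mathfrak{I} = (0)_{\mathfrak{R}}$. That proposition asserts that for any superideal $\mathfrak{I}$ of a $\C$-superring $\mathfrak{R}$, the $\C$-radical $\sqrt[\infty]{\mathfrak{I}}$ equals the intersection of all $\C$-radical prime superideals containing $\mathfrak{I}$. Since every superideal of $\mathfrak{R}$ contains $(0)_{\mathfrak{R}}$, the condition ``containing $\mathfrak{I}$'' becomes vacuous when $\mathfrak{I} = (0)_{\mathfrak{R}}$, and the intersection ranges over \emph{all} $\C$-radical prime superideals of $\mathfrak{R}$.

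More explicitly, I would write
\[
\sqrt[\infty]{(0)_{\mathfrak{R}}} \;=\; \bigcap_{\mathfrak{p} \supseteq (0)_{\mathfrak{R}}} \mathfrak{p} \;=\; \bigcap_{\mathfrak{p}} \mathfrak{p},
\]
where $\mathfrak{p}$ ranges over the $\C$-radical prime superideals of $\mathfrak{R}$. The first equality is Proposition \ref{prop:sradical}(b), and the second is the observation that the containment condition is automatic. One may further cross-check consistency with Lemma \ref{lem:inftyrad}(c), which identifies $\sqrt[\infty]{(0)_{\mathfrak{R}}}$ with $\sqrt[\infty]{(0)_{\overline{\mathfrak{R}}}} \oplus \mathfrak{J}_{\mathfrak{R}}$; via Remark \ref{rem:radical-prime} (prime superideals have the form $\mathfrak{p}\ev \oplus \mathfrak{R}\od$ and are $\C$-radical iff $\mathfrak{p}\ev$ is), the intersection on the right decomposes as $\bigl(\bigcap_{\mathfrak{p}\ev} \mathfrak{p}\ev\bigr) \oplus \mathfrak{R}\od$, and by \cite[Theorem 4.42(b)]{CM1} this equals $\sqrt[\infty]{(0)_{\overline{\mathfrak{R}}}} \oplus \mathfrak{J}_{\mathfrak{R}}$ as required.

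There is essentially no obstacle: the corollary is a direct specialization of the previous proposition, and the role of the proof is mainly to record the identity in a form convenient for later geometric use (namely, the construction of the super-spectrum in the following section).
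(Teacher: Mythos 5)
Your proposal is correct. The main route you take — specializing Proposition \ref{prop:sradical}(b) to $\mathfrak{I}=(0)_{\mathfrak{R}}$ and noting that the containment condition becomes vacuous — is a legitimate one-line derivation and is genuinely shorter than the paper's argument. The paper does not invoke Proposition \ref{prop:sradical}(b) at all; instead it re-derives the identity from scratch: it writes every $\C$-radical prime superideal as $\mathfrak{p}\ev\oplus\mathfrak{R}\od$ via Remark \ref{rem:radical-prime}, computes the intersection as $\bigl(\bigcap_{\mathfrak{p}\ev}\mathfrak{p}\ev\bigr)\oplus\mathfrak{R}\od$, identifies the even factor with $\sqrt[\infty]{(0)_{\mathfrak{R}\ev}}$ using the ungraded result \cite[Theorem 4.22]{CM1}, and concludes with Lemma \ref{lem:inftyrad}(c). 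Your ``consistency check'' paragraph essentially reproduces this second argument, so both routes appear in your proposal. What your primary route buys is economy and a cleaner logical dependency (the corollary really is an instance of the already-established proposition); what the paper's route buys is an explicit even/odd decomposition of the intersection, which is the form actually used later when relating the maximal spectrum to the $\C$-prime spectrum (Lemma \ref{lem:densemax} and Proposition \ref{dense}). Either proof is acceptable.
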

\begin{proof}
Since all radical prime superideals of $\mathfrak R$ are of the form $\mathfrak{p}\ev\oplus\mathfrak{R}\od$, where $\mathfrak{p}\ev$ is a radical prime ideal of $\mathfrak{R}\ev$ (Remark \ref{rem:radical-prime}), their intersection is given by $\left(\bigcap_{\mathfrak{p}\ev\in Spec^{\infty}(\mathfrak{R}\ev)}\mathfrak{p}\ev \right)\oplus\mathfrak{R}\od$. Moreover, for the non-graded case (see, \cite[Theorem 4.22]{CM1}), we have $\bigcap_{\mathfrak{p}\ev\in Spec^{\infty}(\mathfrak{R}\ev)}\mathfrak{p}\ev=\sqrt[\infty]{(0)_{\mathfrak{R}\ev}}$. Consequently, using Lemma \ref{lem:inftyrad}$\text{c)}$, which states that $\sqrt[\infty]{(0)_{\mathfrak{R}}}=\sqrt[\infty]{(0)_{\mathfrak{R}\ev}}\oplus \mathfrak{R}\od$, the desired result is obtained.
\end{proof}

According to Definition 4.5 of \cite{CM1}, a $\C$-ring $\mathfrak{C}$ is defined as $\C$-reduced if and only if $0$ is its unique $\C$-nilpotent element, meaning $\sqrt[\infty]{(0)_\mathfrak{C}}=(0)_\mathfrak{C}$. This definition isn't directly applicable to a $\C$-superring $\mathfrak{R}$ because all elements within $\mathfrak{J}_\mathfrak{R}$ are $\C$-nilpotent. Consequently, considering Lemma \ref{lem:inftyrad}, we propose the following definition for the super case:

\begin{definition}\label{def:superred}
We say that a $\C$-superring $\mathfrak{R}$ is $\C$-superreduced if and only if $\sqrt[\infty]{(0)_\mathfrak{R}}=\mathfrak{J}_\mathfrak{R}$.
\end{definition}

\begin{remark}\label{rmk:superred}
From Definition \ref{def:superred}, if $\mathfrak{R}$ is $\C$-superreduced, then the $\C$-ring  $\mathfrak{R}/\sqrt[\infty]{(0)_\mathfrak{R}}$ coincides with $\overline{\mathfrak{R}}=\mathfrak R/\mathfrak J_{\mathfrak R}$. Furthermore, by Lemma \ref{lem:inftyrad} $c)$, we know that $\sqrt[\infty]{(0)_{\overline{\mathfrak{R}}}}=(0)_{\overline{\mathfrak{R}}}$. Consequently, $\mathfrak{R}$ being $\C$-superreduced is equivalent to $\overline{\mathfrak{R}}$ being $\C$-reduced, which in turn is equivalent to $\mathfrak{J}_\mathfrak{R}$ being $\C$-radical.
\end{remark}

\begin{lemma}\label{lem:redmorph}
Let $\varphi:\mathfrak{R}\rightarrow\mathfrak{S}$ be an injective morphism of $\C$-superrings. If $\mathfrak{S}$ is $\C$-superreduced, then $\mathfrak{R}$ is $\C$-superreduced.
\end{lemma}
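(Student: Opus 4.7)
The plan is to verify the equality $\sqrt[\infty]{(0)_\mathfrak{R}}=\mathfrak{J}_\mathfrak{R}$ (Definition \ref{def:superred}). The inclusion $\mathfrak{J}_\mathfrak{R}\subseteq\sqrt[\infty]{(0)_\mathfrak{R}}$ is automatic: every prime superideal contains $\mathfrak{R}\od$ (Proposition \ref{primeideal}), and $\sqrt[\infty]{(0)_\mathfrak{R}}$ is the intersection of all $\C$-radical prime superideals of $\mathfrak{R}$ (Corollary \ref{prime_intersection}). Thus only the reverse inclusion $\sqrt[\infty]{(0)_\mathfrak{R}}\subseteq\mathfrak{J}_\mathfrak{R}$ needs to be established.

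To this end I would take $a\in\sqrt[\infty]{(0)_\mathfrak{R}}$. Since $\sqrt[\infty]{(0)_\mathfrak{R}}=\sqrt[\infty]{(0)_{\mathfrak{R}\ev}}\oplus\mathfrak{R}\od$ is $\Z_2$-graded (Lemma \ref{lem:inftyrad}(c)) and $\mathfrak{R}\od\subseteq\mathfrak{J}_\mathfrak{R}$, the problem reduces to the case in which $a\in\mathfrak{R}\ev$ is $\C$-nilpotent, i.e.\ $\mathfrak{R}\ev\{a^{-1}\}\cong 0$, with the goal now of showing $a\in\mathfrak{R}\od^2$. To transport this to $\mathfrak{S}$, I would use that the even restriction $\varphi\ev:\mathfrak{R}\ev\to\mathfrak{S}\ev$ extends by the universal property of $\C$-ring localization to a morphism $\mathfrak{R}\ev\{a^{-1}\}\to\mathfrak{S}\ev\{\varphi(a)^{-1}\}$ sending $1$ to $1$; since the source is the zero ring, so is the target, so $\varphi(a)$ is $\C$-nilpotent in $\mathfrak{S}\ev$. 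Combining this with the hypothesis that $\mathfrak{S}$ is $\C$-superreduced and with Lemma \ref{lem:inftyrad}(b), which forces $\sqrt[\infty]{(0)_{\mathfrak{S}\ev}}=\mathfrak{S}\od^2$, yields $\varphi(a)\in\mathfrak{S}\od^2\subseteq\mathfrak{J}_\mathfrak{S}$.

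The final step is to descend this conclusion back to $\mathfrak{R}$: from $\varphi(a)\in\mathfrak{J}_\mathfrak{S}$ and the injectivity of $\varphi$, I need to deduce $a\in\mathfrak{R}\od^2$. I expect this to be the main obstacle, since grade-preserving injectivity does not formally entail $\varphi^{-1}(\mathfrak{S}\od^2)=\mathfrak{R}\od^2$. The cleanest route will be to prove separately that the induced map $\overline{\varphi}:\overline{\mathfrak{R}}\to\overline{\mathfrak{S}}$ on superreduced rings is itself injective; once in hand, the $\C$-reducedness of $\overline{\mathfrak{S}}$---which is equivalent to the $\C$-superreducedness of $\mathfrak{S}$ by Remark \ref{rmk:superred}---transfers directly to $\overline{\mathfrak{R}}$, completing the proof. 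Establishing injectivity of $\overline{\varphi}$ will require exploiting the supercommutative structure of the odd generators, showing that any even element of $\mathfrak{R}$ whose image in $\mathfrak{S}$ lies in $\mathfrak{S}\od^2$ is already a sum of products of odd elements of $\mathfrak{R}$.
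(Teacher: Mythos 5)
Your first two steps are sound and in substance follow the same route as the paper: you reduce, via Lemma \ref{lem:inftyrad}, to showing that every $\C$-nilpotent element of $\mathfrak{R}\ev$ lies in $\mathfrak{R}\od^2$, and you correctly push $\C$-nilpotence forward along $\varphi\ev$ using the universal property of localization, obtaining $\varphi(a)\in\sqrt[\infty]{(0)_{\mathfrak{S}\ev}}=\mathfrak{S}\od^2$. The paper's own proof is a compressed version of the same argument: it asserts that the induced map $\overline{\varphi}:\overline{\mathfrak{R}}\rightarrow\overline{\mathfrak{S}}$ is injective and then invokes \cite[Proposition 4.33]{CM1} together with Remark \ref{rmk:superred}.

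The step you flag as ``the main obstacle'' is, however, a genuine gap, and the route you sketch for closing it cannot succeed as stated: an even element of $\mathfrak{R}$ whose image lies in $\mathfrak{S}\od^2$ need not be a sum of products of odd elements of $\mathfrak{R}$, and $\overline{\varphi}$ need not be injective. Take $\mathfrak{R}=\R[x]/(x^2)$ viewed as a purely even $\C$-superring (it is a Weil algebra, hence a $\C$-ring), $\mathfrak{S}=\R[\theta^1,\theta^2]^{\pm}$, and $\varphi(a+bx)=a+b\,\theta^1\theta^2$. This is an injective, grade-preserving morphism of $\C$-superrings; $\mathfrak{S}$ is $\C$-superreduced since $\overline{\mathfrak{S}}\cong\R$; yet $\mathfrak{R}$ is not, because $\mathfrak{J}_{\mathfrak{R}}=0$ while $x$ is ($\C$-)nilpotent in $\overline{\mathfrak{R}}=\mathfrak{R}$. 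Here $\varphi(x)\in\mathfrak{S}\od^2$ but $x\notin\mathfrak{R}\od^2=0$, so the descent fails exactly where you predicted it might. The statement does go through if one additionally assumes $\varphi^{-1}(\mathfrak{J}_{\mathfrak{S}})\subseteq\mathfrak{J}_{\mathfrak{R}}$, equivalently that $\overline{\varphi}$ is injective --- which is precisely the assertion the paper's proof makes without justification. You have therefore isolated the one point of the argument that requires either an extra hypothesis or a proof, but your proposal does not supply either, and no proof exists under the stated hypotheses.
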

\begin{proof}  
Given that $\varphi:\mathfrak{R}\rightarrow\mathfrak{S}$ is an injective morphism of $\C$-superrings, the induced morphism of $\C$-rings $\overline{\varphi}:\overline{\mathfrak{R}}\rightarrow\overline{\mathfrak{S}}$, defined by $\overline \varphi(\overline a)=\overline{\varphi(a)}$ for all $a \in \mathfrak R$, is also injective. From  \cite[Proposition 4.33]{CM1} and Remark \ref{rmk:superred}, it then follows that $\overline{\mathfrak{R}}$ is $\C$-reduced, which is equivalent to $\mathfrak{R}$ is $\C$-superreduced.
\end{proof}

\begin{lemma} For any morphism of $\C$-superrings $\varphi: \mathfrak{R}\rightarrow \mathfrak{S}$, the following properties hold:
\begin{itemize}
\item[a)]  If $\mathfrak{I}\subseteq \mathfrak{S}$ is a superideal, then $  \sqrt[\infty]{\varphi^{-1}(\mathfrak{I})}\subseteq \varphi^{-1}(\sqrt[\infty]{\mathfrak{I}})$.
\item[b)] If $\varphi$ is injective and $\mathfrak{I}$ is a $\C$-radical superideal of $\mathfrak{S}$, then $\varphi^{-1}(\mathfrak{I})$ is a $\C$-radical superideal of $\mathfrak R$.  
\end{itemize}
\end{lemma}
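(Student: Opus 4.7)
The plan is to reduce (a) to the ungraded case via the decomposition $\sqrt[\infty]{\mathfrak I}=\sqrt[\infty]{\mathfrak I\ev}\oplus\mathfrak S\od$ given by Definition \ref{defrad}, and then to derive (b) as an immediate consequence of (a) combined with the tautological inclusion $J\subseteq\sqrt[\infty]{J}$.

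For part (a), I would first note that since $\varphi$ preserves the $\mathbb Z_2$-grading, $\varphi^{-1}(\mathfrak I)$ is a superideal with even component $\varphi\ev^{-1}(\mathfrak I\ev)$ and odd component $\varphi\od^{-1}(\mathfrak I\od)$, where $\varphi\ev,\varphi\od$ denote the restrictions of $\varphi$ to even/odd parts. The odd inclusion in (a) is immediate: any $a\in\mathfrak R\od$ satisfies $\varphi(a)\in\mathfrak S\od\subseteq\sqrt[\infty]{\mathfrak I}$, so $\mathfrak R\od\subseteq\varphi^{-1}(\sqrt[\infty]{\mathfrak I})$. For the even inclusion I would proceed exactly as in the non-graded $\C$-ring setting. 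Suppose $a\in\sqrt[\infty]{\varphi\ev^{-1}(\mathfrak I\ev)}$, i.e.\ $(\mathfrak R\ev/\varphi\ev^{-1}(\mathfrak I\ev))\{\bar a^{-1}\}\cong 0$. The morphism $\varphi\ev$ descends to $\bar\varphi:\mathfrak R\ev/\varphi\ev^{-1}(\mathfrak I\ev)\longrightarrow\mathfrak S\ev/\mathfrak I\ev$ carrying $\bar a$ to $\overline{\varphi(a)}$, and by the universal property of $\C$-localization, it lifts to a $\C$-ring morphism $(\mathfrak R\ev/\varphi\ev^{-1}(\mathfrak I\ev))\{\bar a^{-1}\}\to(\mathfrak S\ev/\mathfrak I\ev)\{\overline{\varphi(a)}^{-1}\}$. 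Because the source is the zero $\C$-ring and the morphism sends $1$ to $1$, the target must also be zero. Hence $\varphi(a)\in\sqrt[\infty]{\mathfrak I\ev}$, i.e.\ $a\in\varphi\ev^{-1}(\sqrt[\infty]{\mathfrak I\ev})\subseteq\varphi^{-1}(\sqrt[\infty]{\mathfrak I})$. Combining the even and odd inclusions yields the desired containment.

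For part (b), I would first record the general (and routine) fact that any superideal $J\subseteq\mathfrak R$ satisfies $J\subseteq\sqrt[\infty]{J}$: the even inclusion holds because if $a\in J\ev$ then $\bar a=0$ in $\mathfrak R\ev/J\ev$, forcing the $\C$-localization at $\bar a$ to vanish; the odd inclusion is trivial since $\sqrt[\infty]{J}\od=\mathfrak R\od$ by Definition \ref{defrad}. Applying this with $J=\varphi^{-1}(\mathfrak I)$ gives one inclusion. For the reverse, use (a) together with the $\C$-radicality of $\mathfrak I$:
\[
\sqrt[\infty]{\varphi^{-1}(\mathfrak I)}\;\subseteq\;\varphi^{-1}(\sqrt[\infty]{\mathfrak I})\;=\;\varphi^{-1}(\mathfrak I).
\]
Thus $\sqrt[\infty]{\varphi^{-1}(\mathfrak I)}=\varphi^{-1}(\mathfrak I)$, proving $\varphi^{-1}(\mathfrak I)$ is $\C$-radical. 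I would remark that this argument does not actually require the injectivity of $\varphi$; the hypothesis in (b) is therefore stronger than strictly needed, although it fits the parallel with Lemma \ref{lem:redmorph}.

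The only delicate point is the universal-property step in (a): one must be careful to verify that the composite $\mathfrak R\ev\to\mathfrak S\ev\to\mathfrak S\ev/\mathfrak I\ev\to(\mathfrak S\ev/\mathfrak I\ev)\{\overline{\varphi(a)}^{-1}\}$ sends $\varphi\ev^{-1}(\mathfrak I\ev)$ to zero and the element $a$ to an invertible element, which then produces the required factorization through $(\mathfrak R\ev/\varphi\ev^{-1}(\mathfrak I\ev))\{\bar a^{-1}\}$ via the universal property of Definition \ref{def:suplocal} (applied to the underlying $\C$-ring $\mathfrak R\ev$). This is the only genuinely $\C$-infinity flavored step; everything else is bookkeeping with the even/odd decomposition of the $\C$-radical.
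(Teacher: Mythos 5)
Your argument is correct in both parts. For a) you take essentially the same route as the paper: reduce to the even parts via the decompositions $\sqrt[\infty]{\varphi^{-1}(\mathfrak I)}=\sqrt[\infty]{\varphi\ev^{-1}(\mathfrak I\ev)}\oplus\mathfrak R\od$ and $\varphi^{-1}(\sqrt[\infty]{\mathfrak I})=\varphi\ev^{-1}(\sqrt[\infty]{\mathfrak I\ev})\oplus\mathfrak R\od$; the only difference is that the paper simply cites the ungraded inclusion $\sqrt[\infty]{\varphi\ev^{-1}(\mathfrak I\ev)}\subseteq\varphi\ev^{-1}(\sqrt[\infty]{\mathfrak I\ev})$ from \cite[Proposition 4.39]{CM1}, whereas you re-derive it via the universal property of $\C$-localization (a unital morphism out of the zero $\C$-ring forces the target to be zero), which is exactly the standard argument behind that citation. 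For b) you genuinely diverge: the paper passes to the induced injective morphism $\widehat\varphi:\mathfrak R/\varphi^{-1}(\mathfrak I)\to\mathfrak S/\mathfrak I$, notes that $\mathfrak S/\mathfrak I$ is $\C$-superreduced because $\mathfrak I$ is $\C$-radical, and then invokes Lemma \ref{lem:redmorph}, while you obtain b) as a two-line corollary of a) combined with the tautological inclusion $J\subseteq\sqrt[\infty]{J}$. Your version is more elementary and self-contained, avoiding the detour through $\C$-superreducedness. Your observation that the injectivity of $\varphi$ is not actually needed is also accurate, and in fact applies equally to the paper's own proof, since the induced map $\widehat\varphi$ is injective for arbitrary $\varphi$ once one sets $\mathfrak K=\varphi^{-1}(\mathfrak I)$.
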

\begin{proof}
\begin{itemize}
\item[a)] Let $K=\varphi^{-1}(\mathfrak{I})$. Since $\mathfrak{I}=\mathfrak{I}\ev\oplus \mathfrak{I}\od$ is a graded ideal of $\mathfrak S$, it follows that $K=K\ev\oplus K\od$ is a superideal of $\mathfrak R$. By definition, we have $\sqrt[\infty]{K}=\sqrt[\infty]{K\ev}\oplus \mathfrak{R}\od$.\\

Now, we claim that $\varphi^{-1}(\sqrt[\infty]{\mathfrak{I}})=\varphi\ev^{-1}(\sqrt[\infty]{\mathfrak{I}\ev})\oplus\mathfrak{R}\od$, where $\varphi\ev$ is the restriction of $\varphi$ to $\mathfrak{R}\ev$. To show this, consider an element $a\oplus b \in \varphi\ev^{-1}(\sqrt[\infty]{\mathfrak{I}\ev})\oplus\mathfrak{R}\od$. Since $\varphi$ is grading preserving, $\varphi(a\oplus b)=\varphi\ev(a)\oplus\varphi(b)$. It is clear that $\varphi(a\oplus b)$ belongs to $\varphi^{-1}(\sqrt[\infty]{\mathfrak{I}})=\varphi^{-1}(\sqrt[\infty]{\mathfrak{I}\ev}\oplus \mathfrak{S}\od)$.\\
Conversely, let $a\oplus b\in \varphi^{-1}(\sqrt[\infty]{\mathfrak{I}})$. This implies that $\varphi(a)\oplus\varphi(b)\in  \sqrt[\infty]{\mathfrak{I}}=\sqrt[\infty]{\mathfrak{I}\ev}\oplus \mathfrak{S}\od$. Therefore, $a\in \varphi\ev^{-1}(\sqrt[\infty]{\mathfrak{I}\ev})$ and $b \in \varphi^{-1}(\mathfrak{S}\od)\subseteq\mathfrak{R}\od$.\\
Now, because $\sqrt[\infty]{K}=\sqrt[\infty]{\varphi\ev^{-1}(\mathfrak{I}\ev)} \subseteq \varphi\ev^{-1}(\sqrt[\infty]{\mathfrak{I}\ev})$ (by, \cite[Proposition 4.39]{CM1}), we can conclude that
$$
\sqrt[\infty]{K}=\sqrt[\infty]{K\ev}\oplus\mathfrak{R}\od\subseteq \varphi\ev^{-1}(\sqrt[\infty]{\mathfrak{I}\ev})\oplus \mathfrak{R}\od.
$$

\item[b)] Let $\mathfrak{I}$ be a $\C$-radical superideal of $\mathfrak{S}$ and let $\mathfrak{K}=\varphi^{-1}(\mathfrak{I})$ be its inverse image. We know that $\mathfrak{K}\subseteq \sqrt[\infty]{\mathfrak{K}}$. Consequently, $\varphi$ induces a well-defined and injective morphism $\widehat{\varphi}:\mathfrak{R}/\mathfrak{K}\rightarrow \mathfrak{S}/\mathfrak{I} $. Since $\mathfrak{S}/\mathfrak{I}$ is $\C$-superreduced (because $\mathfrak{I}$ is $\C$-radical), Lemma \ref{lem:redmorph} implies that $\mathfrak{R}/\mathfrak{K}$ is also $\C$-superreduced. This, in turn, means $\mathfrak{K}$ is $\C$-radical.
\end{itemize}
\end{proof}

\begin{remark}
For any $\C$-ring $\mathfrak{C}$ and any nonzero ideal $I$ of $\mathfrak{C}$, we have $\sqrt[\infty]{0_{\mathfrak{C}}}=\sqrt[\infty]{I}/I$ (see \cite[Prop. 4.30]{CM1}). However, this result does not generally extend to our $\mathbb{Z}_2$-graded setting. Indeed, for a nonzero superideal $\mathfrak{I}$ of a $\C$-superring $\mathfrak{R}$, we have:  
$$
\frac{\sqrt[\infty]{\mathfrak{I}}}{\mathfrak{I}}=\frac{\sqrt[\infty]{\mathfrak{I}\ev}\oplus\mathfrak{R}\od}{\mathfrak{I}\ev\oplus \mathfrak{I}\od}=\frac{\sqrt[\infty]{\mathfrak{I}\ev}}{\mathfrak{I}\ev}\oplus\frac{\mathfrak{R}\od}{\mathfrak{I}\od}=\sqrt[\infty]{0_{\mathfrak{R}\ev}}\oplus \frac{\mathfrak{R}\od}{\mathfrak{I}\od}.
$$ 
This expression is equal to $\sqrt[\infty]{0_{\mathfrak{R}\ev}}\oplus \mathfrak{R}\od$ if and only if the odd part $\mathfrak{I}\od$ is zero.
\end{remark}

 \section{The spectrum of a $\C$-Superring}\label{superspec}
To develop a geometric theory inspired by algebraic geometry. Using the concept of a ringed space-the next crucial step is to associate a topological space with a $\C$-superring. This association must also include a sheaf of $\C$-superrings whose stalks are local. This process requires defining a functor that maps $\C$ superrings to locally $\C$ ringed superspaces.\\

We have identified two distinct methods for constructing this spectrum:\\

\begin{itemize}
\item \textbf{The $\C$-Prime Spectrum}. The first approach, detailed in the following subsection, defines the $\C$-prime spectrum of a $\C$-superring.  This spectrum is simply the set of all $\C$-radical prime ideals,  equipped with the Zariski topology. This method aligns with the algebraic techniques found in works by Moerdijk and Reyes \cite{MRI} or Cerqueira and Mariano \cite{CM1}.\\

\item \textbf{The Maximal Spectrum}. The second approach, presented in the subsequent section, defines the spectrum of a 
$\C$-ring ``\'a la Joyce" (see \cite{J}). This involves considering morphisms from the $\C$-superring to the real numbers, endowed with the Gelfand topology. Alternatively, this can be understood as the set of maximal ideals of the $\C$-ring. While this set is generally a proper subset of the $\C$-prime spectrum, they coincide in specific cases.
\end{itemize}

\subsection{The prime spectrum}
The most straightforward method for defining the spectrum of a $\C$-ring involve its prime spectrum. However, localization at a prime ideal isn't always local unless the ideal is $\C$-radical \cite[Proposition 4.3]{CM1}. This same issue applies to $\C$-superring, which means prime superideals aren't suitable for constructing a local structure sheaf. Therefore, we use $\C$-radical prime superideals, since their locality is established by Theorem \ref{rad-local}. The following definition, adapted for a $\C$-superring, draws inspiration from the concepts presented in \cite{CM1}.

\begin{definition} Let $\mathfrak{R}$ be a $\C-$ superring. We define the \emph{$\C$-spectrum of $\mathfrak{R}$}, denoted as ${\rm sSpec}^{\infty}(\mathfrak{R})$, to be the set of all $\C$-radical prime superideals of $\mathfrak R$:
$$
{\rm sSpec}^{\infty}(\mathfrak{R}) = \{ \mathfrak{p} \in {\rm sSpec}(\mathfrak{R}) \mid \mathfrak{p} \, \mbox{is} \,\, \C\mbox{-radical} \} \, .
$$
This set is endowed with the smooth Zariski topology, which is generated by a base of principal open sets of the form:
$$
D^{\infty}(a) = \{ \mathfrak{p} \in {\rm sSpec}^{\infty}\,(\mathfrak{R}) \mid a \notin \mathfrak{p} \}. 
$$ 
Here, $a$ is an element of $\mathfrak R$. The closed sets in this topology are given by:
$$
Z^{\infty}(\mathfrak{I})=\{\mathfrak{p}\in  {\rm sSpec}^{\infty}(\mathfrak{R})\mid \mathfrak{p}   \supseteq \mathfrak{I}\} \, ,
$$
\end{definition}
where $\mathfrak{I}$ is any superideal of $\mathfrak R$.

\begin{lemma}\label{lem:superrad}
Let $\mathfrak{R}$ be a $\C$-superring. If $\mathfrak{I}$ is a superideal of $\mathfrak{R}$, then we have:
$$
Z^\infty(\mathfrak{I})=Z^\infty(\sqrt[\infty]{\mathfrak{I}})
$$
\end{lemma}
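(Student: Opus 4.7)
The plan is to prove the equality of the two closed sets by showing containment in both directions, leveraging the characterization of the $\C$-radical given in Proposition \ref{prop:sradical}(b).

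For the inclusion $Z^\infty(\sqrt[\infty]{\mathfrak{I}}) \subseteq Z^\infty(\mathfrak{I})$, I would use the elementary fact that $\mathfrak{I} \subseteq \sqrt[\infty]{\mathfrak{I}}$, which follows because every element $a \in \mathfrak{I}\ev$ satisfies $a + \mathfrak{I}\ev = 0$ in $\mathfrak{R}\ev/\mathfrak{I}\ev$ (so the localization trivially collapses), and $\mathfrak{R}\od \supseteq \mathfrak{I}\od$. Thus any $\C$-radical prime superideal $\mathfrak{p}$ containing $\sqrt[\infty]{\mathfrak{I}}$ automatically contains $\mathfrak{I}$, giving the first inclusion.

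For the reverse inclusion $Z^\infty(\mathfrak{I}) \subseteq Z^\infty(\sqrt[\infty]{\mathfrak{I}})$, I would apply Proposition \ref{prop:sradical}(b) directly. If $\mathfrak{p} \in Z^\infty(\mathfrak{I})$, then $\mathfrak{p}$ is a $\C$-radical prime superideal with $\mathfrak{p} \supseteq \mathfrak{I}$, so $\mathfrak{p}$ appears as one of the terms in the intersection
\[
\sqrt[\infty]{\mathfrak{I}} \;=\; \bigcap_{\substack{\mathfrak{q} \supseteq \mathfrak{I} \\ \mathfrak{q} \text{ is } \C\text{-radical prime}}} \mathfrak{q}.
\]
Consequently $\sqrt[\infty]{\mathfrak{I}} \subseteq \mathfrak{p}$, which shows $\mathfrak{p} \in Z^\infty(\sqrt[\infty]{\mathfrak{I}})$.

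There is no substantial obstacle, since both inclusions reduce to already-established properties of the $\C$-radical. The only point worth being careful about is ensuring the hypothesis that $\sqrt[\infty]{\mathfrak{I}}$ is a superideal (so that $Z^\infty(\sqrt[\infty]{\mathfrak{I}})$ is well-defined in the sense of the definition of closed sets); this is immediate from Definition \ref{defrad}, which writes $\sqrt[\infty]{\mathfrak{I}} = \sqrt[\infty]{\mathfrak{I}\ev} \oplus \mathfrak{R}\od$ with $\sqrt[\infty]{\mathfrak{I}\ev}$ an honest ideal of $\mathfrak{R}\ev$ by the classical $\C$-ring theory.
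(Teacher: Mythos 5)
Your proof is correct and follows essentially the same route as the paper: the inclusion $Z^\infty(\sqrt[\infty]{\mathfrak{I}}) \subseteq Z^\infty(\mathfrak{I})$ from $\mathfrak{I} \subseteq \sqrt[\infty]{\mathfrak{I}}$, and the reverse inclusion from Proposition \ref{prop:sradical}(b), which the paper likewise invokes (working componentwise on the even parts, whereas you argue directly with the superideals). No gaps.
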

\begin{proof}
Every prime superideal $\mathfrak{p}$ of $\mathfrak{R}$ can be expressed in the form $\mathfrak{p}=\mathfrak{p}_0\oplus \mathfrak{R}\od$. The inclusion $\mathfrak{p}\supseteq \mathfrak{I}$ directly implies $\mathfrak{p}\ev\supseteq \mathfrak{I}\ev$. Given $\mathfrak{p}\in{\rm sSpec}^{\infty}(\mathfrak{R})$, Proposition \ref{prop:sradical} b) ensures that $\mathfrak{p}\ev\supseteq \sqrt[\infty]{\mathfrak{I}\ev}$. This further means $\mathfrak{p}\supseteq \sqrt[\infty]{\mathfrak{I}\ev}\oplus \mathfrak{R}\od$, which establish the inclusion $Z^\infty(\mathfrak{I})\supseteq Z^\infty(\sqrt[\infty]{\mathfrak{I}})$.\\

Conversely, suppose $\mathfrak{p}\supseteq \sqrt[\infty]{\mathfrak{I}}$. Then it follows that $\mathfrak{p}_0\supseteq \sqrt[\infty]{\mathfrak{I}\ev}$. Consequently, we have $\mathfrak{p}\supseteq \sqrt[\infty]{\mathfrak{I}\ev}\oplus \mathfrak{R}\od \supseteq \mathfrak{I}\ev \oplus \mathfrak{R}\od= \mathfrak I$, since $\sqrt[\infty]{\mathfrak{I}\ev} \subseteq \mathfrak{I}\ev$. This completes the proof.
\end{proof}

\begin{remark}
Following the reasoning in the proof of Lemma \ref{lem:superrad}, we can readily construct a bijection between sets $D^{\infty}(a) = \{ \mathfrak{p} \in {\rm sSpec}^{\infty}\,(\mathfrak{R})\mid a \notin \mathfrak{p} \}$ and $D^{\infty}(a\ev) = \{ \mathfrak{p}\ev \in {\rm Spec}^{\infty}\,(\mathfrak{R}\ev) \mid a\ev \notin \mathfrak{p}\ev \}$. Consequently, this establishes a homeomorphism between $\text{sSpec}^\infty(\mathfrak{R})$ and $Spec^\infty(\mathfrak{R}\ev)$.
\end{remark}

As this remark shows, the topological space associated with a $\C$-superring is entirely determined by its even part. This implies that $Z^\infty (\mathfrak{I})=Z^\infty(\mathfrak{I}\ev)$, for any superideal $\mathfrak{I}$ in $\mathfrak{R}$. These relationships lead the following basic properties, whose proofs are omitted because they are straightforward ``super'' extension of Propositions 5.3, 5.4, and 5.5 of \cite{CM1}.

\begin{proposition} Let $\mathfrak{R}$ be a $\C$-superring, and let $a,b \in \mathfrak{R}$. The following conditions are equivalent:
    \begin{itemize}
  \item[(i)] $D^{\infty}(a) \subseteq D^{\infty}(b)$.
  \item[(ii)] $Z^{\infty}(a) \supseteq Z^{\infty}(b)$.
  \item[(iii)] $a \in \sqrt[\infty]{(b)}$.
  \item[(iv)] $\sqrt[\infty]{(a)} \subseteq \sqrt[\infty]{(b)}$.
\end{itemize}  
\end{proposition}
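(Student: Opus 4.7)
The plan is to establish the equivalence of the four conditions by proving a short cycle, relying on the characterization of the $\C$-radical as an intersection of $\C$-radical primes (Proposition \ref{prop:sradical}~b)) and its monotonicity (Proposition \ref{prop:sradical}~a)). Observe first that $Z^{\infty}(a)$ and $D^{\infty}(a)$ are by definition set-theoretic complements inside ${\rm sSpec}^{\infty}(\mathfrak{R})$, so the equivalence (i) $\Leftrightarrow$ (ii) is immediate and requires no further argument.

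For (ii) $\Leftrightarrow$ (iii), I would use the principal ideals $(a)$ and $(b)$ together with the identification $Z^{\infty}(a)=Z^{\infty}((a))=\{\mathfrak{p}\in{\rm sSpec}^{\infty}(\mathfrak{R})\mid a\in\mathfrak{p}\}$, and similarly for $b$. Assuming (ii), every $\C$-radical prime containing $b$ also contains $a$; hence by Proposition \ref{prop:sradical}~b),
\[
a \;\in\; \bigcap_{\mathfrak{p}\in Z^{\infty}(b)}\mathfrak{p} \;=\; \sqrt[\infty]{(b)},
\]
which gives (iii). Conversely, if $a\in\sqrt[\infty]{(b)}$ and $\mathfrak{p}\in Z^{\infty}(b)$, then $\mathfrak{p}\supseteq(b)$ implies $\mathfrak{p}\supseteq\sqrt[\infty]{(b)}\ni a$, so $\mathfrak{p}\in Z^{\infty}(a)$; this yields (ii).

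For (iii) $\Leftrightarrow$ (iv), I would use that $\sqrt[\infty]{\cdot}$ is monotone and idempotent. Idempotence follows from Proposition \ref{prop:sradical}~b) (any intersection of $\C$-radical primes is $\C$-radical), together with the observation that a $\C$-radical superideal equals its own $\C$-radical. Assuming (iii), one has $(a)\subseteq\sqrt[\infty]{(b)}$, so by monotonicity $\sqrt[\infty]{(a)}\subseteq\sqrt[\infty]{\sqrt[\infty]{(b)}}=\sqrt[\infty]{(b)}$, establishing (iv). Conversely, (iv) directly implies $a\in(a)\subseteq\sqrt[\infty]{(a)}\subseteq\sqrt[\infty]{(b)}$, which is (iii).

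No step presents a genuine obstacle once Proposition \ref{prop:sradical} is in hand; if anything, the only subtle point is checking that the usual reformulation $Z^{\infty}(a)=Z^{\infty}((a))$ in terms of the principal superideal generated by a (possibly non-homogeneous) element $a$ is compatible with the definition $Z^{\infty}(\mathfrak{I})=\{\mathfrak{p}\mid\mathfrak{p}\supseteq\mathfrak{I}\}$, which is immediate since $a\in\mathfrak{p}$ iff $(a)\subseteq\mathfrak{p}$. The arrangement I would present in the paper is the cycle (i) $\Leftrightarrow$ (ii) $\Rightarrow$ (iii) $\Rightarrow$ (iv) $\Rightarrow$ (iii) $\Rightarrow$ (ii), collapsing the routine equivalence with complements into a single sentence.
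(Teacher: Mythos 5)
Your proof is correct; note that the paper omits this proof entirely, deferring to Propositions 5.3--5.5 of \cite{CM1}, and your argument via Proposition \ref{prop:sradical} is exactly the intended ``straightforward super extension,'' reducing everything (as it should) to the even parts since every prime superideal contains all of $\mathfrak{R}\od$. The only point worth tightening is idempotence of $\sqrt[\infty]{\;\cdot\;}$: rather than asserting that an arbitrary intersection of $\C$-radical primes is $\C$-radical, derive it directly from Proposition \ref{prop:sradical}~b) by observing that a $\C$-radical prime contains $\mathfrak{I}$ if and only if it contains $\sqrt[\infty]{\mathfrak{I}}$, so the two intersections defining $\sqrt[\infty]{\mathfrak{I}}$ and $\sqrt[\infty]{\sqrt[\infty]{\mathfrak{I}}}$ range over the same family.
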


\begin{proposition}  Let $\mathfrak{R}$ be a $\C$-superring. The following properties hold:
\begin{itemize}
\item[(i)]  For any superideals $\mathfrak{a}$ and $\mathfrak{b}$ of $\mathfrak{R}$, we have $\sqrt[\infty]{\mathfrak{a}} \subseteq \sqrt[\infty]{\mathfrak{b}}$ if and only if $Z^{\infty}(\mathfrak{b}) \subseteq Z^{\infty}(\mathfrak{a})$.
\item[(ii)] $Z^{\infty}\,((0)) = {\rm sSpec}^{\infty}\,(\mathfrak{R})$ and $Z^{\infty}((1_{\mathfrak{R}})) = \emptyset$.
\item[(iii)] For any superideals $\mathfrak{a}$ and $\mathfrak{b}$ of $\mathfrak{R}$, we have $Z^{\infty}(\mathfrak{a}) \cup Z^{\infty}(\mathfrak{b}) = Z^{\infty}(\mathfrak{a}\cdot \mathfrak{b}) = Z^{\infty}(\mathfrak{a} \cap \mathfrak{b})$;
\item[(iv)] $\bigcap_{i \in I}Z^{\infty}(\mathfrak{a}_i) = Z^{\infty}\left( \sum_{i \in I} \mathfrak{a}_i\right)$, where $\sum_{i \in I} \mathfrak{a}_i$ denotes the sum superideal generated by the family $\{ \mathfrak{a}_i\}_{i \in I}$ of superideals of $\mathfrak{R}$.
\end{itemize}
\end{proposition}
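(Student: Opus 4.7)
The plan is to systematically reduce each of the four claims to analogous statements about the even part $\mathfrak{R}\ev$, which is a $\C$-ring, and then invoke the corresponding results from \cite{CM1}. The main tool for this reduction is Proposition \ref{primeideal}, which asserts that every prime superideal of $\mathfrak{R}$ has the form $\mathfrak{p} = \mathfrak{p}\ev \oplus \mathfrak{R}\od$ with $\mathfrak{p}\ev$ a prime ideal of $\mathfrak{R}\ev$, together with Remark \ref{rem:radical-prime}, which says that $\mathfrak{p}$ is $\C$-radical if and only if $\mathfrak{p}\ev$ is a $\C$-radical prime ideal of $\mathfrak{R}\ev$. The key consequence I will use repeatedly is that for any superideal $\mathfrak{a}=\mathfrak{a}\ev\oplus\mathfrak{a}\od$ of $\mathfrak{R}$, the condition $\mathfrak{p}\supseteq \mathfrak{a}$ is equivalent to $\mathfrak{p}\ev\supseteq \mathfrak{a}\ev$, since $\mathfrak{R}\od$ automatically contains $\mathfrak{a}\od$.

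For item (i), I will apply Proposition \ref{prop:sradical}(b), which identifies $\sqrt[\infty]{\mathfrak{a}}$ with the intersection of all $\C$-radical prime superideals containing $\mathfrak{a}$. The forward direction is then immediate: if $\sqrt[\infty]{\mathfrak{a}}\subseteq \sqrt[\infty]{\mathfrak{b}}$ and $\mathfrak{p}\in Z^{\infty}(\mathfrak{b})$, then $\mathfrak{p}\supseteq\sqrt[\infty]{\mathfrak{b}}\supseteq\sqrt[\infty]{\mathfrak{a}}\supseteq\mathfrak{a}$, so $\mathfrak{p}\in Z^{\infty}(\mathfrak{a})$. For the converse, the inclusion $Z^{\infty}(\mathfrak{b})\subseteq Z^{\infty}(\mathfrak{a})$ gives $\sqrt[\infty]{\mathfrak{a}} = \bigcap_{\mathfrak{p}\in Z^{\infty}(\mathfrak{a})}\mathfrak{p}\subseteq \bigcap_{\mathfrak{p}\in Z^{\infty}(\mathfrak{b})}\mathfrak{p} = \sqrt[\infty]{\mathfrak{b}}$. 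Item (ii) is immediate: every prime superideal contains $(0)$, and by properness no prime superideal contains $1_{\mathfrak{R}}$.

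For item (iii), I will first note the inclusions $\mathfrak{a}\cdot\mathfrak{b}\subseteq\mathfrak{a}\cap\mathfrak{b}\subseteq\mathfrak{a},\mathfrak{b}$, which give $Z^{\infty}(\mathfrak{a})\cup Z^{\infty}(\mathfrak{b})\subseteq Z^{\infty}(\mathfrak{a}\cap\mathfrak{b})\subseteq Z^{\infty}(\mathfrak{a}\cdot\mathfrak{b})$. To close the loop, I will show $Z^{\infty}(\mathfrak{a}\cdot\mathfrak{b})\subseteq Z^{\infty}(\mathfrak{a})\cup Z^{\infty}(\mathfrak{b})$ using primality: if $\mathfrak{p}\supseteq \mathfrak{a}\cdot\mathfrak{b}$ but $\mathfrak{p}\not\supseteq\mathfrak{a}$, pick $a\in\mathfrak{a}\setminus\mathfrak{p}$ (which can be taken in $\mathfrak{a}\ev$ since $\mathfrak{p}\supseteq\mathfrak{R}\od\supseteq\mathfrak{a}\od$); then for each $b\in\mathfrak{b}\ev$ one has $ab\in\mathfrak{a}\cdot\mathfrak{b}\subseteq\mathfrak{p}$, forcing $b\in\mathfrak{p}\ev$ by primality, and hence $\mathfrak{b}\ev\subseteq\mathfrak{p}\ev$, i.e.\ $\mathfrak{p}\in Z^{\infty}(\mathfrak{b})$. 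For item (iv), the equivalence $\mathfrak{p}\supseteq\sum_{i\in I}\mathfrak{a}_i \Longleftrightarrow \mathfrak{p}\supseteq\mathfrak{a}_i\text{ for all }i\in I$ is formal and yields the claim directly.

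I do not anticipate any serious obstacles; the proof is essentially a bookkeeping exercise, and the only subtle point is to keep track of the decomposition $\mathfrak{p}=\mathfrak{p}\ev\oplus\mathfrak{R}\od$ so that the arguments involving primality (especially in (iii)) can be carried out on the even components, where the corresponding commutative-algebra facts apply. The use of $\C$-radical primes (rather than arbitrary primes) is essential only for invoking Proposition \ref{prop:sradical}(b) in item (i); the other parts hold for the set of all prime superideals and specialize without modification to $\mathrm{sSpec}^{\infty}(\mathfrak{R})$.
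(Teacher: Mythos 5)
Your proof is correct. The paper omits the argument entirely, stating only that it is a ``straightforward super extension'' of Propositions 5.3--5.5 of \cite{CM1}, and your writeup — reducing everything to the decomposition $\mathfrak{p}=\mathfrak{p}\ev\oplus\mathfrak{R}\od$, using Proposition \ref{prop:sradical}(b) for item (i) and complete primality for item (iii) — is precisely the intended filling-in of that omitted routine verification.
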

\begin{definition}
Let $\mathfrak{R}=\mathfrak R\ev \oplus \mathfrak R\od$ be a $\C$-superring and let $X={\rm Spec}^{\infty}(\mathfrak{R}\ev)$. We define its structure sheaf $\mathcal{O}_X$ as a sheaf of $\C$-superrings on $X$ satisfying the following conditions:
\begin{itemize}
\item  For any $a\in \mathfrak{R}\ev$, the $\C$-superring of sections over the principal open set $D^{\infty}(a)$, denoted $\mathcal{O}_X(D^{\infty}(a))$, is given by $\mathfrak{R}\{ a^{-1} \}$.

\item For any $\mathfrak{p}\ev\in X$, the stalk at $\mathfrak{p}\ev$, $\mathcal{O}_{X, \mathfrak p\ev}$, is  the local $\C$-superring $\mathfrak{R}_{\mathfrak{p}\ev}$.
\end{itemize}
   
\end{definition}

This definition extends the construction given in \cite[section 1]{MRII} directly to the super context.

\subsection{The space of $\R$-points}
The second approach to associating  a topological space with a $\C$-superring, developed by Dubuc \cite{D1} and Joyce \cite{J}, defines the spectrum of a $\C$-ring $\mathfrak{C}$ as its set of  $\R$-points. This set consists of all $\C$-ring morphisms from $\mathfrak{C}$ to $\R$, or, equivalently, the set of maximal ideals of $\mathfrak C$. This alternative construction immediately prompts two key questions: What is the relationship, if any, between the set of $\R$-points and the $\C$-radical spectrum? And what criteria guide the selection of one approach over the other? 

The first question has a straightforward answer: the space of $\R$-points is generally a subspace of the $\C$-radical spectrum. This is because $\R$-points correspond to maximal ideals, which are themselves $\C$-radical prime ideals. To address the second question, we must consider the specific problem we aim to solve or develop within the realm of $\C$-rings. For example, Joyce's work is primarily oriented towards real differential geometry; thus, it is more advantageous in that context to have residue fields isomorphic to $\R$.

In this subsection, we define the super-version of this maximal spectrum. We then investigate natural extensions of key concepts, such as \textit{fair} or \textit{germ-determined}, which are widely studied for $\C$-rings. Finally, we prove that our superspectrum is adjoint to the global sections functor and present some applications of this result.

\begin{lemma}
For any $\C$-superring $\mathfrak{R}$, the spaces of $\R$-points of the $\C$-rings $\mathfrak{R}\ev $ and $\overline{\mathfrak{R}}$ are homeomorphic.
\end{lemma}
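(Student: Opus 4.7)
The plan is to show that the surjective quotient morphism $q: \mathfrak{R}\ev \twoheadrightarrow \overline{\mathfrak{R}} = \mathfrak{R}\ev/\mathfrak{R}\od^2$ induces, by precomposition, a continuous map $q^*: X_{\overline{\mathfrak{R}}} \to X_{\mathfrak{R}\ev}$ given by $x \mapsto x \circ q$, and that this map is a homeomorphism. Since $q$ is surjective, $q^*$ is automatically injective. Continuity is immediate because the preimage of a basic open $U_a = \{y \in X_{\mathfrak{R}\ev} : y(a) \neq 0\}$ is $(q^*)^{-1}(U_a) = U_{\overline{a}}$, and the same computation shows that $q^*$ sends the basic open $U_{\overline{a}}$ onto $U_a \cap q^*(X_{\overline{\mathfrak{R}}})$, so once surjectivity is established, $q^*$ will automatically be open.

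The heart of the proof is therefore the surjectivity of $q^*$: given a $\C$-ring morphism $y: \mathfrak{R}\ev \to \R$, I must show $y$ factors through $\overline{\mathfrak{R}}$, i.e.\ $y(\mathfrak{R}\od^2) = 0$. For this, the key observation is that every element of $\mathfrak{R}\od^2$ is nilpotent. Indeed, take $a = \sum_{i=1}^k \theta_i\eta_i$ with $\theta_i,\eta_i \in \mathfrak{R}\od$. By supercommutativity, each odd element squares to zero (since $2\theta^2 = 0$ and $\R \subseteq \mathfrak{R}\ev$), and the unital subalgebra generated by $\{\theta_i,\eta_i\}_{i=1}^{k}$ is a supercommutative quotient of the Grassmann algebra $\bigwedge_{\R}(\R^{2k})$. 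Inside that finite-dimensional Grassmann algebra, every even element without constant term is nilpotent, so $a^N = 0$ for $N$ large enough. Since $\R$ has no nonzero nilpotents, $y(a)^N = y(a^N) = 0$ forces $y(a) = 0$.

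With that computation in hand, the factorization $y = \overline{y} \circ q$ exists and is unique, so $q^*$ is bijective. Combined with the openness argument above (which now yields $q^*(U_{\overline{a}}) = U_a$ for every $a \in \mathfrak{R}\ev$), we conclude that $q^*$ is a homeomorphism between $X_{\overline{\mathfrak{R}}}$ and $X_{\mathfrak{R}\ev}$.

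The main obstacle, and the step that requires the most care, is the nilpotency claim for arbitrary elements of $\mathfrak{R}\od^2$ when $\mathfrak{R}\od$ is not assumed finitely generated. The idea is to reduce to a finitely generated situation by working inside the subalgebra generated by the odd factors appearing in a fixed representative $a = \sum_{i=1}^{k}\theta_i\eta_i$, so that the Grassmann-algebra argument applies. Once this is set up cleanly, the rest of the proof is a straightforward verification of basic open sets pulling back and pushing forward correctly.
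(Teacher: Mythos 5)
Your proposal is correct and follows essentially the same route as the paper: both establish a bijection between $\R$-points of $\mathfrak{R}\ev$ and of $\overline{\mathfrak{R}}$ via the quotient map $q:\mathfrak{R}\ev\to\overline{\mathfrak{R}}=\mathfrak{R}\ev/\mathfrak{R}\od^2$ and then observe that basic open sets correspond under this bijection. In fact your write-up is more complete than the paper's, which merely asserts that every $\R$-point of $\mathfrak{R}\ev$ descends to $\overline{\mathfrak{R}}$; your nilpotency argument (each $\theta\in\mathfrak{R}\od$ satisfies $\theta^2=0$ since $2$ is invertible, so any finite sum $\sum_{i=1}^{k}\theta_i\eta_i$ satisfies $a^{k+1}=0$, and $\R$ is reduced) supplies exactly the missing justification for that step.
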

\begin{proof}
Let $\overline{x}:\overline{\mathfrak{R}}\rightarrow\R$ be a $\R$-point of $\overline{\mathfrak{R}}$. Consider the canonical projection $\pi:\mathfrak{R}\ev\rightarrow\overline{\mathfrak{R}}$. We can define a map $x:\mathfrak{R}\ev\rightarrow\R$, by setting $x(r)=\overline{x}(\pi(r))$. This map $x$ is clearly a well-defined morphism of $\C$-rings. 

Conversely, given a morphism $x:\mathfrak{R}\ev\rightarrow\R$, we can define  $\overline{x}:=\pi\circ x$. This $\overline x$ is a well-defined morphism of $\C$-rings, making it an $\R$-point of $\overline{\mathfrak{R}}$. 

Consequently, we have established a bijection between the sets of $\R$-points of $\mathfrak R\ev$ and $\overline{\mathfrak{R}}$. Let $X_{\mathfrak R\ev}$ and $X_{\overline{\mathfrak R}}$ denote the spaces of $\R$-points of $\mathfrak R\ev$ and $\overline{\mathfrak R}$, respectively. This bijection is a homeomorphism because it maps basic open sets from $X_{\mathfrak R\ev}$ to  basic open sets of $X_{\overline{\mathfrak R}}$. Specifically, if $r\in\mathfrak{R}\ev$, then the image of the basic open $U_r=\{x\in X_{\mathfrak{R}{\ev}}\mid x(r)=0\}$ in $X_{\overline{\mathfrak R}}$ is precisely the basic open set $U_{\overline{r}}=\{\overline{x}\in X_{\overline{\mathfrak{R}}{\ev}}\mid \overline{x}(\overline{r})=0\}$ in $X_{\overline{\mathfrak R}}$.
\end{proof}

\begin{definition}
For a $\C$-superring $\mathfrak{R}$, its space of $\R$-points, denoted $X_\mathfrak{R}$, is defined as the space of $\R$-points of its even part $\mathfrak{R}\ev$. Thus,  $X_\mathfrak{R}:=X_{\mathfrak{R}{\ev}}$.   
\end{definition}

One can readily verify that a bijection exist between the set of $\R$-points of $\mathfrak{C}$ and the set of its maximal ideals $\mathfrak{m}$ for which the quotient ring  $\mathfrak{C}/\mathfrak{m}$ is isomorphic to $\R$. This set is variably referred to as the \textit{real spectrum of} $\mathfrak{C}$ by some authors (e.g., \cite{GS}) and the \textit{Archimedean spectrum} by others  (e.g., \cite{MRII}). However, it should not be confused with the real spectrum as defined in \cite{CM1}. Since maximal ideals are always $\C$-radical, the $\R$-points form a subspace of the $\C$-prime spectrum.\\

\begin{example}
Let $E$ be a set and let $\mathfrak{R}=\C(\R^E)[\theta^1,\ldots,\theta^n]/\mathfrak{I}$ be a $\C$-superring. The reduced part of $\mathfrak{R}$ is a $\C$-ring of the form $\mathfrak{\overline{R}}=\C(\R^{E})/\overline{\mathfrak{I}}$. It is easy to see that the $\R$-points of $\mathfrak{R}$ are precisely the evaluation morphisms at points in the zero set of ideal $\overline{\mathfrak{I}}$. Moreover, this zero set is a subspace of $\R^E$ with the topology inherited from the product-topology on $\R^E$.   
\end{example}

We have defined $X_{\mathfrak R}$ as the topological space of $\R$-points associated with a $\C$-superrings $\mathfrak R$. Our next step is to define a sheaf of $\C$-superrings on $X_{\mathfrak R}$, which will provide the necessary ingredients for defining the superspectrum. Prior to that, however, we will introduce the notion of localization of a $\C$-superring at an $\R$-point, and then extend some familiar concepts and result from the realm of $\C$-rings.

\begin{definition}\label{LocR-point}
Let $x$ be an $\R$-point of a $\C$-superring $\mathfrak{R}=\mathfrak R\ev \oplus \mathfrak R\od$. The localization of $\mathfrak{R}$ at the $\R$-point $x$, denoted $\mathfrak{R}_x$, is the local $\C$-superring $\mathfrak{R}\{ S^{-1} \}$ where $S=\{r\in \mathfrak{R}\ev \mid x(r)\neq 0\}$. 
\end{definition}
\begin{lemma}\label{lem:superloc}
 Consider a $\C$-superring $\mathfrak{R}=\mathfrak{R}\ev\oplus\mathfrak{R}\od$ and an $\R$-point $x\in X_{\mathfrak{R}\ev}$. Let $S=\{r\in \mathfrak{R}\ev \mid x(r)\neq 0\}$. Then, $\mathfrak{R}_x$ decomposes as $\mathfrak{R}_x=(\mathfrak{R}\ev)_{x}\oplus(\mathfrak{R}\od)_{x}$. Here, $(\mathfrak{R}\ev)_{x}$ represents the localization of the $\C$-ring $\mathfrak{R}\ev$ with respect to $S$, and $(\mathfrak{R}\od)_{x}$ is the resulting $(\mathfrak{R}\ev)_{x}$-module formed by localizing the $\mathfrak{R}\ev$-module $\mathfrak{R}\od$ at S.
\end{lemma}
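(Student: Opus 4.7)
The plan is to reduce this statement directly to the general construction of the localization of a $\C$-superring at a multiplicatively closed subset of its even part, which was established earlier in the subsection on localization (the proposition that computes $\mathfrak{R}\{S^{-1}\}$ as $\mathfrak{R}\ev\{S^{-1}\}\oplus\mathfrak{R}\od\{S^{-1}\}$). Since $\mathfrak{R}_x$ is defined in Definition \ref{LocR-point} precisely as $\mathfrak{R}\{S^{-1}\}$ for the specific set $S=\{r\in\mathfrak{R}\ev\mid x(r)\neq 0\}$, the only conceptual content is to verify that this $S$ is legitimately a multiplicatively closed subset of the even part so that the earlier proposition applies.

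First, I would check multiplicative closedness of $S$. Because $x:\mathfrak{R}\ev\rightarrow\R$ is a morphism of $\C$-rings (equivalently, of unital $\R$-algebras), we have $x(1_{\mathfrak{R}\ev})=1$ and $x(rs)=x(r)x(s)$ for all $r,s\in\mathfrak{R}\ev$. Consequently $1_{\mathfrak{R}\ev}\in S$, and whenever $r,s\in S$ the product satisfies $x(rs)=x(r)x(s)\neq 0$, so $rs\in S$. Thus $S\subseteq\mathfrak{R}\ev$ is multiplicatively closed in the sense required by Definition \ref{def:suplocal}.

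Second, I would invoke the proposition computing localizations of $\C$-superrings at multiplicative subsets of the even part. Applied to the present $S$, it yields
\[
\mathfrak{R}_x \;=\; \mathfrak{R}\{S^{-1}\} \;=\; \mathfrak{R}\ev\{S^{-1}\}\,\oplus\,\mathfrak{R}\od\{S^{-1}\},
\]
where the first summand is the $\C$-ring localization of $\mathfrak{R}\ev$ at $S$ and the second is the module localization of the $\mathfrak{R}\ev$-module $\mathfrak{R}\od$ at $S$. By the notational convention introduced just after Definition \ref{LocR-point} (the subscript $x$ meaning ``localize at the multiplicative set determined by $x$''), these are exactly $(\mathfrak{R}\ev)_{x}$ and $(\mathfrak{R}\od)_{x}$, which gives the claimed decomposition. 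The $(\mathfrak{R}\ev)_x$-module structure on $(\mathfrak{R}\od)_x$ is inherited from the $\mathfrak{R}\ev\{S^{-1}\}$-module structure supplied by the general localization proposition, so no extra verification is needed.

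There is no real obstacle here: once the multiplicative-closure check is in place, the statement is a direct specialization of the already-proved general localization proposition to the multiplicative set cut out by an $\R$-point. The only mild subtlety worth flagging in the write-up is that, unlike the case where $S$ is generated by a single element, one should emphasize that $S$ is in general much larger than any principal multiplicative set, so the localization $(\mathfrak{R}\ev)_x$ agrees with the usual Joyce-style stalk construction reviewed in the Preliminaries (in particular, it is the local $\C$-ring whose unique maximal ideal corresponds to $x$, cf.\ Theorem \ref{rad-local}), which justifies calling $\mathfrak{R}_x$ the localization of $\mathfrak{R}$ at the $\R$-point $x$.
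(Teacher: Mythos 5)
Your proof is correct and follows essentially the same route as the paper's: both obtain the decomposition by identifying $\mathfrak{R}_x$ with the even-part $\C$-ring localization plus the odd-part module localization, justified by the universal property of localization. The paper re-verifies the universal property componentwise via $\mathcal{L}_x=\pi_x\oplus\rho_x$, whereas you simply specialize the already-proved general proposition on $\mathfrak{R}\{S^{-1}\}$ after checking that $S$ is multiplicatively closed --- a slightly cleaner packaging of the same argument.
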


\begin{proof}
The localization morphism $\mathcal{L}_x:\mathfrak{R}\rightarrow(\mathfrak{R}\ev)_{x}\oplus(\mathfrak{R}\od)_{x}$ is defined as $\pi_x\oplus \rho_x$, where $\pi_x$ and $\rho_x$ are the respective localization morphisms of $\mathfrak{R}\ev$ and $\mathfrak{R}\od$ at $x$. The universal properties of localization for $\pi_x$ and $\rho_x$ imply the universal property of localization for $\mathcal{L}_x$, thus completing the proof.
\end{proof}

\begin{lemma}
 Let $\mathfrak{R}=\mathfrak{R}\ev\oplus\mathfrak{R}\od$ be a $\C$-superring. Let $x\in X_{\mathfrak{R}\ev}$ and $\overline{x} \in X_{\overline{\mathfrak{R}}}$ be $\R$-points of $\mathfrak{R}$ and $\overline{\mathfrak{R}}$, respectively. Then, we have the following isomorphism of $\C$-rings: $\overline{\mathfrak{R}_x}\cong \overline{\mathfrak{R}}_{\overline{x}}$.
\end{lemma}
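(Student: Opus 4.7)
The plan is to derive this isomorphism as a direct corollary of the preceding lemma that establishes $\overline{\mathfrak{R}\{S^{-1}\}} \cong \overline{\mathfrak{R}}\{\overline{S}^{-1}\}$ for any multiplicative subset $S \subseteq \mathfrak{R}\ev$. The only work is to identify the appropriate multiplicative set associated to the $\R$-point $x$ and to verify that its image under the canonical projection $q_{\mathfrak{R}}: \mathfrak{R}\ev \to \overline{\mathfrak{R}}$ coincides with the multiplicative set defining the localization at $\overline{x}$.

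First, I would unpack the definitions. Per Definition \ref{LocR-point}, set $S = \{r \in \mathfrak{R}\ev \mid x(r) \neq 0\}$, so that $\mathfrak{R}_x = \mathfrak{R}\{S^{-1}\}$. Similarly, $\overline{\mathfrak{R}}_{\overline{x}} = \overline{\mathfrak{R}}\{T^{-1}\}$ where $T = \{\overline{s} \in \overline{\mathfrak{R}} \mid \overline{x}(\overline{s}) \neq 0\}$. The key observation is provided by the first lemma of this subsection: the $\R$-points $x$ and $\overline{x}$ are related by $\overline{x} \circ q_{\mathfrak{R}} = x$. Consequently, for any $r \in \mathfrak{R}\ev$, the equality $x(r) = \overline{x}(\overline{r})$ shows that $r \in S$ if and only if $\overline{r} \in T$, so $q_{\mathfrak{R}}(S) = T = \overline{S}$.

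Having matched the multiplicative sets, I would apply the earlier lemma ($\overline{\mathfrak{R}\{S^{-1}\}} \cong \overline{\mathfrak{R}}\{\overline{S}^{-1}\}$) to conclude
$$\overline{\mathfrak{R}_x} \;=\; \overline{\mathfrak{R}\{S^{-1}\}} \;\cong\; \overline{\mathfrak{R}}\{\overline{S}^{-1}\} \;=\; \overline{\mathfrak{R}}_{\overline{x}},$$
which is the desired isomorphism of $\C$-rings.

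I do not foresee any serious obstacle: the argument is essentially a compatibility check between the $\R$-point correspondence and the ``bar commutes with localization'' lemma, both of which are already established. The only point requiring care is the explicit verification that $q_{\mathfrak{R}}(S) = \overline{S}$, which hinges on the relation $\overline{x} \circ q_{\mathfrak{R}} = x$ coming from the bijection $X_{\mathfrak{R}\ev} \cong X_{\overline{\mathfrak{R}}}$; this is immediate once the correspondence is invoked.
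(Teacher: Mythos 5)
Your proof is correct, and it takes a genuinely tidier route than the one in the paper. The paper re-derives the isomorphism from scratch: it sets up the commutative square involving the projection $q$ and the localization morphisms, shows that $\overline{\mathcal{L}_x}(\overline{r})$ is invertible for every $\overline{r}\in T$, and then invokes the universal property of $\C$-ring localization. You instead recognize that this lemma is a special case of the earlier result $\overline{\mathfrak{R}\{S^{-1}\}}\cong\overline{\mathfrak{R}}\{\overline{S}^{-1}\}$, and reduce everything to the single computation $q_{\mathfrak{R}}(S)=T$, which follows immediately from $x=\overline{x}\circ q_{\mathfrak{R}}$. Your reduction is cleaner, avoids duplicating the universal-property argument that already appears in the proof of the general lemma, and makes the dependency structure of the paper more transparent. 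The only implicit point worth flagging (and you do flag it) is that the statement of the lemma presupposes that $x$ and $\overline{x}$ correspond to one another under the homeomorphism $X_{\mathfrak{R}\ev}\cong X_{\overline{\mathfrak{R}}}$; otherwise the claimed isomorphism has no reason to hold. Both your proof and the paper's rely on this identification, and it is the right reading of the statement.
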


\begin{proof}
Note that, due to the functoriality of the projection $q:\mathfrak{R}\rightarrow\overline{\mathfrak{R}}$, the following diagram commutes:
\[ 
\begin{tikzcd}
\mathfrak{R}\arrow{r}{\mathcal{L}_x} \arrow[swap]{d}{q} & \mathfrak{R}_x\arrow{d}{q_x} \\
\overline{\mathfrak{R}} \arrow{r}{\overline{\mathcal{L}_{x}} }& \overline{\mathfrak{R}_{x}}
\end{tikzcd}
\]
By definition, $\overline{\mathfrak{R}}_{\overline{x}}$ is the localization of $\overline{\mathfrak{R}}$ at the set $T=\{\overline{r} \in \overline{\mathfrak{R}}\mid \overline{x}(\overline{r})\neq 0\}$. Consider any $r\in\mathfrak{R}\ev$ such that $\overline{r}\in T$. From the definition of the $\R$-point $\overline{x}$ and given that $\overline{x}(\overline{r})\neq 0$, it follows that $x(r)\neq 0$. This implies that $\mathcal{L}_x(r)$ is invertible in $\mathfrak{R}_x$. Consequently, its image  $\overline{\mathcal{L}_x(r)}=q_x(\mathcal{L}_x(r))$, under the projection $q_x$, is an invertible element in $\overline{\mathfrak{R}_x}$. By the commutativity of the diagram, we have $\overline{\mathcal{L}_{x}}(\overline{r})=\overline{\mathcal{L}_{x}(r)}$. It follows that $\overline{\mathcal{L}_{x}}(\overline{r})$ is invertible  for any $\overline{r}\in T$. 

Therefore, by the universal property of localization for $\C$-rings, we conclude that $\overline{\mathfrak{R}_x}\cong \overline{\mathfrak{R}}_{\overline{x}}$.
\end{proof}
  
The localization morphism for $\C$-rings is generally not surjective. However, when localizing at an $\R$-point, the localization morphism is known to be surjective \cite[Proposition 2.14]{J}. This property extends to $\C$-superrings, as the following lemma shows.

 \begin{lemma}\label{locsobre}
Let $\mathfrak{R}$ be a $\C$-superring and let $x \in X_{\mathfrak{R}}$ be an $\R$-point. The localization morphism $\mathcal{L}_x: \mathfrak R \to \mathfrak R\ev$ is surjective.
 \end{lemma}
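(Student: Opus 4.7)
The plan is to leverage the decomposition from Lemma \ref{lem:superloc} and reduce the statement to the already-known surjectivity in the purely even (non-graded) case. By Lemma \ref{lem:superloc}, the localization morphism decomposes as $\mathcal{L}_x = \pi_x \oplus \rho_x$, where $\pi_x \colon \mathfrak{R}\ev \to (\mathfrak{R}\ev)_x$ is the $\C$-ring localization at $x$, and $\rho_x \colon \mathfrak{R}\od \to (\mathfrak{R}\od)_x$ is the localization of the $\mathfrak{R}\ev$-module $\mathfrak{R}\od$ at the multiplicative set $S = \{r \in \mathfrak{R}\ev \mid x(r) \neq 0\}$. It therefore suffices to show that each summand is surjective.

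For the even part, $\pi_x$ is surjective by \cite[Proposition 2.14]{J}, a result already invoked in the preliminaries of this paper. This handles the entire even component of $\mathcal{L}_x$ with no further work.

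For the odd part, the plan is to transfer surjectivity from the even component using the module action. Let $y \in (\mathfrak{R}\od)_x$; by construction of the localized module, $y$ can be written as $\rho_x(r) \cdot u$ for some $r \in \mathfrak{R}\od$ and some $u \in (\mathfrak{R}\ev)_x$ (concretely $u = \pi_x(s)^{-1}$ where $r/s$ represents $y$). Applying surjectivity of $\pi_x$ to $u$, choose $t \in \mathfrak{R}\ev$ with $\pi_x(t) = u$. Since $\rho_x$ is an $\mathfrak{R}\ev$-module morphism (where $\mathfrak{R}\ev$ acts on $(\mathfrak{R}\od)_x$ through $\pi_x$, as established in Lemma \ref{escalarext} and in the proof of Lemma \ref{lem:superloc}), we obtain $\rho_x(tr) = \pi_x(t) \cdot \rho_x(r) = u \cdot \rho_x(r) = y$. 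Because $t \in \mathfrak{R}\ev$ and $r \in \mathfrak{R}\od$, the element $tr$ lies in $\mathfrak{R}\od$, and thus $y$ lies in the image of $\rho_x$.

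Combining these two observations, $\mathcal{L}_x = \pi_x \oplus \rho_x$ is surjective on both graded components and hence surjective overall. The only mild subtlety—and the place where one must be attentive—is to verify that the module action on $(\mathfrak{R}\od)_x$ is genuinely induced by $\pi_x$ from the $\mathfrak{R}\ev$-action on $\mathfrak{R}\od$, so that the identity $\rho_x(tr) = \pi_x(t) \cdot \rho_x(r)$ is valid. This, however, is exactly the content of the universal property of module localization invoked inside the proof of Lemma \ref{lem:superloc}, so no new ingredient is required.
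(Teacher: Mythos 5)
Your proposal is correct and follows essentially the same approach as the paper: both reduce to the surjectivity of the even localization morphism via \cite[Proposition 2.14]{J}, then observe that an odd element of $\mathfrak{R}_x$ is of the form $a\cdot\rho_x(r)$ with $a\in(\mathfrak{R}\ev)_x$ and $r\in\mathfrak{R}\od$, and lift $a$ through $\pi_x$ to produce a preimage in $\mathfrak{R}\od$. The only difference is cosmetic: you spell out the module-morphism identity $\rho_x(tr)=\pi_x(t)\cdot\rho_x(r)$ and the provenance of the $(\mathfrak{R}\ev)_x$-action, which the paper leaves implicit.
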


\begin{proof}
 Since the even part of the ring $\mathfrak R\ev$ is a $\C$-ring, the localization morphism $(\mathcal{L}_x)\ev:\mathfrak{R}\ev\longrightarrow(\mathfrak{R}{\ev})_{x}$ is surjective. Furthermore, the odd elements of $\mathfrak{R}_{x}$ are the form $ar$ with $r\in \mathfrak{R}_1$ and $a\in (\mathfrak{R}{\ev})_x$. Thus, if $b$ is a preimage of $a$ under $(\mathcal{L}_x)\ev$, then $br$ is a preimage of $ar$ in $(R\od)_x$. Consequently, $\mathcal{L}_x$ is surjective. 
\end{proof}

\begin{remark}\label{locx-cociente}

According to \cite[Proposition 2.14]{J}, taking $x$ as $\R$-point of $\mathfrak{R}\ev$ the kernel of the localization morphism $(\mathcal{L}_x)\ev: \mathfrak R\ev \rightarrow (\mathfrak R\ev)_x$ is given by $$(I_x)\ev=\{a\in\mathfrak{R}\ev \mid \exists\; {b\in\mathfrak{R}{\ev}}\; \text{with}\; x(b)\neq 0 \; \text{in}\; \R \; \text{and} \;  ab=0 \; \text{in }\; \mathfrak R\ev \}$$ 

Therefore, the kernel of the localization morphism $\mathcal{L}_x$ is given by
$$I_x=(I_x)\ev\mathfrak{R}$$ which is an ideal of $\mathfrak{R}$. Thus, the localization of $\mathfrak{R}$ on $x$ is isomorphic to the quotient $\mathfrak{R}/I_x$
\end{remark}

The proposition presented below highlights the functorial nature of localizing a $\C$-superring at an $\R$-point.

\begin{proposition}\label{Rpointmor}
Let $\varphi:\mathfrak{R}\rightarrow \mathfrak{S}$ be a morphism of finitely generated $\C$-superrings. For an $\R$-point $x \in X_{\mathfrak S}$, define $y=x\circ \varphi\ev \in X_{\mathfrak R}$. Then, there exists an induced morphism $\varphi_x:\mathfrak{R}_y\rightarrow\mathfrak{S}_x$  making the following diagram commute:
\[ 
\begin{tikzcd}
\mathfrak{R}\arrow{r}{\varphi} \arrow[swap]{d}{\mathcal{L}_y} & \mathfrak{S}\arrow{d}{\mathcal{L}_x} \\
\mathfrak{R}_y \arrow{r}{\varphi_x }& \mathfrak{S}_x .
\end{tikzcd}
\] 
\end{proposition}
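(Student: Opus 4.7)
The plan is to invoke the universal property of localization in the category of $\C$-superrings (Definition \ref{def:suplocal}) applied to the composition $\mathcal{L}_x\circ\varphi:\mathfrak{R}\rightarrow\mathfrak{S}_x$. To apply it, we must exhibit the multiplicative set at which $\mathfrak{R}$ is being localized to obtain $\mathfrak{R}_y$, and then verify that the image of this set under $\mathcal{L}_x\circ\varphi$ consists of invertible elements of $\mathfrak{S}_x$.

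First, by Definition \ref{LocR-point}, $\mathfrak{R}_y=\mathfrak{R}\{S_y^{-1}\}$, where $S_y=\{r\in\mathfrak{R}\ev\mid y(r)\neq 0\}$, and similarly $\mathfrak{S}_x=\mathfrak{S}\{S_x^{-1}\}$, where $S_x=\{s\in\mathfrak{S}\ev\mid x(s)\neq 0\}$. The key step is the observation that $\varphi\ev(S_y)\subseteq S_x$: indeed, for any $r\in S_y$, the definition $y=x\circ\varphi\ev$ yields $x(\varphi\ev(r))=y(r)\neq 0$, so $\varphi\ev(r)\in S_x$. Since $\mathcal{L}_x(S_x)$ consists of invertible elements of $\mathfrak{S}_x$, the composition $\mathcal{L}_x\circ\varphi$ sends every element of $S_y$ to an invertible element of $\mathfrak{S}_x$.

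By the universal property of the localization $\mathcal{L}_y:\mathfrak{R}\rightarrow\mathfrak{R}_y$, there exists a unique morphism of $\C$-superrings $\varphi_x:\mathfrak{R}_y\rightarrow\mathfrak{S}_x$ satisfying $\varphi_x\circ\mathcal{L}_y=\mathcal{L}_x\circ\varphi$, which is precisely the commutativity stated in the proposition. Note that the finite generation hypothesis is not essential for the existence and uniqueness of $\varphi_x$ itself; it enters only through the setting of the subsequent geometric constructions, where Lemma \ref{locsobre} guarantees that $\mathcal{L}_y$ and $\mathcal{L}_x$ are surjective, so that $\varphi_x$ can be described explicitly as $\varphi_x(\mathcal{L}_y(r))=\mathcal{L}_x(\varphi(r))$ for every $r\in\mathfrak{R}$.

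I do not foresee a substantial obstacle: the argument is a direct unwinding of the universal property, with the one nontrivial checkpoint being the inclusion $\varphi\ev(S_y)\subseteq S_x$, which is immediate from the definition $y=x\circ\varphi\ev$. If desired, one can complement the argument with the even/odd decomposition given in Lemma \ref{lem:superloc}, producing $\varphi_x=(\varphi_x)\ev\oplus(\varphi_x)\od$, where $(\varphi_x)\ev:(\mathfrak{R}\ev)_y\rightarrow(\mathfrak{S}\ev)_x$ is the classical induced morphism between $\C$-ring localizations and $(\varphi_x)\od$ is the induced morphism of modules obtained by extension of scalars (Lemma \ref{escalarext}).
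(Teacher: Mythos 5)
Your proof is correct, but it takes a genuinely different route from the paper's. You invoke the universal property of localization (Definition \ref{def:suplocal}): you verify the inclusion $\varphi\ev(S_y)\subseteq S_x$, which is immediate from $y=x\circ\varphi\ev$, conclude that $\mathcal{L}_x\circ\varphi$ inverts every element of $S_y$, and obtain $\varphi_x$ as the unique morphism factoring through $\mathcal{L}_y$. The paper instead uses Remark \ref{locx-cociente} to identify $\mathfrak{R}_y\cong\mathfrak{R}/I_y$ and $\mathfrak{S}_x\cong\mathfrak{S}/I_x$ (this is where surjectivity of the localization morphisms at $\R$-points, Lemma \ref{locsobre}, and the finite generation setting enter), and then checks directly that $\varphi(I_y)\subseteq I_x$: for $a\in(I_y)\ev$ there is $c$ with $y(c)\neq 0$ and $ac=0$, whence $\varphi(a)\varphi(c)=0$ with $x(\varphi(c))\neq 0$, so $\varphi(a)\in(I_x)\ev$. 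Your approach is cleaner and, as you correctly observe, does not need the finite generation hypothesis at all, since the universal property delivers existence and uniqueness of $\varphi_x$ in full generality; the paper's approach buys an explicit description of $\varphi_x$ as the map induced on quotients, $\widetilde{r}\mapsto\widetilde{\varphi(r)}$, which is the form used later (e.g.\ in the proof of Proposition \ref{prop:fairequiv}). You recover that same explicit formula at the end of your argument via surjectivity of $\mathcal{L}_y$, so nothing is lost.
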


\begin{proof}
According to Remark \ref{locx-cociente}, we have $\mathfrak{R}_y\cong \mathfrak{R}/I_y$ and $\mathfrak{S}_x\cong \mathfrak{S}/I_x$. Thus we only have to check that there is a well defined morphism f $\C$-superrings $\tilde{\varphi}:\mathfrak{R}/I_y\rightarrow  \mathfrak{S}/I_x$ such that the diagram \[ 
\begin{tikzcd}
\mathfrak{R}\arrow{r}{\varphi} \arrow[swap]{d}{} & \mathfrak{S}\arrow{d}{} \\
\mathfrak{R}/I_y \arrow{r}{\tilde{\varphi} }& \mathfrak{S}/I_x .
\end{tikzcd}
\] 
where the vertical arrows are quotient projection. \\
Indeed for any $\title{r}\in\mathfrak{R}/I_y $ we define $\tilde{\varphi}(\tilde{r}):= \widetilde{\varphi(r)}$. Without loss of generality, take $r=ab$ with $a\in (I_y)\ev$ and $b$ any element of $\mathfrak{R}$. Since $a\in (I_y)\ev$, there exists $c$ in $\mathfrak{R}\ev$ such that $y(c)\neq 0$ but $ac=0$. Thus, $\varphi(ac)=\varphi(a)\varphi(c)=0$ and, by the definition of $y$, we have $x(\varphi(c))\neq 0$ which proves that $\varphi(a)\in (I_x)\ev$ and consequently $ab\in I_x$. In conclusion, $\tilde{\varphi}$ is well defined.
\end{proof}

\begin{example}
Let $\mathfrak{C}$ be a $\C$-ring and consider the $\C$-superring $\mathfrak{R}=\mathfrak{C}[\theta^1,\ldots,\theta^n]^\pm$. Let $x$ be an $\R$-point of $\mathfrak R$. The reduced part of $\mathfrak R$ corresponds to $\mathfrak{C}$, and its localization at $x$ is given by $\mathfrak{R}_x=\mathfrak{C}_x[\theta^1,\ldots,\theta^n]^\pm$. To see this, consider a general element $C\in \mathfrak R$:
$$
C=c_0+\sum_{|I|=2n} c_I \, \theta^I+\sum_{|K|=2n+1} d_K\, \theta^K \, 
$$
where $c_0, \,c_I,\, d_K \in \mathfrak{C}$. The localization morphism $\mathcal{L}_x$ maps this element to:
$$
\mathcal{L}_x(C)=\pi_x(c_0)+\sum_{|I|=2n}\pi_x(c_I) \, \theta^I+\sum_{|K|=2n+1}\pi_x(d_K)\, \theta^K . 
$$
\end{example}

\begin{example}
Let $M=(|M|, \mathcal{O}_M)$ be a supermanifold of dimension $p\mid q$. The sheaf $\mathcal{O}_M$ is defined for any open set $U\subseteq | M|$ as $\mathcal{O}_M(U)=\C(U)\otimes\bigwedge_{\R} (\theta^1, \cdots, \theta^q)$ \cite[Definition 4.1.2]{CCF}, where $\C(U)$ is the $\C$-ring  of smooth functions on $U$. Therefore, for any $p\in |M|$, the localization of $\mathcal{O}_{M}$ at $p$ is given by $\C_p(|M|)\otimes\bigwedge_{\R} (\theta^1, \cdots, \theta^q)$, where $\C_p(|M|)$ denotes the $\C$-ring of germs of smooth functions at $p$. 
\end{example}

\subsection{Fair $\C$-superrings}

Having successfully extended the notion of localization at an $\R$-point to $\C$-superrings, our next step is to investigate the relationship between a $\C$-superring and its localizations. Specifically, we seek to identify conditions under which a $\C$-superring  is determined by its germs.

Recall that in the context of $\C$-rings, a fair C$^\infty$-ring is one that is finitely generated and germ-determined \cite[subsection 2.4]{J}. When extending the concept of fairness to $\C$-superrings, we have identified two possible definitions, which are not equivalent. The first option is a straightforward extension of Definition \ref{Fair} to  $\C$-superrings, simply replacing the term ``ring" with ``superring" in that definition. The second option is to define a $\C$-superring as fair if its reduced part is a fair $\C$-ring. The following example demonstrates why these two definitions are not equivalent.

\begin{example}\label{nonfair}
Let $f:\R\rightarrow\R$ be a smooth function such that $f(x)>0$ for all $x\in (0,1)$ and $f(x)=0$ elsewhere. Consider $K$ as the ideal of $\C(\R)$ generated by finite sums of the form $\sum_{a\in A\subset \mathbb{Z}}\; g_a(x)f(x-a)$, where $g \in \C(\R)$  and $A$ is a finite subset of $\mathbb{Z}$. Example 2.21 in \cite{J} shows that $K$ is a non-fair ideal of $\C(\R)$, as defined by Joyce in \cite[Definition 2.16]{J}. Consequently, the quotient $\mathfrak{C}= \C(\R)/K$ is also not fair \cite[Definition 2.16]{J}. 

Now, consider the ideal $\mathfrak{K}=\{k\, \theta^1 \theta^2 \mid k\in  K\}$ of $\C(\R^{1|2})=\C(\R)[\theta^1, \theta^2]^\pm$ and define the $\C$-superring $\mathfrak{R}=\C(\R^{1|2})/\mathfrak{K}$. To calculate the reduced part of $\mathfrak{R}$ we proceed as in Example \ref{reduced-quotient}, thus $\overline{\C(\R^{1|2})/\mathfrak{K}}=\C(\R)/[\mathfrak{K}\cap \C(\R)]=\C(\R)/(0)=\C(\R)$ which is clearly fair as it is finitely presented. Let $h\in \C(\R)/K$ be a non-zero function such that $\pi_p(h)=0$ for all $p\in Z(K)$. By the definition of $\mathfrak{K}$ we have that   $\mathcal{L}_p(h\theta^{12})=\pi_p(h)\theta^{12}=0\in \mathfrak{R}_p$ for all $p$. However, since $h\theta^{12}\neq 0$, it follows that $\mathfrak{K}$ is not a fair ideal, hence $\mathfrak{R}$ is not fair even though its reduced part $\overline{\mathfrak{R}}$ is.
\end{example}

We have decided to define a fair $\C$-superring by directly extending the concept from $\C$-rings. This approach directly builds on the definition provided by Joyce in \cite[Definition 2.16]{J}.

\begin{definition}\label{SuperFair}
Consider the split $\C$-superring $\C(\R^{p\mid q})=\C(\R)[\theta^1,\cdots ,\theta^q]^\pm$, and let $I$ be a superideal of it. For any $\R$-point $x$ of $\C(\R^{p\mid q})$, we denote by $I_x$ the image of $I$ under the localization morphism $\mathcal{L}_x:\C(\R^{p\mid q}) \to \C(\R^{p\mid q})_x $.

We say that $I$ is a \emph{fair superideal} if, for any $r\in \C(\R^{p\mid q})$,  $\mathcal{L}_x(r)\in I_x$ for all $x \in X_{\C(\R^{p\mid q})}$ if and only if $r\in I$. A finitely generated $\C$-superring $\mathfrak{R}=\C(\R^{p\mid q})/I$ is \emph{fair} if $I$ is a fair superideal of $\C(\R^{p\mid q})$.

Equivalently, a finitely generated $\C$-superring $\mathfrak{R}$ is fair if, for all $a \in \mathfrak R$ and $x \in X_{\mathfrak R}$, $\mathcal{L}_x(a)=0$ if and only if $a=0$.
\end{definition}

\begin{proposition}\label{prop:equivfair}
Let $\mathfrak{R}=\mathfrak{C}[\theta^1\ldots\theta^q]^\pm$ be a split $\C$-superring, where $\mathfrak{C}$ is a finitely generated $\C$-ring. Then $\mathfrak{C}$ is fair (in the sense of Definition \ref{Fair}) if and only if $\mathfrak{R}$ is fair (in the sense of Definition \ref{SuperFair}).
\end{proposition}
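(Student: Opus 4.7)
The plan is to reduce the question to a coefficient-wise comparison, using the fact that localization commutes nicely with taking the exterior algebra. First I would identify the spaces of $\R$-points involved: since $\mathfrak{R}=\mathfrak{C}[\theta^1,\ldots,\theta^q]^{\pm}$ is split, its superreduced ring is isomorphic to $\mathfrak{C}$, so by the lemma identifying $X_{\mathfrak R}$ with $X_{\mathfrak R_{\bar 0}}$ and the homeomorphism $X_{\mathfrak R_{\bar 0}}\cong X_{\overline{\mathfrak R}}$, we have a canonical identification $X_{\mathfrak R}\cong X_{\mathfrak C}$ under which an $\R$-point $x$ of $\mathfrak R$ restricts to an $\R$-point (still called $x$) of $\mathfrak C$.

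Next I would invoke the computation already recorded in the excerpt, namely that for a split $\C$-superring $\mathfrak{R}=\mathfrak{C}[\theta^1,\ldots,\theta^q]^{\pm}$ and any $\R$-point $x\in X_{\mathfrak R}$ one has
\[
\mathfrak R_x \;=\; \mathfrak C_x[\theta^1,\ldots,\theta^q]^{\pm},
\]
with localization morphism acting coefficient-wise:
\[
\mathcal L_x\Bigl(c_0+\sum_{|I|=2n}c_I\theta^I+\sum_{|K|=2n+1}d_K\theta^K\Bigr)\;=\;\pi_x(c_0)+\sum_{|I|=2n}\pi_x(c_I)\theta^I+\sum_{|K|=2n+1}\pi_x(d_K)\theta^K.
\]
The key technical point is that the monomials $\{1,\theta^I,\theta^K\}$ form a basis of $\mathfrak R_x$ as a free $\mathfrak C_x$-module; this follows from the defining universal property of $\bigwedge_{\mathfrak C}\xi$ together with the fact that localization is an exact functor on $\mathfrak C$-modules, so the exterior-algebra decomposition persists after localizing. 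Consequently, $\mathcal L_x(a)=0$ in $\mathfrak R_x$ if and only if $\pi_x$ kills every coefficient of $a$ simultaneously.

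With this coefficient-wise characterization in hand, both implications of the proposition are immediate. For the forward direction, assuming $\mathfrak C$ is fair and finitely generated, take $a\in\mathfrak R$ with $\mathcal L_x(a)=0$ for all $x\in X_{\mathfrak R}$; then each of the finitely many coefficients $c_0,c_I,d_K\in\mathfrak C$ satisfies $\pi_x(\cdot)=0$ for every $\R$-point $x$ of $\mathfrak C$, so by fairness of $\mathfrak C$ each vanishes and hence $a=0$; this gives fairness of $\mathfrak R$ (which is finitely generated since $\mathfrak C$ is). For the converse, assume $\mathfrak R$ is fair and take $c\in\mathfrak C$ with $\pi_x(c)=0$ for all $x\in X_{\mathfrak C}\cong X_{\mathfrak R}$; viewing $c$ as an element of $\mathfrak R$ (all other coefficients zero), coefficient-wise localization gives $\mathcal L_x(c)=0$ for all $x$, hence $c=0$ by fairness of $\mathfrak R$; finite generation of $\mathfrak C$ is inherited from that of $\mathfrak R$ since $\overline{\mathfrak R}\cong\mathfrak C$ and the superreduced ring of a finitely generated $\C$-superring is finitely generated.

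The main (and essentially only) obstacle is justifying rigorously the free $\mathfrak C_x$-module decomposition $\mathfrak R_x\cong\bigwedge_{\mathfrak C_x}(\xi\otimes_{\mathfrak C}\mathfrak C_x)$, i.e.\ verifying that the $\theta^I$ remain linearly independent after localizing; once this is settled the rest is a routine translation between a single equality in $\mathfrak C$ and a finite conjunction of such equalities indexed by multi-indices.
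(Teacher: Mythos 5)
Your proof is correct and follows essentially the same route as the paper: identify $X_{\mathfrak R}\cong X_{\mathfrak C}$, use the coefficient-wise description of $\mathcal L_x$ on $\mathfrak R_x\cong\mathfrak C_x[\theta^1,\ldots,\theta^q]^\pm$, and reduce fairness of $\mathfrak R$ to fairness of $\mathfrak C$ applied simultaneously to each coefficient (for the converse, embedding $c\in\mathfrak C$ as the constant-coefficient element). Your added remark that one must verify the monomials $\theta^I$ remain linearly independent over $\mathfrak C_x$ is a point the paper leaves implicit by invoking the preceding example $\mathfrak R_x=\mathfrak C_x[\theta^1,\ldots,\theta^q]^\pm$, so you are simply being more explicit about the same step rather than taking a different route.
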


\begin{proof}
Recall that $\overline{\mathfrak R}=\mathfrak C$ and $X_{\mathfrak R\ev}\cong X_{\mathfrak C}$.\\

($\Rightarrow$) Assume $\mathfrak C$ is fair. This means that for all $x \in X_\mathfrak C$, if $\pi_x(c)=0$ for some $c \in \mathfrak C$, then $c=0$ (Definition \ref{Fair}). Now, an arbitrary element $C \in \mathfrak{R}$ can be uniquely written as:
$$
C=c_0+\sum_{|I|=2n}c_I\,  \theta^I+\sum_{|K|=2n+1}d_K\,  \theta^K \, ,  
$$ 
where $c_0, c_I, d_K \in \mathfrak C $. Suppose that $\mathcal{L}_x(C)= 0$ for all $x \in X_{\mathfrak R\ev}$. We have
$$
\mathcal{L}_x(C)=\pi_x(c_0)+\sum_{|I|=2n}\pi_x(c_I) \, \theta^I+\sum_{|K|=2n+1}\pi_x(d_K)\, \theta^K.
$$ 
Since $\mathcal{L}_x(C)= 0$, it must be that $\pi_x(c_0)=0$,  $\pi_x(c_I)=0$, and  $\pi_x(d_K)=0$. This holds for all $x$. Since $\mathfrak{C}$ is fair, it follows that $c_0=0=c_I=d_K$ and therefore $C=0$. Thus, $\mathfrak{R}$ is a fair $\C$-superring.\\

($\Leftarrow$) Conversely, suppose $\mathfrak{R}$ is fair. This means that for all $x \in X_{\mathfrak R\ev}$, if $\mathcal{L}_x(C)=0$ for some $C \in \mathfrak R$, then $C=0$. Let $c \in \mathfrak C$ be an arbitrary element such that $\pi_x(c)=0$ for all  $x \in X_{\mathfrak C} \cong X_{\mathfrak R\ev}$.\\
Construct $C=c+0\,\theta^1\in \mathfrak{R}$. Then, $\mathcal{L}_x(C)=\pi_x(c)=0$ for all $x$. Since $\mathfrak{R}$ is fair, $C=0$, which implies $c=0$. Therefore, $\mathfrak{C}$ is a fair $\C$-ring.  
\end{proof} 
Proposition \ref{prop:equivfair} is instrumental in constructing examples of both fair and non-fair split $\C$-superrings. Moreover, this proposition implies that the $\C$-superring $\mathfrak{R}$ of Example  \ref{nonfair} is not split\\

Analogous to the non-graded setting \cite[Definition 2.20]{J}, a fair $\C$-superring can be associated with any given finitely generated $\C$-superring. This association is functorial, as detailed in the subsequent definition.

\begin{definition}\label{def:fairfication}
Let $\mathfrak{R}$ be a finitely generated $\C$-superring. Define the superideal $I_{\mathfrak R}$ as $I_{\mathfrak R}=\{r \in \mathfrak{R}\mid\mathcal{L}_x(r)=0,\, \forall x\in X_{\mathfrak{R}\ev}\}$. The \emph{fairfication of $\mathfrak{R}$}, denoted $\mathfrak{R}^{fa}$, is the fair $\C$-superring given by  $\mathfrak{R}^{fa}:=\mathfrak{R}/I_{\mathfrak{R}}$. Moreover, if $\varphi:\mathfrak{R}\rightarrow\mathfrak{S}$ is a morphism of finitely generated $\C$-superrings, then the following diagram commutes:

\[ 
\begin{tikzcd}
\mathfrak{R}\arrow{r}{} \arrow[swap]{d}{\varphi} & \mathfrak{R}^{fa}\arrow{d}{\varphi^{fa}} \\
\mathfrak{S} \arrow{r}{}& \mathfrak{S}^{fa}
\end{tikzcd}
\] 
\end{definition}

The following lemma provides a criterion for the maximal spectrum  of a $\C$-superring to be dense in its $\C$-spectrum.

\begin{lemma}\label{lem:densemax}
Let $\mathfrak{R} $ be a $\C$-superring. The maximal spectrum of $\mathfrak{R}$, denoted $\text{sMax}(\mathfrak{R})$, is dense in $\text{sSpec}^{\infty}(\mathfrak{R})$ if and only if the Jacobson ideal $J(\mathfrak{R})=\bigcap_{\mathfrak m \in \text{sMax}(\mathfrak R)} \; \mathfrak m$ of $\mathfrak{R}$ is equal to $\sqrt[\infty]{(0)_\mathfrak{R}}$.
\end{lemma}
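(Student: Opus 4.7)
The plan is to identify the topological closure of $\text{sMax}(\mathfrak{R})$ inside $\text{sSpec}^{\infty}(\mathfrak{R})$ explicitly as a closed set $Z^{\infty}(\mathfrak{I})$ for a specific ideal $\mathfrak{I}$, and then translate the density condition into an algebraic equality between two ideals via Corollary~\ref{prime_intersection}.

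First I would verify the inclusion $\text{sMax}(\mathfrak{R}) \subseteq \text{sSpec}^{\infty}(\mathfrak{R})$: every maximal graded ideal of a $\C$-superring is $\C$-radical (as observed in the excerpt following Remark~\ref{cor:samerad}) and its even part is a maximal, hence prime, ideal of $\mathfrak{R}\ev$, so by Proposition~\ref{primeideal} the maximal superideal is itself prime. Next I would establish the general topological fact that, for any subset $S \subseteq \text{sSpec}^{\infty}(\mathfrak{R})$, its closure satisfies $\overline{S} = Z^{\infty}\bigl(\bigcap_{\mathfrak{p}\in S} \mathfrak{p}\bigr)$. The inclusion $\overline{S}\subseteq Z^{\infty}\bigl(\bigcap_{\mathfrak{p}\in S}\mathfrak{p}\bigr)$ is immediate since every $\mathfrak{p}\in S$ contains $\bigcap_{\mathfrak{p}\in S}\mathfrak{p}$; conversely, any closed set $Z^{\infty}(\mathfrak{J})\supseteq S$ forces $\mathfrak{J}\subseteq \bigcap_{\mathfrak{p}\in S}\mathfrak{p}$, whence $Z^{\infty}(\mathfrak{J})\supseteq Z^{\infty}\bigl(\bigcap_{\mathfrak{p}\in S}\mathfrak{p}\bigr)$. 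Specializing $S=\text{sMax}(\mathfrak{R})$ gives $\overline{\text{sMax}(\mathfrak{R})} = Z^{\infty}(J(\mathfrak{R}))$.

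Hence density of $\text{sMax}(\mathfrak{R})$ is equivalent to $Z^{\infty}(J(\mathfrak{R})) = \text{sSpec}^{\infty}(\mathfrak{R})$, which asserts that every $\C$-radical prime superideal of $\mathfrak{R}$ contains $J(\mathfrak{R})$, i.e., $J(\mathfrak{R})\subseteq \bigcap_{\mathfrak{p}\in \text{sSpec}^{\infty}(\mathfrak{R})} \mathfrak{p}$. By Corollary~\ref{prime_intersection} this intersection is exactly $\sqrt[\infty]{(0)_{\mathfrak{R}}}$. The reverse inclusion $\sqrt[\infty]{(0)_{\mathfrak{R}}}\subseteq J(\mathfrak{R})$ holds unconditionally, since every maximal superideal belongs to $\text{sSpec}^{\infty}(\mathfrak{R})$ by the first step and therefore appears among the ideals over which one intersects to produce $\sqrt[\infty]{(0)_{\mathfrak{R}}}$. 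Combining the two containments yields $J(\mathfrak{R})=\sqrt[\infty]{(0)_{\mathfrak{R}}}$, completing the equivalence.

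The only mildly delicate point is the verification that $\text{sMax}(\mathfrak{R})\subseteq \text{sSpec}^{\infty}(\mathfrak{R})$, but this is secured by the observation already recorded in the excerpt that maximal graded ideals of a $\C$-superring are $\C$-radical. Beyond this, the argument is routine bookkeeping relating the smooth Zariski topology on $\text{sSpec}^{\infty}(\mathfrak{R})$ to intersections of $\C$-radical prime superideals, and I do not anticipate any genuine obstacle.
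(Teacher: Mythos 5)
Your proof is correct and rests on exactly the same ingredients as the paper's: Corollary~\ref{prime_intersection} identifying $\sqrt[\infty]{(0)_{\mathfrak{R}}}$ as the intersection of all $\C$-radical prime superideals, the unconditional inclusion $\sqrt[\infty]{(0)_{\mathfrak{R}}} \subseteq J(\mathfrak{R})$, and the observation that maximal superideals are $\C$-radical. The route you take is a mild variant: the paper translates density into a pointwise condition by running over all basic opens $D^{\infty}(r)$ (density holds iff, for every $r$, either $D^{\infty}(r) = \emptyset$ or it meets $\text{sMax}(\mathfrak{R})$), and then reads off the ideal inclusion from the contrapositive. You instead prove the general closure formula $\overline{S} = Z^{\infty}\bigl(\bigcap_{\mathfrak{p}\in S}\mathfrak{p}\bigr)$ for any $S \subseteq \text{sSpec}^{\infty}(\mathfrak{R})$ and specialize $S = \text{sMax}(\mathfrak{R})$, so density becomes $Z^{\infty}(J(\mathfrak{R})) = \text{sSpec}^{\infty}(\mathfrak{R})$, i.e.\ $J(\mathfrak{R}) \subseteq \sqrt[\infty]{(0)_{\mathfrak{R}}}$. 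Your packaging is slightly more conceptual and would apply to any subset, whereas the paper's is more elementary, using only that the $D^{\infty}(a)$ form a basis; the underlying algebra is identical.
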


\begin{proof}
From Corollary \ref{prime_intersection}, we know that $\sqrt[\infty]{(0)_\mathfrak{R}}=\bigcap_{\mathfrak p \in \text{sSpec}^\infty(\mathfrak R)}\; \mathfrak p$. Consequently, $\sqrt[\infty]{(0)_\mathfrak{R}}$ is always a subset of the Jacobson ideal $J(\mathfrak{R})$. Thus, we only need to prove the converse.\\

Since the principal open sets $\{ D^\infty(a)\}_{a \in \mathfrak R}$ form a basis for the topological space $\text{sSpec}^{\infty}(\mathfrak{R})$, $\text{sMax}(\mathfrak{R})$ is dense in $\text{sSpec}^{\infty}(\mathfrak{R})$ if and only if for any $r\in\mathfrak{R}$, either $D^\infty(r)=\emptyset$ or $\text{sMax}(\mathfrak{R})\cap D^\infty(r)\neq\emptyset$. The latter condition is equivalent to stating that either $r$ belongs to all $\C$-radical prime superideals, or there exists a maximal superideal $\mathfrak m$ such that $\mathfrak m \in D^\infty(r)$. This holds if and only if $r\in \sqrt[\infty]{(0)_\mathfrak{R}}=\bigcap_{\mathfrak p \in \text{sSpec}^\infty(\mathfrak R)}\; \mathfrak p $ or $r\notin J(\mathfrak{R})$. Therefore, $\text{sMax}(\mathfrak{R})$ is dense in $\text{sSpec}^{\infty}(\mathfrak{R})$ if and only if the following implication holds: if $r\in J(\mathfrak{R})$, then $r\in \sqrt[\infty]{(0)_\mathfrak{R}}$.
\end{proof}

\begin{proposition}\label{dense}
Let $\mathfrak{R}$ be a fair $\C$-superring. Then, the set of $\R$-points $X_\mathfrak{R}$ is homeomorphic to a dense subset of $\text{sSpec}^{\infty}(\mathfrak{R})$.
\end{proposition}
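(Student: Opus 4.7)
The plan is to construct a natural map $\iota\colon X_\mathfrak{R} \to \text{sSpec}^\infty(\mathfrak{R})$ by $x \mapsto \iota(x) := \ker(x) \oplus \mathfrak{R}\od$, and to show that it is a topological embedding with dense image. Well-definedness is immediate: $\ker(x)$ is a maximal ideal of $\mathfrak{R}\ev$ with residue field $\R$, which is in particular $\C$-radical, so by Proposition \ref{primeideal} and Remark \ref{rem:radical-prime} the superideal $\iota(x)$ is a $\C$-radical prime of $\mathfrak{R}$. Injectivity follows since distinct $\R$-points $\mathfrak{R}\ev \to \R$ have distinct kernels.

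To see that $\iota$ is a topological embedding, note that $a \notin \iota(x)$ is equivalent to $a\ev \notin \ker(x)$, i.e. $x(a\ev) \neq 0$. Hence $\iota^{-1}(D^\infty(a)) = U_{a\ev}$, and $\iota(U_c) = D^\infty(c) \cap \iota(X_\mathfrak{R})$ for every $c \in \mathfrak{R}\ev$, so $\iota$ is continuous and open onto its image. For density, I would apply Lemma \ref{lem:densemax}: writing $J(\mathfrak{R}) := \bigcap_{x \in X_\mathfrak{R}} \iota(x)$ for the intersection of these maximal superideals and invoking Lemma \ref{lem:inftyrad}(c), both $J(\mathfrak{R})$ and $\sqrt[\infty]{(0)_\mathfrak{R}}$ decompose as an even summand plus $\mathfrak{R}\od$. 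The equality $J(\mathfrak{R}) = \sqrt[\infty]{(0)_\mathfrak{R}}$ therefore reduces to the equality $\bigcap_{x} \ker(x) = \sqrt[\infty]{(0)_{\mathfrak{R}\ev}}$ inside the $\C$-ring $\mathfrak{R}\ev$, of which the inclusion $\supseteq$ is automatic because each $\ker(x)$ is a $\C$-radical prime.

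The main obstacle is the reverse inclusion, which constitutes the genuine content of the density claim, and the strategy is to reduce to the ungraded setting. First I would verify that $\mathfrak{R}\ev$ is itself a fair $\C$-ring: it is finitely generated as a $\C$-ring (with generators $x^1,\ldots,x^p$ together with the even monomials $\theta^i\theta^j$ inherited from a presentation $\mathfrak{R} \cong \C(\R^{p|q})/I$), and the identity $\mathcal{L}_x(c) = \pi_x(c)$ for $c \in \mathfrak{R}\ev$ transfers the germ-determined property from $\mathfrak{R}$ to $\mathfrak{R}\ev$. The required inclusion then amounts to the density of $\R$-points in the $\C$-spectrum of a fair $\C$-ring, which follows from the spectrum theory developed in \cite{J} and \cite{Olarte}: concretely, an element $c \in \mathfrak{R}\ev$ that is not $\C$-nilpotent yields a nonzero localization $\mathfrak{R}\ev\{c^{-1}\}$ which admits an $\R$-point, and this pulls back to an $\R$-point of $\mathfrak{R}\ev$ at which $c$ does not vanish. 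Assembling these pieces yields the desired homeomorphism onto a dense subset.
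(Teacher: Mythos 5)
Your proposal follows the same top-level route as the paper---invoke Lemma \ref{lem:densemax} and reduce density to the equality $J(\mathfrak{R}) = \sqrt[\infty]{(0)_\mathfrak{R}}$---but handles the nontrivial inclusion $J(\mathfrak{R})\subseteq\sqrt[\infty]{(0)_\mathfrak{R}}$ by a different (and sounder) argument. The paper's own proof asserts that if $r\in J(\mathfrak{R})$, so that $r\in\ker(x)$ for every $\R$-point $x$, ``it follows that $\mathcal{L}_x(r)=0$ for all such $x$,'' and then concludes $r=0$ from fairness. That implication is not correct: the kernel of the localization morphism $\mathcal{L}_x$ is the ideal $I_x$ of Remark \ref{locx-cociente}, which is in general a proper subideal of $\ker(x)$, so membership in $\ker(x)$ does not force $\mathcal{L}_x(r)=0$. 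As written, the paper's argument would yield $J(\mathfrak{R})=0$, which already fails for the fair $\C$-ring $\C(\R)/(x^2)$ (there $J=(x)/(x^2)\neq 0$, yet the localization at the unique $\R$-point is injective) and always fails in the genuinely super case, since $\mathfrak{R}\od\subseteq J(\mathfrak{R})$. You instead reduce, via Lemma \ref{lem:inftyrad}(c), to the purely even statement $\bigcap_x\ker(x)\subseteq\sqrt[\infty]{(0)_{\mathfrak{R}\ev}}$, argued contrapositively; this is the right target, and your preliminary checks (well-definedness of $\iota$, topological embedding, fairness and finite generation of $\mathfrak{R}\ev$) are all sound.

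The gap is in your last sentence. You assert that a non-$\C$-nilpotent $c\in\mathfrak{R}\ev$ yields a nonzero localization $\mathfrak{R}\ev\{c^{-1}\}$ which ``admits an $\R$-point,'' but a nonzero finitely generated $\C$-ring need not have any $\R$-points: the quotient of $\C(\R)$ by the ideal of compactly supported functions is nonzero, finitely generated, and has empty $\R$-point space. What fairness actually buys you is that $\mathfrak{R}\ev\{c^{-1}\}$ is itself fair: $\C$-localization at a single element preserves germ-determined ideals, a foundational stability result from Moerdijk--Reyes \cite{MRI,MRII} that also underlies Joyce's construction of the structure sheaf in \cite{J}. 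Once you know $\mathfrak{R}\ev\{c^{-1}\}$ is fair and nonzero, it must have an $\R$-point, since an empty $\R$-point space would make the germ-determinedness condition vacuous and force the ring to be zero. Supply that stability result explicitly in place of the vague appeal to ``the spectrum theory,'' and your proof closes.
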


\begin{proof}
We can identified the set $X_\mathfrak{R}$ with $\text{sMax}$. Therefore, by Lemma \ref{lem:densemax}, we only need to prove that $J(\mathfrak{R})$ is a subset of $\sqrt[\infty]{(0)_\mathfrak{R}}$. 

To establish this, consider any $r\in J(\mathfrak{R})$. Since every maximal ideal of $\mathfrak{R}$ is of the form $M_x =\text{ker}(x)$ for some $x\in X_\mathfrak{R}$, it follows that $\mathcal{L}_x(r)=0$ for all such $x$. The fairness of $\mathfrak{R}$ then yields $r=0$. Therefore, $r\in \sqrt[\infty]{0_\mathfrak{R}}$, which completes the proof.\\
\end{proof}

\begin{remark}
In commutative algebra, a ring is defined as a \emph{Jacobson ring} if its nilradical coincides with its Jacobson radical in every quotient ring. As a consequence of the Nullstellensatz, finitely generated and reduced algebras over algebraically closed fields are examples of Jacobson rings (see \cite[Chapter 15]{AltKlei}). For such algebras, the maximal spectrum is a dense subset of the prime spectrum. This property directly leads to the concept of an algebraic scheme, which is defined as the spectrum of a finitely generated and reduced algebra over an algebraically closed field. Thus, algebraic schemes represents a natural generalization of algebraic varieties.\\

A possible interpretation of fair $\C$-ring is that they are the direct analogues of algebraic schemes in the smooth setting. Moreover, our previous results suggest that fair $\C$-superrings provide the appropriate setting for studying generalizations of smooth supergeometry. In fact, Corollary \ref{cor:fairestriction} provides strong evidence for this proposal, as it generalizes a well-known result for supermanifolds to fair affine superschemes (see \cite[Prop. 3.4.5]{CCF}).

\end{remark}

Let $\mathfrak{R}$ be a $\C$-superring. The fairfication $\mathfrak{R}^{fa}:=\mathfrak{R}/I_{\mathfrak{R}}$, as defined in Definition \ref{def:fairfication}, is a functorial construction. Indeed, we have a functor $\mathcal{F}a:{\bf \C Superings}  \rightarrow  {\bf \C Superrings}^{fa}$, where ${\bf \C Superrings}^{fa}$ denotes the category of fair $\C$-superrings. This category is a full subcategory of the category of finitely generated $\C$-superrings. More precisely, the functor $\mathcal{F}a$ is defined on objects by $\mathcal{F}a(\mathfrak{R})=\mathfrak{R}/I_\mathfrak{R}$. For a morphism $\varphi:\mathfrak{R}\longrightarrow\mathfrak{S}$, $\mathcal{F}a(\varphi)$ is the induced morphism $\varphi^{fa}:\mathfrak{R}/I_\mathfrak{R}\longrightarrow\mathfrak{S}/I_\mathfrak{S}$. This induced morphism is well-defined
since $\varphi(I_\mathfrak{R})\subseteq I_\mathfrak{S}$. We therefore call this functor the \textit{fairfication} functor.

\begin{lemma}\label{lem:fairfication}
The fairfication functor is a left adjoint to the inclusion functor from fair $\C$-superrings to finitely generated $\C$-superrings.
\end{lemma}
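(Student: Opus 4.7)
The plan is to establish the adjunction by exhibiting its unit and proving the required universal property. The unit at a finitely generated $\C$-superring $\mathfrak{R}$ is taken to be the canonical quotient morphism $\eta_{\mathfrak{R}}:\mathfrak{R}\rightarrow \mathfrak{R}^{fa}=\mathfrak{R}/I_{\mathfrak R}$ associated to the fairfication. To check that this is universal, I need to show that for every fair $\C$-superring $\mathfrak{S}$ and every morphism $\varphi:\mathfrak{R}\rightarrow \mathcal{I}(\mathfrak{S})$, there exists a unique morphism $\widetilde{\varphi}:\mathfrak{R}^{fa}\rightarrow \mathfrak{S}$ with $\widetilde{\varphi}\circ \eta_{\mathfrak{R}}=\varphi$, which amounts to verifying the inclusion $\varphi(I_{\mathfrak{R}})=0$.

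The key step is precisely this inclusion, and it is the only nontrivial content of the proof. Let $r\in I_{\mathfrak{R}}$, so that $\mathcal{L}_x(r)=0$ for every $x\in X_{\mathfrak{R}}$. I want to show $\varphi(r)=0$ in $\mathfrak{S}$. Since $\mathfrak{S}$ is fair, by Definition \ref{SuperFair} it suffices to prove $\mathcal{L}_y(\varphi(r))=0$ for every $y\in X_{\mathfrak{S}}$. Given such a $y$, define $x:=y\circ\varphi\ev\in X_{\mathfrak{R}}$; then Proposition \ref{Rpointmor} provides an induced morphism $\varphi_y:\mathfrak{R}_x\rightarrow \mathfrak{S}_y$ making the square
\[
\begin{tikzcd}
\mathfrak{R}\arrow{r}{\varphi}\arrow[swap]{d}{\mathcal{L}_x} & \mathfrak{S}\arrow{d}{\mathcal{L}_y}\\
\mathfrak{R}_x \arrow{r}{\varphi_y} & \mathfrak{S}_y
\end{tikzcd}
\]
commute. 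Hence $\mathcal{L}_y(\varphi(r))=\varphi_y(\mathcal{L}_x(r))=\varphi_y(0)=0$, as required. Thus $\varphi$ factors through the quotient $\eta_{\mathfrak{R}}$; uniqueness of $\widetilde{\varphi}$ follows from surjectivity of $\eta_{\mathfrak{R}}$.

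With the factorization established, I would define the adjunction bijection
\[
\Phi_{\mathfrak{R},\mathfrak{S}}:\mathrm{Hom}_{{\bf \C Superrings}^{fa}}\!\left(\mathcal{F}a(\mathfrak{R}),\mathfrak{S}\right)\longrightarrow \mathrm{Hom}_{{\bf \C Superrings}^{fg}}\!\left(\mathfrak{R},\mathcal{I}(\mathfrak{S})\right)
\]
by $\Phi_{\mathfrak{R},\mathfrak{S}}(\psi)=\psi\circ \eta_{\mathfrak{R}}$. The universal factorization above supplies a two-sided inverse, sending $\varphi$ to the unique $\widetilde{\varphi}$. Naturality in $\mathfrak{R}$ (via morphisms $\mathfrak{R}\rightarrow \mathfrak{R}'$ of finitely generated $\C$-superrings) and in $\mathfrak{S}$ (via morphisms of fair $\C$-superrings) follows from the commutative diagram in Definition \ref{def:fairfication} together with the fact that $\mathcal{I}$ is just the inclusion and hence acts trivially on morphisms.

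The main obstacle is the verification that $\varphi(I_{\mathfrak R})=0$, which relies crucially on two ingredients that must be used in tandem: the functoriality of localization at $\R$-points (Proposition \ref{Rpointmor}), which transports vanishing of $\mathcal{L}_x(r)$ to vanishing of $\mathcal{L}_y(\varphi(r))$, and the assumed fairness of $\mathfrak{S}$, which promotes stalkwise vanishing to global vanishing. Everything else—the definition of the unit, the construction of the inverse bijection, and naturality—is then a formal consequence.
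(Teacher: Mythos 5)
Your proof is correct and follows essentially the same route as the paper: both isolate the key step $\varphi(I_{\mathfrak R})=0$ and prove it by combining the functoriality of localization at $\R$-points (Proposition \ref{Rpointmor}) with the fairness of the target $\C$-superring, then derive the Hom-set bijection from the universal factorization through the quotient. The paper phrases the argument directly as a bijection $\varphi\mapsto\varphi^{fa}$ while you phrase it as a universal property of the unit $\eta_{\mathfrak R}$, but these are the same adjunction packaged in two standard equivalent ways.
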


\begin{proof}
Let $\varphi:\mathfrak{S}\longrightarrow \mathfrak{R}$ be a morphism of $\C$-superrings, where $\mathfrak{S}$ is finitely generated and $\mathfrak{R}$ is fair. Recall that the fairfication functor is defined on an object $\mathfrak S$ as $\mathcal Fa (\mathfrak S)=\mathfrak S/I_{\mathfrak S}$ and on a morphism $\varphi$ as $\mathcal Fa (\varphi)=\varphi^{fa}$.

If $\rho:\mathfrak{S}\longrightarrow \mathfrak{S}/I_\mathfrak{S}$ is the quotient projection, then there is a natural morphism $\varphi^{fa}$ making the following diagram commutative:

\begin{center}
\begin{tikzcd}
\mathfrak S \arrow{d}[swap]{\varphi} \arrow{r}{\rho} & \mathfrak S/I_{\mathfrak S} \arrow{dl}{\varphi^{fa}} \\
 \mathfrak R
\end{tikzcd}
\end{center}

We claim that $\varphi^{fa}$ is well defined. This follows from the functoriality of localization on $\R$-points. Specifically, for any $x \in X_{\mathfrak{R}}$ and $\varphi(x) \in X_{\mathfrak{S}}$, the diagram below is commutative: 

\[ 
\begin{tikzcd}
\mathfrak{S}\arrow{r}{\varphi} \arrow[swap]{d}{\rho_{\mathfrak{S}}} & \mathfrak{R}\arrow{d}{\rho_{\mathfrak{R}}} \\
\mathfrak{S}/I_{\mathfrak{S}} \arrow{r}{\varphi_{x}} & \mathfrak{R}/I_{\mathfrak{R}}
\end{tikzcd}
\]

Consequently, if $a\in I_\mathfrak{S}$, then $\mathcal{L}_{\varphi(x)}(a)=0$. Thus, we have $0=\varphi_x(\mathcal{L}_{\varphi(x)}(a))=\mathcal{L}_x(\varphi(a))$. Since $x$ is arbitrary and $\mathfrak{R}$ is fair, it follows that $\varphi(a)=0$ which implies $\varphi^{fa}(\rho(a))=0$. The mapping $\varphi\longmapsto\varphi^{fa}$ is clearly injective. Moreover, it is surjective. To see this, consider any morphism $\Psi:\mathfrak{S}/I_\mathfrak{S}\longrightarrow\mathfrak{R}$ of fair $\C$-superrings. We can define 
$\psi: \mathfrak{S}\longrightarrow \mathfrak{R}$ by setting $\psi(a)=\Psi(\rho(a))$. It then follows
$\Psi=\psi^{fa}$, showing surjectivity.
\end{proof}

\subsection{Spectrum of a $\C$-superring as a locally ringed space.} Now that we have described the topological space $X_\mathfrak R$ of $\R$-points of a $\C$-superring $\mathfrak R$, we will define a structure sheaf on it. This will allow us to obtain a locally ringed space associated with $\mathfrak R$.
\begin{definition}
 A $\C$-ringed superspace or simply a $\C$-superspace is a ringed superspace $X=(|X|,\sh_X)$, such that $\sh_X$ is a sheaf of $\C$-superrings and we say that $X$ is a locally $\C$-ringed superspace if for any $x\in |X|$ the stalk $\sh_{X, x}$ is a local $\C$-superring.
  
\end{definition}
\begin{definition}
A morphism between two $\C$-superspaces $(X,\sh_X)$ and $(Y,\sh_Y)$ is a pair $(f,f^\#)$ where $f:X\rightarrow Y$ is a continuous function and $f^\#:\sh_Y\rightarrow f_*\sh_X$ is a morphism of sheaves of $\C$-superrings over $Y$.  
\end{definition}
\begin{remark}
Let $X=(|X|,\mathcal{O}_X)$ a $\C$-ringed superspace, the sheaf $\sh_X$ is a sheaf of $\C$-superrings which can be expressed as the sum $\sh_X{\ev}\oplus\sh_X{\od}$ were the even part $\sh_X{\ev}$ is a sheaf of $\C$-rings and $\sh_X{\od}$ is a $\sh_X{\ev}$-module. Additionally, the sheaf morphism $f^\#$ can be decomposed as $f^\#\ev\oplus f^\#\od$ where $f^\#\ev:(\sh_{Y})\ev\rightarrow (f_*\sh_X)\ev=f_*(\sh_X)$ is a morphism of sheaves of $\C$-rings and $f^\#\od:(\sh_{Y})\od\rightarrow (f_*\sh_X)\od=f_*(\sh_X)$ is a morphism of $(\sh_{Y})\ev$-modules.  
\end{remark}
As a consequence of last remark, the following notation for a morphism $(f,f^\#):(X,\sh_X)\rightarrow (Y,\sh_Y)$ will be useful despite it might not make sense as a direct sum of morphism of any concrete category.   $$(f,f^\#):=(f,f^\#\ev)\oplus f^\#\od$$

\begin{example}
Let $M$ be a smooth supermanifold of dimension $p\mid q$. Consider $\C(M_{red})[\theta^1,\ldots, \theta^q]^{\pm}$, which is the $\C$-superring of global sections. The resulting pair $(M_{red},S\C)$, with $S\C(U)=\C(U)[\theta^1,\ldots, \theta^q]^{\pm}$ for $U\subseteq M_{red}$ open, is the archetypal example of a locally $\C$-ringed superspace. Its stalks at $p\in M_{red}$ are given by $\C(M_{red})_{p}[\theta^1,\ldots, \theta^q]^{\pm}$.
\end{example}

Similar to the relationship between $\C$-spaces and smooth manifolds, the category of $\C$-superspaces is considerably broader than that of smooth supermanifolds. In subsequent work (see \cite{ORTG}), we will provide further examples of $\C$-superspaces. These examples are motivated by various concepts of differentiable spaces found in the literature, all of which are instances of $\C$-ringed spaces.\\
\begin{definition}
Let $\mathfrak{R}=\mathfrak{R}\ev\oplus \mathfrak{R}\od$ be a $\C$-superring. We define a sheaf of $\C$-superrings, denoted $\mathcal{O}_\mathfrak{R}$, on the space $X_\mathfrak{R}$ as follows: for any open set $U\subseteq X_{\mathfrak{R}}$,
$$
\mathcal{O}_\mathfrak{R}(U)=\mathcal{O}_{X_{\mathfrak{R}\ev}}(U)\oplus (\text{Mspec} \, \mathfrak{R}\od)(U) \, .
$$ 
Here, $\sh_{X_{R\ev}}$ represents the structure sheaf of $\C$-rings over $X_{\mathfrak{R}\ev}$. The term $\text{Mspec}\, \mathfrak{R}\od$ denotes the $\sh_{X_{R\ev}}$-module induced by $\mathfrak{R}\od$ (see Definition \ref{Mspec}).\\ 

The restriction maps for $\mathcal{O}_\mathfrak{R}$ are induced by the corresponding restriction maps of $\mathcal{O}_{X_{\mathfrak{R}\ev}}$ and $ Mspec\mathfrak{R}\od$, respectively. Furthermore, for every $x \in X_{\mathfrak R}$, the stalk of $\mathcal{O}_\mathfrak{R}$ at $x$ is naturally given by the localization  $(\mathcal{O}_{X_{\mathfrak{R}\ev}})_x \oplus (\text{Mspec}\, \mathfrak{R}\od)_x$.
\end{definition}

For any morphism $\varphi:\mathfrak{R}\rightarrow \mathfrak{S}$ between $\C$-superrings, we can define a morphism $(f_\varphi, f_{\varphi}^{\#} )$ between the ringed spaces $(X_{\mathfrak S}, \mathcal O_{\mathfrak S})$ and $(X_{\mathfrak R}, \mathcal O_{\mathfrak R})$ as follows:

\begin{itemize}
\item {\bf On the topological spaces:} The continuous function $f_\varphi:X_{\mathfrak{S}}\rightarrow X_\mathfrak{R}$ is defined for every $x \in X_{\mathfrak S}$ by $f_\varphi(x)= x\circ \varphi$. 

\item {\bf On the structural sheaves:} The morphism of sheaves $f_{\varphi}^{\#}:\mathcal{O}_{\mathfrak{R}}\longrightarrow  (f_\varphi)_\ast \mathcal{O}_{\mathfrak{S}}$ is defined over each open subset $U\subseteq X_\mathfrak{R}$. It corresponds to the morphism of $\C$-superrings:
$$ 
f_{\varphi}^{\#}(U):\mathcal{O}_{X_{\mathfrak{R}\ev}}(U)\oplus (\text{Mspec}\,  \mathfrak{R}\od)(U)\longrightarrow \mathcal{O}_{X_{\mathfrak{S}\ev}}(f_{\varphi}^{-1}(U))\oplus (\text{Mspec} \, \mathfrak{S}\od)(f_{\varphi}^{-1}(U))
$$ 
defined by $s\oplus e\longmapsto f_{\varphi\ev}^{\#}(s)\oplus f_{\varphi\od}^{\#}(e)$ 

where $f_{\varphi\ev}^{\#}$ (resp. $f_{\varphi\od}^{\#}$) corresponds to the morphism induced by the even (resp. odd) part of $\varphi$ via the Spectrum functor (resp. Module Spectrum) in Definition \ref{def:twofunctors} (resp. Definition \ref{Mspec}). 

\end{itemize} 

\begin{definition}\label{def:Supertwofunctors}
We define two covariant functors between the (opposite) category of $\C$-superrings, ${\bf \C Superrings}$, and the category of locally $\C$-superringed spaces, ${\bf L\C SRS}$. These functors are the analogues of those given in Definition \ref{def:twofunctors}).

\begin{itemize}
\item {\bf Super-Spectrum Functor:} The functor $sSpec:  {\bf \C Superrings}^{op} \rightarrow {\bf L\C SRS}$ is defined as follows:
\begin{enumerate}[(i)]
\item On objects, for any $\C$-superring $\mathfrak{R}$, $sSpec(\mathfrak{R})=(X_{\mathfrak{R}},\mathcal{O}_{X_{\mathfrak{R}}})$. This locally $\C$-superringed space is called an affine $\C$-superscheme.

\item On morphisms, for any $\C$-ring morphism $\varphi: \mathfrak{R} \rightarrow \mathfrak{S}$, $sSpec(\varphi)=(f_{\varphi},f^{\#}_{\varphi})$.\\
\end{enumerate}

\item {\bf Global Sections Functor:} The functor $s\Gamma:\bf{L\C SRS}\rightarrow {\bf \C Superrings}^{op}$ is defined as follows:
\begin{enumerate}[(i)]
\item On objects, for any locally $\C$-superringed space $(X, \mathcal O_X)$, $s\Gamma(X,\mathcal{O}_X)=\mathcal{O}_X(X)$.

\item On morphisms, for any morphism $(f, f^{\#}) : (X, \mathcal O_X) \rightarrow (Y, \mathcal O_Y)$ of locally $\C$-superringed spaces, we have  $s\Gamma(f)=f^{\#}(X):\mathcal{O}_Y(Y)\rightarrow\mathcal{O}_X(X)$.
\end{enumerate}
\end{itemize}
\end{definition}

\begin{remark}\label{rmk:sglobal}
For a locally $\C$-superring space $(X, \mathcal O_X)$, where $\mathcal O_X=\mathcal O_{X,\overline 0} \oplus \mathcal O_{X,\overline 1}$, the global sections functor $s\Gamma$ can be naturally decomposed. This decomposition is given by $s \Gamma=\Gamma\ev \oplus \Gamma\od$, where:
\begin{itemize}
\item $\Gamma\ev:\bf{L\C RS}\rightarrow {\bf \C Rings}^{op}$ is the global sections functor for $\C$-rings, defined as  $\Gamma\ev(X,\mathcal O\ev)=(\mathcal O_X(X))\ev$.

\item $\Gamma\od:(\mathcal{O}_X)\ev{\bf -mod}\rightarrow (\mathcal{O}_X(X))\ev{\bf- mod}$ is the global sections functor for modules (Definition \ref{modglobalsections}), define as $\Gamma\od(X, \mathcal O_{X, \overline 1})=(\mathcal O_X(X))\od$.
\end{itemize}
\end{remark}

\begin{example}
Consider the split $\C$-superring $\mathfrak{R} = \C(\R^{p|q})$. The global sections of $sSpec(\mathfrak{R})=(X_{\mathfrak R},\mathcal O_{X_{\mathfrak R}} )$ is $\mathfrak{R}$ itself. This is because: $$s\Gamma(sSpec(\mathfrak{R}))=\Gamma\ev(X_{\mathfrak R},\mathcal O_{X_{\mathfrak R}, \overline 0})\oplus \Gamma\od(X_{\mathfrak R},\mathcal O_{X_{\mathfrak R}, \overline 1})=(O_{X_{\mathfrak R}}(X_{\mathfrak R}))\ev \oplus (O_{X_{\mathfrak R}}(X_{\mathfrak R}))\od=\mathfrak R\ev \oplus \mathfrak R\od=\mathfrak R.$$
\end{example}

\begin{theorem}\label{Adjunction}
The functor $sSpec$ is right adjoint to the functor $s\Gamma$ . 
\end{theorem}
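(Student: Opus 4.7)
The strategy is to reduce the super-adjunction to two known adjunctions (one for $\C$-rings, one for their modules) by systematically using the even/odd decomposition, and then glue the resulting bijections.

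First, I would fix a $\C$-superring $\mathfrak{R}=\mathfrak{R}\ev\oplus\mathfrak{R}\od$ and a locally $\C$-superringed space $(X,\mathcal{O}_X)$ with $\mathcal{O}_X=\mathcal{O}_{X,\overline 0}\oplus\mathcal{O}_{X,\overline 1}$, and decompose both Hom-sets. By Definition \ref{def:smorph}, a morphism $\varphi\in\mathrm{Hom}_{\bf\C Superrings}(\mathfrak{R},s\Gamma(X))$ is the datum of a $\C$-ring morphism $\varphi\ev:\mathfrak{R}\ev\to\mathcal{O}_{X,\overline 0}(X)$ together with an $\mathfrak{R}\ev$-module morphism $\varphi\od:\mathfrak{R}\od\to\mathcal{O}_{X,\overline 1}(X)$, where $\mathcal{O}_{X,\overline 1}(X)$ is viewed as an $\mathfrak{R}\ev$-module via $\varphi\ev$ (Lemma \ref{escalarext}). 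Similarly, by the definition of morphisms of locally $\C$-superringed spaces and Remark~\ref{rmk:sglobal}, a morphism $(f,f^{\#})\in\mathrm{Hom}_{\bf L\C SRS}(X,sSpec(\mathfrak{R}))$ is the datum of a morphism of locally $\C$-ringed spaces $(f,f^{\#}\ev):(X,\mathcal{O}_{X,\overline 0})\to Spec(\mathfrak{R}\ev)$ together with a morphism of $\mathcal{O}_{X_{\mathfrak{R}\ev}}$-modules $f^{\#}\od:\mathrm{Mspec}(\mathfrak{R}\od)\to f_{\ast}\mathcal{O}_{X,\overline 1}$, where $f_{\ast}\mathcal{O}_{X,\overline 1}$ is regarded as an $\mathcal{O}_{X_{\mathfrak{R}\ev}}$-module via $f^{\#}\ev$.

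Second, I would apply the two underlying adjunctions to each piece. For the even data, Theorem~\ref{adjoint} supplies a natural bijection
\[
L_{\mathfrak{R}\ev,X}:\mathrm{Hom}_{\bf\C Rings}(\mathfrak{R}\ev,\mathcal{O}_{X,\overline 0}(X))\longrightarrow\mathrm{Hom}_{\bf L\C RS}(X,Spec(\mathfrak{R}\ev)),
\]
sending $\varphi\ev$ to a pair $(f_{\varphi\ev},f^{\#}_{\varphi\ev})$. For the odd data, the construction $\mathrm{Mspec}$ of Definition~\ref{Mspec} is designed to be right adjoint to the module global sections functor of Definition~\ref{modglobalsections}; this yields a bijection between $\mathfrak{R}\ev$-module morphisms $\mathfrak{R}\od\to\Gamma(X,\mathcal{O}_{X,\overline 1})$ (with scalars acting through $\varphi\ev$) and morphisms of $\mathcal{O}_{X_{\mathfrak{R}\ev}}$-modules $\mathrm{Mspec}(\mathfrak{R}\od)\to(f_{\varphi\ev})_{\ast}\mathcal{O}_{X,\overline 1}$. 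Composing these two bijections produces the required super-bijection, with the explicit forward map $\varphi\ev\oplus\varphi\od\mapsto(f_{\varphi\ev},f^{\#}_{\varphi\ev}\oplus f^{\#}_{\varphi\od})$ and the inverse obtained on stalks by the concrete formula induced by Lemma~\ref{stalks} and Lemma~\ref{lem:superloc}: a point $x\in X$ gives a local morphism $\mathfrak{R}_{f(x)}=\mathfrak{R}\ev_{f(x)}\oplus\mathfrak{R}\od_{f(x)}\to\mathcal{O}_{X,x}\ev\oplus\mathcal{O}_{X,x}\od$ whose composition with the localization $\mathcal{L}_{f(x)}:\mathfrak{R}\to\mathfrak{R}_{f(x)}$ yields, after taking global sections, the original morphism $\varphi$.

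Third, I would verify that the bijection is natural in both variables. Naturality in $\mathfrak{R}$ (via a $\C$-superring morphism $\psi:\mathfrak{R}'\to\mathfrak{R}$) and in $(X,\mathcal{O}_X)$ (via a morphism of locally $\C$-superringed spaces $(g,g^{\#}):Y\to X$) decomposes into the corresponding naturality statements for $\varphi\ev$ and $\varphi\od$, each of which is already known from Theorem~\ref{adjoint} and from the $\mathrm{Mspec}/\Gamma$ adjunction for modules. Finally, I would check that the resulting morphism $(f,f^{\#})$ is indeed a morphism of \emph{locally} $\C$-superringed spaces, i.e.\ that the induced maps on stalks are local in the sense of Definition~\ref{def:localSring}: this reduces, via the isomorphism $\overline{\mathfrak{R}_{f(x)}}\cong\overline{\mathfrak{R}}_{\overline{f(x)}}$ and the corresponding statement for $\mathcal{O}_{X,x}$, to the locality of $f^{\#}_{\varphi\ev,x}$, which is the content of the classical $\C$-ringed-space case.

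The main obstacle I anticipate is keeping track of the change-of-scalars bookkeeping in the odd module adjunction: the $\mathfrak{R}\ev$-module structure on $\mathcal{O}_{X,\overline 1}(X)$ depends on the chosen $\varphi\ev$ (and dually the $\mathcal{O}_{X_{\mathfrak{R}\ev}}$-module structure on $(f_{\varphi\ev})_{\ast}\mathcal{O}_{X,\overline 1}$ depends on $f^{\#}_{\varphi\ev}$), so to glue the two bijections cleanly one must check that the module adjunction is itself natural with respect to changing the underlying ring morphism. Once that compatibility is established, all remaining verifications are routine diagram chases analogous to those in \cite[Theorem~4.20]{J}.
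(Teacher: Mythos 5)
Your proposal is correct and follows essentially the same route as the paper's proof: decompose both Hom-sets into even and odd components, apply the classical $Spec$--$\Gamma$ adjunction of Theorem \ref{adjoint} to the even part and the $\mathrm{Mspec}$--$\Gamma$ module adjunction (\cite[Theorem 5.19]{J}) to the odd part, and glue the two natural bijections. Your explicit attention to the change-of-scalars compatibility in the odd piece is a point the paper's proof passes over more quickly, but the overall argument is the same.
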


\begin{proof}
To prove this adjuntion, we will show that there exists a natural bijection between the hom-sets ${\rm Hom}_{\bf \C Superrings}(\mathfrak{R},s\Gamma(X,\mathcal{O}_X))$ and ${\rm Hom}_{\bf L\C SRS}((X,\mathcal{O}_X), sSpec(\mathfrak{R}))$, for any $\mathfrak{R}\in {\bf \C Superrings}$ and any $(X,\mathcal{O}_X)\in {\bf L\C SRS}$. This bijection must be natural in both variables.\\

Let $\mathfrak{R}=\mathfrak{R}\ev\oplus\mathfrak{R}\od$ be a $\C$-superring and let $(Y,\mathcal{O}_{Y})=sSpec(\mathfrak{R})$. Consider a morphism of $\C$-superrings $\varphi:\mathfrak{R}\longrightarrow s\Gamma(X)$. By Definition \ref{def:smorph} and Lemma \ref{escalarext}, this morphism $\varphi$ can be decomposed into its even and odd components: $\varphi\ev:\mathfrak{R}\ev\longrightarrow \Gamma\ev(X)$ is a morphism of $\C$-superrings and $\varphi\od:\mathfrak{R}\od\longrightarrow \Gamma\od(X)$ is a morphism of $\mathfrak{R}\ev$-modules. 

Following the notation of Remark \ref{rmk:sglobal}, the adjunction in the non-graded case (Theorem \ref{adjoint}) implies a natural bijection:
$$
L\ev:{\rm Hom}_{\bf \C Rings}      (\mathfrak{R}\ev, \Gamma\ev(X))\longrightarrow {\rm Hom}_{\bf L\C RS}((X,\mathcal{O}_{X, \overline 0}), (Y,\mathcal{O}_{Y, \overline 0})).
$$
This bijection yields a morphism
\begin{equation}\label{eq:evenadj} L\ev(\varphi\ev):= (\psi\ev,\psi\ev^{\#}):(X, \sh_{X, \overline{0}})\longrightarrow (Y,\sh_{Y, \overline{0}})
\end{equation}
where $\psi\ev:X\longrightarrow Y$ is a continuous function and $\psi\ev^{\#}:\sh_{Y, \overline{0}}\longrightarrow(\psi\ev)_{\ast}\sh_{X,\overline{0}}$ is a morphism of sheaves of $\C$-rings on Y.

Similarly, by $\C$-modules adjunction (see \cite[Theorem 5.19]{J}), there exists a natural bijection:
\begin{equation}\label{eq:oddadj} 
L\od:{\rm Hom}_{\mathfrak R\ev-mod}(\mathfrak{R}\od, \Gamma\od(X))\longrightarrow {\rm Hom}_{\sh_{Y, \overline{0}}-mod}(\sh_{Y, \overline{1}}, (\psi\ev)_{\ast}\sh_{X, \overline{1}}).
\end{equation}
This provides a morphism of $\sh_{Y, \overline{0}}$-modules, $L\od(\varphi\od):=\psi\od:\sh_{Y, \overline{1}} \longrightarrow (\psi\ev)_{\ast}\sh_{X, \overline{1}}$.

In conclusion, we define the map
$$
L:{\rm Hom}_{\bf \C Superrings}(\mathfrak{R},s\Gamma(X))\longrightarrow {\rm Hom}_{\bf L\C SRS}((X,\mathcal{O}_X), (Y,\mathcal{O}_Y))
$$
by mapping $\varphi$ to $L(\varphi)=(\psi,\psi^\#)$. Here, $\psi=L\ev(\varphi\ev):X\rightarrow Y$ is the continuous function between the topological spaces X and Y derived from Theorem \ref{adjoint}. Furthermore, $\psi^{\#}=\psi^{\#}\ev\oplus \psi^{\#}\od $, where $\psi^{\#}\ev$ and $\psi^{\#}\od:=\psi\od$ are as described by (\ref{eq:evenadj}) and (\ref{eq:oddadj}), respectively. Thus, for any open subset $U\subseteq Y$, $\psi^{\#}(U):\sh_Y(U)\rightarrow \sh_X(\psi^{-1}(U))$ is a morphism of $\C$-superrings, given by 
$$
\psi^{\#}(U)(a\oplus b)=\psi^{\#}\ev (U)(a)\oplus \psi^{\#}\od(U)(b).
$$
This map is a ring homomorphism that preserves parity, and by definition, $\psi^{\#}\ev(U)$ is a $\C$-ring morphism. Since both $L\ev$ and $L\od$ are bijections, we denote their inverses by $L\ev^{-1}$ and $L\od^{-1}$, respectively. We then define the function
$$
R:{\rm Hom}_{\bf L\C SRS}((X,\mathcal{O}_X), (Y,\mathcal{O}_Y))\rightarrow {\rm Hom}_{\bf \C Superrings}(\mathfrak{R},s\Gamma(X))
$$
as $(f,f^\#)\longrightarrow R(f,f^\#):=L^{-1}\ev(f,f^{\#}\ev)\oplus L\od^{-1}(f^{\#}\od)$. We claim that $R$ is the inverse of $L$. Indeed,
\begin{align*}
         L(R(f,f^\#))&=L(L^{-1}\ev(f,f^\#)\oplus L_1^{-1}(f^\#))\\
         &= L{\ev}(L{\ev}^{-1}(f,f^\#))\oplus L\od(L\od^{-1}(f,f^\#))\\
         &= (f, f^{\#}\ev)\oplus f^{\#}\od\\
         &= (f, f^\#).
     \end{align*}

 Conversely, we obtain that 
 \begin{eqnarray*}
     R(L(\varphi))&= & R  (L{\ev}(\varphi\ev), L{\ev}(\varphi^\#\ev)\oplus L{\od}(\varphi^\#\od ) \\
     &= & L^{-1}{\ev}((L{\ev}(\varphi{\ev}), L{\ev}(\varphi^{\#}\ev ))\oplus    L^{-1}{\od}(L{\od}(\varphi))\\
     &= & (\varphi{\ev},\varphi^{\#}\ev)\oplus \varphi^{\#}\od\\
     &= & \varphi{\ev}\oplus\varphi{\od}. 
 \end{eqnarray*}
 
Finally, the bijection is natural, as it follows directly from the naturality of the spectrum and global sections functors in the non-graded case.

\end{proof}

In the non-graded setting (see \cite{J, Olarte}), the adjunction between the global sections functor and the spectrum functor establishes a crucial link for understanding the relationship between algebraic and geometric structures. The following propositions generalize \cite[Theorem  4.22]{J} to the super setting, demonstrating this connection by showing how abstract super-algebraic properties are precisely reflected in geometric super-spaces.
\begin{proposition} The category of split finitely generated affine $\C$-superschemes admits fiber products. Specifically, for any pair of split finitely generated affine $\C$-superrings $\mathfrak{R}$ and $\mathfrak{S}$ there exists a $\C$-superring $\mathfrak{T}$ such that $sSpec(\mathfrak{T})$ satisfies the universal property of fiber products for  $sSpec(\mathfrak{R})$ and $sSpec(\mathfrak{S})$.
  \end{proposition}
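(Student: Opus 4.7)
The plan is to take $\mathfrak{T}$ to be the coproduct $\mathfrak{R}\otimes_{\infty s}\mathfrak{S}$ constructed in Proposition \ref{super-coproduct} (in general, for a fiber product over a base $\mathfrak{U}$, one takes the pushout $\mathfrak{R}\sqcup_{\mathfrak{U}}\mathfrak{S}$ in ${\bf \C Superrings}$) and transport its universal property across the adjunction of Theorem \ref{Adjunction}. Since $\mathfrak{R}=\mathfrak{C}[\theta^1,\ldots,\theta^q]^{\pm}$ and $\mathfrak{S}=\mathfrak{D}[\xi^1,\ldots,\xi^n]^{\pm}$ are split finitely generated, Proposition \ref{super-coproduct} gives
\[
\mathfrak{T}=(\mathfrak{C}\otimes_{\infty}\mathfrak{D})[\theta^1,\ldots,\theta^q,\xi^1,\ldots,\xi^n]^{\pm},
\]
and this is again split finitely generated, since $\mathfrak{C}\otimes_{\infty}\mathfrak{D}$ is a finitely generated $\C$-ring whenever $\mathfrak{C}$ and $\mathfrak{D}$ are.

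Next, I would argue that $sSpec(\mathfrak{T})$ realizes the fiber product in ${\bf L\C SRS}$ purely by unwinding universal properties. Concretely, let $(X,\mathcal{O}_X)$ be any locally $\C$-ringed superspace equipped with morphisms $f:(X,\mathcal{O}_X)\rightarrow sSpec(\mathfrak{R})$ and $g:(X,\mathcal{O}_X)\rightarrow sSpec(\mathfrak{S})$ (agreeing over $sSpec(\R)$, which is the terminal object of ${\bf L\C SRS}$, so this compatibility is automatic in the absolute product case). By Theorem \ref{Adjunction} these correspond to $\C$-superring morphisms $\widetilde{f}:\mathfrak{R}\rightarrow s\Gamma(X,\mathcal{O}_X)$ and $\widetilde{g}:\mathfrak{S}\rightarrow s\Gamma(X,\mathcal{O}_X)$. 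The universal property of the coproduct $\mathfrak{T}=\mathfrak{R}\otimes_{\infty s}\mathfrak{S}$ (Proposition \ref{super-coproduct}) yields a unique morphism $h:\mathfrak{T}\rightarrow s\Gamma(X,\mathcal{O}_X)$ whose pre-compositions with the canonical inclusions $\alpha:\mathfrak{R}\rightarrow\mathfrak{T}$ and $\beta:\mathfrak{S}\rightarrow\mathfrak{T}$ recover $\widetilde{f}$ and $\widetilde{g}$ respectively. Applying the adjunction in the reverse direction to $h$ produces a unique morphism $(X,\mathcal{O}_X)\rightarrow sSpec(\mathfrak{T})$ whose compositions with $sSpec(\alpha)$ and $sSpec(\beta)$ recover $f$ and $g$.

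The naturality of the bijections $L_{\mathfrak{C},X}$ and $R_{\mathfrak{C},X}$ from Theorem \ref{Adjunction} guarantees that this correspondence respects the coherence required for the universal property of a (fiber) product; in categorical language, this is just the well-known fact that a right adjoint preserves limits, so $sSpec$ sends the pushout $\mathfrak{R}\sqcup_{\R}\mathfrak{S}$ (a colimit) to the pullback $sSpec(\mathfrak{R})\times_{sSpec(\R)} sSpec(\mathfrak{S})$.

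The main obstacle I anticipate is not in the abstract argument, which is formal, but in ensuring closure within the stated subcategory: one must confirm that $\mathfrak{T}$ lies in ${\bf SplC^{\infty}Superrings}^{fg}$ and that the fiber product $sSpec(\mathfrak{T})$ computed in ${\bf L\C SRS}$ is genuinely the fiber product in the full subcategory of split finitely generated affine $\C$-superschemes. This amounts to verifying that the coproduct formula of Proposition \ref{super-coproduct} preserves finite generation and splitness, and that the cocartesian square in ${\bf \C Superrings}$ is also cocartesian when restricted to split finitely generated objects. Both facts are immediate from the explicit description of $\mathfrak{T}$ above.
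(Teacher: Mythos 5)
Your proposal is correct and follows essentially the same route as the paper: take $\mathfrak{T}=\mathfrak{R}\otimes_{\infty s}\mathfrak{S}$ from Proposition \ref{super-coproduct} and invoke the adjunction of Theorem \ref{Adjunction} (right adjoints turn colimits into limits) to identify $sSpec(\mathfrak{T})$ with the fiber product. Your extra care about closure of the split finitely generated subcategory under this construction is a reasonable refinement, but the core argument is the one the paper gives.
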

 
\begin{proof}
In Proposition \ref{super-coproduct}, we constructed the coproduct $\mathfrak{R}\otimes_\infty\mathfrak{S}$ of two split finitely generated $\C$-superrings $\mathfrak R$ and $\mathfrak S$. We claim that $\mathfrak{T}=\mathfrak{R}\otimes_\infty\mathfrak{S}$. The functor $sSpec$ is a right adjoint and therefore transform colimits into limits. Consequently, it maps the coproduct of $\C$-superrings to the fiber products of affine $\C$-superschemes. Thus, the fiber product $sSpec(\mathfrak{R})$ and $sSpec(\mathfrak{S})$ is given by  $sSpec(\mathfrak{R}\otimes_\infty\mathfrak{S})$. That is, $sSpec(\mathfrak{R}) \times_{sSpec (\R)} sSpec(\mathfrak{S})=sSpec(\mathfrak{R}\otimes_\infty\mathfrak{S})$.
\end{proof}
\begin{proposition}\label{prop:fairequiv}
For any $\C$-superring $\mathfrak{R}$, there is an isomorphism of $\C$-superrings:
$$
s\Gamma(sSpec(\mathfrak{R})\cong \mathcal{F}a(\mathfrak{R}), 
$$
Here, $\mathcal{F}a(\mathfrak{R})$ is the fairfication of $\mathfrak{R}$ (see Lemma \ref{lem:fairfication}). Furthermore, the natural transformation  $\mathcal{F}a \Rightarrow s\Gamma\circ sSpec$ is a natural isomorphism of functors.
\end{proposition}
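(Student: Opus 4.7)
The plan is to construct an explicit natural morphism $\Psi_{\mathfrak R}: \mathfrak R \to s\Gamma(sSpec(\mathfrak R))$, identify its kernel with the superideal $I_{\mathfrak R}$ of Definition \ref{def:fairfication}, and then show that the resulting factored morphism $\overline{\Psi}_{\mathfrak R}: \mathcal{F}a(\mathfrak R) \to s\Gamma(sSpec(\mathfrak R))$ is a natural isomorphism of $\C$-superrings. Schematically, $\Psi_{\mathfrak R}$ will be the unit of the adjunction of Theorem \ref{Adjunction} at $\mathfrak R$, and the factorization through $\mathcal{F}a(\mathfrak R)$ is forced by the universal property supplied by Lemma \ref{lem:fairfication}.

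First I would define, for each $r\in\mathfrak R$, the section $\Psi_{\mathfrak R}(r)(x):=\mathcal{L}_x(r)\in\mathfrak R_x$ for all $x\in X_{\mathfrak R}$, and verify using Lemma \ref{lem:superloc} that it splits along the grading into a pair $(\Psi_{\mathfrak R\ev},\Psi_{\mathfrak R\od})$; the even component is the morphism from the non-graded theory of $\C$-rings, while the odd component lands in $(\mathrm{Mspec}\,\mathfrak R\od)(X_{\mathfrak R})$. By the very definition of $I_{\mathfrak R}$, one has $\ker \Psi_{\mathfrak R}=I_{\mathfrak R}$, hence there is a canonical injective morphism $\overline{\Psi}_{\mathfrak R}:\mathcal{F}a(\mathfrak R)\hookrightarrow s\Gamma(sSpec(\mathfrak R))$.

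Next, surjectivity of $\overline{\Psi}_{\mathfrak R}$ I would establish degree by degree using the decomposition of $s\Gamma$ provided by Remark \ref{rmk:sglobal}. On the even summand, the surjectivity statement for finitely generated $\C$-rings (the proposition on $\Psi_{\mathfrak{C}}$ stated just before Theorem \ref{adjoint}) applied to $\mathfrak R\ev$ yields that $\Psi_{\mathfrak R\ev}$ surjects onto $\Gamma\ev(sSpec(\mathfrak R))$. On the odd summand, the target $\Gamma\od(sSpec(\mathfrak R))$ consists of global sections of the $\mathcal O_{X_{\mathfrak R\ev}}$-module $\mathrm{Mspec}\,\mathfrak R\od$; since $\mathfrak R\od$ is finitely generated over $\mathfrak R\ev$ whenever $\mathfrak R$ is finitely generated, one can adapt the covering-and-gluing argument of \cite[Proposition 21]{Olarte} to modules to conclude that every global section is of the form $x\mapsto \mathcal{L}_x(m)$ for some $m\in\mathfrak R\od$. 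Thus $\overline{\Psi}_{\mathfrak R}$ is bijective.

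For naturality in $\mathfrak R$, a morphism $\varphi:\mathfrak R\to\mathfrak S$ produces a commutative square
\[
\begin{tikzcd}
\mathcal{F}a(\mathfrak R)\arrow{r}{\overline{\Psi}_{\mathfrak R}}\arrow{d}[swap]{\mathcal{F}a(\varphi)} & s\Gamma(sSpec(\mathfrak R))\arrow{d}{s\Gamma(sSpec(\varphi))}\\
\mathcal{F}a(\mathfrak S)\arrow{r}{\overline{\Psi}_{\mathfrak S}} & s\Gamma(sSpec(\mathfrak S))
\end{tikzcd}
\]
whose commutativity reduces, pointwise on $X_{\mathfrak S}$, to the functoriality of localization at $\R$-points given by Proposition \ref{Rpointmor}. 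The main obstacle I expect is precisely the odd-part surjectivity: the non-graded argument does not transfer verbatim to sheaves of modules, and one has to control how local representatives in $\mathfrak R\od\otimes_{\mathfrak R\ev}\mathfrak R_x$ glue across the basic opens $U_c$, which likely requires either smooth partition-of-unity arguments on $X_{\mathfrak R\ev}$ or an explicit module-theoretic version of the surjectivity result for finitely generated $\C$-rings.
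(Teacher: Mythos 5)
Your proposal follows essentially the same route as the paper's proof: define the morphism $r\mapsto (x\mapsto\mathcal{L}_x(r))$, identify its kernel with $I_{\mathfrak R}$, pass to the quotient, and verify naturality via Proposition~\ref{Rpointmor}. Where you go beyond the paper is in the surjectivity step: the paper simply asserts ``It is clear that $\Phi$ is surjective,'' whereas you correctly flag that this is a real step, split it along the $\Z_2$-grading, invoke the non-graded surjectivity result for the even part, and observe that the odd part needs a module-level version of the covering argument (essentially Joyce's $\mathrm{MSpec}$ machinery, \cite[Theorem~5.19]{J}), which in turn tacitly requires $\mathfrak R$ to be finitely generated so that $\mathfrak R\od$ is a finitely generated $\mathfrak R\ev$-module. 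That finite-generation hypothesis is also implicit in the paper's statement (since $I_{\mathfrak R}$ in Definition~\ref{def:fairfication} is only defined for finitely generated $\C$-superrings), so your caution is well placed rather than a defect. In short: same argument, but you make explicit a nontrivial step that the paper glosses over, and you correctly identify the extra module-theoretic input it relies on.
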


\begin{proof}
We will construct a $\C$-superring morphism $\Phi:\mathfrak{R}\rightarrow s\Gamma(sSpec(\mathfrak{R}))$.

A global section of $\mathcal O_{X_{\mathfrak R}}$ can be viewed as a map $s:X_\mathfrak{R}\rightarrow \bigsqcup_{x\in X_\mathfrak{R}}\mathfrak{R}_x$ such that for any $x\in X_{\mathfrak{R}}$, there exists an open neighborhood $U \subseteq X_{\mathfrak R}$ and an element $r\in \mathfrak{R}$ with $s(y)=\mathcal{L}_x(r)$ for $y \in U$. 

We define $\Phi(r)$ for any $r\in\mathfrak{R}$ as the global section that maps $x$ to $\mathcal{L}_x(r)$ in $\mathfrak{R}_x$. It is clear that $\Phi$ is surjective and its kernel is $I_\mathfrak{R}=\{r \in \mathfrak{R}\mid\mathcal{L}_x(r)=0,\, \forall x\in X_{\mathfrak{R}}\}$. 

By the First Isomorphism Theorem, we have an isomorphism between  $s\Gamma(sSpec(\mathfrak{R}))$ and $\mathfrak R/I_{\mathfrak R}=:\mathcal{F}a(\mathfrak{R})$. We denote this isomorphism by $\Phi_\mathfrak{R}$. This completes the proof of the isomorphism at the object level.

Now, consider $\varphi:\mathfrak{R}\rightarrow\mathfrak{S}$ a morphism of $\C$-superrings. We claim that the following diagram is commutative:
\begin{equation}\label{eq:fairdiagram}
\begin{tikzcd}
\mathcal{F}a(\mathfrak{R})\arrow{r}{\Phi_\mathfrak{R}} \arrow[swap]{d}{\epsilon=\mathcal{F}a(\varphi)} & s\Gamma(sSpec(\mathfrak{R}))\arrow{d}{ \delta=s\Gamma(sSpec(\varphi))} \\
\mathcal{F}a(\mathfrak{S}) \arrow{r}{\Phi_\mathfrak{S}} &  s\Gamma(sSpec(\mathfrak{S}))
\end{tikzcd}
\end{equation} 
Indeed, for any $\tilde{r} \in \mathcal{F}a(\mathfrak{R})$ we have that $\Phi_\mathfrak{S}(\epsilon(\tilde {r}))$ is by definition a global section of $\mathcal O_{X_{\mathfrak{S}}}$ given by $x\longmapsto\mathcal{L}_x(\varphi(r))$ for $x \in X_{\mathfrak S}$. On the other hand, $\delta(\Phi_\mathfrak{R}(\tilde {r}))$ is a global section of $\mathcal O_{X_{\mathfrak{S}}}$ given by $x\longmapsto \varphi_x(\mathcal{L}_{x\circ\varphi}(\tilde{r}))=\mathcal{L}_x(\varphi(r))$ by Proposition \ref{Rpointmor}. Hence we have that  $\delta(\Phi_\mathfrak{R}(\tilde {r}))=\Phi_\mathfrak{S}(\tilde {r})$. 
In conclusion, $\Phi:\mathcal{F}a\Rightarrow s\Gamma\circ sSpec$ is, actually, a natural transformation. Additionally, it is an isomorphism of functors since the horizontal arrows in the diagram (\ref{eq:fairdiagram}) are isomorphisms, which proves the proposition.
\end{proof} 
\begin{corollary}\label{cor:fairestriction}
Under the hypotheses and notations in Theorem \ref{Adjunction}, the functor $sSpec$ is full and faithful when restricted to the category of fair $\C$-superrings. This means that for any pair of of fair $\C$-superrings $\mathfrak{R}$ and $\mathfrak{S}$, there is a natural bijection:
\begin{equation}\label{eq:fffair}
{\rm Hom}_{\bf \C Superrings}(\mathfrak{R},\mathfrak{S})\cong{\rm Hom}_{\bf L\C SRS}(sSpec(\mathfrak{S}), sSpec(\mathfrak{R}))\, .
\end{equation}
\end{corollary}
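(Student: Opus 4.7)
The plan is to obtain the bijection \eqref{eq:fffair} by combining the adjunction of Theorem~\ref{Adjunction} with the identification of $s\Gamma\circ sSpec$ provided by Proposition~\ref{prop:fairequiv}. Explicitly, setting $X=sSpec(\mathfrak{S})$ in Theorem~\ref{Adjunction} yields a natural bijection
\[
\mathrm{Hom}_{\bf \C Superrings}\bigl(\mathfrak{R},\, s\Gamma(sSpec(\mathfrak{S}))\bigr) \;\cong\; \mathrm{Hom}_{\bf L\C SRS}\bigl(sSpec(\mathfrak{S}),\, sSpec(\mathfrak{R})\bigr).
\]
By Proposition~\ref{prop:fairequiv}, the natural transformation $\Phi_{\mathfrak{S}}\colon \mathcal{F}a(\mathfrak{S})\to s\Gamma(sSpec(\mathfrak{S}))$ is an isomorphism. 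Since $\mathfrak{S}$ is fair, the ideal $I_{\mathfrak{S}}=\{s\in\mathfrak{S}\mid \mathcal{L}_x(s)=0\ \forall x\in X_{\mathfrak{S}}\}$ is the zero ideal (by Definition~\ref{SuperFair}), so $\mathcal{F}a(\mathfrak{S})=\mathfrak{S}/I_{\mathfrak{S}}\cong\mathfrak{S}$. Composing these identifications gives the bijection in \eqref{eq:fffair}.

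Next I would verify that the bijection so obtained is induced by $sSpec$, that is, that it sends a morphism $\varphi\colon\mathfrak{R}\to\mathfrak{S}$ to $sSpec(\varphi)$. Under the adjunction of Theorem~\ref{Adjunction}, a morphism $\psi\colon\mathfrak{R}\to s\Gamma(sSpec(\mathfrak{S}))$ corresponds to the morphism $L(\psi)\colon sSpec(\mathfrak{S})\to sSpec(\mathfrak{R})$. Tracing through the construction in the proof of Theorem~\ref{Adjunction}, applied to the composition $\mathfrak{R}\xrightarrow{\varphi}\mathfrak{S}\xrightarrow{\Phi_{\mathfrak{S}}^{-1}\cdot\,\mathrm{id}}s\Gamma(sSpec(\mathfrak{S}))$, one recovers exactly $(f_{\varphi},f_{\varphi}^{\#})=sSpec(\varphi)$; this is immediate in the even part from the $\C$-ring case \cite[Theorem 4.22]{J} and in the odd part from the $\C$-module adjunction used in \eqref{eq:oddadj}. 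Thus the map
\[
sSpec\colon \mathrm{Hom}_{\bf \C Superrings}(\mathfrak{R},\mathfrak{S}) \longrightarrow \mathrm{Hom}_{\bf L\C SRS}\bigl(sSpec(\mathfrak{S}),\,sSpec(\mathfrak{R})\bigr)
\]
is bijective for any pair of fair $\C$-superrings, which is precisely the statement that $sSpec$ restricted to $\bf \C Superrings^{fa}$ is fully faithful.

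Naturality in $\mathfrak{R}$ and $\mathfrak{S}$ follows from the naturality of $\Phi$ established in Proposition~\ref{prop:fairequiv} (the diagram \eqref{eq:fairdiagram}) together with the naturality of the adjunction $L,R$ of Theorem~\ref{Adjunction}. The only subtle point — and the place I expect to spend the most care — is the identification in the previous paragraph: one must ensure that the isomorphism $\mathfrak{S}\cong s\Gamma(sSpec(\mathfrak{S}))$ provided by $\Phi_{\mathfrak{S}}$ is compatible with the counit of the adjunction, so that precomposition by $\Phi_{\mathfrak{S}}^{-1}$ on the algebraic side corresponds to the identity on the geometric side. Once this compatibility is spelled out, the corollary is a formal consequence.
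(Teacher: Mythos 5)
Your proof is correct and follows essentially the same route as the paper: apply the adjunction of Theorem~\ref{Adjunction} with $X=sSpec(\mathfrak{S})$, then use Proposition~\ref{prop:fairequiv} together with fairness of $\mathfrak{S}$ to identify $s\Gamma(sSpec(\mathfrak{S}))\cong\mathfrak{S}$. You go a bit further than the paper in explicitly checking that the resulting bijection is induced by $sSpec$ (the paper simply concludes ``this implies the desired result''), which is a reasonable and careful addition but not a different argument.
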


\begin{proof}
By the adjunction theorem, Theorem \ref{Adjunction}, we have a bijection   
\begin{equation*}
{\rm Hom}_{\bf \C Superrings}(\mathfrak{R},s\Gamma(sSpec(\mathfrak{S})))\cong{\rm Hom}_{\bf L\C SRS}(sSpec(\mathfrak{S}), sSpec(\mathfrak{R})). 
\end{equation*} 
Since $\mathfrak{S}$ is a fair $\C$-superring, Proposition \ref{prop:fairequiv} states that $s\Gamma(sSpec(\mathfrak{S}))\cong\mathfrak{S}$. This implies the desired result.
\end{proof}

\section*{Acknowledgements}
A. Torres-Gomez is grateful to the Shanghai Institute for Mathematics and Interdisciplinary Sciences (SIMIS, China) for their hospitality and partial financial support during his visit in December 2024 and January 2025, where a part of this project was completed.

\section*{Funding}
C. D. Olarte and P. Rizzo were partially supported by CODI (Universidad de Antioquia, UdeA) through project numbers 2022-52654 and 2023-62291, respectively.


\end{document}